\newtheorem{theorem}{Theorem}[section]
\newtheorem{thm}[theorem]{Theorem}
\newtheorem{lemma}[theorem]{Lemma}
\newtheorem{lem}[theorem]{Lemma}
\newtheorem{prop}[theorem]{Proposition}
\newtheorem{cor}[theorem]{Corollary}
\newtheorem{dfn}[theorem]{Definition}
\newtheorem{rem}[theorem]{Remark}
\def\U{\mathcal{U}}
\def\sU{\mathcal{U}}
\def\dU{d_{\U}}
\def\mU{\mu_{\U}}
\def\pU{\phi_{\U}}
\def\rU{\rho_{\U}}
\def\BU{B_{\U}}
\newcommand\bP{\mathbf{P}}
\def\be{\begin{equation}}
\def\ee{\end{equation}}
\def\eps{\varepsilon}
\renewcommand{\P}{\mathbb{P}}
\newcommand{\E}{\mathbb{E}}
\renewcommand{\S}{\mathbb{S}}
\newcommand\N{\mathbb{N}}
\newcommand\Z{\mathbb{Z}}
\newcommand\R{\mathbb{R}}
\newcommand\T{\mathcal{T}}
\newcommand\cH{\mathcal{H}}
\def\uT{\underline{\mathcal{T}}}
\def\dT{d_{\T}}
\def\mT{\mu_{\T}}
\def\pT{\phi_{\T}}
\def\rT{\rho_{\T}}
\newcommand{\interior}[1]{%
  {\kern0pt#1}^{\mathrm{o}}%
}
\date{}
\DeclareMathOperator{\LE}{LE}
\DeclareMathOperator{\diam}{diam}
\DeclareMathOperator{\dist}{dist}
\DeclareMathOperator{\tr}{tr}
\DeclareMathOperator{\len}{len}
\DeclareMathOperator{\QL}{QL}
\newglossaryentry{beta}
{
    name=\ensuremath{\beta},
    description={Growth exponent for the three-dimensional loop-erased random walk.},
    type=symbols,
    sort={0}
}
\newglossaryentry{ust}
{
    name=\ensuremath{\U},
    description={Uniform spanning tree on $\mathbb{Z}^3$.},
    type=symbols,
    sort={0}
}
\newglossaryentry{boldP}
{
    name=\ensuremath{\bP},
    description={Law of the uniform spanning tree $\U$.},
    type=symbols,
    sort={0}
}
\newglossaryentry{distanceU}
{
    name=\ensuremath{\dU},
    description={Intrinsic metric on the graph $\U$.},
    type=symbols,
    sort={0}
}
\newglossaryentry{ballU}
{
    name=\ensuremath{B_{\U} (x, r)},
    description={Ball in the metric space $\U$ of radius $r$ around $x$.},
    type=symbols,
    sort={0}
}
\newglossaryentry{muU}
{
    name=\ensuremath{\mu_\U},
    description={Counting measure on $\U$.},
    type=symbols,
    sort={0}
}
\newglossaryentry{phiU}
{
    name=\ensuremath{\phi_\U},
    description={Continuous embedding of $\U$ into $\mathbb{R}^3$.},
    type=symbols,
    sort={0}
}
\newglossaryentry{rhoU}
{
    name=\ensuremath{\rU},
    description={Root of $\U$. It is equal to $0$.},
    type=symbols,
    sort={0}
}
\newglossaryentry{boldPd}
{
    name=\ensuremath{\bP_\delta},
    description={Law of $ \left(\U, \delta^\beta \dU, \delta^3 \mU, \delta\pU, \rU \right) $.},
    type=symbols,
    sort={0}
}
\newglossaryentry{hausdorffDimT}
{
    name=\ensuremath{d_f},
    description={Fractal dimension of $\U$.},
    type=symbols,
    sort={0}
}
\newglossaryentry{Tlimit}
{
    name=\ensuremath{\T},
    description={Limit metric space of the scaled uniform spanning tree $\U$.},
    type=symbols,
    sort={0}
}
\newglossaryentry{Plimit}
{
    name=\ensuremath{\hat{\bP}},
    description={Law of the limit space $(\T,d_\T,\mu_\T,\phi_\T,\rho_\T)$.},
    type=symbols,
    sort={0}
}
\newglossaryentry{B-T}
{
    name=\ensuremath{B_\T(x,r)},
    description={Ball in the metric space $\T$ of radius $r$ around $x$.},
    type=symbols,
    sort={0}
}
\newglossaryentry{srwU}
{
    name=\ensuremath{X^\sU},
    description={Simple random walk on $\U$.},
    type=symbols,
    sort={0}
}
\newglossaryentry{quenchedP}
{
    name=\ensuremath{{P}_x^\sU},
    description={Quenched law of the simple random walk on  $\U$ started at $x$.},
    type=symbols,
    sort={0}
}
\newglossaryentry{annealedP}
{
    name=\ensuremath{\P^\U},
    description={Annealed law of the simple random walk on $\U$.},
    type=symbols,
    sort={0}
}
\newglossaryentry{resistanceU}
{
    name=\ensuremath{R_\U},
    description={Effective resistance on $\U$.},
    type=symbols,
    sort={0}
}
\newglossaryentry{hausdorffD}
{
    name=\ensuremath{d_H},
    description={Hausdorff metric.},
    type=symbols,
    sort={0}
}
\newglossaryentry{discreteBall}
{
    name=\ensuremath{B (x, r)},
    description={Discrete Euclidean ball of radius $R$ around $x$.},
    type=symbols,
    sort={0}
}
\newglossaryentry{scaledBall}
{
    name=\ensuremath{B_{\delta} (x, r)},
    description={Discrete Euclidean ball on $\delta \mathbb{Z}^3$ of radius $r$ around $x$.},
    type=symbols,
    sort={0}
}
\newglossaryentry{euclideanBall}
{
    name=\ensuremath{B_E (x, r)},
    description={Euclidean ball of radius $r$ around $x$.},
    type=symbols,
    sort={0}
}
\newglossaryentry{discreteCube}
{
    name=\ensuremath{D(x,r)},
    description={Discrete cube of side-length $2r$ centred at $x$.},
    type=symbols,
    sort={0}
}
\newglossaryentry{scaledCube}
{
    name=\ensuremath{D_{\delta} (x,r)},
    description={Discrete cube on $\delta \mathbb{Z}^3$ of side-length $2r$ centred at $x$.},
    type=symbols,
    sort={0}
}
\newglossaryentry{euclideanCube}
{
    name=\ensuremath{D_E (x,r)},
    description={Euclidean cube of side-length $2r$ centred at $x$.},
    type=symbols,
    sort={0}
}
\newglossaryentry{length}
{
    name=\ensuremath{\len(\gamma)},
    description={Length of a path $\gamma$.},
    type=symbols,
    sort={0}
}
\newglossaryentry{duration}
{
    name=\ensuremath{T (\gamma) },
    description={Duration of a curve $\gamma$.},
    type=symbols,
    sort={0}
}
\newglossaryentry{spaceCf}
{
    name=\ensuremath{ \mathcal{C}_f },
    description={Space of parameterized curves of finite duration.},
    type=symbols,
    sort={0}
}
\newglossaryentry{psi}
{
    name=\ensuremath{ \psi },
    description={Metric on $\mathcal{C}_f$.},
    type=symbols,
    sort={0}
}
\newglossaryentry{spaceC}
{
    name=\ensuremath{ \mathcal{C} },
    description={Space of transient parameterized curves.},
    type=symbols,
    sort={0}
}
\newglossaryentry{chi}
{
    name=\ensuremath{ \chi },
    description={Metric on $\mathcal{C}$.},
    type=symbols,
    sort={0}
}
\newglossaryentry{schrammMetric}
{
    name=\ensuremath{ d_{\gamma}^S },
    description={Schramm metric on a parameterized curve $\gamma$.},
    type=symbols,
    sort={0}
}
\newglossaryentry{intrinsicMetric}
{
    name=\ensuremath{ d_{\gamma} },
    description={Intrinsic metric on a parameterized curve $\gamma$.},
    type=symbols,
    sort={0}
}
\newglossaryentry{betaParameterization}
{
    name=\ensuremath{\bar{\gamma} },
    description={Loop-erased random walk endowed with its $\beta$-parameterization.},
    type=symbols,
    sort={0}
}
\newglossaryentry{ILERW}
{
    name=\ensuremath{\gamma^x_{\infty}},
    description={Infinite loop-erased random walk on $\mathbb{Z}^3$ starting at $x$.},
    type=symbols,
    sort ={0}
}
\newglossaryentry{dyadic}
{
    name=\ensuremath{\mathcal{P}},
    description={Dyadic polyhedron.},
    type=symbols,
    sort ={0}
}
\newglossaryentry{parameterizedTree}
{
    name=\ensuremath{\mathscr{T}},
    description={Parameterized tree.},
    type=symbols,
    sort ={0}
}
\newglossaryentry{parameterizedForest}
{
    name=\ensuremath{\mathscr{F}^K},
    description={Space of parameterized trees with $K$ leaves.},
    type=symbols,
    sort ={0}
}
\newglossaryentry{essentialBranches}
{
    name=\ensuremath{\Gamma^e (\mathscr{T})},
    description={Space of parameterized trees with $K$ leaves.},
    type=symbols,
    sort ={0}
}
\newglossaryentry{innerB}
{
    name=\ensuremath{\partial_{i} A},
    description={Inner boundary of $A \subset \mathbb{Z}^3$.},
    type=symbols,
    sort ={0}
}
\begin{document}

\title{\vspace{-10pt}Scaling limits of the three-dimensional uniform spanning tree\\and associated random walk}
\author{O.\ Angel\footnote{Department of Mathematics, University of British Columbia, Vancouver, BC, V6T 1Z2, Canada. Email: angel@math.ubc.ca.}, D.~A.~Croydon\footnote{Research Institute for Mathematical Sciences, Kyoto University, Kyoto 606-8502, Japan. Email: croydon@kurims.kyoto-u.ac.jp.}, S. Hernandez-Torres\footnote{Faculty of Mathematics and Faculty of Industrial Engineering and Management, Technion -- Israel Institute of Technology, Haifa 32000, Israel. Email: sarai.h@campus.technion.ac.il.} and D.~Shiraishi\footnote{Department of Advanced Mathematical Sciences, Graduate School of Informatics,
Kyoto University, Kyoto 606-8501, Japan. Email: shiraishi@acs.i.kyoto-u.ac.jp.}}
\footnotetext[0]{{\bf MSC 2010}: 60D05 (primary); 60G57; 60K37.}
\footnotetext[0]{{\bf Key words and phrases}: uniform spanning tree; random walk; scaling limit, transition density.}

\maketitle

\vspace{-30pt}

\begin{abstract}
  We show that the law of the three-dimensional uniform spanning tree (UST) is tight under rescaling in a space whose elements are measured, rooted real trees, continuously embedded into Euclidean space. We also establish that the relevant laws actually converge along a particular scaling sequence. The techniques that we use to establish these results are further applied to obtain various properties of the intrinsic metric and measure of any limiting space, including showing that the Hausdorff dimension of such is given by $3/\beta$, where $\beta\approx 1.624\dots$ is the growth exponent of three-dimensional loop-erased random walk. Additionally, we study the random walk on the three-dimensional uniform spanning tree, deriving its walk dimension (with respect to both the intrinsic and Euclidean metric) and its spectral dimension, demonstrating the tightness of its annealed law under rescaling, and deducing heat kernel estimates for any diffusion that arises as a scaling limit.
\end{abstract}

\vspace{-10pt}

\renewcommand{\baselinestretch}{0.5}\normalsize
\setcounter{tocdepth}{1}
\tableofcontents
\renewcommand{\baselinestretch}{1.0}\normalsize

\section{Introduction}

Remarkable progress has been made in understanding the scaling limits of two-dimensional statistical mechanics models in recent years, much of which has depended in a fundamental way on the asymptotic conformal invariance of the models in question that has allowed many powerful tools from complex analysis to be harnessed. See \cite{LSW,Schramm,Smirnov} for some of the seminal works in this area, and \cite{Lawler} for more details. By contrast, no similar foothold for studying analogous problems in the (physically most relevant) case of three dimensions has yet been established.
It seems that there is currently little prospect of progress for the corresponding models in this dimension.

Nonetheless, in \cite{Kozma}, Kozma made the significant step of establishing the existence of a (subsequential) scaling limit for the trace of a three-dimensional loop-erased random walk (LERW). Moreover, in work that builds substantially on this, the time parametrisation of the LERW has been incorporated into the picture, with it being demonstrated that (again subsequentially) the three-dimensional LERW converges as a stochastic process, see \cite{LS} and the related articles \cite{Escape, S}.
The aim of this work is to apply the latter results in conjunction with the fundamental connection between uniform spanning trees (USTs) and LERWs -- specifically that paths between points in USTs are precisely LERWs \cite{Pemantle, Wilson} -- to determine the scaling behaviour of the three-dimensional UST (see Figure \ref{3dustfig}) and the associated random walk.

\begin{figure}[!b]
\begin{center}
\begin{tabular}{m{.47\textwidth} m{.47\textwidth}}
  \includegraphics[width=0.47\textwidth]{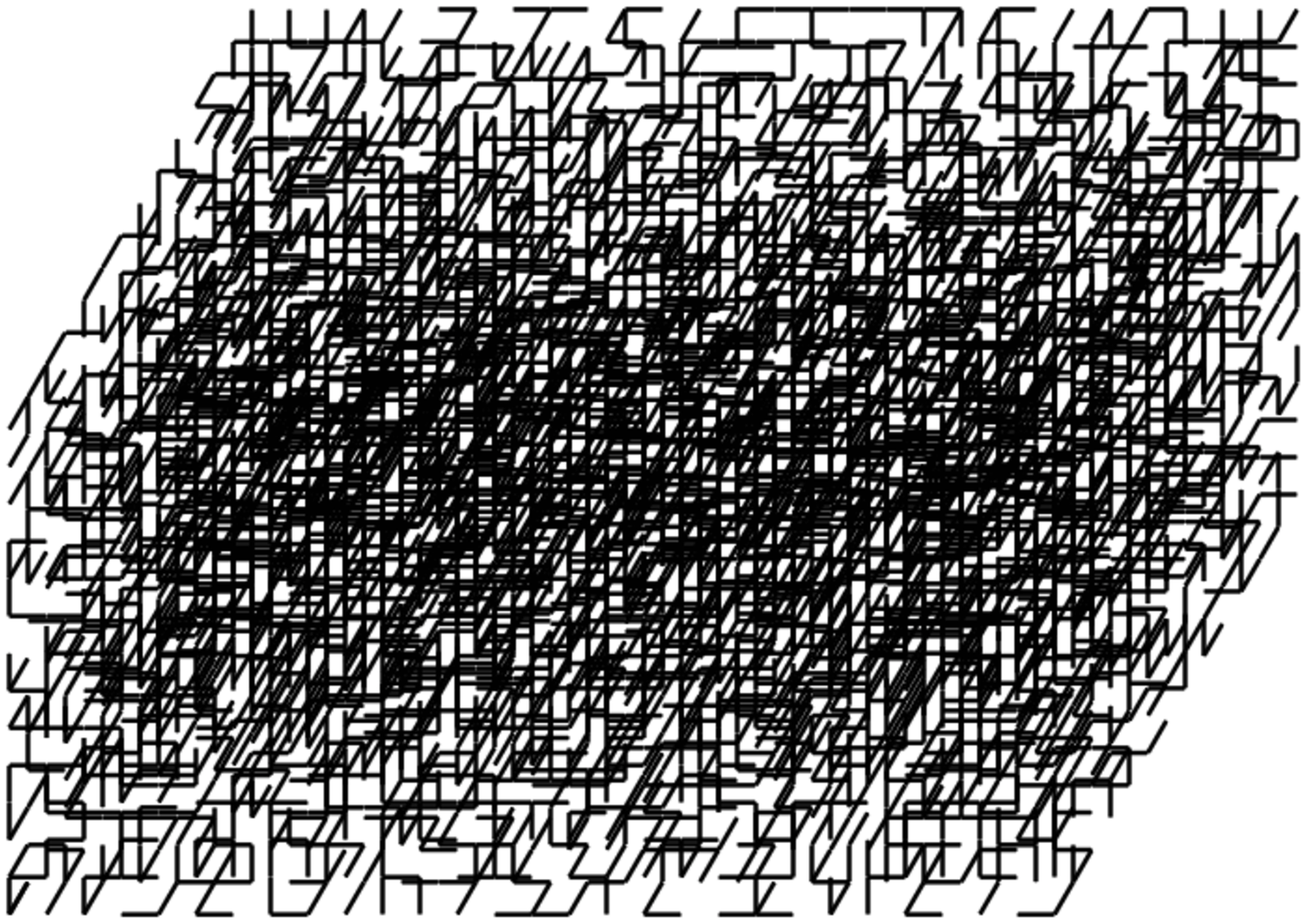} & \includegraphics[width=0.47\textwidth]{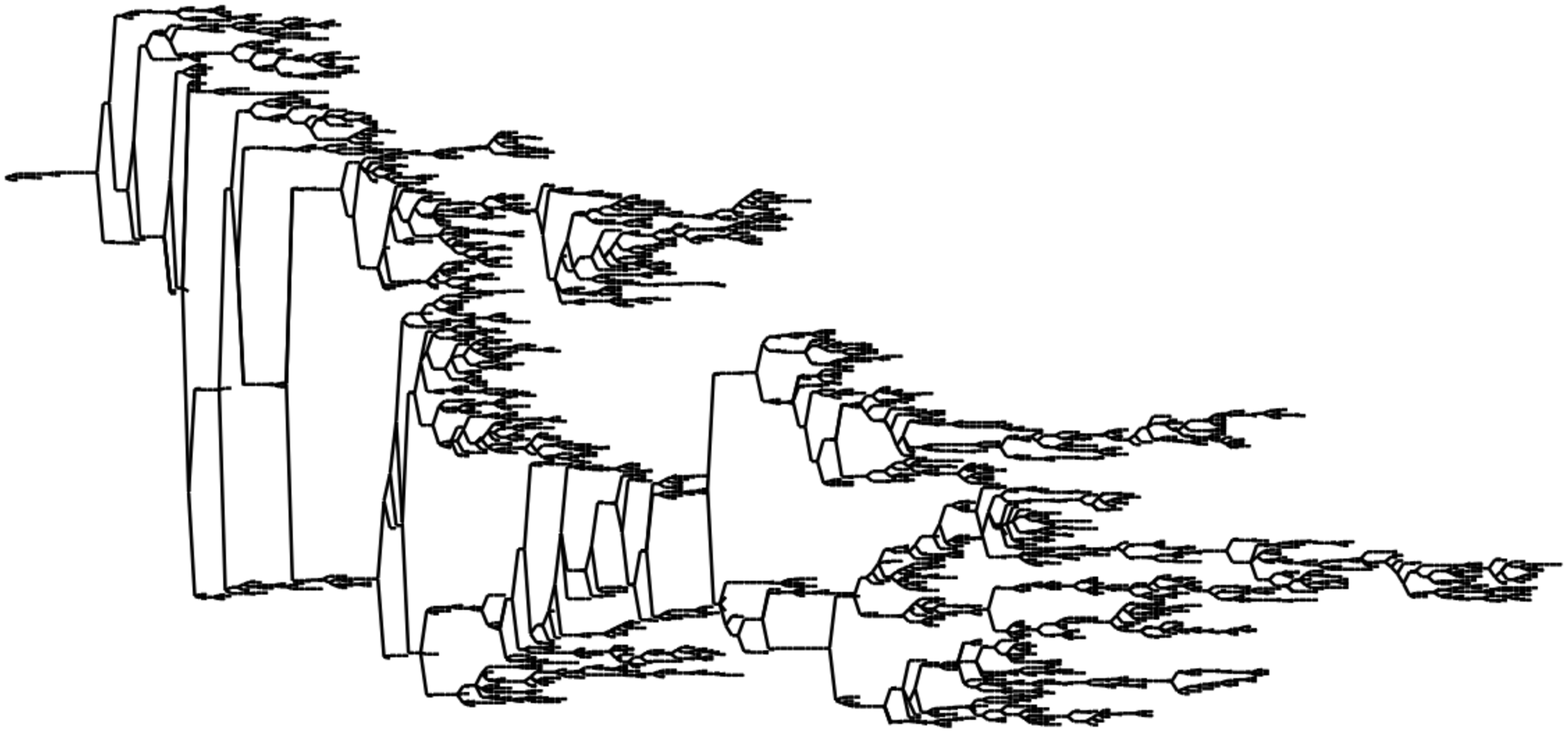}
  \end{tabular}
  \vspace{-15pt}
\end{center}
\caption{A realisation of the UST in a three-dimensional box, as embedded into $\R^3$ (left), and drawn as a planar graph tree (right). Source code adapted from two-dimensional version of Mike Bostock.}\label{3dustfig}
\end{figure}

Before stating our results, let us introduce some of our notation. For the reader's convenience, we include a list of notation in Appendix~\ref{symbols}. We follow closely the presentation of \cite{BCroyK}, where similar results were obtained in the two-dimensional case. Henceforth, we will write $\gls{ust}$ for the UST on $\Z^3$, and $\gls{boldP}$ the probability measure on the probability space on which this is built (the corresponding expectation will be denoted $\mathbf{E}$). We refer the reader to \cite{Pemantle} for Pemantle's construction of $\U$ in terms of a local limit of the USTs on the finite boxes $[-n,n]^3\cap\Z^3$ (equipped with nearest-neighbour bonds) as $n\rightarrow\infty$, and proof of the fact that the resulting graph is indeed a spanning tree of $\Z^3$. We will denote by $\gls{distanceU}$ the intrinsic (shortest path) metric on the graph $\U$, and $\gls{muU}$ the counting measure on $\U$ (i.e., the measure which places a unit mass at each vertex). Similarly to \cite{BCroyK}, in describing a scaling limit for $\U$, we will view $\U$ as a measured, rooted spatial tree. In particular, in addition to the metric measure space $(\U, \dU,\mu_\U)$, we will also consider the embedding $\gls{phiU} :\U\rightarrow\R^3$, which we take to be simply the identity on vertices; this will allow us to retain information about $\U$ in the Euclidean topology. Moreover, it will be convenient to suppose the space $(\U, \dU)$ is rooted at the origin of $\R^3$, which we will write as $\gls{rhoU}$. To fit the framework of \cite{BCroyK}, we extend $(\U,\dU)$ by adding unit line segments along edges, and linearly interpolate $\phi_\U$ between vertices.

\subsection{Scaling limits of the three-dimensional UST}

We have defined a random quintuplet $(\U, \dU,\mu_\U,\phi_\U, \rU)$. Our main result (Theorem \ref{mainthm1} below) is the existence of a certain subsequential scaling limit for this object in an appropriate Gromov-Hausdorff-type topology, the precise definition of which we postpone to Section \ref{topsec}. Moreover, the result incorporates the statement that the laws of the rescaled objects are tight even without taking the subsequence. One further quantity needed to state the result precisely is the growth exponent of the three-dimensional LERW. Let $M_n$ be the number of steps of the LERW on $\Z^3$ until its first exit from a ball of radius $n$. The \textbf{growth exponent} is defined by the limit:
\[ \gls{beta}:=\lim_{n\rightarrow\infty}\frac{\log\mathbf{E}M_n}{\log n},\]
(equivalently, $\mathbf{E}M_n = n^{\beta+o(1)}$). The existence of this limit was proved in \cite{S}. Whilst the exact value of $\beta$ is not known, rigourously proved bounds are $\beta\in(1,\frac{5}{3}]$, see \cite{LLERW}. Numerical estimates suggest that $\beta=1.624\dots$, see \cite{Wilest}. We remark that in two dimensions the corresponding exponent is $5/4$, first proved by Kenyon \cite{Kenyon}, and in dimension 4 or more its value is $2$. In three dimensions there is no conjecture for an exact value of $\beta$.

The exponent $\beta$ determines the scaling of $\dU$. Specifically, let $\gls{boldPd}$ be the law of the measured, rooted spatial tree
\begin{equation}\label{uddef}
  \left(\U, \delta^\beta \dU, \delta^3 \mU, \delta\pU, \rU \right),
\end{equation}
when $\U$ has law $\bP$.
For the rooted measured metric space $(\U,\dU,\mU,\rho)$ we consider the local Gromov-Hausdorff-Prohorov topology. This is extended with the locally uniform topology for the embedding $\pU$. As a straightforward consequence of our tightness and scaling results with respect to this Gromov-Hausdorff-type topology, we also obtain the corresponding conclusions with respect to Schramm's path ensemble topology. The latter topology was introduced in \cite{Schramm} as an approach to taking scaling limits of two-dimensional spanning trees. Roughly speaking this topology observes the set of all macroscopic paths in an object, in the Hausdorff topology. See Section \ref{topsec} for detailed definitions of these topologies.

\begin{thm}\label{mainthm1}
  The collection $(\bP_\delta)_{\delta\in(0,1]}$ is tight with respect to the local Gromov-Hausdorff-Prohorov topology with locally uniform topology for the embedding, and with respect to the path ensemble topology.  Moreover the limit of $\bP_\delta$ exists as $\delta=2^{-n}\to0$ exists in both topologies.
\end{thm}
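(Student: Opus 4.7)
My plan is to follow the two-dimensional strategy of \cite{BCroyK}, substituting the three-dimensional LERW scaling-limit inputs of \cite{Kozma,LS,S} wherever \cite{BCroyK} invoked their two-dimensional counterparts. The argument splits naturally into three layers: convergence of finite subtrees via Wilson's algorithm, approximation of the full tree by a finite subtree, and transfer from the Gromov--Hausdorff--Prohorov-plus-embedding topology to the path-ensemble topology.

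First, fix $x_{1},\dots,x_{k}\in \R^{3}$ and consider the subtree of $\U$ spanned by $\rU$ together with the vertices nearest to $\delta^{-1}x_{1},\dots,\delta^{-1}x_{k}$, carrying the rescaled metric $\delta^{\beta}\dU$, measure $\delta^{3}\mU$, and embedding $\delta\pU$. By Wilson's algorithm \cite{Wilson}, this finite subtree is built by iteratively concatenating loop-erased random walks that run from each seed until they hit the portion of the tree already constructed. Under the rescaling $(\delta,\delta^{\beta})$ in space and time, each such LERW (with its natural parametrisation) is tight for $\delta\in(0,1]$ and converges along the dyadic subsequence $\delta=2^{-n}$ by the main theorem of \cite{LS}. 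A concatenation-continuity argument in the parametrised-curve space, combined with the joint convergence of each walk and its hitting point, then upgrades this to tightness (respectively, convergence along $\delta=2^{-n}$) for the $k$-leaf subtree viewed as a measured, embedded, rooted tree.

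Next, I would approximate the full rescaled tree restricted to a Euclidean ball of radius $R$ by such a $k$-leaf subtree, taking the seeds to be an intensity-$\lambda$ Poisson point process in that ball and letting $\lambda \to \infty$ after $\delta \to 0$. By Wilson's algorithm, every vertex of $\U$ inside the ball that is not already on the $k$-leaf subtree lies on a LERW branch joining itself to the subtree, so its intrinsic and Euclidean distances to the subtree are controlled by the duration and spatial diameter of that branch respectively. The one-point, escape-probability, and Green's function estimates for three-dimensional LERW from \cite{Escape,S} then provide polynomial tail bounds on both quantities, and a union bound over candidate seeds shows that, uniformly in $\delta$, the GHP-plus-embedding distance between the $k$-leaf subtree and the restriction of the full rescaled tree to the ball tends to zero in probability as $\lambda\to\infty$. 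Combined with the first step, this yields tightness of $(\bP_{\delta})_{\delta\in(0,1]}$, and, because the finite subtrees already converge along $\delta=2^{-n}$, convergence of $\bP_{\delta}$ along the same subsequence.

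Finally, convergence in the path-ensemble topology follows from the GHP-plus-embedding statement by a soft argument: any macroscopic tree geodesic between two spatial points is a continuous functional of $(\dU,\pU)$ on the approximating finite subtree, so Hausdorff-type tightness of the path ensemble is inherited from the measured-metric-space tightness together with uniform equicontinuity of the embedding. The principal technical obstacle I foresee is the uniform-in-$\delta$ tail control required in the second step: the bound on the intrinsic length of a 3D LERW conditioned on its Euclidean extent must be sharp enough to survive a union bound over an $O(\delta^{-3})$-sized mesh of candidate seeds. Establishing this, via moment estimates on the LERW step count built on \cite{S}, in the spirit of the two-dimensional analogues in \cite{BCroyK} and related works, will be the technical heart of the proof.
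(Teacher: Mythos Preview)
Your overall architecture matches the paper's: finite-dimensional convergence via Wilson's algorithm plus an approximation of the full tree by finite subtrees, with path-ensemble convergence deduced softly from the Gromov--Hausdorff--type statement. However, two of your intermediate steps contain genuine gaps.

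First, in the approximation step you write that ``the one-point, escape-probability, and Green's function estimates for three-dimensional LERW from \cite{Escape,S} then provide polynomial tail bounds'' on the branch lengths in Wilson's algorithm. This is where the three-dimensional case diverges sharply from \cite{BCroyK}. In two dimensions, Beurling's estimate guarantees that a simple random walk started near \emph{any} connected set hits it quickly, so the new Wilson branch is automatically short. In three dimensions there is no Beurling estimate: a random walk can avoid an arbitrary path of diameter $r$ with probability bounded away from zero. What saves the argument is that the previously constructed subtree is not arbitrary but is built from LERW paths, and \cite{SS} proves that a \emph{typical} LERW path is ``hittable'' in a quantitative sense (Proposition~\ref{result:hit} in the paper). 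The references you cite, \cite{Escape,S}, give length estimates for an LERW crossing a given Euclidean scale, but they do not control the hitting time of a random walk on the already-built subtree; without \cite{SS} your union bound cannot close. Relatedly, a single Poisson cloud of seeds at one scale is not enough: the paper runs Wilson's algorithm over a sequence of nets at geometrically decreasing mesh, using hittability of the tree built at the previous scale to control the branches at the next (see the proofs of Propositions~\ref{1st-assump} and~\ref{last-step}).

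Second, in the finite-subtree step you assert that each successive LERW in Wilson's algorithm converges by \cite{LS}, and that a ``concatenation-continuity argument'' handles the joint limit. The difficulty is that after the first branch, the LERW runs in the complement of the earlier branches, which is a domain with fractal boundary. The scaling-limit results of \cite{Kozma,LS} are proved only for balls, the full space, or polyhedral domains. The paper addresses this (Section~\ref{finitesec}) by approximating the previous branches with dyadic polyhedra, proving a version of \cite{LS} on such polyhedra, and then showing via the hittability and quasi-loop estimates of \cite{SS} that the LERW stopped on the polyhedral approximation is close, as a parametrised curve, to the LERW stopped on the true subtree. Your proposal does not acknowledge this obstacle, and without it the inductive step in the finite-dimensional convergence does not go through.
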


\begin{rem}\label{extension}
  The reason that we only state convergence along the subsequence $(2^{-n})_{n\geq 0}$ stems from the fact that our argument fundamentally depends on the one-point function estimates for three-dimensional LERW from \cite{Escape}. Indeed, although the same subsequential restriction was also present in Kozma's original work on the scaling of three-dimensional LERW \cite{Kozma}, (on which we also rely heavily,) as was helpfully explained to us by a referee, this restriction can be removed by a slight rearrangement of the argument of \cite{Kozma}. That the latter is the case should allow the extension of the scaling limit of Theorem \ref{mainthm1} to an arbitrary sequence of $\delta$s with respect to the path ensemble topology. However, we choose to not include this here since the deficiency with respect to the Gromov-Hausdorff-type topology remains. We highlight that there is no reason to believe that taking a certain subsequence of $\delta$s is an essential requirement, and if one could extend Li and Shiraishi's work from \cite{Escape} to arbitrary sequence of $\delta$s, then the corresponding extension of our result would also follow.
\end{rem}

\begin{rem}
  An important open problem, for both the LERW and UST in three dimensions, is to describe the limiting object directly in the continuum.
  In two dimensions, there are connections between the LERW and SLE$_2$, as well as between the UST and SLE$_8$, see \cite{HS,LSW,Schramm}, which give a direct construction of the continuous objects.
  In the three-dimensional case, there is as yet no parallel theory.
  The development of such a representation would be a significant advance in three-dimensional statistical mechanics.
\end{rem}

Before continuing, we briefly outline the strategy of proof for the convergence part of the above result, for which there are two main elements. The first of these is a finite-dimensional convergence statement: Theorem \ref{thm:convergecetrees} states that the part of $\U$ spanning a finite collection of points converges under rescaling. Appealing to Wilson's algorithm \cite{Wilson}, which gives the means to construct $\U$ from LERW paths, this finite-dimensional result extends the scaling result for the three-dimensional LERW of \cite{LS}. Here we encounter a central hurdle: after the first walk, Wilson's algorithm requires us to take a LERW in an rough subdomain of $\Z^3$, namely the complement of the previous LERWs. Existing results in \cite{Kozma,LS} on scaling limits of LERWs require subdomains with smooth boundary, and some care is needed to extend the existence of the scaling limit. We resolve this difficulty by proving that we can approximate the rough subdomain with a simpler one, and showing the corresponding LERWs are close to each other as parametrized curves.

Secondly, to prove tightness, we need to check that the trees spanning a finite collection of points give a sufficiently good approximation of the entire UST $\U$, once the number of points is large. For this, we need to know that LERWs started from the remaining lattice points hit the trees spanning a finite collection of points quickly. In two dimensions, such a property was established using Beurling's estimate, which says that a simple random walk hits any given path quickly if it starts close to it in Euclidean terms, see \cite{Kesten1987}. In three dimensions, Beurling's estimate does not hold. In its place, we have a result from \cite{SS}, which yields that a simple random walk hits a typical LERW path quickly if it starts close to it. Thus, although the intuition in the three-dimensional case is similar, it requires us to remember much more about the structure of the part of the UST we have already constructed as Wilson's algorithm proceeds.

\subsection{Properties of the scaling limit}

While uniqueness of the scaling limit is as yet unproved, the techniques we use to establish Theorem \ref{mainthm1} allow us to deduce some properties of any possible scaling limit. These are collected below. NB.\ For the result, the scaling limits we consider are with respect to the Gromov-Hausdorff-type topology on the space of measured, rooted spatial trees, see Section \ref{topsec} below. The one-endedness of the limiting space matches the corresponding result in the discrete case, \cite[Theorem 4.3]{Pemantle}. We use $\gls{B-T}$ to denote the ball in the limiting metric space $\gls{Tlimit} =(\T,d_\T)$ of radius $r$ around $x$.
 It is natural to expect that the scaling limit will have dimension
\[\gls{hausdorffDimT} := \frac{3}{\beta}.\]
Moreover, one would expect that a ball of radius $r$ in the limiting object has measure of order $r^{3/\beta}$. The following theorem establishes uniform bounds of this magnitude for all small balls in the limiting tree, with a logarithmic correction for arbitrary centres and with iterated logarithmic corrections for a fixed centre, which may be fixed to be $\rho$. We use $f \preceq g$ to denote that $f\leq Cg$ for some absolute (i.e.\ deterministic, and not depending on the particular subsequence) constant $C$. We denote by $\gamma_\T(x,y)$ the path in the topological tree $\T$ between points $x$ and $y$. We write $\mathcal{L}$ to represent Lebesgue measure on $\R^3$. The definition of the `Schramm distance' below is inspired by \cite[Remark 10.15]{Schramm}.

\begin{thm}\label{mainthm2}
Let $\gls{Plimit}$ be a subsequential limit of $\bP_\delta$ as $\delta\to0$, and the random  measured, rooted spatial tree $(\T,d_\T,\mu_\T,\phi_\T,\rho_\T)$ have law $\hat\bP$. Then the following statements hold $\hat\bP$-a.s.
  \begin{enumerate}[nosep,label=(\alph*)]
  \item The tree $\T$ is one-ended (with respect to the topology induced by the metric $d_\T$).
  \item Every open ball in $(\T,d_\T)$ has Hausdorff dimension $d_f$.
  \item There exists an absolute constant $C<\infty$ so that: for any $R>0$, there exists a random $r_0(\T)>0$ such that
    \[r^{d_f}(\log r^{-1})^{-C}
      \preceq \inf_{x\in B_\T(\rho,R)}\mu_{\T}\left(B_\T(x,r)\right)\leq\sup_{x\in B_\T(\rho,R)}\mu_{\T}\left(B_\T(x,r)\right)
      \preceq r^{d_f}(\log r^{-1})^{C},\]
      for all $r<r_0$.
  \item For some absolute $C<\infty$, there exists a random $r_0(\T)>0$ such that
    \[r^{d_f}(\log\log r^{-1})^{-C}
      \preceq \mu_\T\left(B_\T(\rho,r)\right)
      \preceq r^{d_f}(\log\log r^{-1})^{C},\qquad\forall r<r_0.\]
  \item The metric $d$ is equivalent to the `Schramm metric' $d_\T^S$ on $\T$,
    defined by
    \begin{equation}\label{dtschramm}
      d_\T^S(x,y) := \diam\left(\phi_\T(\gamma_\T(x,y))\right),
    \end{equation}
    where $\diam$ is the diameter in the Euclidean metric.
  \item $\mu_\mathcal{T}=\mathcal{L}\circ\phi_\mathcal{T}$.
  \end{enumerate}
\end{thm}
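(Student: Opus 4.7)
The plan is to derive part (f) as a direct consequence of the scaling convergence of Theorem \ref{mainthm1}, using the fact that $\U$ is a spanning tree of $\Z^3$. The key discrete input is that since $\U$ contains every vertex of $\Z^3$ and $\phi_\U$ restricts to the identity on vertices, the pushforward $(\delta\phi_\U)_\ast(\delta^3\mu_\U)$ equals $\delta^3$ times the counting measure on $\delta\Z^3$, up to an $O(\delta)$ contribution from the interpolating edges that is negligible in the scaling limit. A standard Riemann-sum argument shows that this discrete measure converges vaguely to Lebesgue measure $\mathcal{L}$ on $\R^3$ as $\delta \to 0$.

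To transport this to the limit, I would use the Gromov-Hausdorff-Prohorov topology augmented with the locally uniform topology for the embedding (Section \ref{topsec}), which is designed precisely so that convergence of $\bP_\delta$ to $\hat{\bP}$ implies vague convergence on $\R^3$ of the pushforwards of the rescaled measures via the rescaled embeddings. Working on a Skorokhod representation space and combining with the previous step, this yields $\hat{\bP}$-almost surely that $(\phi_\T)_\ast \mu_\T = \mathcal{L}$, which is the pushforward form of the claim.

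To promote this pushforward identity to the stated equality $\mu_\T(A) = \mathcal{L}(\phi_\T(A))$ for Borel $A \subset \T$, one needs $\phi_\T$ to be essentially injective on the support of $\mu_\T$: under injectivity one has $\phi_\T^{-1}(\phi_\T(A)) = A$, so that $\mu_\T(A) = \mu_\T(\phi_\T^{-1}(\phi_\T(A))) = (\phi_\T)_\ast\mu_\T(\phi_\T(A)) = \mathcal{L}(\phi_\T(A))$. Part (c) rules out $\mu_\T$-atoms since $\mu_\T(\{x\}) \leq \mu_\T(B_\T(x,r)) \preceq r^{d_f}(\log r^{-1})^C \to 0$, but to exclude positive-measure collapsing by $\phi_\T$ one needs more. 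This is the main obstacle: I would attack it by a Fubini argument on $\hat{\bP} \otimes \mathcal{L}$, using that the pushforward identity forces $\mathcal{L}$-a.e.\ fibre $\phi_\T^{-1}(y)$ to carry zero $\mu_\T$-mass, and then invoke the uniform ball bounds of (c) (together with the fact that the embedding of a small intrinsic ball necessarily occupies a Euclidean region of the matching Lebesgue mass) to preclude any accumulation of non-trivial fibres on a $\mu_\T$-positive set. The first two paragraphs are mostly bookkeeping once Theorem \ref{mainthm1} and the topology of Section \ref{topsec} are in hand; the injectivity-type step is where the genuine work lies.
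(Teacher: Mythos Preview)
Your first two paragraphs correctly capture the argument for part (f): the pushforward $(\phi_\T)_*\mu_\T=\mathcal{L}$ follows from the scaling convergence because the discrete pushforward $(\delta\phi_\U)_*(\delta^3\mu_\U)$ is (up to the negligible edge contribution) the scaled counting measure on $\delta\Z^3$, which converges vaguely to Lebesgue. This is exactly what the paper does --- it defers to \cite[Lemma~5.2]{BCroyK}, with the coupling of Lemma~\ref{couplem} replacing your Skorokhod representation. One technical point you gloss over is that the convergence in the topology of Section~\ref{topsec} is localized to \emph{intrinsic} balls $\BU(0,\delta^{-\beta}R)$, whereas the pushforward concerns Euclidean regions; the paper invokes Proposition~\ref{2-4-1} precisely to pass between the two, and you should too.

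Your third paragraph, however, rests on a misreading and contains a genuine gap. The identity $\mu_\T=\mathcal{L}\circ\phi_\T$ here is the pushforward statement $(\phi_\T)_*\mu_\T=\mathcal{L}$, not the image-measure identity $\mu_\T(A)=\mathcal{L}(\phi_\T(A))$ for all Borel $A\subset\T$; the latter would require $\phi_\T$ to be essentially injective, and the paper explicitly flags (in the ``Differences from the two-dimensional case'' discussion) that injectivity of $\phi_\T$ on the trunk is \emph{not} expected in three dimensions and is deferred to future work. So the upgrade you attempt is neither claimed nor needed. Moreover, even if you wished to prove it, your Fubini sketch fails: the pushforward identity gives $\mu_\T(\phi_\T^{-1}(\{y\}))=\mathcal{L}(\{y\})=0$ for every $y$, but zero-mass fibres are perfectly compatible with $\phi_\T$ being, say, everywhere two-to-one --- take $\T=[0,2]$ with half-Lebesgue measure, $\phi_\T(x)=x\bmod 1$, and observe that $\mu_\T([0,1])=\tfrac12$ while $\mathcal{L}(\phi_\T([0,1]))=1$. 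The volume bounds of (c) do not obviously rescue this. Finally, note that your proposal treats only part (f); parts (a)--(e) each require their own arguments (Lemma~\ref{mendo-i} for (a), Theorems~\ref{2nd-goal} and~\ref{darui-1} for (c) and (d), Lemma~\ref{darudaru} for (e)).
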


\begin{rem}\label{volrem}
To establish parts (c) and (d) of Theorem \ref{mainthm2}, we need to extend some of the estimates applied in the proof of Theorem \ref{mainthm1}. In particular, in Section \ref{sec:assump}, we derive certain probabilistic volume bounds of a polynomial form, which are what we require for our proof of tightness of $\mathcal{U}$ with respect to the Gromov-Hausdorff-type topology of interest. However, to obtain the above logarithmic/loglogarithmic error terms, we need to sharpen such volume bounds to exponential ones. By suitably extending the argument of Section \ref{sec:assump}, this is done in Sections \ref{sec: exp lower volume} and \ref{sec:expupper}. (See Theorems \ref{2nd-goal} and \ref{darui-1} for precise results in this direction.)
\end{rem}

\subsubsection{Differences from the two-dimensional case}

Analogues for the properties described in Theorem \ref{mainthm2} (and others) were proved in the two-dimensional case in \cite{BCroyK}, see also the related earlier work \cite{Schramm}. There are, however, several notable differences in three dimensions. Following Schramm \cite{Schramm}, consider the \textbf{trunk} of the tree $\T$, denoted $\T^\circ$, which is the set of all points of $\T$ of degree greater than $1$, where the degree of $x$ is the number of connected components of $\T\setminus\{x\}$. In the two-dimensional case, it is known that the restriction of the continuous map $\phi_\T$ to the trunk is a homeomorphism between $\T^\circ$ (equipped with the induced topology from $\T$) and its image $\phi_\T(\T^\circ)$ (equipped with the induced Euclidean topology). Thus the image of the trunk, which is dense in $\R^2$, determines its topology. We do not expect the same to be true in three-dimensions. Indeed, due to the greater probability that three LERWs started from adjacent points on the integer lattice escape to a macroscopic distance before colliding, we expect that the image of the trunk $\phi_\T(\T^\circ)$ is no longer a topological tree in $\R^3$, see Figure \ref{fig2}. We aim to establish this as a result in a forthcoming work.
%DC - We can eventually upgrade "aim" to "will" - or do you want to do this now! Anyway, we can confirm this later once the results of the second paper are clear! [And add a citation to that here...]

\begin{figure}[!ht]
\begin{center}
  \includegraphics[width=0.5\textwidth]{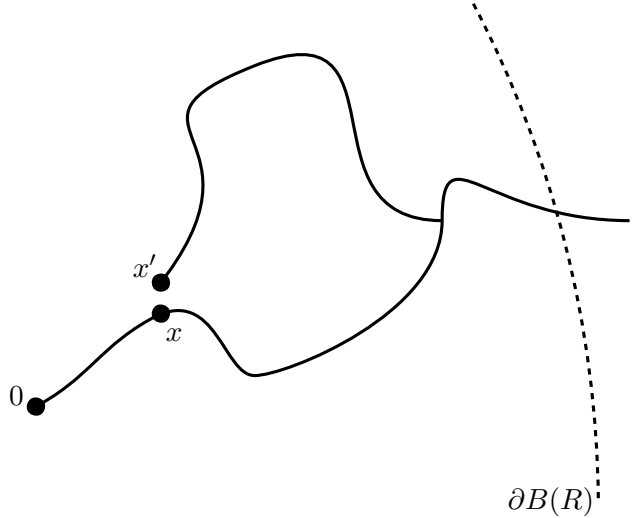}
  \rput(-234pt,40pt){$0$}
  \rput(-186pt,89pt){$x'$}
  \rput(-175pt,63pt){$x$}
  \rput(-34pt,0pt){$\partial B(R)$}
\end{center}
\caption{In the above sketch, $|x-x'|=1$, but the path in the UST between these points has Euclidean diameter greater than $R/3$. We expect that such pairs of points occur with positive probability, uniformly in $R$.}\label{fig2}
\end{figure}

Secondly, for the two-dimensional UST, it was shown in \cite{BCroyK} that the maximal degree in $\T$ is $3$, and that $\mu_\T$ is supported on the leaves of $\T$, i.e.\ the set of points of degree 1. We can show that the same is true in three dimensions, though we also postpone these results to a separate paper, since they are significantly harder than in two dimensional case.
%DC - add citation later
Indeed, as well as appealing to the homeomorphism between the trunk and its embedding, the two-dimensional arguments in the literature depend on a duality argument that does not extend to three dimensions. We replace this with a more technical direct argument. The aforementioned homeomorphism and duality also allow it to be shown that in two dimensions $\max_{x\in\R^3}|\phi^{-1}(x)|=3$ (where we write $|A|$ to represent the cardinality of a set $A$), and, although not mentioned explicitly in \cite{BCroyK,Schramm}, it is also easy to deduce the Hausdorff dimension of the set of points with given pre-image size. Our forthcoming work will explore the corresponding results in the three dimensional case.

\subsection{Scaling the random walk on $\U$}

The metric-measure scaling of $\U$ yields various consequences for the associated simple random walk (SRW), which we next introduce. For a given realisation of the graph $\sU$, the SRW on $\sU$ is the discrete time Markov process $\gls{srwU}=((X^\U_n)_{n\geq0},({P}_x^\sU)_{x\in \Z^3})$ which at each time step jumps from its current location to a uniformly chosen neighbour in $\sU$. For $x\in \Z^3$, the law $\gls{quenchedP}$ is called the \textbf{quenched} law of the simple random walk on $\sU$ started at $x$. We then define the \textbf{annealed} or \textbf{averaged} law for the process started from $\rU$ as the semi-direct product of the environment law $\bP$ and the quenched law ${P}_0^\sU$ by setting
\[\gls{annealedP} \left(\cdot\right):=\int P_0^\sU(\cdot) d\bP.\]
We use $\E^\U$ for the corresponding quenched expectation.

The behaviour of the random walk on a graph is fundamentally linked to the associated electrical resistance. We refer the reader to \cite{Barlowbook,DS,LPW,LP} for introductions to this connection, including the definition of effective resistance in particular. For the three-dimensional UST, we will write $\gls{resistanceU}$ for the effective resistance on $\U$, considered as an electrical network with unit resistors placed along each edge.

As noted above, the typical measure of $B_\U(\rho,R)$ is of order $R^{d_f}$.
We show below that the effective resistance to the complement of the ball is typically of order $R$ (it is trivially at most $R$).
In light of these, and following \cite{KM}, we define the set of well-behaved scales with parameter $\lambda$ by
\[J(\lambda) := \left\{R\in[1,\infty):\:
    R^{-d_f} \mU\left(\BU(\rho,R)\right)\in [\lambda^{-1}, \lambda]
    \text{ and } R_\U\left(\rho,\BU(\rho,R)^c\right)\geq \lambda^{-1}R\right\}.\]
In particular, for $R$ to be in $J(\lambda)$, we require good control over the volume of the intrinsic ball centred at the root of $\U$ of radius $R$, and control over the resistance from the root to the boundary of this ball.
As our next result, we show that the these events hold with high probability, uniformly in $R$. (In particular, this adds the resistance estimate to the volume bounds discussed in Remark \ref{volrem}.)

\begin{thm}\label{mainthm3}
There exist constants $c, c_1,c_2\in(0,\infty)$ such that: for all $R,\lambda\geq 1$,
\[\bP\left(R\in J(\lambda)\right) \geq 1-c e^{-c_1\lambda^{c_2}}.\]
\end{thm}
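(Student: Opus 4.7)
The plan is to decompose the event $\{R\in J(\lambda)\}$ into volume and resistance sub-events and bound the failure probabilities separately. The volume component,
\[\bP\bigl(R^{-d_f}\mu_\U(B_\U(\rho,R)) \notin [\lambda^{-1},\lambda]\bigr) \leq c\exp(-c_1\lambda^{c_2}),\]
follows immediately from the sharp exponential volume estimates of Theorems \ref{2nd-goal} and \ref{darui-1} (see Remark \ref{volrem}). The bulk of the proof therefore reduces to establishing the resistance bound
\[\bP\bigl(R_\U(\rho, B_\U(\rho,R)^c) < \lambda^{-1}R\bigr) \leq c\exp(-c_1\lambda^{c_2}).\]

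For this, I would exploit the almost-sure one-endedness of $\U$ due to Pemantle: there is a unique infinite spine $\gamma=(\gamma_0=\rho,\gamma_1,\gamma_2,\ldots)$, which is a geodesic, and every other connected component of $\U\setminus\gamma$ is a finite subtree attached at some $\gamma_k$. For $k=1,\ldots,R$, let $\hat{E}_k$ be the set of edges $(u,v)\in\U$ with $d_\U(\rho,u)=k-1$ and $d_\U(\rho,v)=k$ such that the branch of $\U$ beyond $v$ meets $B_\U(\rho,R)^c$. The $\hat{E}_k$ form disjoint cutsets separating $\rho$ from $B_\U(\rho,R)^c$, so by the Nash--Williams inequality followed by AM--HM,
\[R_\U(\rho, B_\U(\rho,R)^c) \geq \sum_{k=1}^R \frac{1}{|\hat{E}_k|} \geq \frac{R^2}{S_R}, \qquad S_R := \sum_{k=1}^R |\hat{E}_k|.\]
It is thus enough to show $\bP(S_R > \lambda R) \leq c\exp(-c_1\lambda^{c_2})$. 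The spine contributes exactly $R$ edges to $S_R$, while an off-spine subtree attached at $\gamma_k$ contributes only if it reaches intrinsic depth greater than $R-k$ from $\gamma_k$. Standard bounds on the geometry of LERW excursions, combined with the upper volume estimates, should show that each such deep subtree contributes $O(R-k)$ edges to $S_R$, whence $S_R \leq CR(1+M_R)$ on a high-probability event, where $M_R$ counts the off-spine subtrees reaching $B_\U(\rho,R)^c$. The task reduces to the exponential tail bound
\[\bP(M_R > \lambda) \leq c\exp(-c_1\lambda^{c_2}).\]

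This tail bound is the principal obstacle. Conditionally on $\gamma$, Wilson's algorithm realises the off-spine subtrees as LERWs from the remaining vertices of $\Z^3$ stopped upon hitting $\gamma$, so $\{M_R > \lambda\}$ corresponds to at least $\lambda$ such excursions being anomalously long relative to their attachment level. Controlling this requires three ingredients in combination: the LERW length tail estimates of \cite{Escape, S}; the substitute for Beurling's estimate from \cite{SS}, which bounds the time for a simple random walk to hit a typical LERW path; and an iterative conditional-revealing scheme for $\U$ analogous to the one developed in Sections \ref{sec: exp lower volume} and \ref{sec:expupper} for the volume estimates, but now tracking the depths of LERW excursions rather than their mass. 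The delicate point is the strong dependence between excursions at different spine vertices, which must be handled by revealing $\gamma$ and the subtrees in a careful order so as to preserve enough Markov structure of the LERW excursions for the per-excursion tail estimates to be applied and summed with the correct stretched-exponential rate.
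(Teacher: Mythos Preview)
Your resistance argument is far more elaborate than necessary and leaves its main step unproved. The paper obtains the resistance lower bound directly from the volume estimates already established, with no separate analysis of off-spine branching. The key input is a standard covering lemma for trees (see \cite[Lemma 2.4]{BCKum} or \cite[Lemma 4.1]{Kumres}): if $B_\U(0,R)$ can be covered by $N$ intrinsic balls of radius $R/4$, then $R_\U(0,B_\U(0,R)^c)\geq R/N$. On the event
\[\left\{\inf_{x\in B_\U(0,R)}\mu_\U\bigl(B_\U(x,R/8)\bigr)\geq\lambda^{-1}R^{d_f},\quad \mu_\U\bigl(B_\U(0,2R)\bigr)\leq\lambda R^{d_f}\right\},\]
a disjoint-packing argument produces such a cover with $N\leq\lambda^2$, whence $R_\U(0,B_\U(0,R)^c)\geq R/\lambda^2$. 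The probability of this event is controlled by Theorem~\ref{2nd-goal} and Proposition~\ref{2-4-1}, and after reparametrising $\lambda$ the theorem follows in a few lines.

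Your Nash--Williams route, by contrast, requires an exponential tail for the number $M_R$ of off-spine subtrees reaching $\partial B_\U(\rho,R)$, which you correctly flag as the principal obstacle and do not prove. The preceding reduction is also incomplete: the claim that each deep off-spine subtree contributes only $O(R-k)$ edges to $S_R$ is unjustified, since such a subtree may itself branch and send several arms to level $R$, so the bound $S_R\leq CR(1+M_R)$ already hides a recursive instance of the same difficulty. The covering argument sidesteps all of this by recognising that, on a tree, a resistance lower bound is a consequence of two-sided volume control; no new iterative revealing scheme is needed.
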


The motivation for Theorem \ref{mainthm3} is provided by the general random walk estimates presented by Kumagai and Misumi in \cite{KM}. (Which builds on the work \cite{BJKS}.)
More specifically, Theorem \ref{mainthm3} establishes the conditions for the main results of \cite{KM}, which yield several important exponents governing aspects of the behaviour of the random walk.
Indeed, as is made precise in the following corollary, we obtain that the walk dimension with respect to intrinsic distance is given by
\[d_w:=1+d_f=\frac{3+\beta}{\beta},\]
the walk dimension with respect to extrinsic (Euclidean) distance $d_E$ is given by $\beta d_w=3+\beta$ (this requires a small amount of additional work to the tools of \cite{KM}), and the spectral dimension is given by
\begin{equation}\label{dsdef}
d_s:=\frac{2d_f}{d_w}=\frac{6}{3+\beta}.
\end{equation}
Various further consequences for the random walk on $\U$ also follow from the results of \cite{KM}, but rather than simply list these here, we refer the interested reader to that article for details. Table \ref{exponenttable} summarises the numerical estimates for the three-dimensional random walk exponents that follow from the above formulae, together with the numerical estimate for $\beta$ from \cite{Wilest}, and compares these with the known exponents in the two-dimensional model.

\begin{table}[t]
\begin{center}
\begin{tabular}{r|c|c|c}
  & General form & $d=2$ & $d=3$\\
  \hline
  LERW growth exponent & $\beta$ & 5/4 = 1.25 & 1.62\\
  Fractal dimension of $\sU$ & $d_f=d/\beta$ & 8/5 = 1.60 & 1.85 \\
  Intrinsic walk dimension of $\sU$ & $d_w=1+d_f$ & 13/5 = 2.60 & 2.85\\
  Extrinsic walk dimension of $\sU$ & $\beta d_w$ & 13/4 = 3.25 & 4.62\\
  Spectral dimension of $\sU$ & $2d_f/d_w$ & 16/13 = 1.23 & 1.30\\
\end{tabular}
\end{center}
\caption{Exponents associated with the LERW and UST in two and three dimensions. The two-dimensional exponents are known rigourously from \cite{BCroyK2, BCroyK, BM, Kenyon}. The three-dimensional values are based on the results of this study, together with the numerical estimate for the growth exponent of the three-dimensional LERW from \cite{Wilest}.}
\label{exponenttable}
\end{table}

{\cor\label{corsrxexp} (a) For ${\bP}$-a.e.~realisation of $\U$ and all $x\in\sU$,
\begin{equation}\label{star1}
\lim_{R\rightarrow \infty}\frac{\log E_x^\sU \tau^\sU_{x,R}}{\log R}=d_w,
\end{equation}
where $\tau^\sU_{x,R}:=\inf\{n\geq 0:\:d_\sU(x,X^\sU_n)>R\}$,
\begin{equation}\label{star2}
\lim_{R\rightarrow \infty}\frac{\log E_x^\sU \tau^E_{x,R}}{\log R}=\beta d_w,
\end{equation}
where $\tau^E_{x,R}:=\inf\{n\geq 0:\:d_E(x,X^\sU_n)>R\}$, and
\begin{equation}\label{specdim}
-\lim_{n\rightarrow \infty}\frac{2\log p^\mathcal{U}_{2n}(x,x)}{\log n}=d_s.
\end{equation}
(b) For ${\mathbb{P}}^{\sU}$-a.e.~realisation of $X^\U$,
\begin{equation}\label{star3}
\lim_{R\rightarrow \infty}\frac{\log \tau^\sU_{0,R}}{\log R}=d_w,\qquad
\lim_{n\rightarrow \infty}\frac{\log \max_{0\leq m\leq n}d_\sU(0,X^\sU_m)}{\log n}=\frac{1}{d_w},
\end{equation}
\begin{equation}\label{star4}
\lim_{R\rightarrow \infty}\frac{\log\tau^E_{0,R}}{\log R}=\beta d_w,\qquad
\lim_{n\rightarrow \infty}\frac{\log \max_{0\leq m\leq n}d_E(0,X^\sU_m)}{\log n}=\frac{1}{\beta d_w}.
\end{equation}
(c) It holds that
\begin{equation}\label{star5}
\lim_{R\rightarrow \infty}\frac{\log \mathbb{E}^{\sU}\left( \tau^\sU_{0,R}\right)}{\log R}=d_w,
\end{equation}
\begin{equation}\label{star6}
\lim_{R\rightarrow \infty}\frac{\log \mathbb{E}^{\sU}\left( \tau^E_{0,R}\right)}{\log R}=\beta d_w,
\end{equation}
where $\mathbb{E}^{\sU}$ is the expectation under $\mathbb{P}^{\sU}$, and
\begin{equation}\label{star7}
-\lim_{n\rightarrow \infty}\frac{2\log \mathbf{E}\left( p^\mathcal{U}_{2n}(0,0)\right)}{\log n}=d_s.
\end{equation}}

\begin{rem} In part (c) of the previous result, we do not provide averaged results for the distance travelled by the process up to time $n$ with respect to either the intrinsic or extrinsic metrics. In the two-dimensional case, the corresponding results were established in \cite{BCroyK2}, with the additional input being full off-diagonal annealed heat kernel estimates. Since the latter require a substantial amount of additional work, we leave deriving such as an open problem.
\end{rem}

Finally, it is by now well-understood how scaling limits of discrete trees transfer to scaling limits for the associated random walks on the trees, see \cite{ALW, BCroyK, CAIHP, Crange, Csl, Cres}.
We apply these techniques in our setting to deduce a (subsequential) scaling limit for $X^\U$.
As we will explain in Section \ref{srwsec}, the limiting process can be written as $(\phi_\T(X^\T_t))_{t\geq 0}$, where $((X^\T_t)_{t\geq 0},(P^\T_x)_{x\in\T})$ is the canonical Brownian motion on the limit space $(\T, d_\T ,\mu_\T )$.
This Brownian motion is constructed in \cite{Kden,AEW}.
Moreover, the volume estimates of Theorem \ref{mainthm2}, in conjunction with the general heat kernel estimates of \cite{Cest}, yield sub-diffusive transition density bounds for the limiting diffusion. Modulo the different exponents, these are of the same sub-Gaussian form as established for the Brownian continuum random tree in \cite{Cvol}, and for the two-dimensional UST in \cite{BCroyK}. Note in particular that our results imply that the spectral dimension of the continuous model, defined analogously to \eqref{specdim}, is equal to the value $d_s$ given at \eqref{dsdef}.

{\thm \label{mainthm4}
If $(\bP_{\delta_n})_{n\geq 0}$ is a convergent sequence with limit $\hat{\bP}$,
then the following statements hold.
\begin{enumerate}
\item[(a)] The annealed law of $(\phi_\mathcal{T}(X^\mathcal{T}_t))_{t\geq 0}$,
where $X^\mathcal{T}$ is Brownian motion on
$(\mathcal{T},d_\mathcal{T},\mu_\mathcal{T})$ started from $\rho_\mathcal{T}$, i.e.~
\[{\mathbb{P}}^\T\left(\cdot\right):=\int P_{\rho_\mathcal{T}}^\mathcal{T}\circ\phi_\mathcal{T}^{-1}(\cdot) d\hat{\bP},\]
is a well-defined probability measure on $C(\R_+,\R^3)$.
\item[(b)] Let $(X^\U_t)_{t\geq 0}$ be the simple random walk on $\U$ started from $\rU$, then the annealed laws of the rescaled processes
\[\left(\delta_n X^\U_{t\delta_n^{-(3+\beta)}}\right)_{t\geq 0}\]
converge to the annealed law of  $(\phi_\mathcal{T}(X^\mathcal{T}_t))_{t\geq 0}$.
\item[(c)] $\hat{\bP}$-a.s., the process $X^\mathcal{T}$ is recurrent and admits a jointly continuous transition density $(p^\mathcal{T}_t(x, y))_{x,y\in\mathcal{T},t>0}$. Moreover, it $\hat{\bP}$-a.s.\ holds that, for any $R >0$, there exist random constants $c_1(\mathcal{T}),c_2(\mathcal{T}),c_3(\mathcal{T}),c_4(\mathcal{T})$ and $t_0(\mathcal{T} ) \in(0,\infty)$ and deterministic constants $\theta_1,\theta_2,\theta_3,\theta_4\in(0,\infty)$ (not depending on $R$) such that
\[p_t^\mathcal{T}(x,y)\leq c_1t^{-d_s/2}\ell(t^{-1})^{\theta_1} \exp\left\{-c_2\left(\frac{d_\mathcal{T}(x,y)^{d_w}}{t}\right)^{\frac{1}{d_w-1}}\ell(d_\mathcal{T}(x,y)/t)^{-\theta_2}\right\},\]
\[p_t^\mathcal{T}(x,y)\geq c_3t^{-d_s/2}\ell(t^{-1})^{-\theta_3} \exp\left\{-c_4\left(\frac{d_\mathcal{T}(x,y)^{d_w}}{t}\right)^{\frac{1}{d_w-1}}\ell(d_\mathcal{T}(x,y)/t)^{\theta_4}\right\},\]
for all $x,y\in B_\mathcal{T}(\rho_\mathcal{T},R)$, $t\in(0,t_0)$, where $\ell(x):=1\vee \log x$.
\item[(d)]
\begin{enumerate}
\item[(i)]
$\hat{\bP}$-a.s., there exists a random $t_0(\mathcal{T}) \in(0,\infty)$ and deterministic $c_1,c_2,\theta_1,\theta_2\in(0,\infty)$ such that
\[c_1t^{-d_s/2}(\log\log t^{-1})^{-\theta_1}
\leq p_t^\mathcal{T}(\rho_\mathcal{T},\rho_\mathcal{T})\leq c_2t^{-d_s/2}(\log\log t^{-1})^{\theta_2},\]
for all $t\in(0,t_0)$.
\item[(i)] There exist constants $c_1,c_2\in(0,\infty)$ such that
\[c_1t^{-d_s/2}
\leq \hat{\mathbf{E}} p_t^\mathcal{T}(\rho_\mathcal{T},\rho_\mathcal{T})\leq c_2t^{-d_s/2},\]
for all $t\in(0,1)$.
\end{enumerate}
\end{enumerate}}

\subsection*{Organization of the paper}

The remainder of the article is organised as follows. In Section \ref{topsec}, we introduce the topologies that provide the framework for Theorem \ref{mainthm1}, and set out three conditions that imply tightness in this topology. Then, in Section \ref{sec:lerw}, we collect together the properties of loop-erased random walks that will be useful for this article. After these preparations, the three tightness conditions are checked in Section \ref{sec:assump}, and the volume estimates contained within this are strengthened in Sections \ref{sec: exp lower volume} and \ref{sec:expupper} in a way that yields more detailed properties concerning the limit space and simple random walk. In Section \ref{finitesec}, we demonstrate our finite-dimensional convergence result for subtrees of $\U$ that span a finite number of points. The various pieces for proving Theorem \ref{mainthm1} are subsequently put together in Section \ref{proofsec}, and the properties of the limiting space are explored in Section \ref{limitsec}, with Theorem \ref{mainthm2} being proved in this part of the article. Finally, Section \ref{srwsec} covers the results relating to the simple random walk and its diffusion scaling limit.

\section{Topological framework}\label{topsec}

In this section, we introduce the Gromov-Hausdorff-type topology on measured, rooted spatial trees with respect to which Theorem \ref{mainthm1} is stated.
This topology is metrizable, and for completeness sake we include a possible metric (see Proposition \ref{deltametric}).
Moreover, we provide a sufficient criterion (Assumptions 1,2, and 3 below) for tightness of a family of measures on measured, rooted spatial trees in the relevant topology (see Lemma \ref{relcompact}).
This will be applied in order to prove tightness under scaling of the three-dimensional UST.
In the first part of the section, we follow closely the presentation of \cite{BCroyK}.

Define $\mathbb{T}$ to be the collection of quintuplets of the form
\[\uT=(\T,d_\T,\mu_{\T},\phi_\T,\rho_\T),\]
where: $(\T,d_\T)$ is a complete and locally compact real tree (for the definition of a real tree, see \cite[Definition 1.1]{rrt}, for example); $\mu_\T$ is a locally finite Borel measure on $(\T,d_\T)$; $\phi_\T$ is a continuous map from $(\T,d_\T)$ into a separable metric space $(M,d_M)$; and $\rho_\T$ is a distinguished vertex in $\T$. (In this article, the image space $(M,d_M)$ we consider is $\R^3$ equipped with the Euclidean distance.) We call such a quintuplet a \textbf{measured, rooted, spatial tree}. We will say that two elements of $\mathbb{T}$, $\uT$ and $\uT'$ say, are equivalent if there exists an isometry $\pi:(\T,d_\T)\rightarrow(\T',d_\T')$ for which $\mu_\T\circ\pi^{-1}=\mu_\T'$, $\phi_\T=\phi_\T'\circ\pi$ and also $\pi(\rho_\T)=\rho_\T'$.

We now introduce a variation on the Gromov-Hausdorff-Prohorov topology on $\mathbb{T}$ that also takes into account the mapping $\phi_\T$.
In order to introduce this topology, we start by recalling from \cite{BCroyK} the metric $\Delta_c$ on $\mathbb{T}_c$, which is the subset of elements of $\mathbb{T}$ such that $(\T,d_\T)$ is compact. In particular, for two elements of $\mathbb{T}_c$, we set $\Delta_c\left(\uT,\uT'\right)$ to be equal to
\begin{equation}\label{deltacdef}
\inf_{\substack{Z,\psi,\psi',\mathcal{C}:\\(\rho_\T,\rho_\T')\in\mathcal{C}}}
\left\{d_P^Z\left(\mu_{\T}\circ\psi^{-1},\mu_{\T}'\circ\psi'^{-1}\right)+
\sup_{(x,x')\in\mathcal{C}}\left(d_Z\left(\psi(x),\psi'(x')\right)+d_M\left(\phi_\T(x),\phi_\T'(x')\right)\right)\right\},
\end{equation}
where the infimum is taken over all metric spaces $Z=(Z,d_Z)$, isometric embeddings $\psi:(\T,d_\T)\rightarrow Z$, $\psi':(\T',d_\T')\rightarrow Z$, and correspondences $\mathcal{C}$ between $\T$ and $\T'$, and we define $d_P^Z$ to be the Prohorov distance between finite Borel measures on $Z$. Note that, by a correspondence $\mathcal{C}$ between $\T$ and $\T'$, we mean a subset of $\T\times \T'$ such that for every $x\in \T$ there exists at least one $x'\in \T'$ such that $(x,x')\in\mathcal{C}$ and conversely for every $x'\in \T'$ there exists at least one $x\in \T$ such that $(x,x')\in \mathcal{C}$.
(Except for the term involving $\phi$ and $\phi'$, this is the usual metric for the Gromov-Hausdorff-Prohorov topology.)

Given the definition of $\Delta_c$ at \eqref{deltacdef}, we then define a pseudo-metric $\Delta$ on $\mathbb{T}$ by setting
\begin{equation}\label{deltadef}
{\Delta\left(\uT,\uT'\right)}:=\int_0^\infty e^{-r}\left(1\wedge
\Delta_c\left(\uT^{(r)},\uT'^{(r)}\right)\right) dr,
\end{equation}
where $\uT^{(r)}$ is obtained by taking the closed ball in $(\T,d_\T)$ of radius $r$ centred at $\rho_\T$, restricting $d_\T$, $\mu_\T$ and $\phi_\T$ to $\T^{(r)}$, and taking $\rho_\T^{(r)}$ to be equal to $\rho_\T$. We have the following result, and it is the corresponding topology that provides the framework for Theorem \ref{mainthm1}.

\begin{prop}[{\cite[Proposition 3.4]{BCroyK}}]
  \label{deltametric}
  The function $\Delta$ defines a metric on the equivalence classes of $\mathbb{T}$. Moreover, the resulting metric space is separable.
\end{prop}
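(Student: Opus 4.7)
Since the proposition is quoted from \cite[Proposition 3.4]{BCroyK}, my plan follows that standard template: first prove that the auxiliary function $\Delta_c$ of \eqref{deltacdef} is a genuine metric on $\mathbb{T}_c$ by adapting the usual argument for the pointed Gromov--Hausdorff--Prohorov metric, with the twist that one must also track a continuous map into the fixed space $(M,d_M)=\R^3$; then upgrade to $\Delta$ via the integration in \eqref{deltadef}; and finally exhibit a countable dense family.

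For $\Delta_c$, non-negativity and symmetry are immediate from the definition. For the triangle inequality, given near-optimal quadruples $(Z_1,\psi_1,\psi'_1,\mathcal{C}_1)$ realising $\Delta_c(\uT,\uT')$ and $(Z_2,\psi'_2,\psi''_2,\mathcal{C}_2)$ realising $\Delta_c(\uT',\uT'')$, I would glue $Z_1$ and $Z_2$ along the common copy $\psi'_1(\T')\simeq\psi'_2(\T')$ to form an ambient space $Z$ in which all three trees embed isometrically with roots matched, and concatenate the correspondences via $\mathcal{C}:=\{(x,x''):\exists\,x'\in\T',\ (x,x')\in\mathcal{C}_1\text{ and }(x',x'')\in\mathcal{C}_2\}$. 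The metric- and Prohorov-distortion terms combine through the triangle inequalities in $Z$; the embedding term only uses the triangle inequality in the fixed space $M$, which is available because all three $\phi$'s map there. For the identity axiom, assuming $\Delta_c(\uT,\uT')=0$, I would extract $\eps_n\to 0$ correspondences and invoke a standard compactness/Arzel\`a--Ascoli argument on correspondences (as in Burago--Burago--Ivanov) to obtain in the limit the graph of a root-preserving isometry $\pi:\T\to\T'$; the vanishing Prohorov term then forces $\mu_\T\circ\pi^{-1}=\mu_{\T'}$, and the vanishing embedding term forces $\phi_\T=\phi_{\T'}\circ\pi$, giving equivalence in the sense of the excerpt.

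Extending from $\Delta_c$ to $\Delta$ is largely formal: the integrand in \eqref{deltadef} is dominated by $e^{-r}$ so $\Delta\leq 1$, while symmetry and the triangle inequality integrate from the corresponding properties of $\Delta_c$ applied to the balls $\uT^{(r)}$ and $\uT'^{(r)}$. For the identity axiom, $\Delta(\uT,\uT')=0$ forces $\Delta_c(\uT^{(r)},\uT'^{(r)})=0$ for Lebesgue-almost every $r$; along a sequence $r_n\uparrow\infty$ of such radii I would inductively extract subsequential limits of the corresponding isometries $\pi^{r_n}$ on the nested compact balls to produce compatible isometries $\pi^{(m)}:\T^{(r_m)}\to\T'^{(r_m)}$ with $\pi^{(m+1)}|_{\T^{(r_m)}}=\pi^{(m)}$, whose union defines a global isometry $\pi:\T\to\T'$ witnessing equivalence.

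For separability, I would take as countable dense family the finite $\R$-trees with rational edge lengths carrying rational-weight atomic measures supported on vertices, whose embedding into $M=\R^3$ sends vertices into a fixed countable dense subset of $\R^3$ and is interpolated linearly along edges, and which are rooted at a distinguished vertex. Density follows by a standard approximation: for any $\uT\in\mathbb{T}$ and any $r>0$, cover $\T^{(r)}$ by a finite $\eps$-net, collapse the tree onto its minimal spanning subtree through the net (this is where local compactness and the real-tree structure are used), approximate $\mu_\T|_{\T^{(r)}}$ in Prohorov distance by a rational-weighted sum of point masses on the net, and approximate $\phi_\T|_{\T^{(r)}}$ (uniformly continuous on the compact $\T^{(r)}$) by a rationalised piecewise-linear map. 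The step I expect to be most delicate is the diagonal patching in the identity axiom for $\Delta$, where mutual consistency of the limit isometries on nested balls must be maintained through successive subsequence extractions; the triangle-inequality gluing, while standard in the tree-only setting, also demands some care to keep the three embeddings into $M$ compatible with the glued correspondence.
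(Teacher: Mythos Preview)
The paper does not give its own proof of this proposition; it simply cites \cite[Proposition 3.4]{BCroyK} and moves on. Your sketch is a faithful reconstruction of the standard argument one expects to find in that reference: verify $\Delta_c$ is a metric on $\mathbb{T}_c$ by the usual gluing-and-correspondence machinery (augmented to track the map into the fixed target $M$), integrate to pass to $\Delta$, and exhibit a countable dense family of finite rational trees with rational measures and piecewise-linear embeddings. The points you flag as delicate---the diagonal patching of isometries across nested balls for the identity axiom of $\Delta$, and keeping the $M$-embedding compatible in the glued triangle-inequality construction---are indeed the places where care is needed, but the approach is correct and standard.
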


\begin{rem}
 For those less familiar with Gromov-Hausdorff-type topologies and their application to the study of random graphs, we highlight that the embeddings of metric spaces into $(Z,d_Z)$ allow the comparison of properties with respect to their intrinsic metrics (which will be the rescaled graph distance on $\mathcal{U}$ in the discrete example considered here), whereas the map into $(M,d_M)$ allows the comparison of extrinsic properties (in our setting, features of $\mathcal{U}$ in Euclidean space). See \cite[Chapter 7]{BBI} for an introduction to the original Gromov-Hausdorff distance between compact metric spaces.
\end{rem}

We next present a criterion for tightness of a sequence of random measured, rooted spatial trees.
This is a probabilistic version of \cite[Lemma 3.5]{BCroyK} (which adds the spatial embedding to the result of \cite[Theorem 2.11]{ADH})
Recall the definition of \textbf{stochastic equicontinuity}:
Suppose for some index set $\mathcal{A}$ there are random metric spaces $(X_i,d_i)$ and random functions $\phi_i:X_i\to M$ for a metric space $(M,d_M)$.
The functions are stochastically equicontinuous if their moduli of continuity converge to 0 uniformly in probability,
i.e. for every $\eps>0$,
\[\lim_{\eta\to0} \sup_{i\in\mathcal{A}} \bP\left(\sup_{\substack{{x,y\in X_i}:\\d_i(x,y)\leq \eta}} d_M\left(\phi_i(x),\phi_i(y)\right)>\eps\right)=0.\]

\begin{lemma}\label{relcompact}
  Suppose $(M,d_M)$ is proper (i.e.~every closed ball in $M$ is compact), and $\rho_M$ is a fixed point in $M$.
  Let $\uT_\delta = (\T_\delta,d_{\T_\delta},\mu_{\T_\delta},\phi_{\T_\delta},\rho_{\T_\delta})$, ${\delta\in\mathcal{A}}$ (where $\mathcal{A}$ is some index set), be a collection of random measured, rooted spatial trees. Moreover, assume that for every $R>0$, the following quantities are tight:
  \begin{enumerate}[nosep,label=(\roman*)]
  \item For every $\eps>0$, the number $N\left(\uT_\delta,R,\eps\right)$ of
    balls of radius $\eps$ required to cover the ball $\T^{(R)}_\delta$,
  \item The measure of the ball: $\mu_{\T_\delta}\left(\T_\delta^{(R)}\right)$;
  \item The distances $d_M\left(\rho_M, \phi_{\T_\delta}\left(\rho_{\T_\delta}\right)\right)$.
  \end{enumerate}
  And additionally the restrictions of $\phi_{\T_\delta}$ to $\T_\delta^{(R)}$ are stochastically equicontinuous.
  Then the laws of $(\uT_\delta)_{\delta\in\mathcal{A}}$, form a tight sequence of probability measures on the space of measured, rooted spatial trees.
\end{lemma}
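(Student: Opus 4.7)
The plan is to reduce tightness in $(\mathbb{T},\Delta)$ to tightness of the $R$-ball restrictions $\uT_\delta^{(R)}$ in the compact-tree space $(\mathbb{T}_c,\Delta_c)$ for each fixed $R>0$, and then apply a deterministic compactness criterion for $(\mathbb{T}_c,\Delta_c)$ in the spirit of \cite[Lemma 3.5]{BCroyK} (itself a spatial-embedding-augmented version of \cite[Theorem 2.11]{ADH}). Because $\Delta$ is an exponentially weighted integral over $r$ of $\Delta_c$-distances between the closed balls $\uT^{(r)}$, a set $K\subseteq\mathbb{T}$ is relatively $\Delta$-compact if and only if for every $R>0$ the restriction $\{\uT^{(R)}:\uT\in K\}$ is relatively $\Delta_c$-compact in $\mathbb{T}_c$; thus a diagonal argument along a sequence $R_n=n$ reduces the problem to the compact case.

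For each fixed $R>0$, the task is to exhibit, given $\eta>0$, a deterministic $\Delta_c$-relatively compact set $K_{R,\eta}\subseteq\mathbb{T}_c$ with $\bP(\uT_\delta^{(R)}\in K_{R,\eta})\geq 1-\eta$ uniformly in $\delta\in\mathcal{A}$. The deterministic criterion on $K_{R,\eta}$ translates to four conditions: uniformly bounded $\eps$-covering numbers for each $\eps>0$; uniformly bounded total measure; embedded roots lying in a compact subset of $M$; and uniformly equicontinuous embeddings. Assumption (i), applied along a countable grid $\eps=1/k$ and intersected with probability budget $\eta/2^{k+2}$, yields the first property; assumption (ii) gives the second; assumption (iii) together with the properness of $(M,d_M)$ gives the third; and stochastic equicontinuity, applied analogously along a countable grid of moduli, gives the fourth. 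Verifying that the conjunction of these conditions actually defines a $\Delta_c$-relatively compact set is essentially the content of the deterministic lemma: Gromov pre-compactness handles the intrinsic-metric component, while the embedding-and-measure component is obtained by isometrically placing a sequence of such spaces into a common compact ambient space and extracting a subsequence along which the measures converge weakly and the embeddings converge uniformly via Arzel\`a--Ascoli (the latter made possible precisely by the combination of equicontinuity and the root constraint (iii)).

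The main obstacle I anticipate is the careful bookkeeping connecting the locally compact setting of $\mathbb{T}$ to the compact setting of $\mathbb{T}_c$: namely, showing that if $\uT_n^{(R)}\to \uT^{(R)}$ in $\Delta_c$ for every $R$ in a dense subset of $(0,\infty)$ along which the restriction map behaves continuously, then one can glue the limits into a single element of $\mathbb{T}$ and conclude $\uT_n\to \uT$ in $\Delta$. This requires making the embeddings into the common compact ambient spaces used for different radii $R$ compatible with each other, and checking that the resulting limit is again a complete, locally compact real tree carrying a locally finite measure and a continuous embedding. Most of this mirrors \cite[Lemma 3.5]{BCroyK} and \cite[Theorem 2.11]{ADH}; the only genuinely new piece over the standard Gromov--Hausdorff--Prohorov tightness result is the incorporation of the spatial map $\phi$, which is handled by treating it on the same footing as the metric via Arzel\`a--Ascoli in the common ambient space.
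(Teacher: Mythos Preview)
Your proposal is correct and aligns with the paper's approach: the paper does not give a proof but simply presents the lemma as a probabilistic version of \cite[Lemma 3.5]{BCroyK} (augmenting \cite[Theorem 2.11]{ADH} with the spatial embedding), which is exactly the deterministic compactness criterion you invoke. Your sketch of the reduction from $(\mathbb{T},\Delta)$ to $(\mathbb{T}_c,\Delta_c)$ and the Arzel\`a--Ascoli handling of $\phi$ is the standard way to unpack that citation.
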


For convenience in applying Lemma \ref{relcompact} to the three-dimensional UST, we next summarise the conditions that we will check for this example. Since these are of a different form to those given above, we complete the section by verifying their sufficiency in Lemma \ref{tightness}. We recall that the notation $\BU(x,r)$ is used for balls in $(\sU, d_{\sU})$.

{\assu\label{a1} For every $R\in (0,\infty)$, it holds that
\[\lim_{\lambda\rightarrow \infty}\limsup_{\delta\rightarrow 0}\bP\left(\delta^3\mU\left(\BU(0,\delta^{-\beta}R)\right)>\lambda\right)=0.\]}

{\assu\label{a2} For every $\varepsilon, R\in (0,\infty)$, it holds that
\[\lim_{\eta\rightarrow 0}\limsup_{\delta\rightarrow 0}
\bP\left(\inf_{x\in \BU(0,\delta^{-\beta}R)}\delta^3\mU\left(\BU(x,\delta^{-\beta}\varepsilon)\right)<\eta\right)=0.\]}

{\assu\label{a3} For every $\varepsilon, R\in (0,\infty)$, it holds that
\[\lim_{\eta\rightarrow 0}\limsup_{\delta\rightarrow 0}
\bP\left(\inf_{\substack{x,y\in \BU(0,\delta^{-\beta}R):\\\delta d_E(x,y)>\varepsilon}}\delta^{\beta}\dU(x,y)<\eta\right)=0.\]}

{\lem
\label{tightness}
If Assumptions \ref{a1}, \ref{a2} and \ref{a3} hold, then so does the tightness claim of Theorem \ref{mainthm1}.}
\begin{proof} We first check that if Assumptions \ref{a1} and \ref{a2} hold, then, for every $\varepsilon, R\in (0,\infty)$,
\begin{equation}\label{a20}
\lim_{\lambda\rightarrow \infty}\limsup_{\delta\rightarrow 0}\bP\left(N_{\U}\left(\delta^{-\beta}R,\delta^{-\beta}\varepsilon\right)>\lambda\right)=0,
\end{equation}
where $N_{\U}(\delta^{-\beta}R,\delta^{-\beta}\varepsilon)$ is the minimal number of intrinsic balls of radius $\delta^{-\beta}\varepsilon$ needed to cover $\BU(0,\delta^{-\beta}R)$. Towards proving this, suppose that
\begin{equation}\label{e1}
\delta^3\mU\left(\BU(0,\delta^{-\beta}(R+\varepsilon/2))\right)\leq \lambda\eta,
\end{equation}
and also
\begin{equation}\label{e2}
\inf_{x\in \BU(0,\delta^{-\beta}R)}\delta^3\mU\left(\BU(x,\delta^{-\beta}\varepsilon/2)\right)\geq\eta.
\end{equation}
Set $x_1=0$, and choose
\[x_{i+1}\in \BU(0,\delta^{-\beta}R)\backslash\cup_{j=1}^{i}\BU(x_j,\delta^{-\beta}\varepsilon),\]
stopping when this is no longer possible, to obtain a finite sequence $(x_i)_{i=1}^M$. By construction, $\cup_{i=1}^{M}\BU(x_i,\delta^{-\beta}\varepsilon)$ contains $\BU(0,\delta^{-\beta}R)$, and so $M\geq N_{\U}(\delta^{-\beta}R,\delta^{-\beta}\varepsilon)$. Moreover, since $\dU(x_i,x_j)\geq \delta^{-\beta}\varepsilon$ for $i\neq j$, it is the case that the balls $(\BU(x_i,\delta^{-\beta}\varepsilon/2))_{i=1}^M$ are disjoint. Putting these observations together with (\ref{e1}) and (\ref{e2}), we find that
\begin{eqnarray*}
N_{\U}(\delta^{-\beta}R,\delta^{-\beta}\varepsilon)&\leq& M\\
&\leq& \eta^{-1}\sum_{i=1}^{M}\delta^3\mU\left(\BU(x_i,\delta^{-\beta}\varepsilon/2)\right)\\
&=&\eta^{-1}\delta^3\mU\left(\cup_{i=1}^{M}\BU(x_i,\delta^{-\beta}\varepsilon/2)\right)\\
&\leq &\eta^{-1}\delta^3\mU\left(\BU(0,\delta^{-\beta}(R+\varepsilon/2))\right)\\
&\leq &\lambda.
\end{eqnarray*}
From this, we conclude that
\begin{eqnarray*}
\lefteqn{\bP\left(N_{\U}\left(\delta^{-\beta}R,\delta^{-\beta}\varepsilon\right)>\lambda\right)}\\
&\leq&
\bP\left(\delta^3\mU\left(\BU(0,\delta^{-\beta}(R+\varepsilon/2))\right)>\lambda\eta\right)\\
&&+
\bP\left(\inf_{x\in \BU(0,\delta^{-\beta}R)}\delta^3\mU\left(\BU(x,\delta^{-\beta}\varepsilon/2)\right)<\eta\right),
\end{eqnarray*}
and so \eqref{a20} follows by letting $\delta\rightarrow 0$, $\lambda\rightarrow\infty$ and then $\eta\rightarrow 0$.

Second, we show that if Assumption \ref{a3} holds, then, for every $\varepsilon, R\in (0,\infty)$,
\begin{equation}\label{a30}
\lim_{\eta\rightarrow 0}\limsup_{\delta\rightarrow 0}
\bP\left(\sup_{\substack{x,y\in \BU(0,\delta^{-\beta}R):\\ \dU(x,y)< \delta^{-\beta}\eta}}\delta d_E(x,y)>\varepsilon\right)=0.
\end{equation}
Indeed, this follows from the elementary observation that
\[\bP\left(\sup_{\substack{x,y\in \BU(0,\delta^{-\beta}R):\\ \dU(x,y)< \delta^{-\beta}\eta}}\delta d_E(x,y)>\varepsilon\right)\leq \bP\left(\inf_{\substack{x,y\in \BU(0,\delta^{-\beta}R):\\\delta d_E(x,y) >\varepsilon}}\delta^{\beta}\dU(x,y)<\eta\right).\]

Given \eqref{a20}, Assumption \ref{a1}, the fact that $\delta\phi_\mathcal{U}(\rho_\mathcal{U})=0$, and \eqref{a30}, the result is a straightforward application of Lemma \ref{relcompact}.
\end{proof}

\subsection{Path ensembles}

Finally, we also define the path ensemble topology used in Theorem~\ref{mainthm1}. This topology was introduced by Schramm \cite{Schramm} in the context of scaling of two-dimensional uniform spanning trees, and a related topology (based on  quad-crossings) have been used in the context of scaling limits of critical percolation. Recall that $\gamma_\T(x,y)$ is the unique path from $x$ to $y$ in a topological tree $\T$.

We denote by $\cH(X)$ the Hausdorff space of compact subsets of a metric space $X$, endowed with the Hausdorff topology.
This is generated by the \textbf{Hausdorff distance}, given by
\[  \gls{hausdorffD} (A,B) = \inf \left\{ r \geq 0\::\: A \subset B_r,\: B \subseteq A_r \right\},\]
where $B_r = \{x\in X\::\: d(x,B)\leq r\}$ is the $r$-expansion of $B$.

We shall consider the sphere $\S^3$ as the one-point compactification of $\R^3$, on which we also consider the one-point compactification of a uniform spanning tree of $\mathbb{Z}^3$. For concreteness, fix some homeomorphism from $\R^3$ to $\S^3$ and endow it with the Euclidean metric on the sphere. Given a compact topological tree $\T\subset \S^3$, we consider the set $\Gamma_\T \subset \S^3 \times \S^3 \times \cH(\S^3)$
\[  \Gamma_\T = \{ (x,y,\gamma_\T(x,y)) \,:\, x,y\in\T\}.\]
Thus $\Gamma_\T$ consists of a pair of points and the path between them. We call $\Gamma_\T$ the \textbf{path ensemble} of the tree $\T$. Clearly $\Gamma_\T$ is a compact subset of $\S^3\times\S^3\times\cH(\S^3)$. Since each tree corresponds to a compact subset of $\S^3\times\S^3\times\cH(\S^3)$, the Hausdorff topology on this product space induces a topology on trees. Theorem~\ref{mainthm1} states that the laws of the uniform spanning on $\delta \Z^3$ are tight and have a subsequential weak limit with respect to this topology (in addition to the Gromov-Hausdorff-type topology described above).

\section{Loop-erased random walks}\label{sec:lerw}

As noted in the introduction, the fundamental connection between loop-erased random walks (LERWs) and uniform spanning tree (USTs) will be crucial to this study.
In this section, we recall the definition of the LERW, and collect together a number of properties of the three-dimensional LERW that hold with high probability.
These properties will be useful in our study of the three-dimensional UST.
We start by introducing some general notation and terminology.

\subsection{Notation for Euclidean subsets}  \label{subsec:notationset}

The \textbf{discrete $\ell^2$ Euclidean ball} will be denoted by
\[
  \gls{discreteBall} := \left\{ y \in \Z^{3}\::\:|x - y| < r \right\},
\]
where we write $|x-y|=d_E(x,y)$ for the Euclidean distance between $x$ and $y$.
(We will use the notation $|x-y|$ and $d_E(x,y)$, interchangeably.)
A \textbf{$\delta$-scaled discrete $\ell^2$ ball}, for $\delta > 0$, will be denoted by
\[
  \gls{scaledBall}  := \left\{ y \in \delta \Z^{3}\::\:|x - y| < r \right\},
\]
and the Euclidean $\ell^2$ ball is
\[
  \gls{euclideanBall} := \left\{ y \in \mathbb{R}^3 \::\:|x - y| < r \right\}.
\]
We will also use the abbreviation $B(r)=B(0,r)$, similarly for $B_{\delta}$ and $B_E$. We also write $ B_n (0, r) = B_{2^{-n}} (r) $.
The \textbf{discrete cube} (or $\ell^\infty$ ball of radius $r$) with side-length $2r$ centred at $x$ is defined to be the set
\[
  \gls{discreteCube} :=\left\{y \in \Z^{3} \::\: \|x-y\|_\infty < r  \right\}.
\]
Similarly to the definitions above, but with $\ell^{\infty}$ balls, $\gls{scaledCube}$ denotes the $\delta$-scaled discrete cube  and $\gls{euclideanCube}$ the Euclidean cube.
We further write $D(R)=D(0,R)$ and $D_n (r) = D_{2^{-n}}(0, r)$.
The Euclidean distance between a point $x$ and a set $A$ is given by
\[
  \dist (x , A) := \inf \left\{ |x - y|\::\:y \in A \right\}.
\]
For a subset $A$ of $\mathbb{Z}^{3}$, the \textbf{inner boundary} $\partial_{i} A$ is defined by
\[
\gls{innerB} := \left\{ x \in A \::\:\exists y \in \mathbb{Z}^{3} \setminus A \text{ such that } |x-y| =1 \right\}.
\]

\subsection{Notation for paths and curves} \label{subsec:notationpaths}

A path in $\Z^3$ is a finite or infinite sequence of vertices $[  v_0, v_1, \ldots ]$ such that $v_{i-1}$ and $v_{i}$ are nearest neighbours, i.e.\ $|v_{i-1}-v_i|=1$, for all $i \in \{1, 2, \dots \}$. The \textbf{length} of a finite path $\gamma = [v_0, v_1, ... , v_m ]$ will be denoted $\gls{length}$ and is defined to be the number of steps taken by the path, that is $ \len(\gamma) = m$.

A \textbf{(parameterized) curve} is a continuous function $\gamma : [0,T] \rightarrow \mathbb{R}^3$. For a curve $\gamma : [0,T] \to \mathbb{R}^3 $, we say that $T$ is its \textbf{duration}, and will sometimes use the notation $\gls{duration} := T$. When the specific parameterization of a curve $\gamma$ is not important, then we might consider only its trace, which is the closed subset of $\mathbb{R}^3$ given by $\tr \gamma=\{\gamma(t):\:t\in[0,T]\}$.
To simplify notation, we sometimes write $\gamma$ for instead of $\tr \gamma$ where the meaning should be clear.
A curve is \textbf{simple} if $\gamma$ is an injective function.
All curves in this article are assumed to be simple, often implicitly.

The space of parameterized curves of finite duration, $\gls{spaceCf}$, will be endowed with a metric $\gls{psi}$, as defined by
\begin{equation} \label{eq:distpsi}
 \psi (\gamma_1, \gamma_2) = \left| T_1 - T_2 \right|  +  \max_{0 \leq s \leq 1} \left|\gamma_1 (sT_1) - \gamma_2 ( sT_2 ) \right|,
\end{equation}
where $\gamma_i:[0,T_i]\rightarrow\mathbb{R}^3$, $i=1,2$ are elements of $\mathcal{C}_f$.

We say that a continuous function $\gamma^{\infty} : [0, \infty) \rightarrow \mathbb{R}^3$ is a \textbf{transient (parameterized) curve} if $\lim_{t \to \infty} \vert \gamma^{\infty} (t) \vert  = \infty $. We let $\mathcal{C}$ be the set of transient curves, and endow $\gls{spaceC}$ with the metric $\gls{chi}$ given by
\begin{equation} \label{eq:distchi}
\chi(\gamma^{\infty}_1,\gamma^{\infty}_2) =
  \sum_{k=1}^{\infty} 2^{-k} \left( 1\wedge \max_{t \leq k} |\gamma^{\infty}_1(t)- \gamma^{\infty}_2 (t)| \right).
\end{equation}

The concatenation of two curves $ \gamma_1 : [0, T_1] \to \mathbb{R}^3$ and $ \gamma_2 : [0, T_2] \to \mathbb{R}^3$ with $ \gamma_1 (T_1) = \gamma_2 (0)$ is the curve $\gamma_1 \oplus \gamma_2$ of length $T_1+T_2$ given by
\[  \gamma_1 \oplus \gamma_2 (t) :=
  \begin{cases}
    \gamma_1 (t)    & \text{ if } 0 \leq t \leq T_1 \\
    \gamma_2 (t - T_1) & \text{ if } T_1 < t \leq T_1 + T_2 .
  \end{cases}\]
The time-reversal of $\gamma : [0, T] \to \mathbb{R}^3$ is the curve $\vec{\gamma} : [0, T] \to \mathbb{R}^3$ defined by
\[
  \vec{\gamma} (t) := \gamma  (T - t), \qquad  t \in [0,T].
\]

We define several kinds of restrictions for a curve $\gamma : [0,T] \to \mathbb{R}^3$.
Analogous restrictions are defined for transient curves.
The restriction of $\gamma$ to an interval $ [a,b] \subseteq [0,T]$ is the curve $\gamma \vert_{[a,b]} : [0,b-a] \to \mathbb{R}^3 $ defined by setting
\[  \gamma \vert_{[a,b]} (t) = \gamma (t + a), \qquad  0 \leq t \leq b-a.\]
Similarly, if $\gamma$ is a simple parametrized curve, and $x, y \in \tr \gamma $ and $x$ appears before $y$ in $\gamma$,
then we define the restriction of $\gamma$ between $x$ and $y$ to be the curve $\gamma(x,y)$, where
\[  \gamma (x,y) (t) = \gamma(t + t_x), \qquad 0 \leq t \leq t_y - t_x,\]
with $t_x\leq t_y$ satisfying  $\gamma(t_x) = x$ and $\gamma (t_y) = y$.
(Note that the simplicity of $\gamma$ ensures that $t_x$ and $t_y$ are well-defined.)
Finally, the restriction of $\gamma$ to the Euclidean ball of radius $R$, with $R > 0$, is the curve $\gamma \vert^R :=  \gamma \vert_{[0, \xi_R \wedge T]} $, where  $\xi_R = \inf \{ t \in [0, T]:\:| \gamma(t)|\geq R \}$  is the time $\gamma$ exits the ball of radius $R$.

\begin{prop} \label{prop:continuity-psi}
Let $ ( \gamma_n )_{n \in \mathbb{N}} \subset \mathcal{C}_f$ be a sequence of curves. Assume that  $\gamma_n \to \gamma \in  \mathcal{C}_f$.
Then, the convergence is preserved in $\mathcal{C}_f$ under the following operations.
\begin{enumerate}[label=(\alph*)]
  \item \label{item:time-reversal}
  Time reversal: for the sequence of curves under time-reversal
    \[
      \vec{\gamma}_n \rightarrow \vec{\gamma}  \qquad
      \text{ as } n \to \infty.
    \]
  \item \label{item:restriction}
  Restriction: for $0 \leq a < b < T(\gamma)$, the restrictions
    \[ %\label{eq:conv-restriction}
      \gamma_n\vert_{[a, b]} \rightarrow \gamma \vert_{[a, b]} \qquad
      \text{ as } n \to \infty ,
    \]
  where the sequence above is defined for $n$ large enough.
  \item \label{item:concatenation}
  Concatenation:
    if $ \tilde{\gamma}_n \to \tilde{\gamma} $ in $\mathcal{C}_f$, then
    \[
      \gamma_n \oplus \tilde{\gamma}_n \to \gamma \oplus \tilde{\gamma} \qquad
      \text{ as } n \to \infty.
    \]
\end{enumerate}
\end{prop}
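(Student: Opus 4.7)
The plan is to reparameterize every curve of duration $T$ to the unit interval by setting $\hat{\gamma}(s) := \gamma(sT)$, since then convergence in $\psi$ simply decomposes into convergence of the durations $T_n\to T$ together with uniform convergence of the reparameterized curves $\hat{\gamma}_n \to \hat{\gamma}$ on $[0,1]$. All three claims then reduce to routine manipulations of uniformly convergent continuous functions, combined where necessary with the uniform continuity of the limiting curve on $[0,1]$.

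Part (a) is essentially immediate: the reparameterization of $\vec{\gamma}_n$ is simply $\hat{\gamma}_n(1-s)$, so the change of variable $s\mapsto 1-s$ preserves both the duration term and the uniform part of $\psi$. Part (b) is also straightforward: for $n$ large enough, $b < T(\gamma_n)$ since $T(\gamma_n)\to T(\gamma) > b$, so the restriction is defined; both restricted curves have the same duration $b-a$, so only the uniform term in $\psi$ matters, and for $u\in[a,b]$ the triangle-inequality bound
\[|\gamma_n(u) - \gamma(u)| \leq \|\hat{\gamma}_n - \hat{\gamma}\|_\infty + |\hat{\gamma}(u/T(\gamma_n)) - \hat{\gamma}(u/T(\gamma))|\]
yields uniform closeness, the first term by uniform convergence and the second by uniform continuity of $\hat{\gamma}$.

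Part (c) is the main obstacle. Let $s_n := T(\gamma_n)/(T(\gamma_n)+T(\tilde{\gamma}_n))$ and $s_* := T(\gamma)/(T(\gamma)+T(\tilde{\gamma}))$ be the ``breakpoints'' of the reparameterized concatenated curves, and note $s_n\to s_*$ since the durations converge. On the regions $\{s \leq \min(s_n,s_*)\}$ and $\{s \geq \max(s_n,s_*)\}$ the analysis from (b) applies verbatim, after observing that the reparameterized concatenations restrict on these pieces to time-changes of $\hat{\gamma}_n$ and $\hat{\tilde{\gamma}}_n$ respectively. The delicate point is the overlap interval, where $\widehat{\gamma_n\oplus\tilde{\gamma}_n}$ is already on its second piece while $\widehat{\gamma\oplus\tilde{\gamma}}$ is still on its first (or vice versa). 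On this interval, whose length $|s_n-s_*|\to 0$, one uses the triangle inequality to insert the common matching value $\gamma(T(\gamma)) = \tilde{\gamma}(0)$ as an intermediate point, and controls the remaining differences by combining the uniform convergence of $\hat{\gamma}_n$ and $\hat{\tilde{\gamma}}_n$ with the uniform continuity of $\hat{\gamma}$ and $\hat{\tilde{\gamma}}$ on the shrinking sub-intervals near the endpoints $1$ and $0$ respectively. This gives $\psi(\gamma_n\oplus\tilde{\gamma}_n, \gamma\oplus\tilde{\gamma})\to 0$.
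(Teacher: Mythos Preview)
Your proposal is correct and follows essentially the same strategy as the paper: reparameterize to $[0,1]$, then split each estimate via the triangle inequality into a term controlled by the uniform convergence $\hat{\gamma}_n\to\hat{\gamma}$ and a term controlled by the uniform continuity of the limiting curve. Your treatment of part~(c) is in fact slightly more explicit than the paper's---you identify the breakpoints $s_n,s_*$ and insert the common concatenation value $\gamma(T(\gamma))=\tilde{\gamma}(0)$ on the overlap interval, whereas the paper handles the overlap with a terser bound---but the underlying idea is identical.
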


\begin{proof} In this proof, we write $T_n = T (\gamma_n)$  and $T = T (\gamma)$.
  The convergence after a time-reversal is immediate from the definition and we get \ref{item:time-reversal}.
  For \ref{item:restriction}, we consider the case $a = 0$.
  Let $ r_n, r \in [0, 1]$ be such that $b = r_n T_n$ and $b = r T$. Then
  \begin{align*}
    \psi ( \gamma_n\vert_{[0, b]}, \gamma \vert_{[0, b]} )
    &=
    \max_{0 \leq s \leq 1} \vert \gamma_n ( s b ) - \gamma (sb) \vert
    =
    \max_{0 \leq s \leq 1} \vert \gamma_n ( s r_n T_n ) - \gamma (s r T) \vert \\
    &\leq
    \max_{0 \leq s \leq 1} \vert \gamma_n ( s r_n T_n ) - \gamma (s r_n T) \vert
    +
    \max_{0 \leq s \leq 1} \vert \gamma ( s r_n T ) - \gamma (s r T) \vert.
  \end{align*}
  The convergence of $\gamma_n \to \gamma$ implies that the first term above goes to $0$ as $n \to \infty$.
  Note that $\vert r_n - r \vert = b \vert T_n^{-1} - T^{-1} \vert \to 0 $, and hence the convergence of the last term above follows from uniform continuity of $\gamma$.
  The convergence of $\gamma_n$ under time-reversal gives the general when $a > 0$.

  Next we prove \ref{item:concatenation}.
  We write $\tilde{T}_n = T (\tilde{\gamma}_n)$, $\tilde{T} = T (\tilde{\gamma})$ and $\delta_n = \vert T_n + \tilde{T}_n - (\tilde{T} + T) \vert $.
  Note that $\delta_n \to 0$ as $n \to \infty$.
  For $ 0 \leq s \leq 1$, when we compare the times that we compare for $\psi$,
  $ \vert s(T_n +\tilde T_n ) -  s(T  +\tilde T ) \vert \leq \delta $.
  Then $\psi ( \gamma_n \oplus \tilde{\gamma}_n, \gamma \oplus \tilde{\gamma} )$
  is bounded above by
  \begin{align*}
  &\delta_n +
  \max_{ \substack{ \vert r - s \vert \leq \delta_n \\ r \leq T_n \vee \tilde{T}_n, \, s \leq T \vee \tilde{T}  }}
  \vert \gamma_n \oplus \tilde{\gamma}_n (r) - \gamma \oplus \tilde{\gamma} (s) \vert \\
  &
  \leq
  \delta_n
  + \max_{\substack{ \vert r - s \vert \leq \delta_n \\ r \leq T_n, \, s \leq T }}
    \vert \gamma_n (r) - \gamma (s) \vert
  +
    \max_{\substack{ \vert r - s \vert \leq \delta_n \\ r \leq \tilde{T}_n, \, s \leq \tilde{T} }}
    \vert \tilde{\gamma}_n (r) - \tilde{\gamma} (s) \vert
  +
  \delta_n M_n,
  \end{align*}
  where $M_n$ is an upper bound for the norms of $\gamma_n,  \tilde{\gamma}_n, \gamma$, and $ \tilde{\gamma}$.
  The last term in the inequality above comes from comparisons between $\gamma_n$ and $\tilde{\gamma}$ (or between $\tilde{\gamma}_n$ and $\gamma$) close to the concatenation point.
  The convergence of $\gamma_n \to \gamma$ and $\tilde{\gamma}_n \to \tilde{\gamma}$, and the uniform continuity of each curve give the desired result.
\end{proof}

\begin{prop} \label{prop:continuity-chi}
Let $ ( \gamma_n^{\infty})_{n \in \mathbb{N}} \subset \mathcal{C} $ be a sequence of parameterized curves with limit  $\gamma_n^{\infty} \to \gamma^{\infty}$ in $(\mathcal{C}, \chi)$.
The convergence is preserved  under the operations below.
\begin{enumerate}[label=(\alph*)]
  \item \label{item:restriction-infty}
  Restriction: for any $b > 0$
    \[
    \gamma_n^{\infty} \vert_{[0, b]} \to \gamma^{\infty} \vert_{[0, b]}  \qquad \text{ as } n \to \infty,
    \]
  in the space $\mathcal{C}_f$.
  \item \label{item:concatenation-infty}
  Concatenation: if $ (\gamma_n )_{n \in \mathbb{N}} \subset \mathcal{C}_f$ converges to a finite parameterized curve $\gamma$ as $n \to \infty$, then
    \[  \gamma_n \oplus \gamma^{\infty}_n \to \gamma \oplus \gamma^{\infty} \qquad
    \text{ as } n \to \infty,
    \]
  in $\mathcal{C}$.
  \item \label{item:evaluation-infty}
  Evaluation: if $t_n \to t$ then
  \[
    \gamma^{\infty}_n(t_n) \to \gamma^{\infty} (t) \qquad \text{ as } n \to \infty.
  \]
\end{enumerate}

\end{prop}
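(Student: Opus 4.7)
The plan is to first observe that $\chi$-convergence in $\mathcal{C}$ is equivalent to locally uniform convergence on $[0,\infty)$: if $\gamma^\infty_n\to\gamma^\infty$ in $(\mathcal{C},\chi)$, then for every $k\in\N$, $\max_{t\leq k}|\gamma^\infty_n(t)-\gamma^\infty(t)|\to 0$, and conversely. This is an immediate consequence of the dominated-convergence-style argument applied to the defining series, together with the observation that each summand is bounded by $1$. With this in hand the three parts follow by mimicking the ideas used in Proposition~\ref{prop:continuity-psi}, with the main work concentrated in \ref{item:concatenation-infty}.

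For part \ref{item:restriction-infty}, the restricted curves $\gamma^\infty_n\vert_{[0,b]}$ and $\gamma^\infty\vert_{[0,b]}$ both have duration $b$, so
\[
\psi\bigl(\gamma^\infty_n\vert_{[0,b]},\gamma^\infty\vert_{[0,b]}\bigr)
=\max_{0\leq s\leq 1}\bigl|\gamma^\infty_n(sb)-\gamma^\infty(sb)\bigr|
=\max_{0\leq t\leq b}\bigl|\gamma^\infty_n(t)-\gamma^\infty(t)\bigr|,
\]
which tends to $0$ by locally uniform convergence, picking any $k\geq b$ in the defining series of $\chi$. For part \ref{item:evaluation-infty}, I would use the triangle inequality
\[
|\gamma^\infty_n(t_n)-\gamma^\infty(t)|
\leq |\gamma^\infty_n(t_n)-\gamma^\infty(t_n)|
+|\gamma^\infty(t_n)-\gamma^\infty(t)|,
\]
where on the right-hand side the first term is controlled uniformly over $t_n$ lying in any compact containing $\{t_n\}_n\cup\{t\}$ (using locally uniform convergence), and the second tends to $0$ by continuity of $\gamma^\infty$.

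The main obstacle, and hence where I would spend most effort, is part \ref{item:concatenation-infty}. Write $T_n=T(\gamma_n)$, $T=T(\gamma)$, and $\delta_n=|T_n-T|\to 0$. For fixed $k$, I would split the comparison of $\gamma_n\oplus\gamma^\infty_n$ with $\gamma\oplus\gamma^\infty$ on $[0,k]$ into three regions: (i) $t\in[0,(T_n\wedge T)-\delta_n]$, where both concatenations are in their ``first leg'' and the $\gamma_n\to\gamma$ convergence in $\mathcal{C}_f$ (together with uniform continuity of $\gamma$) controls the difference; (ii) $t\in[(T_n\vee T)+\delta_n,k]$, where both are in their ``second leg'' with time-shifts $T_n$ and $T$ respectively, and the difference is bounded by
\[
|\gamma^\infty_n(t-T_n)-\gamma^\infty(t-T_n)|+|\gamma^\infty(t-T_n)-\gamma^\infty(t-T)|,
\]
the first term going to $0$ by locally uniform convergence of $\gamma^\infty_n$ on $[0,k]$ and the second by uniform continuity of $\gamma^\infty$ on $[0,k]$; and (iii) the transitional interval of length $O(\delta_n)$ around the concatenation points, on which the difference is bounded by $2\max_n\sup_{s\leq k}|\gamma^\infty_n(s)|+2\sup_{s\leq k}|\gamma(s)|$ times an indicator, and shrinks because the boundedness of $\{\gamma^\infty_n\}$ on $[0,k]$ is uniform in $n$ (by locally uniform convergence). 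Collecting these three estimates and letting $n\to\infty$ gives $\max_{t\leq k}|\gamma_n\oplus\gamma^\infty_n(t)-\gamma\oplus\gamma^\infty(t)|\to 0$ for every $k$, hence convergence in $(\mathcal{C},\chi)$.
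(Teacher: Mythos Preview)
Your arguments for \ref{item:restriction-infty} and \ref{item:evaluation-infty} are correct and coincide with the paper's (very terse) proof, which simply notes that \ref{item:restriction-infty} follows from the definition of $\chi$ and \ref{item:evaluation-infty} from uniform continuity on compact time intervals.

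For \ref{item:concatenation-infty}, the paper takes a shorter route than your three-region decomposition: it reduces to the finite-curve concatenation result Proposition~\ref{prop:continuity-psi}\ref{item:concatenation}. Concretely, for fixed $k$ one chooses $b>k$, uses part \ref{item:restriction-infty} to get $\gamma^\infty_n|_{[0,b]}\to\gamma^\infty|_{[0,b]}$ in $\mathcal{C}_f$, applies Proposition~\ref{prop:continuity-psi}\ref{item:concatenation} to obtain $\gamma_n\oplus(\gamma^\infty_n|_{[0,b]})\to\gamma\oplus(\gamma^\infty|_{[0,b]})$ in $\mathcal{C}_f$, and reads off uniform convergence on $[0,k]$; since $k$ is arbitrary this gives $\chi$-convergence. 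This bypasses rerunning the concatenation argument in the infinite setting.

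Your direct approach is viable, but the treatment of region (iii) has a gap. You bound the difference on the transitional interval by a fixed constant (essentially twice the uniform norm bound) ``times an indicator'' and claim this ``shrinks'' because the bound is uniform in $n$. But you are taking a supremum over $t\le k$, not integrating: a bounded function on an interval of length $O(\delta_n)$ still has sup of order the bound, not $o(1)$. What actually makes region (iii) negligible is continuity at the concatenation point. On $[T_n\wedge T,\,T_n\vee T]$ both concatenated curves take values within $o(1)$ of the common value $\gamma(T)=\gamma^\infty(0)=\gamma_n(T_n)=\gamma^\infty_n(0)$: for the $\gamma^\infty$-side this is continuity of $\gamma^\infty$ at $0$ (the relevant argument lies in $[0,\delta_n]$), and for the $\gamma_n$-side one uses $\psi$-convergence of $\gamma_n$ to $\gamma$ together with uniform continuity of $\gamma$ near its endpoint. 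With this correction your argument is complete.
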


\begin{proof}
  The convergence in \ref{item:restriction-infty} follows from the definition of the metric $\chi$.
  Similarly, \ref{item:concatenation-infty} is a consequence of Proposition \ref{prop:continuity-psi} \ref{item:concatenation} and the definition of $\chi$.
  Finally, \ref{item:evaluation-infty} follows from the uniform continuity of $\gamma_n \vert_{[0,k]}$.
\end{proof}

If $\gamma$ is a parameterized (simple) curve and $x, y \in \tr \gamma$, we define the \textbf{Schramm metric} (cf.\ \eqref{dtschramm}) by setting
\begin{equation} \label{eq:Schramm}
  \gls{schrammMetric} (x,y) := \diam \tr \gamma(x,y).
\end{equation}
The \textbf{intrinsic distance} between $x$ and $y$ is given by
\begin{equation} \label{eq:Intrinsic}
  \gls{intrinsicMetric} (x,y) := T(\gamma(x,y))=t_y-t_x,
\end{equation}
where $\gamma(t_x) = x$ and $\gamma (t_y) = y$, i.e.\ this is the time duration of the curve segment between $x$ and $y$.
Formally, both \eqref{eq:Schramm} and \eqref{eq:Intrinsic} are only defined when $x$ comes before $y$ in $\gamma$, but the definition is extended symmetrically in the obvious way.

\subsection{Definition and parameterization of loop-erased random walks}

We will now define the loop-erased random walk.
Let $S = [v_0,\dots,v_m]$ be a path in some graph (which we take to be $\Z^3$ or $\delta\Z^3$).
By erasing the cycles (or loops) in $S$ in chronological order, we obtain a simple path from $v_0$ to $v_m$.
This operation is called \textbf{loop-erasure}, and is defined as follows. Set $T(0) = 0$ and $\widetilde{v_0} = v_0$.
Inductively, we set $T(j)$ according to the last visit time to each vertex:
\begin{equation} \label{eq:deflerw}
  T(j) = 1 + \sup \left\{ n \colon v_n = \widetilde{v}_j \right\}, \qquad \tilde{v}_j = v_{T(j)}.
\end{equation}
We continue until $\tilde{v}_l = v_m$, at which time $T(j)=m+1$ and there is no additional vertex $\tilde v_j$.
The \textbf{loop-erased random walk (LERW)} is the simple path $\LE(S) = [\tilde{v}_0, \dots , \tilde{v}_l]$.

The exact same definition also applies to an infinite, transient path $S$.
Since the path $S$ is transient, the times $T(j)$ in $\eqref{eq:deflerw}$ are finite, almost surely, for every $j \in \mathbb{N}$.
In this case $\LE(S)$ is an infinite simple path.

The loop-erased random walk is just what the name implies: the loop erasure of a random walk.
In $\Z^3$ (or $\delta\Z^3$) we can take $S_\infty$ to be an infinite random walk.
$S_\infty$ is almost surely transient, so the path $L(S_{\infty})$, called the \textbf{infinite loop-erased random walk (ILERW)}, is a.s. well defined.
We will also need loop-erased random walks in a domain $D\subset \R^3$.
We will write $\hat{D} = \Z^3 \cap D$ for the subset of vertices of $\Z^3$ inside $D$. Moreover, the (inner vertex) boundary of $\hat{D}$ is the set $\partial\hat{D}$ defined as the collection of vertices $v \in D$ for which $v$  is connected to $v_1 \in \Z^3 \setminus \hat{D} $. In this case, for a given starting vertex $v_0$, we may take $S$ to be a simple random walk up to the stopping time $m$ when it first hits $\partial\hat D$.
(We will apply this to bounded domains, so that $m$ is almost surely finite, though the definition is valid even if $m=\infty$.)
Examples of domains of a loop-erased random walk include the family of $L^2$-balls $\{ B(R) \}_{R > 0}$ and of $L^{\infty}$-balls $\{  D(R)\}_{r > 0}$.

A discrete simple path $\gamma=(v_i)$ may naturally be considered as a curve by setting $\gamma(i) = v_i$, for $i\in\N$, and linearly interpolating between $\gamma(i)$ and $\gamma(i+1)$.
With this parameterization, the length of $\gamma$ (as a path) is equal to its duration as a curve: $\len(\gamma) = T(\gamma)$.
If $\gamma$ is a loop-erased random walk on $\delta\Z^3$, its length as $\delta\to 0$, and the curve needs to be reparameterized.
To obtain a macroscopic curve in the scaling limit, we reparameterize loop-erased random walks by $\beta$-\textbf{parameterization}:
\[\gls{betaParameterization} (t) := \gamma (\delta^{-\beta}t),\qquad\forall t \in[0,\delta^{-\beta} \len(\gamma)],\]
where $\beta$ is the LERW growth exponent.
Similarly, for an infinite loop-erased random walk $\gamma_{\infty} = [v_0, v_1,\dots]$,
we consider its associated curve $\gamma_{\infty}$ by linearly interpolating between integer times, and its $\beta$-parameterization is given by
\[   \bar{\gamma}_{\infty} (t) = \gamma_{\infty} (\delta^{-\beta} t), \qquad\forall t \geq 0.\]

In this article, we will sometimes consider the ILERW restricted to a finite domain. Specifically, if $\gamma_{\infty}$ is an ILERW starting at the origin, we denote its restriction to a ball of radius $r > 0$ by $\gamma_{\infty} \vert^r = \LE(S^{\infty}) \vert_{[0,\xi_r(LE(S^{\infty}))]}$, where $\xi_r(\LE(S^{\infty}))$ is the first time $\LE(S^{\infty})$ exits $B(r)$. Note that this is a different object to a LERW started at the origin and stopped at the first hitting time of $\partial B(r)$.
However, the two are closely related, see \cite[Corollary 4.5]{Mas}.

\subsection{Path properties of the infinite loop-erased random walk}
\label{subsec:pathprop}

In this section, we summarize some path properties of the ILERW that hold with high probability.
Typically the events will involve some property that holds on the appropriate scale in a neighbourhood of radius $R \delta^{-1}$ about the starting point of the ILERW, for $\delta$ the scaling parameter, and for some fixed $R \geq 1$. Since the results hold uniformly in the scaling parameter $\delta \in (0,1]$, they will also be useful in the scaling limit. As for notation, for $x\in\Z^3$, we let $\gls{ILERW}$ be an ILERW on $\Z^3$ starting at $x$. If $x = 0$, then we simply write $\gamma_{\infty}$. We highlight that in this section the space is not rescaled.

\subsubsection{Quasi-loops}

A path $\gamma$ is said to have an \textbf{$(r,R)$--quasi-loop} if it contains two vertices $v_1, v_2 \in \gamma$ such that $|v_1 - v_2 |< r$, but $\gamma (v_1, v_2) \not\subseteq B(v_1, R)$.
(Up to changing the parameters slightly, this is almost the same as $d_\gamma^S(x,y)\ge R$.)
We denote the set of $(r,R)$--quasi-loops of $\gamma$ by $\QL (r, R; \gamma)$.
Estimates on probabilities of quasi-loops in LERWs were central to Kozma's work \cite{Kozma}.
The following bound on the probability of quasi-loops for the ILERW was established in \cite{SS} for loop-erased random walks.
The extension to the infinite case follows from \cite[Theorem 6.1]{SS} in combination with \cite[Corollary 4.5]{Mas}.

\begin{prop}[{cf.\ \cite[Theorem 6.1]{SS}}] \label{result:quasiloops}
  For every $R \geq 1$, there exist constants $C,M < \infty$, and $\tilde{\eta} > 0$ such that for any $\delta,\eps \in (0,1)$,
\[ \bP \left( \QL (\eps^M \delta^{-1} , \sqrt{\eps} \delta^{-1} ; \gamma_{\infty} \vert^{R\delta^{-1}}) \neq \emptyset \right) \leq C \eps^{\tilde{\eta}}.\]
\end{prop}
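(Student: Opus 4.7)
The plan is to reduce the quasi-loop estimate for the infinite loop-erased random walk restricted to a ball to the analogous statement for the loop-erasure of a simple random walk stopped at the first exit of a (slightly larger) ball, where \cite[Theorem 6.1]{SS} already supplies the required bound. The infinite object enters only through the comparison result \cite[Corollary 4.5]{Mas}, which relates these two closely related but distinct laws, as already noted immediately before the statement.

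Concretely, I would fix an auxiliary radius $R' > R$ (for instance $R' = 2R$), let $S$ be simple random walk on $\Z^3$ started at $0$, and denote by $\tau$ its first exit time from $B(R'\delta^{-1})$. After scaling, \cite[Theorem 6.1]{SS} states that $\LE(S[0,\tau])|^{R\delta^{-1}}$ contains an $(\eps^M\delta^{-1}, \sqrt{\eps}\,\delta^{-1})$--quasi-loop with probability at most $C_0 \eps^{\tilde\eta}$ for some constants $C_0, M, \tilde\eta$ depending only on $R$. Next, \cite[Corollary 4.5]{Mas} provides a Radon--Nikodym bound (equivalently, a coupling) between the law of $\gamma_\infty|^{R\delta^{-1}}$ and the law of $\LE(S[0,\tau])|^{R\delta^{-1}}$, with a constant $K = K(R, R')$ that is independent of $\delta$. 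Since the event $\{\QL(\eps^M\delta^{-1}, \sqrt{\eps}\,\delta^{-1}; \gamma) \neq \emptyset\}$ is measurable with respect to the restriction of $\gamma$ to $B(R\delta^{-1})$, the finite LERW bound transfers to $\gamma_\infty|^{R\delta^{-1}}$ with only the loss of the multiplicative constant $K$, yielding the claim with $C = K C_0$ and the same exponent $\tilde\eta$.

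The main obstacle is essentially a bookkeeping exercise: one must verify that the scales appearing in the cited form of \cite[Theorem 6.1]{SS} match the scales $\eps^M\delta^{-1}$ and $\sqrt{\eps}\,\delta^{-1}$ here (by choosing the exponent $M$ large enough, and possibly re-tuning $\tilde\eta$ down), and that the comparison constant in \cite[Corollary 4.5]{Mas} at the macroscopic radius $R\delta^{-1}$ can be taken uniformly in $\delta \in (0,1]$. Both reduce to scale invariance of the underlying SRW/LERW hitting estimates, so it suffices to verify the statement for $\delta = 1$ and rescale; since the quasi-loop event involves only the two scales $\eps^M$ and $\sqrt\eps$ relative to the ambient radius $R$, the resulting bound is genuinely of the form $C\eps^{\tilde\eta}$ with $C, \tilde\eta$ depending only on $R$, as required.
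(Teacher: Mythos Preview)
Your proposal is correct and matches the paper's own argument essentially verbatim: the paper does not give a detailed proof but simply remarks that the bound for the finite LERW is \cite[Theorem 6.1]{SS} and that the extension to the ILERW follows from that result in combination with \cite[Corollary 4.5]{Mas}. Your write-up spells out precisely this reduction---apply the SS bound in a slightly larger ball and transfer via the Masson comparison---so there is nothing to add.
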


\subsubsection{Intrinsic length and diameter}

Let $\xi_{n} $ be the first time that the loop-erased walk $\gamma_{\infty}$ exits the ball $B(n)$ (i.e.\ the number of steps after the loop erasure).
The next result is a quantitative tightness result for $n^{-\beta}\xi_n$.
It is a combination of the exponential tail bounds of \cite{S}, together with the estimates on the expected value of $\xi_n$ from \cite{Escape}. We note that the result in \cite{S} is for the LERW, but the proof is the same for the ILERW.

\begin{prop}[{\cite[Theorems 1.4 and 8.12]{S} and \cite[Corollary 1.3]{Escape}}] \label{result:upperLowTail}
There exist constants $C, c_1, c_2\in(0,\infty)$ such that: for all $\lambda \geq 1$ and $n \geq 1$,
\begin{align*}
\mathbf{P} \left( \xi_{n} \leq \lambda n^{\beta} \right)& \geq 1 - 2e^{-c_1 \lambda},\\
\mathbf{P} \left( \xi_{n} \geq \lambda^{-1} n^{\beta}   \right)& \geq 1 -  C e^{-c_2 \lambda^{1/2}}.
\end{align*}
\end{prop}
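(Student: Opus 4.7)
The statement is a packaging of three facts already in the literature: the sharp one-point function estimate $\mathbf{E}\xi_n \asymp n^{\beta}$ from \cite[Corollary 1.3]{Escape}, the exponential upper-tail bound from \cite[Theorem 1.4]{S}, and the stretched-exponential lower-tail bound from \cite[Theorem 8.12]{S}. Accordingly, the plan is not to reprove anything new but to combine these at the correct scale. First I would record that there exist $0<c<C<\infty$ with $cn^{\beta}\le \mathbf{E}\xi_n\le Cn^{\beta}$ for all $n\ge 1$, which identifies $n^{\beta}$ as the natural normalising scale and means the two assertions are equivalent to corresponding tail bounds for $\xi_n/\mathbf{E}\xi_n$.

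For the first inequality, the strategy is to quote from \cite[Theorem 1.4]{S} a uniform exponential moment bound of the form $\mathbf{E}\exp(t\, \xi_n/\mathbf{E}\xi_n)\le C'$ for some fixed $t>0$, and apply Markov's inequality to obtain $\mathbf{P}(\xi_n \ge \lambda \mathbf{E}\xi_n)\le C'e^{-t\lambda}$. Substituting the upper bound $\mathbf{E}\xi_n\le Cn^{\beta}$, a rescaling of $\lambda$ (absorbing $t$, $C$, $C'$ into the constants) delivers $\mathbf{P}(\xi_n\le \lambda n^{\beta})\ge 1-2e^{-c_1\lambda}$ for all $\lambda\ge 1$. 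For the second inequality, I would cite \cite[Theorem 8.12]{S}, which provides $\mathbf{P}(\xi_n \le \lambda^{-1}\mathbf{E}\xi_n)\le Ce^{-c\lambda^{1/2}}$, and then combine it with the lower bound $\mathbf{E}\xi_n\ge cn^{\beta}$ to arrive at the stated form.

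The only bookkeeping subtlety is that the results in \cite{S} are phrased for the LERW run in a finite ball, whereas our $\xi_n$ refers to the exit time of the \emph{infinite} loop-erased walk $\gamma_\infty$ from $B(n)$. This is bridged by the total-variation-type comparison between $\gamma_\infty\vert^n$ and the LERW from $0$ to $\partial B(n)$ supplied by \cite[Corollary 4.5]{Mas}, at a cost that can be absorbed into the multiplicative constants $C,c_1,c_2$. The main obstacle, were one to attempt a fresh proof rather than quoting, would be the lower tail: its $e^{-c_2\lambda^{1/2}}$ rate is not a purely exponential bound, and reflects the fact that forcing $\xi_n$ to be unusually small requires the walk to traverse each dyadic annulus anomalously quickly. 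The square-root exponent is exactly what falls out of the multi-scale decomposition in \cite{S}, and improving it would require substantially sharper control on the joint distribution of the loop-erasures at consecutive scales.
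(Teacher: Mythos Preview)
Your proposal is correct and matches the paper's treatment: the paper does not give a proof either, but simply packages the cited results from \cite{S} and \cite{Escape}, remarking that the bounds in \cite{S} are stated for the LERW but ``the proof is the same for the ILERW.'' The only minor difference is that you invoke \cite[Corollary 4.5]{Mas} to pass from LERW to ILERW, whereas the paper asserts the proofs carry over directly; both are acceptable ways to handle that bookkeeping.
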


While any possible pattern appears in $\gamma_\infty$,
the scaling relation (given by $\beta$) between the intrinsic distance and the Euclidean distance holds uniformly along the path of $\gamma_{\infty}$.
We quantify this relation in terms of equicontinuity.

Let $R \geq 1$, $\delta \in (0,1)$ and $\lambda \geq 1$. We say that $\gamma_{\infty}$ is $\lambda$-\textbf{equicontinuous} in the ball $B (R \delta^{-1})$ (with exponents $ 0 < b_1, b_2 < \infty $) if the following event holds:
\[ E^*_\delta(\lambda,R) =
  \left\{
  \begin{array}{c}
  \forall x,y \in \gamma_{\infty} \vert^{R \delta^{-1}} , \\
    \text{ if }  d_{\gamma_{\infty}} (x,y) \leq \lambda^{-b_1} \delta^{- \beta}, \text{ then }     \vert x - y \vert < \lambda^{-b_2} \delta^{-1}
  \end{array}
  \right\}.
\]
%\note{OA: What does the * in $E^*$ signify?}
The bound for the ILERW was proved in \cite{LS}.

\begin{prop}[{cf.\ \cite[Proposition 7.1]{LS}}] \label{result:boundE}
There exist constants $ 0 <  b_1, b_2 < \infty$ such that the following is true. Given $R\geq 1$, there exists a constant $C$ such that: for all $\delta \in (0,1) $and $\lambda \geq 1$,
\begin{equation*}
  \mathbf{P} (E^*_\delta(\lambda,R)) \geq 1 - C \lambda^{-b_2}.
\end{equation*}
\end{prop}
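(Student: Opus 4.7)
The plan is to prove the contrapositive by a net-plus-union-bound argument, applying the lower-tail exit-time estimate of Proposition~\ref{result:upperLowTail} locally around each net point. Throughout, write $\rho := \lambda^{-b_2}\delta^{-1}$ and $\tau := \lambda^{-b_1}\delta^{-\beta}$.

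First I would fix a $(\rho/10)$-net $\mathcal{N} \subset B(R\delta^{-1})$ of cardinality $|\mathcal{N}| \le C R^3 \lambda^{3b_2}$. If $(E^*_\delta(\lambda,R))^c$ holds, witnessed by $x, y \in \gamma_\infty|^{R\delta^{-1}}$ with $|x-y| \ge \rho$ and $d_{\gamma_\infty}(x,y) \le \tau$, and $z \in \mathcal{N}$ is chosen with $|x-z| \le \rho/10$, then $x \in B(z, \rho/10)$ while $|y - z| \ge 9\rho/10 > \rho/2$, so $y \notin B(z, \rho/2)$. Hence the event
\[ F_z := \bigl\{\exists\, u,v \in \gamma_\infty : u \in B(z,\rho/10),\ v \notin B(z,\rho/2),\ u \text{ precedes } v \text{ on } \gamma_\infty,\ d_{\gamma_\infty}(u,v) \le \tau\bigr\} \]
holds for some $z \in \mathcal{N}$. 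A union bound then reduces the proof to a uniform upper bound on $\mathbf{P}(F_z)$.

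The key one-point estimate is $\mathbf{P}(F_z) \le C \exp(-c\lambda^{a/2})$ for an appropriate constant $a > 0$. The occurrence of $F_z$ forces the path to traverse Euclidean distance at least $2\rho/5$ within $\tau$ intrinsic steps starting from its first entry into $B(z,\rho/10)$ (and similarly for each subsequent entry, of which there are only polynomially many in $\lambda$ by a crude excursion-counting argument combined with the typical length bound in Proposition~\ref{result:upperLowTail}). For a fresh ILERW started near $z$, Proposition~\ref{result:upperLowTail} bounds the probability of such a fast exit by $C\exp(-c\lambda^{a/2})$, provided $\tau \le \lambda^{-a}(2\rho/5)^\beta$. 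The subtlety is that the post-entry tail of $\gamma_\infty$ is not a fresh ILERW: one must control its conditional distribution given the past. I would do this via Masson's domain coupling (\cite[Corollary~4.5]{Mas}), which compares the relevant conditional law with a LERW in a large ambient domain, using the quasi-loop bound of Proposition~\ref{result:quasiloops} to rule out the atypical pasts that cluster too close to $z$ and would spoil the comparison.

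The parameters are then chosen in order: first pick any $b_2 > 0$; then pick $a$ large enough that $C R^3 \lambda^{3b_2} \exp(-c\lambda^{a/2}) \le C \lambda^{-b_2}$ for all $\lambda \ge 1$; and finally set $b_1 := a + b_2\beta + 1$, which guarantees $\tau \le \lambda^{-a}(2\rho/5)^\beta$ for all $\lambda \ge 1$. The main obstacle is the non-Markovian character of loop erasure in the one-point estimate: transferring the exit-time lower bound from a fresh LERW to the tail of an ILERW after a first visit to $B(z,\rho/10)$ requires careful use of the coupling and escape machinery of \cite{Escape, Mas, SS}, and this is the step where the recent three-dimensional LERW theory enters essentially.
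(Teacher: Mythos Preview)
The paper does not give a proof of this proposition: it is stated as a direct citation of \cite[Proposition~7.1]{LS}, preceded by the sentence ``The bound for the ILERW was proved in \cite{LS}.'' So there is no in-paper argument to compare against.

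Your strategy---net over $B(R\delta^{-1})$, reduce to a local fast-exit event at each net point, apply the lower-tail exit-time bound of Proposition~\ref{result:upperLowTail}, and union bound---is the natural one and is in the spirit of what \cite{LS} does. You also correctly identify the main obstacle: the lower-tail estimate in Proposition~\ref{result:upperLowTail} is stated for a fresh ILERW from the origin, whereas what you need is the same estimate for the continuation of $\gamma_\infty$ after first entering $B(z,\rho/10)$, conditional on the past. This requires the domain Markov property (cf.\ \cite[Proposition~7.3.1]{Lawb}) together with a version of the exit-time lower bound that is uniform over the conditioning set; this uniformity is supplied in \cite{S}, and is exactly what \cite{LS} exploits.

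One point in your sketch deserves more care. You propose to handle multiple entries of $\gamma_\infty$ into $B(z,\rho/10)$ by an excursion-counting argument, but bounding the number of such entries by a polynomial in $\lambda$ uniformly in $\delta$ is not obvious (the total number of lattice steps in $B(R\delta^{-1})$ is of order $\delta^{-\beta}$). A cleaner route---and the one implicitly used elsewhere in the paper, e.g.\ in the decomposition by the stopping times $\tau_l$ in the proof of Proposition~\ref{last-step}---is to chop $\gamma_\infty|^{R\delta^{-1}}$ at its successive first-exit times from balls of radius $\rho/2$ centred along the path itself. The number of such segments is controlled by the corresponding count for the underlying SRW and is polynomial in $\lambda$ with stretched-exponentially small exceptional probability; each segment then admits the lower-tail bound via domain Markov, and the failure of equicontinuity forces some segment to be too short. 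This avoids the fixed-centre excursion count altogether.
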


A partial converse bounds the intrinsic distance in terms of the Schramm distance, where we recall that the Schramm distance was defined at \eqref{eq:Schramm}. For $\delta,r \in (0,1]$, $\lambda \geq 1$, set
\[  S^*_\delta (\lambda, r):=
  \left\{
  \begin{array}{c}
  \forall x,y \in \gamma_{\infty} \vert^{\lambda r \delta^{-1}}, \\ \text{ if }   d^S_{\gamma_{\infty}} (x,y) < r \delta^{- 1}, \text{ then } d_{\gamma_{\infty}} (x,y) < \lambda r^{\beta} \delta^{- \beta}
  \end{array}
  \right\}.\]
The following result follows from \cite[(7.51)]{LS}.

\begin{prop} \label{result:boundS}
  There exist constants $0 < c,C < \infty$ such that: for any $\delta,r \in (0,1]$ and $\lambda \geq 1$,
  \[\bP(S^*_\delta (\lambda, r)) \geq 1 - C \lambda^3 e^{-c \lambda}.\]
\end{prop}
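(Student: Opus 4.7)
The plan is a standard covering plus union-bound argument that reduces the claim to the upper-tail estimate on the LERW exit time from Proposition \ref{result:upperLowTail}. First I would unpack the complement of $S^*_\delta(\lambda,r)$: if this event fails, then there exist $x,y \in \gamma_\infty \vert^{\lambda r\delta^{-1}}$ with $\diam \gamma_\infty(x,y) < r\delta^{-1}$ but $d_{\gamma_\infty}(x,y) \geq \lambda r^\beta \delta^{-\beta}$. Hence there must be a segment of $\gamma_\infty$ of intrinsic length at least $\lambda r^\beta \delta^{-\beta}$ whose trace is confined to a Euclidean ball of radius $r\delta^{-1}$ centred at a point of $B(\lambda r \delta^{-1})$.

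Next I would cover $B(\lambda r\delta^{-1})$ by a maximal $r\delta^{-1}$-separated set of centres $\{z_i\}$, which by a volume count has size $O(\lambda^3)$; any ball of radius $r\delta^{-1}$ with centre in $B(\lambda r\delta^{-1})$ is contained in $B(z_i, 2r\delta^{-1})$ for some $i$. Consequently, on the complement of $S^*_\delta(\lambda,r)$ there exists an index $i$ for which a subpath of $\gamma_\infty$ of intrinsic length at least $\lambda r^\beta \delta^{-\beta}$ has trace contained in $B(z_i,2r\delta^{-1})$. Writing $\tau_i$ and $\sigma_i$ for, respectively, the first entry time of $\gamma_\infty$ into $B(z_i,2r\delta^{-1})$ and the subsequent exit time from $B(z_i,4r\delta^{-1})$, it suffices to bound $\bP(\sigma_i - \tau_i \geq \lambda r^\beta \delta^{-\beta})$ for each $i$ and then union-bound.

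For the per-index bound I would invoke the domain Markov property of the ILERW: conditional on $\gamma_\infty \vert_{[0,\tau_i]}$, the remainder is an ILERW from $\gamma_\infty(\tau_i) \in B(z_i,2r\delta^{-1})$ in a slit version of $\Z^3$. A standard coupling argument (e.g.\ via Wilson's algorithm to compare with a free ILERW from $\gamma_\infty(\tau_i)$, or directly as carried out in \cite[Section 7]{LS}) allows one to dominate the exit time from $B(z_i,4r\delta^{-1})$ by that of a free ILERW from a ball of radius $O(r\delta^{-1})$, up to a multiplicative constant. The upper-tail half of Proposition \ref{result:upperLowTail}, applied with $n \asymp r\delta^{-1}$, then gives $\bP(\sigma_i - \tau_i \geq \lambda r^\beta \delta^{-\beta}) \leq 2e^{-c\lambda}$ for an absolute $c > 0$, and summing over the $O(\lambda^3)$ indices yields the desired $C\lambda^3 e^{-c\lambda}$ bound. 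I expect the main technical obstacle to be precisely this comparison step, since the ILERW is not Markovian and Proposition \ref{result:upperLowTail} is only stated for the free ILERW from the origin; the transfer to a walk begun at an arbitrary site and confined to a slit subdomain is what is handled in the estimate \cite[(7.51)]{LS} on which the cited bound relies.
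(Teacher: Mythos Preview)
Your covering-plus-union-bound strategy matches the paper exactly, and you correctly identify \cite[(7.51)]{LS} as the key input. However, your per-index reduction has a gap. You claim it suffices to bound $\bP(\sigma_i-\tau_i\ge\lambda r^\beta\delta^{-\beta})$, where $\tau_i$ is the \emph{first} entry time into $B(z_i,2r\delta^{-1})$ and $\sigma_i$ the subsequent exit from $B(z_i,4r\delta^{-1})$. But the offending segment $\gamma_\infty(x,y)$ need not lie in the first excursion: the ILERW may enter $B(z_i,2r\delta^{-1})$, exit $B(z_i,4r\delta^{-1})$ quickly, and only on a later return spend a long time there. Bounding only the first visit does not control this, and handling all returns would require an additional argument (e.g.\ a tail bound on the number of excursions), which is not supplied.

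The paper sidesteps this entirely. Rather than tracking visits, it lets $X_u=|\gamma_\infty\vert^{\lambda r\delta^{-1}}\cap B_u|$ be the \emph{total} number of ILERW points in a box $B_u$ of side $3r\delta^{-1}$. If a segment of intrinsic length $\ge\lambda r^\beta\delta^{-\beta}$ has trace contained in such a box, then trivially $X_u\ge\lambda r^\beta\delta^{-\beta}$, regardless of which visit contains it. The estimate \cite[(7.51)]{LS} gives $\bP(X_u\ge\lambda r^\beta\delta^{-\beta})\le Ce^{-c\lambda}$ directly, with no need for a domain Markov property or a coupling to a free ILERW. So your proposed detour through the conditioned walk and Proposition~\ref{result:upperLowTail} is both more delicate and unnecessary: replace $\sigma_i-\tau_i$ by the total occupation count and invoke \cite[(7.51)]{LS} as a black box, and the proof goes through cleanly.
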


\begin{proof}
  For $u\in\Z^3$, let $B_u$ be the box of side length $3r\delta^{-1}$ centred at $u$, and let $X_u = |\gamma_\infty\vert^{\lambda r \delta^{-1} } \cap B_u|$ be the number of points in $B_u$ hit by $\gamma_\infty \vert^{\lambda r \delta^{-1} }$.
  We recall \cite[equation (7.51)]{LS}, which states that for some absolute $c,C$ and any $u$,
  \[\bP \left(X_u \ge \lambda r^{\beta} \delta^{-\beta} \right) \leq C e^{-c \lambda}.\]

  Cover the ball $B(0,\lambda r\delta^{-1})$ by boxes of side length $r\delta^{-1}$ centred at some $\{u_1,\dots,u_N\}$ with $N\asymp\lambda^3$.
  If some pair $x,y$ violates the event $S_\delta^*$, and $x$ is in the box of
  side length $r\delta^{-1}$ around $u_i$, then the segment $\gamma_\infty(x,y)$ is in the thrice larger box around the same $u_i$, and so $X_{u_i} \ge \lambda r^\beta\delta^{-\beta}$.
  A union bound gives the conclusion.
\end{proof}

\subsubsection{Capacity and hittability}

As noted in the introduction, one of the key differences from the two-dimensional case is that in three dimensions it is much easier for a random walk to avoid a LERW. The electrical capacity of a connected path of diameter $r$ in $\Z^3$ can be as large as $Cr$, but can also be as low as $O(r/\log r)$. However, the latter occurs only when the path is close to a smooth curve. The fractal nature of the scaling limit of LERWs suggests that a segment of LERW has capacity comparable to its diameter, and consequently, is likely to be hit by a second random walk starting nearby.
In this subsection, we give bounds on the hitting probability of $\gamma^x_{\infty}$ by a random walk started from a point $y$. The hitting bounds are uniform over the starting points $y \in B:= B(x, R \delta^{-1})$ with $\dist (y, \gamma^x_{\infty}) < r \delta^{-1}$.

We begin with a quantitative definition of hittability on $\mathbb{Z}^3$.
Let $R \geq 1$, $\delta \in (0,1)$, and let $D \subset \mathbb{Z}^3$ be a subset with $x \in D$ and diameter at least $\delta^{-1}R$.
For $y \in B:= B(x, R \delta^{-1})$, let $S$ be simple random walk on $\mathbb{Z}^3$ starting from $y$. If $D$ is a random set, then we further assume that $S$ is independent of $D$. $P^y_S$ denotes the corresponding probability measure for the random walk.
We say that $D$ is $\eta$-\textbf{hittable} around $x$ in $B$, and with parameters  $R \geq 1$ and $r \in (0,1)$, if the following holds:
\begin{equation} \label{def:etaHittable}
  \left\{
\begin{array}{c}
  \forall y \in B (x, R \delta^{-1})\text{ with } \dist (y, D) \leq r  \delta^{-1}, \\
  P^y_{S} \left(  S \left[ 0, \xi_S ( B(y, r^{1/2} \delta^{-1}) ) \right] \cap D= \emptyset  \right) \leq     r^{\eta}
\end{array}
  \right\},
\end{equation}
where $\xi_S (B (y, r^{1/2} \delta^{-1}))$ is the first time that $S$ exits from $B(y, r^{1/2} \delta^{-1})$.
(Recall $\dist(\cdot, \cdot)$ stands for the Euclidean distance between a point and a set.)

If $\gamma^x$ is an ILERW started at $x$, let  $ A_\delta(x , R, r;  \eta) $  denote the event where $\gamma^x$ is $\eta$-hittable as in \eqref{def:etaHittable}.
A local version of this event, restricted to starting points near $x$, is given by
\[
G_\delta(x,r; \eta) =
\left\{
\begin{array}{c}
\forall y \in B (x, r \delta^{-1}), \\ P^y_{S} \left(  S \left[ 0, \xi_S ( B(y, r^{1/2} \delta^{-1}) ) \right] \cap \gamma_{\infty}^x = \emptyset  \right) \leq        r^{\eta}
\end{array}
\right\}.
\]
The next result, which was established in \cite{SS}, indicates that $\gamma^x_{\infty}$ is $\eta$-hittable with high probability.

\begin{prop}[{cf.\ \cite[Lemma 3.2 and Lemma 3.3]{SS}}] \label{result:hit}
There exists a constant $\hat{\eta} \in (0,1)$ such that the following is true. Given $R\geq 1$, there exists a constant $C$ such that: for all $ \delta,r \in (0,1) $,
\[  \mathbf{P} \left(A_\delta(x, R, r; \hat{\eta}) \right) \geq 1 - Cr.\]
In particular, $\mathbf{P} (G_\delta(x,r ; \hat{\eta})) \geq 1 - Cr$.
\end{prop}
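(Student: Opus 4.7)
The plan is to deduce this directly from the fixed-starting-point hittability estimates of [SS] by a covering/union-bound argument, exactly as in their Lemmas 3.2 and 3.3. A preliminary reduction uses the comparison in [Mas, Corollary 4.5] between the restriction $\gamma_\infty^x\vert^{R\delta^{-1}}$ and the LERW stopped on $\partial B(x,R\delta^{-1})$, together with scale invariance of simple random walk, so that it suffices to prove the statement at a single lattice scale.

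The heart of the argument is a single-point version: for a deterministic $y_0$, one bounds
\[
\mathbf{P}\Bigl(\dist(y_0,\gamma_\infty^x)\leq r\delta^{-1}\ \text{and}\ P^{y_0}_S\bigl(S[0,\xi_S(B(y_0,r^{1/2}\delta^{-1}))]\cap\gamma_\infty^x=\emptyset\bigr)>r^{\hat\eta}\Bigr)\leq Cr^{\alpha}
\]
for some exponent $\alpha=\alpha(\hat\eta)$ that can be made as large as desired by decreasing $\hat\eta$. This is where all the genuine LERW input enters: the quasi-loop bound of Proposition \ref{result:quasiloops} forces the segment of $\gamma_\infty^x$ inside $B(y_0,r^{1/2}\delta^{-1})$ to have diameter of order $r^{1/2}\delta^{-1}$ on the good event, and the one-point function estimates from \cite{Escape} imply that such a segment has electrical capacity comparable to its diameter (up to a polynomial factor in $r$), so by standard capacity--hitting-probability comparison a random walk from $y_0$ hits it before exiting $B(y_0,r^{1/2}\delta^{-1})$ with probability at least $1-r^{\hat\eta}$.

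Next, to uniformize over $y$ I would cover $B(x,R\delta^{-1})$ by $N\lesssim R^{3}r^{-3}$ balls of radius $r\delta^{-1}/4$ with centres $\{z_i\}$. Any bad $y$ witnessing $A_\delta(x,R,r;\hat\eta)^c$ lies in some $B(z_i,r\delta^{-1}/4)$, and by the triangle inequality applied to the starting point of the random walk, the failure at $y$ propagates to the failure of a similar (slightly weakened) hittability event centred at $z_i$; this step is the content of [SS, Lemma 3.3] and costs only constant factors. A union bound then yields
\[
\bP(A_\delta(x,R,r;\hat\eta)^{c})\ \leq\ C\,R^{3}r^{-3}\cdot r^{\alpha}\ \leq\ Cr,
\]
provided we have chosen $\hat\eta$ small enough that $\alpha\geq 4$. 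The second statement is an immediate corollary: if $y\in B(x,r\delta^{-1})$ then $\dist(y,\gamma_\infty^x)\leq|y-x|\leq r\delta^{-1}$ automatically, since $x\in\gamma_\infty^x$, so $G_\delta(x,r;\hat\eta)\supseteq A_\delta(x,1,r;\hat\eta)$ and the bound follows by specialising $R=1$.

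The main obstacle is the single-point estimate from [SS]; once that is in hand the rest is bookkeeping. The subtle point is that the exponent $\hat\eta$ in the conclusion and the exponent $\alpha$ needed for the union bound are linked through the quasi-loop/capacity estimates, so one has to choose $\hat\eta$ small \emph{before} paying for the cover, and verify that the chain of dependencies leaves a net power of $r$ on the right-hand side.
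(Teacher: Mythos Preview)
The paper does not prove this proposition; it is quoted directly from \cite{SS}. Your covering-plus-union-bound reduction to a single-point estimate is the correct skeleton, and it matches the brief sketch the paper does give for the closely related polyhedral version (Proposition~\ref{result:hitPoly}). There the argument is split into two cases according to whether $y$ is close to the starting point $x$ of the LERW or not: for $y$ near $x$ one appeals to \cite[Lemma~3.2]{SS} directly (the LERW is automatically a ``large'' path through that region), while for $y$ away from $x$ one covers by balls of the smaller scale and applies \cite[Lemma~3.3]{SS} on each, exactly as you propose. Your single uniform covering is fine once one observes that the near-$x$ case is subsumed, so the two presentations are equivalent.

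One caution: your explanation of \emph{why} the single-point estimate holds---``quasi-loop bound forces the local segment to have large diameter, one-point function from \cite{Escape} gives capacity comparable to diameter, hence hittability''---is a plausible heuristic but is not the actual mechanism in \cite{SS}. The capacity of a LERW segment is not read off from the one-point function in that way; the arguments in \cite{SS} are more hands-on, involving iterated conditioning on the LERW path and direct hitting estimates for random walk on the conditioned path. Since both you and the paper ultimately defer to \cite{SS} for this step, this does not constitute a gap in your reduction, but you should not present that heuristic as the content of \cite[Lemma~3.3]{SS}.
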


We write $\mathbf{P}^{x,y}_{S}$ for the joint probability law of $\gamma_{\infty}^x$ and an independent simple random walk $S$ starting at $y$.
Working on the joint probability space, together with a change of variable, Proposition~\ref{result:hit} implies the following result. This result is well-known and simply states that a simple random walk hits a ILERW almost surely.

\begin{prop}[{cf.\ \cite[Theorem 1.1, Corollary 5.3]{LPS}}] \label{prop:hitas}
  For $x , y \in \mathbb{Z}^3$ we have that, for all $R > 0$,
  \[
    \inf_{\delta \in (0,1]} \mathbf{P}^{x,y}_S \left(  S [ 0, \xi_S ( B ( y, R \delta^{-1}) ) ] \cap \gamma_{\infty}^x  = \emptyset \right) = 0.
  \]
\end{prop}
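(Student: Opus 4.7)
The plan is to deduce the statement directly from the $\hat\eta$-hittability estimate of Proposition~\ref{result:hit}, combined with the trivial monotonicity that avoidance of $\gamma^x_\infty$ until exiting a larger ball implies the same for a smaller ball. The key observation is that, since $|x-y|$ is a fixed quantity while $\delta^{-1}\to\infty$ as $\delta\to 0$, for all sufficiently small $\delta$ the point $y$ lies in $B(x,\delta^{-1})$ and, because $x\in\gamma^x_\infty$, satisfies $\dist(y,\gamma^x_\infty)\leq|x-y|\leq r\delta^{-1}$ for any prescribed $r>0$. In other words, by shrinking $\delta$ we can make $y$ itself admissible in the quantifier defining $A_\delta(x,1,r;\hat\eta)$, and so the hittability bound transfers to the starting point of our walk.

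Explicitly, I would fix an arbitrary $r\in(0,R^2\wedge 1)$ and invoke Proposition~\ref{result:hit} with the parameter $1$ in place of $R$ to obtain a constant $C$ and exponent $\hat\eta>0$ for which $\mathbf{P}(A_\delta(x,1,r;\hat\eta))\geq 1-Cr$ uniformly in $\delta\in(0,1)$. For any $\delta$ small enough that both $|x-y|\leq\delta^{-1}$ and $|x-y|\leq r\delta^{-1}$ hold, the definition \eqref{def:etaHittable} applied at the point $y$ then yields, on the event $A_\delta(x,1,r;\hat\eta)$,
$$
P^y_S\bigl(S[0,\xi_S(B(y,r^{1/2}\delta^{-1}))]\cap\gamma^x_\infty=\emptyset\bigr)\leq r^{\hat\eta}.
$$
The choice $r<R^2$ gives $B(y,r^{1/2}\delta^{-1})\subseteq B(y,R\delta^{-1})$, so $\xi_S(B(y,r^{1/2}\delta^{-1}))\leq\xi_S(B(y,R\delta^{-1}))$ and hence the event of avoiding $\gamma^x_\infty$ before exiting the larger ball is contained in the analogous event for the smaller ball. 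Integrating the displayed bound over the realisation of $\gamma^x_\infty$ and paying $Cr$ on the complement of $A_\delta(x,1,r;\hat\eta)$, I therefore obtain, on the joint space,
$$
\mathbf{P}^{x,y}_S\bigl(S[0,\xi_S(B(y,R\delta^{-1}))]\cap\gamma^x_\infty=\emptyset\bigr)\leq Cr+r^{\hat\eta}.
$$

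Finally, letting $r\downarrow 0$ (and choosing $\delta$ small as a function of $r$ and $|x-y|$ to make $y$ admissible) drives the right-hand side to $0$, so the infimum over $\delta\in(0,1]$ equals $0$, as claimed. I do not anticipate any substantive obstacle here; the only care needed is the bookkeeping linking the three parameters $\delta$, $r$, and $R$, which is presumably what the excerpt has in mind by ``change of variable''.
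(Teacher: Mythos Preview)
Your proposal is correct and follows precisely the route the paper indicates: the text only states that the result follows from Proposition~\ref{result:hit} ``together with a change of variable'', and your argument supplies exactly those details, using the hittability event $A_\delta(x,1,r;\hat\eta)$, the admissibility of $y$ for small $\delta$ via $\dist(y,\gamma^x_\infty)\le|x-y|$, and the monotonicity in the exit radius.
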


\subsubsection{Hittability of sub-paths}

The main result of this subsection, Proposition \ref{prop:iterate}, is crucial for obtaining exponential tail bounds on the volume of balls in the UST in Section \ref{sec: exp lower volume}. It establishes that the path $\gamma_\infty=\text{LE} \left( S[0, \infty ) \right)$, i.e.\ the infinite LERW, has hittable sections across a range of distances from its starting point.

For $1 \le \lambda < R $, consider a sequence of boxes $D_{i} = D \left( \frac{i R}{\lambda} \right)$, $i = 1, 2, \dots , \lambda $, where $D (r)$ was defined in Subsection \ref{subsec:notationset}. Let $t_{i}$ be the first time that $\gamma_{\infty}$ exits $D_{i}$. We denote $x_{i} = \gamma_{\infty} (t_{i} )$, and write
\[\sigma_{i} = \inf \left\{ n\ge t_{i} \ | \ \gamma_{\infty} (n) \notin B \left(x_{i}, \frac{R}{2 \lambda } \right) \right\}.\]
For each $i = 1,2 , \dots , \lambda$, we define the event $A_{i}$ by
\begin{equation}\label{aidef}
A_{i} = \left\{ P^{z} \left( R^{z} [0, \xi_{i} ] \cap \gamma_{\infty} [t_{i}, \sigma_{i} ]\cap D_{\frac{R}{2 \lambda}} (x_{i} ) \neq \emptyset   \right) \ge c_{0} \text{ for all } z \in B \left( x_{i}, \frac{R}{16 \lambda } \right)  \right\},
\end{equation}
where: $R^{z}$ is a simple random walk started at $z$, independent of $\gamma_{\infty}$, with law denoted $P^{z}$; $\xi_{i}$ is the first time that $R^{z}$ exits $B( x_{i}, \frac{R}{2 \lambda })$; and $D_{\frac{R}{2 \lambda}} (x_{i} )$ is the box centered on the infinite half line started at $x_i$ that does not intersect $D_i$ and is orthogonal to the face of $D_i$ containing $x_i$, with centre at distance $R/4\lambda$ from $x$ and radius  $\frac{R}{2,000\lambda}$, see Figure \ref{figf}.

\begin{figure}[!ht]
\begin{center}
  \includegraphics[width=0.75\textwidth]{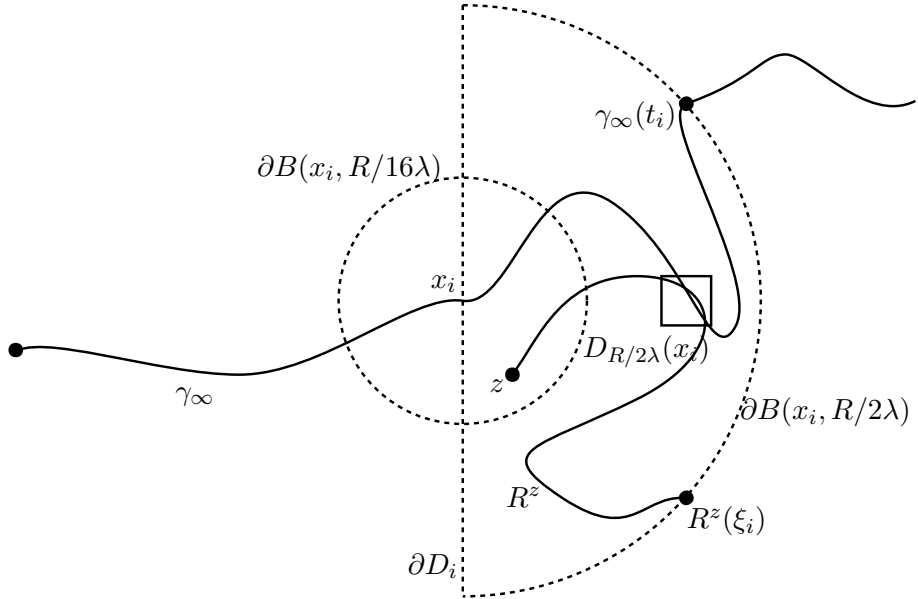}
\rput(	-181	pt,	118.4	pt	){	$x_{i}$	}
\rput(	-274	pt,	76	pt	){	$\gamma _{\infty }$	}
\rput(	-216	pt,	162	pt	){	$\partial B( x_{i} ,R/16\lambda )$	}
\rput(	-38	pt,70	pt	){	$\partial B( x_{i} ,R/2\lambda )$	}
\rput(	-75	pt,	29	pt	){	$R^{z}( \xi _{i})$	}
\rput(	-161	pt,	80	pt	){	$z$	}
\rput(	-151	pt,	38	pt	){	$R^{z}$	}
\rput(	-109	pt,	182	pt	){	$\gamma _{\infty }( t_{i})$	}
\rput(	-185	pt,	12	pt	){	$\partial D_{i}$	}
\rput(	-105	pt,	93	pt	){	$D_{R/2\lambda }( x_{i})$	}
\end{center}
\caption{On the event $A_i$, as defined at \eqref{aidef}, the above configuration occurs with probability greater than $c_0$ for any $z\in B(x_i,R/16\lambda)$.}\label{figf}
\end{figure}

Now, for fixed $a \in (0,1 )$, we consider a sequence of subsets of the index set $\{ 1,2, \dots ,  \lambda \}$ as follows. Let $ q =\lfloor \lambda^{1-a} / 3\rfloor$. For each $j = 0, 1, \dots , q $, define the subset $I_{j}$ of the set $\{ 1,2, \dots ,  \lambda \}$ by setting
\begin{equation}\label{index}
I_{j} := \left\{ 2 j \lambda^{a} + 1, 2 j \lambda^{a} + 2 , \dots , (2 j + 1 ) \lambda^{a} \right\},
\end{equation}
and the event $F_{j}$ by
\begin{equation}\label{index-2}
F_{j} = F_{j}^{a} = \bigcup_{i \in I_{j}} A_{i},
\end{equation}
i.e.\ $F_{j}$ is the event that there exists at least one index $i \in I_{j}$ such that $ \gamma_{\infty} [t_{i}, \sigma_{i} ] $ is a hittable set in the sense that $A_{i}$ holds. The next proposition shows that with high probability the event $F_{j}$ holds for all $j =1,2, \dots , q$. We will prove it in the following subsection.

\begin{prop}\label{prop:iterate}
Define the events $F_{j}$ as in \eqref{index-2}. There exists a universal constant $c_{1} > 0$ such that
\begin{equation}\label{good}
\mathbf{P} \left( \bigcap_{j = 1}^{q} F_{j} \right) \ge 1 - \lambda^{1-a}e^{- c_{1} \lambda^{a}}.
\end{equation}
\end{prop}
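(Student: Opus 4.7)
The plan is to reduce to a single-block estimate and iterate via a Markov-type argument. By the union bound, it suffices to show that for each fixed $j$,
\begin{equation*}
\mathbf{P}(F_j^c)=\mathbf{P}\Bigl(\bigcap_{i\in I_j}A_i^c\Bigr)\leq e^{-c\lambda^a}
\end{equation*}
for an absolute $c>0$; summing over $j\leq q\leq\lambda^{1-a}/3$ then gives the claimed $\lambda^{1-a}e^{-c_{1}\lambda^a}$ bound.

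The key geometric observation I would exploit is that since $x_i\in\partial D_i$ and $B(x_i,R/(2\lambda))\subset D_{i+1}$, we have $\sigma_i<t_{i+1}$ almost surely. Hence the segments $\gamma_\infty[t_i,\sigma_i]$ for $i\in I_j$ are disjoint and ordered along $\gamma_\infty$, and each $A_i$ is measurable with respect to $\gamma_\infty[0,t_{i+1}]$. This sets up a conditional iteration, and the main probabilistic input I need is a uniform lower bound: there is an absolute $p>0$ with
\begin{equation*}
\mathbf{P}(A_i\mid\gamma_\infty[0,t_i])\geq p
\end{equation*}
on a ``good past'' set whose complement has negligible probability. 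To prove this, I would invoke the domain Markov property of the ILERW (via Wilson's algorithm): conditional on $\gamma_\infty[0,t_i]$, the continuation from $x_i$ is a LERW in $\mathbb{Z}^3\setminus\gamma_\infty[0,t_i]$ conditioned to escape to infinity. Since the past is contained in $D_i$, the outer half of $B(x_i,R/(2\lambda))$---which is precisely where the target box $D_{R/(2\lambda)}(x_i)$ sits---is obstruction-free. A direct construction (forcing the underlying random walk to take an excursion into a narrow corridor crossing $D_{R/(2\lambda)}(x_i)$ before escaping to infinity) shows that with probability $\geq p_1>0$ the LERW segment $\gamma_\infty[t_i,\sigma_i]$ crosses $D_{R/(2\lambda)}(x_i)$ in a macroscopically fat way; combining this with the hittability estimate of Proposition \ref{result:hit} applied at the local scale $R/\lambda$ produces the desired $p$. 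Iterating the tower property in increasing order of $i\in I_j$ then yields $(1-p)^{|I_j|}=(1-p)^{\lambda^a}$.

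The main obstacle will be delivering the conditional lower bound genuinely uniformly in the past. Pathological pasts that crowd $\partial D_i$ near $x_i$ could, in principle, shrink the room in which the continuation constructs its hittable pattern, so one must show that the set of such pasts either has small probability (absorbed into a secondary good-past event) or still admits enough outward room. A related subtlety is that the domain Markov property produces a LERW in a random slit domain, so invoking Proposition \ref{result:hit} (stated for fresh ILERW in $\mathbb{Z}^3$) requires a local comparison between LERW in $\mathbb{Z}^3\setminus\gamma_\infty[0,t_i]$ and fresh ILERW near $x_i$ at scale $R/\lambda$. This local comparison, together with the geometric control on $\gamma_\infty[0,t_i]$ near its exit point, will be the technical core of the argument.
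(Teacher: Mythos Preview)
Your skeleton---union bound over $j$, then a conditional iteration over $i\in I_j$ using the domain Markov property of the ILERW to get $\mathbf{P}(F_j^c)\le (1-p)^{\lambda^a}$---matches the paper exactly. The divergence is entirely in how you obtain the uniform conditional lower bound $\mathbf{P}(A_i\mid\gamma_\infty[0,t_i])\ge p$.

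The paper does \emph{not} go through Proposition~\ref{result:hit} plus a local comparison with fresh ILERW. Instead it proves one- and two-point function estimates $\mathbf{P}(z\in\eta[0,\sigma])\ge c\,n^{-3+\beta}$ and $\mathbf{P}(z,w\in\eta[0,\sigma])\le C\,n^{-3+\beta}|z-w|^{-3+\beta}$ directly for the \emph{conditioned} LERW $\eta$ (the continuation of $\gamma_\infty$ from $x_i$ given the past $K=\gamma_\infty[0,t_i]$), uniformly in $K\subseteq D(m)$. A second-moment argument on the number of points of $R^z[0,\xi]\cap\eta[0,\sigma]\cap D_{R/(2\lambda)}(x_i)$ then yields $\mathbf{P}(g(x_i)\ge c_0)\ge c_0$, and a Harnack step spreads this to all $z\in B(x_i,n/8)$. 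This is Proposition~\ref{second-moment}, and it delivers the constant $p=c_0$ with no ``good past'' restriction at all---the uniformity in $K$ is built into the one/two-point estimates.

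Your proposed route has a genuine gap at the step ``fat crossing $+$ Proposition~\ref{result:hit} $\Rightarrow$ hittable''. In three dimensions a path of diameter $r$ can have capacity as small as $r/\log r$, so ``macroscopically fat'' does not by itself yield a uniform lower bound on the hitting probability; and Proposition~\ref{result:hit} concerns hittability of the full ILERW near its starting point, not of a specified short segment located a fixed distance away along the outward normal. The local comparison you flag (conditioned LERW vs.\ fresh ILERW at scale $R/\lambda$) would be needed even to start applying Proposition~\ref{result:hit}, and establishing it uniformly in the past is essentially the content of the paper's one/two-point lemma for the conditioned walk. In short: right scaffolding, but the load-bearing estimate should be the second-moment argument for the conditioned LERW, not a corridor construction feeding into Proposition~\ref{result:hit}.
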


\begin{rem}
(i) The reason that we decompose the ILERW $\gamma_{\infty}$ using the sequence of random times $t_{i}$ as in the above definition is that we need to control the future path $\gamma_{\infty} [t_{i}, \sigma_{i} ]$ uniformly on the given past path $\gamma_{\infty} [0, t_{i}]$ via \cite [Proposition 6.1]{S}.\\
(ii) We expect that each $\gamma_{\infty} [t_{i} , \sigma_{i} ]$ is a hittable set not only with positive probability as in Proposition \ref{second-moment} below, but also with high probability in the sense of \cite[Theorem 3.1]{SS}. However, since Proposition \ref{iterate} is enough for us, we choose not to pursue this point further here.
\end{rem}

\subsection{Loop-erased random walks on polyhedrons} \label{subsec:LERWPoly}

We suppose that a loop-erased random walk on a domain $ \hat{D} \subset \mathbb{Z}^3  $ starts at an interior vertex of $\hat{D}$ and ends with its first hitting time to the boundary $\partial \hat{D}$.
As we have discussed above, the geometry of the domain $\hat{D}$ affects the path properties of loop-erased random walks on it.
In this subsection we will see that the results in \cite{Kozma, SS, LS} hold for a collection of scaled polyhedrons, which we define below.
Similarly to Subsection~\ref{subsec:pathprop}, and under the assumption that the polyhedrons are scaled with a large parameter,
the proofs in the aforementioned papers carry over without major modifications to our setting.
For clarity, we  comment on the differences between the work in \cite{SS, LS} and this subsection.

A \textbf{dyadic polyhedron} on $\mathbb{R}^3$ is a connected set $\mathcal{P}$ of the form
\begin{equation*}
  \gls{dyadic} = \bigcup_{j = 1}^{m} C_j,
\end{equation*}
where each $C_j \subset \mathbb{R}^3$ is a closed cube of the form $[a_1, b_1]\times [a_2, b_2] \times [a_3, b_3]$ with $a_i, b_i \in \mathbb{Z}^3 + \frac{1}{2}$ (cf.\ \eqref{eq:dyadicP} below, where we scale the lattice instead of the polyhedron). Note that
$ \mathbb{Z}^3 + \frac{1}{2} = \{ z \in \mathbb{R}^3 \: : \: z + 1/2 \in \mathbb{Z}  \}  $.
We say that a polyhedron $\mathcal{P}$ is bounded by $R$  if $\mathcal{P} \subset B(R) $.
Let us assume that $ 0 \in \mathcal{P}$ and write
\begin{equation}  \label{eq:expansion}
 2^n \mathcal{P} :=  \{ z \in \mathbb{Z}^3 \: : \:  2^{-n} z \in \mathcal{P}   \},
\end{equation}
for the $2^n$-expansion of the polyhedron $\mathcal{P}$.
In this subsection we restrict our scaling to powers of $2$, and note that $2^{n}\mathcal{P}$  is a dyadic polyhedron as well.
If $\mathcal{P}$ is bounded by $R$, then   $B(0,1) \subset 2^n \mathcal{P} \subset B(0,2^n R)$, for all $n \geq 1$.

Let $S$ be a simple random walk starting at $0$ and let $\xi_{\partial \mathcal{P}}$ be the first exit time of the random walk from the polyhedron (with respect to the lattice $\mathbb{Z}$).
In this section we study the path properties of the loop-erased random walk
\begin{equation} \label{eq:defGammaPoly1}
  \gamma_n^{\mathcal{P}} = \LE (S [  0, \xi_{\partial 2^{n}\mathcal{P}} ] ).
\end{equation}
Note that the index $n$ indicates a $2^n$-expansion of $\mathcal{P}$ (cf.~\eqref{def:gammaEscaledPoly} below).

% Hittability

We say that $\gamma_n^{\mathcal{P}}$ is $\eta$-\textbf{hittable} if the following event holds:
\[
A_n^{\mathcal{P}} (r;  \eta) :=
\left\{
\begin{array}{c}
  \forall y \in 2^n \mathcal{P}  \text{ with } \dist (y, \gamma_n^{\mathcal{P}} ) \leq r 2^n, \\
  P^y_{S} \left(  S \left[ 0, \xi_S ( B(y, r^{1/2} 2^n) ) \right] \cap\gamma_n^{\mathcal{P}} = \emptyset  \right) \leq     r^{\eta}
\end{array}
\right\},
\]
where $\xi_S (B (y, r^{1/2} 2^n))$ is the first time that $S$ exits from $B(y, r^{1/2} 2^n)$.

%The next result, which was established in \cite{SS}, indicates that $\gamma_n^{\mathcal{P}}$ is $\eta$-hittable with high probability.

\begin{prop}[{cf.\ Proposition~\ref{result:hit}}] \label{result:hitPoly}
Fix $R\geq 1$, let $\mathcal{P}$ be a dyadic polyhedron containing $0$ and bounded by $R$, and let $\gamma_n^{\mathcal{P}}$ be the loop-erased random walk in \eqref{eq:defGammaPoly1}.
There exists a constant $\hat{\eta} \in (0,1)$ such that there exists a constant $C$ (depending on $R$) and $N \geq 1$ for which the following is true: for all $ r \in (0,1) $ and $n \geq N$,
\[  \mathbf{P} \left(A_n^{\mathcal{P}} (r; \hat{\eta}) \right) \geq 1 - Cr.\]
\end{prop}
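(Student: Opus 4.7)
The plan is to adapt the proof of Proposition~\ref{result:hit} (i.e.~\cite[Lemmas~3.2 and 3.3]{SS}) to LERW in a scaled dyadic polyhedron. The crucial feature of dyadic polyhedra is that the boundary of $2^n\mathcal{P}$ consists of axis-aligned half-integer faces, so near any boundary point the ambient domain looks locally like either $\mathbb{Z}^3$, a half-space, or a wedge. No genuinely new geometric structures appear, and the arguments of \cite{SS} should extend with only cosmetic modifications.

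First, I would discretize the problem by covering $2^n\mathcal{P}$ by an $\tfrac{1}{4} r^{1/2} 2^n$-grid of $O(r^{-3/2}(2^nR)^3)$ reference points. For a fixed reference point $y$, the goal is to prove the pointwise estimate
\[
\mathbf{P}\Bigl(\dist(y,\gamma_n^{\mathcal{P}}) \leq 2 r\, 2^n \text{ and } P^y_S\bigl(S[0, \xi_S(B(y, r^{1/2}2^n))] \cap \gamma_n^{\mathcal{P}} = \emptyset\bigr) > r^{\hat\eta}\Bigr) \leq C r^{3+\hat\eta'}
\]
for some $\hat\eta,\hat\eta' > 0$ and $C = C(R)$; a union bound over the grid then gives the desired estimate (at the cost of replacing $r$ by $2r$, which is harmless after adjusting constants).

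Second, for each fixed $y$, I would run the second-moment argument from \cite{SS}. Let $N$ count the visits of $S$ (run up to $\xi_S(B(y, r^{1/2}2^n))$) to a chosen ``good'' portion of $\gamma_n^{\mathcal{P}}\cap B(y, r^{1/2}2^n)$ near a typical close approach of $\gamma_n^{\mathcal{P}}$ to $y$. Combining (i) the one-point estimate for LERW passing near $y$, conditional on the LERW coming within $2r\, 2^n$ of $y$ (see \cite{S,Escape,Mas}), (ii) the two-point upper bound for LERW from \cite{SS}, and (iii) classical Green's function estimates for simple random walk in $\mathbb{Z}^3$, a half-space, or a wedge, one obtains $\mathbf{E}[N] \succeq r^{a}$ and $\mathbf{E}[N^2] \preceq r^{a'}$ with $a' < 2a$. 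Paley--Zygmund then yields the required lower bound on the hitting probability, which translates into the claimed upper bound on the non-hitting probability, with $\hat\eta$ equal (up to constants) to the gap $2a-a'$, exactly as in \cite{SS}.

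The main adaptation, and the main obstacle, concerns points $y$ close to $\partial(2^n\mathcal{P})$. There $S$ can exit $2^n\mathcal{P}$ before leaving $B(y, r^{1/2}2^n)$, and the one-point function of $\gamma_n^{\mathcal{P}}$ near $y$ depends on how the LERW interacts with the boundary. Both issues are controlled by the dyadic, axis-aligned structure: the boundary near $y$ is locally a flat face, edge or corner, so Green's function bounds for the killed random walk are classical, and the escape/one-point estimates of \cite{S,Escape} for LERW extend with the same exponents to these local half-space and wedge geometries. The constraint $n \geq N$ arises precisely here: it ensures that $r^{1/2}2^n$ is much larger than the lattice spacing (so continuum estimates govern), and that, for $y$ in the bulk, the coupling between $\gamma_n^{\mathcal{P}}$ and the ILERW afforded by \cite[Corollary~4.5]{Mas} is of sufficient quality to transfer the ILERW hittability estimates through the second-moment calculation uniformly in $r\in(0,1)$.
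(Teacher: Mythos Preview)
Your approach would eventually work, but it is more laborious than the paper's and contains a misconception that leads you to do unnecessary boundary analysis.

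First, the misconception: the simple random walk $S$ appearing in the event $A_n^{\mathcal{P}}(r;\hat\eta)$ is an unrestricted SRW on $\mathbb{Z}^3$, not a walk killed on $\partial(2^n\mathcal{P})$. The stopping time is $\xi_S(B(y, r^{1/2}2^n))$, the exit from a Euclidean ball, with no reference to the polyhedron. Hence your concern that ``$S$ can exit $2^n\mathcal{P}$ before leaving $B(y, r^{1/2}2^n)$'' and the ensuing half-space/wedge Green's-function analysis for $S$ are irrelevant.

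Second, your decomposition differs from the paper's. You split according to the distance of $y$ from $\partial(2^n\mathcal{P})$; the paper instead splits according to the distance of $y$ from the origin, the starting point of $\gamma_n^{\mathcal{P}}$. For $y$ close to $0$ one applies \cite[Lemma~3.2]{SS} directly, using only that $2^n\mathcal{P}$ is large enough for $\gamma_n^{\mathcal{P}}$ to exit a macroscopic ball about the origin (this is where $n\geq N$ enters). For $y$ away from the origin one covers $2^n\mathcal{P}$ by finitely many balls of radius $\varepsilon^2 2^n$ and applies \cite[Lemma~3.3]{SS} on each; that lemma handles hittability of LERW at points far from its start via the domain Markov property, and its hypotheses are insensitive to the position of the domain boundary. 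No geometric analysis of the LERW near faces, edges or corners of $2^n\mathcal{P}$ is required.

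Third, your bulk argument relies on \cite[Corollary~4.5]{Mas} to couple $\gamma_n^{\mathcal{P}}$ with the ILERW. That coupling is only valid in a neighbourhood of the LERW's starting point, not near an arbitrary bulk point $y$. To treat a general $y$ one must invoke the domain Markov property and compare the future of the LERW from a point near $y$ to a fresh (conditioned) LERW; this is precisely what \cite[Lemma~3.3]{SS} packages. Citing that lemma directly, as the paper does, is both cleaner and closes this gap in your outline.
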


Proposition~\ref{result:hitPoly} follows from \cite[Lemma 3.2]{SS} and \cite[Lemma 3.3]{SS}, using the argument for the proof of \cite[Theorem 3.1]{SS}.
The argument for Proposition~\ref{result:hitPoly} considers two cases, depending on the starting point of the simple random walk $S(0) = y $.
For some $\varepsilon > 0$, either $ y \in B(0, \varepsilon n) $ or $y \in \mathcal{P} \setminus B (0, \varepsilon n) $.
For the first case we apply \cite[Lemma 3.2]{SS}, and here we use that $\gamma_n^{\mathcal{P}}$ is a ``large'' path when $n$ is large enough.
If $y \in \mathcal{P} \setminus B (0, \varepsilon n)$, we then consider a covering of $\mathcal{P}$ with a collection of balls $\{ B(v_i, \varepsilon^2 n ) \}_{1 \leq i \leq L}$, with $v_1, \ldots v_L \in \mathcal{P} \setminus B (0, \varepsilon n)$ and $L \leq 10 R^3 \lfloor \varepsilon \rfloor^{-6}$. We then use \cite[Lemma 3.3]{SS} on each one of these balls and a union bound gives the desired result.
%Quasiloops

Recall the definition of $(r,R)$--quasi-loop in Subsection~\ref{subsec:pathprop} and that $\QL (r, R; \gamma)$ denotes the set of $(r,R)$--quasi-loops of $\gamma$. Proposition~\ref{result:quasiloops} indicates the ILERW does not have quasi-loops with high probability. A similar statement holds for a polyhedral domain. The proof makes use of Proposition~\ref{result:hitPoly} and we use modifications over the stopping times and the covering of the domain (as in Proposition~\ref{result:hitPoly}).
 Indeed, the proof of \cite[Theorem 6.1]{SS} is divided in three cases.
 If the LERW has a quasi-loop at a vertex $v$, then either $v$ is close to the starting point of the LERW, or $v$ is close to the boundary, or $v$ is in an intermediate region. The probability of the first two cases is bounded by escape probabilities for random walks. We can use the same bounds in \cite[Theorem 6.1]{SS} as long as the scale $ n $ is large enough (as we assume in Proposition~\ref{result:quasiloopsPoly}). The bound for the third case follows from a union bound over a covering of the domains. We can use this argument because $\mathcal{P}$ has a regular boundary.

\begin{prop}[{cf.\ Proposition~\ref{result:quasiloops}}] \label{result:quasiloopsPoly}
Fix $R\geq 1$ and let $\mathcal{P}$ be a dyadic polyhedron containing $0$ and bounded by $R$,
and let $\gamma_n^{\mathcal{P}}$ be the loop-erased random walk in \eqref{eq:defGammaPoly1}.
There exist constants $C,M < \infty$, $N \geq 1$ and $\tilde{\eta} > 0$ such that for any $\eps \in (0,1)$ and $n \geq N$,
\[ \bP \left( \QL (\eps^M 2^n , \sqrt{\eps} 2^n ;\gamma_n^{\mathcal{P}}) \neq \emptyset \right) \leq C \eps^{\tilde{\eta}}.\]
\end{prop}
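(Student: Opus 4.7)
The plan is to follow the proof of \cite[Theorem 6.1]{SS} essentially verbatim, with Proposition~\ref{result:hitPoly} replacing the hittability input used there, and with the observation that the boundary $\partial 2^n\mathcal{P}$ is piecewise flat and axis-aligned (so in particular uniformly regular) so that random-walk escape estimates near $\partial 2^n\mathcal{P}$ are of the same quality as those in the free case. Let $v \in \gamma_n^{\mathcal{P}}$ be a vertex witnessing an $(\varepsilon^M 2^n, \sqrt{\varepsilon}\, 2^n)$-quasi-loop, i.e.\ some $v' \in \gamma_n^{\mathcal{P}}$ with $|v-v'|<\varepsilon^M 2^n$ but $\gamma_n^{\mathcal{P}}(v,v')\not\subseteq B(v,\sqrt{\varepsilon}\,2^n)$. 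I will split on the location of $v$ into three regions, which I label $\mathcal{R}_{\mathrm{orig}}$, $\mathcal{R}_{\mathrm{bdry}}$ and $\mathcal{R}_{\mathrm{int}}$: vertices within Euclidean distance $\varepsilon^{\alpha} 2^n$ of the origin, within distance $\varepsilon^\alpha 2^n$ of $\partial 2^n\mathcal{P}$, and the rest, for a small $\alpha>0$ to be fixed at the end.

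For $v \in \mathcal{R}_{\mathrm{orig}}$, the argument in \cite[Theorem 6.1]{SS} is unchanged: the walk $S$, after its first exit from $B(0,\sqrt{\varepsilon}\,2^n)$, would need to return close to itself (modulo loop erasure) from a long way off, and such a return probability is bounded by $C\varepsilon^{c}$ via standard Green's function estimates. For $v\in \mathcal{R}_{\mathrm{bdry}}$ we use the same kind of reasoning, but now working relative to a half-space: choose a face $F$ of $\partial 2^n\mathcal{P}$ closest to $v$, and note that since $\mathcal{P}$ is dyadic the orthogonal half-space $H$ through $F$ locally coincides with $2^n\mathcal{P}^c$. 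By monotonicity, hitting of $\partial 2^n\mathcal{P}$ dominates hitting of $\partial H$, so that the probability that $S$ visits a $\varepsilon^M 2^n$-neighbourhood of a given point at distance $\sqrt{\varepsilon}\,2^n$ without first hitting $\partial 2^n\mathcal{P}$ is $O(\varepsilon^{c})$; a union bound over a $O(\varepsilon^{-2\alpha})$-net on the boundary piece takes care of this region.

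For $v\in\mathcal{R}_{\mathrm{int}}$ we use Proposition~\ref{result:hitPoly}: cover the set of admissible $v$'s by $N\lesssim \varepsilon^{-3\alpha}$ balls $B(z_i,\varepsilon^\alpha 2^n)$. On the event $A_n^{\mathcal{P}}(\varepsilon;\hat{\eta})$, any potential quasi-loop witness in $B(z_i,\varepsilon^\alpha 2^n)$ would force the simple random walk $S$, at one of its visits to $B(v,\varepsilon^M 2^n)$, to exit $B(v,\sqrt{\varepsilon}\,2^n)$ without hitting $\gamma_n^{\mathcal{P}}$. Conditioning on the relevant past of $S$ and using the Markov property together with the $\hat{\eta}$-hittability bound $\varepsilon^{\hat{\eta}/2}$ gives, for a single ball, a contribution that is polynomially small in $\varepsilon$; choosing $M$ large enough and $\alpha$ small enough allows the union bound over the $N\lesssim \varepsilon^{-3\alpha}$ balls, combined with the complementary probability $C\varepsilon$ from Proposition~\ref{result:hitPoly}, to beat the $\varepsilon^{-3\alpha}$ factor and leave a bound of the form $C\varepsilon^{\tilde{\eta}}$.

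The main obstacle is purely bookkeeping: choosing the exponents $M$, $\alpha$, $\tilde\eta$ so that all three regions yield a polynomial bound, and checking that nothing in the Green's-function/escape estimates near the boundary breaks because $\partial 2^n\mathcal{P}$ is axis-aligned and at scale $\ge 1$ (this is precisely why we require $n\geq N$ for some sufficiently large $N=N(R)$, and why we need $\mathcal{P}$ to be dyadic rather than a more general Lipschitz domain). Once $n$ is large enough, all the local escape-probability inputs from \cite{SS} apply uniformly on $\mathcal{P}$, and the three-region decomposition described above yields the stated bound $C\varepsilon^{\tilde{\eta}}$.
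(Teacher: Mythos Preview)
Your proposal is correct and follows essentially the same approach as the paper: both arguments adapt \cite[Theorem 6.1]{SS} via the same three-region decomposition (near the origin, near $\partial 2^n\mathcal{P}$, and the intermediate region), handling the first two by random-walk escape estimates (using the axis-aligned regularity of $\partial\mathcal{P}$) and the third by a union bound over a covering combined with the polyhedral hittability input Proposition~\ref{result:hitPoly}. Your sketch is in fact more detailed than the paper's own discussion, which merely outlines this structure.
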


%Scaling limit

Since Propositions~\ref{result:hitPoly} and~\ref{result:quasiloopsPoly} hold for scaled dyadic polyhedrons, we can follow the argument in \cite{LS} leading to the proof of the scaling limit of the LERW.  From this argument we obtain control of the paths and the scaling limit for the LERW $\gamma_n^{\mathcal{P}}$ with $\beta$-parameterization.
We finish this sections stating these three results.

%Path properties

%For $R \geq 1$,  $\eps \in (0,1)$ and $n \geq 1$ let
For a LERW $\gamma_n^{\mathcal{P}}$,  $n \geq 1$ and $\lambda \geq 1$, the path $\gamma_n^{\mathcal{P}}$ is $\lambda$-equicontinuous (with exponents   $ 0 <  b_1, b_2 < \infty$) if
 \[ E^{\mathcal{P}}_{n} (\lambda,R) :=
   \left\{
    \forall x,y \in\gamma_n^{\mathcal{P}} ,
     \text{ if }  d_{\gamma }   (x,y) \leq \lambda^{-b_1} 2^{\beta}, \text{ then }     \vert x - y \vert < \lambda^{-b_2} 2^{n}
     \right\}.\]
The partial converse is the event:
\[  S^{\mathcal{P}}_n (\lambda, r):=\left\{\forall x,y \in \gamma_n^{\mathcal{P}}, \text{ if }   d^S_{\gamma} (x,y) < r 2^{n}, \text{ then } d_{\gamma} (x,y) < \lambda r^{\beta} 2^{\beta}\right\}.\]

\begin{prop}[{cf.\ Proposition~\ref{result:boundE}}] \label{result:boundEPoly}
There exist constants $ 0 <  b_1, b_2 < \infty$ such that the following is true. Given $R\geq 1$, there exist constants $0 < C < \infty$ and $N \geq 1$ such that: for all $\lambda \geq 1 $ and $n \geq N$,
\begin{equation*}
  \mathbf{P} (E^{\mathcal{P}}_n (\lambda,R)) \geq 1 - C \lambda^{-b_2}.
\end{equation*}
\end{prop}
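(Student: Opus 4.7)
The plan is to adapt the argument for Proposition~\ref{result:boundE} from \cite[Proposition 7.1]{LS}, substituting the polyhedral analogues, Propositions~\ref{result:hitPoly} and~\ref{result:quasiloopsPoly}, for the corresponding estimates on the infinite LERW used there. The authors have in fact signalled this in the paragraph preceding the statement: once hittability and absence of quasi-loops have been transferred to the polyhedral setting, the remainder of the argument is essentially a combinatorial reduction.

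Concretely, I would suppose that $E_n^{\mathcal{P}}(\lambda, R)$ fails, so there exist $x, y \in \gamma_n^{\mathcal{P}}$ with $d_\gamma(x, y) \le \lambda^{-b_1} 2^{n\beta}$ and $|x-y| \ge \lambda^{-b_2} 2^n$. Cover the polyhedron $2^n \mathcal{P} \subset B(2^n R)$ by $O(R^3 \lambda^{3 b_2})$ Euclidean balls $B_k$ of radius $\lambda^{-b_2} 2^n / 4$. Any violation places two points of $\gamma_n^{\mathcal{P}}$ in some $B_k$ at intrinsic separation at most $\lambda^{-b_1} 2^{n\beta}$, while the segment $\gamma_n^{\mathcal{P}}(x,y)$ between them must exit a slightly enlarged ball around $B_k$ (otherwise $|x-y|$ would be too small), producing a quasi-loop at spatial scale comparable to $\lambda^{-b_2} 2^n$ on a segment of intrinsic length $\lambda^{-b_1} 2^{n\beta}$. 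Tuning $b_1 \approx \beta b_2 (1 + \delta)$ for small $\delta > 0$, so that the intrinsic budget falls just below the natural $\beta$-scaling of a LERW segment spanning distance $\lambda^{-b_2} 2^n$, such a configuration is atypical: it is ruled out by combining Proposition~\ref{result:quasiloopsPoly} (applied with $\varepsilon$ a suitable small power of $\lambda^{-1}$) with the hittability input of Proposition~\ref{result:hitPoly} and a lower-tail bound on the intrinsic length of LERW crossings of a ball, which is the polyhedral analogue of the length estimate used in \cite{LS} and ultimately derived from Proposition~\ref{result:upperLowTail}.

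A final union bound over the $O(\lambda^{3 b_2})$ balls, after increasing $b_2$ if necessary to absorb the counting factor, yields $\mathbf{P}((E_n^{\mathcal{P}}(\lambda, R))^c) \le C \lambda^{-b_2}$. The main obstacle I foresee is the last ingredient: extending the one-point function estimates of \cite{Escape} and the exponential length tails of \cite{S} from $\mathbb{Z}^3$ to the LERW on $2^n\mathcal{P}$, uniformly in $n \ge N(R)$. This is exactly where the regularity of $\mathcal{P}$ (axis-aligned faces at half-integer coordinates) and the scale separation $n \ge N$ enter: well inside the polyhedron the boundary is invisible at the relevant scales, and near $\partial(2^n\mathcal{P})$ the boundary looks locally like a finite union of half-spaces, so the Green's function and loop-measure estimates underlying \cite{S,Escape} admit $R$-dependent but $n$-uniform constants. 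Once this verification is in hand, the combinatorial scheme above delivers the claimed polynomial bound with exponents $b_1, b_2$ inherited (up to adjustment) from the infinite-volume argument.
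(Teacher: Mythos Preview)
Your high-level plan---carry over the proof of \cite[Proposition~7.1]{LS} verbatim, replacing the infinite-volume inputs by their polyhedral analogues (Propositions~\ref{result:hitPoly} and~\ref{result:quasiloopsPoly}) and relying on the regularity of $\partial\mathcal P$ for $n\ge N$---is exactly what the paper does: it offers no independent argument beyond the remark that once hittability and absence of quasi-loops are available in $2^n\mathcal P$, one may ``follow the argument in \cite{LS}''.

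That said, your concrete sketch conflates the mechanisms behind $E_n^{\mathcal P}$ and $S_n^{\mathcal P}$. A violation of $E_n^{\mathcal P}(\lambda,R)$ produces $x,y\in\gamma_n^{\mathcal P}$ with $|x-y|\ge \lambda^{-b_2}2^n$ but $d_\gamma(x,y)\le \lambda^{-b_1}2^{n\beta}$: the points are Euclidean-\emph{far}, so they do not lie in a common ball $B_k$ of radius $\tfrac14\lambda^{-b_2}2^n$, and no $(r,R)$--quasi-loop is created (that mechanism needs two Euclidean-close points whose connecting arc has large diameter, which is the content of $S_n^{\mathcal P}$ and Proposition~\ref{result:boundSPoly}). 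The relevant obstruction for $E_n^{\mathcal P}$ is instead that the arc $\gamma_n^{\mathcal P}(x,y)$ crosses an annulus of Euclidean width $\asymp\lambda^{-b_2}2^n$ in intrinsic length $\le\lambda^{-b_1}2^{n\beta}$, i.e.\ an abnormally \emph{short} crossing. The correct reduction is therefore: cover $2^n\mathcal P$ by $O(R^3\lambda^{3b_2})$ balls of radius $\lambda^{-b_2}2^n$, apply the domain Markov property together with the lower-tail length estimate (the polyhedral version of the second inequality in Proposition~\ref{result:upperLowTail}, which is exactly the ingredient you flag in your final paragraph) to bound the probability of a short crossing of each ball, and take a union bound; hittability enters only to make the domain Markov comparison uniform. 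With that correction your outline is sound and matches the intended route.
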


%%%%

%Bound on the intrinsic distance

\begin{prop}[{cf.\ Proposition~\ref{result:boundS}}] \label{result:boundSPoly}
  There exist constants $0 < c,C < \infty$ and $N \geq 1$ such that: for any $ r \in (0,1]$, $\lambda \geq 1$  and $n \geq N$,
  \[ \bP(S^{\mathcal{P}}_n (\lambda, r)) \geq 1 - C \lambda^3 e^{-c \lambda} .\]
\end{prop}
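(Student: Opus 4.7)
The plan is to adapt the proof of the infinite-case analogue, Proposition~\ref{result:boundS}, to LERW on the scaled polyhedron $2^n\mathcal{P}$. The structure is identical: cover $2^n\mathcal{P}$ by a mesh of boxes of side length $r2^n$, control the probability that any single box contains many steps of $\gamma_n^{\mathcal{P}}$, and take a union bound. Suppose $x,y\in\gamma_n^{\mathcal{P}}$ witness the failure of $S^{\mathcal{P}}_n(\lambda,r)$, i.e.\ $d^S_{\gamma}(x,y)<r2^n$ while $d_\gamma(x,y)\ge\lambda r^\beta 2^{n\beta}$. Then $x$ lies in some covering box $B_{u_i}$ of side length $r2^n$, and because $\diam\gamma_n^{\mathcal{P}}(x,y)<r2^n$, the whole segment $\gamma_n^{\mathcal{P}}(x,y)$ lies in the box $B_{u_i}^{(3)}$ concentric with $B_{u_i}$ of three times the side length; hence $X_{u_i}:=|\gamma_n^{\mathcal{P}}\cap B_{u_i}^{(3)}|\ge\lambda r^\beta 2^{n\beta}$.

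The key ingredient is therefore the polyhedral analogue of \cite[(7.51)]{LS}: for any $u\in 2^n\mathcal{P}$, any $r\in(0,1]$, any $\lambda\ge 1$, and any $n\ge N=N(R)$,
\[\bP\bigl(X_u\ge\lambda r^\beta 2^{n\beta}\bigr)\le Ce^{-c\lambda}.\]
In the infinite case this estimate is obtained from one-point function estimates for LERW together with a second-moment computation (essentially of the same flavour as Proposition~\ref{result:upperLowTail}), and the scheme transfers to the polyhedral setting in the same manner that Propositions~\ref{result:hitPoly} and~\ref{result:quasiloopsPoly} transferred from their infinite counterparts. Namely, one distinguishes three regimes: $B_{u_i}^{(3)}$ close to the starting point $0$, in the interior of $2^n\mathcal{P}$, or close to $\partial(2^n\mathcal{P})$. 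In the first regime a direct application of the infinite-LERW estimate combined with the Markov property works; in the interior regime the infinite-LERW estimate applies locally via a coupling on the relevant box; and near the boundary the dyadic (hence piecewise axis-aligned) structure of $\partial\mathcal{P}$ ensures that random-walk exit-distribution estimates remain uniform. The assumption $n\ge N$ guarantees that $B_{u_i}^{(3)}$ is macroscopically small compared with $2^n\mathcal{P}$.

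Summing over the covering, whose cardinality is at most $C_R r^{-3}$, yields an initial bound of the form $C_R r^{-3}e^{-c\lambda}$; absorbing the $R$-dependence into $C$ and comparing with the trivial bound whenever $C\lambda^3 e^{-c\lambda}\ge 1$ (after a small adjustment of $c$ if needed) gives the stated form. The main obstacle is the per-box estimate displayed above: the one-point and second-moment bounds of \cite{LS} must be verified uniformly over $u$, including for boxes whose tripled version touches $\partial(2^n\mathcal{P})$. This boundary case is precisely what the dyadic structure of $\mathcal{P}$ is designed to handle: on the scale $r2^n$ the boundary looks like a finite union of half-spaces, for which the relevant Green's-function, escape-probability and LERW one-point estimates satisfy the same uniform upper bounds as in the full lattice, as has already been exploited in establishing Propositions~\ref{result:hitPoly}--\ref{result:boundEPoly}.
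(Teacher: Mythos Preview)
Your approach is exactly what the paper intends. The paper does not give an explicit proof of this proposition; it states (just before Proposition~\ref{result:boundEPoly}) that once Propositions~\ref{result:hitPoly} and~\ref{result:quasiloopsPoly} are available on dyadic polyhedra, the arguments of \cite{LS} carry over, and then simply lists Propositions~\ref{result:boundEPoly}--\ref{result:scalingLimitPoly}. Your adaptation of the proof of Proposition~\ref{result:boundS} --- cover the domain by boxes of side $r2^n$, invoke the polyhedral analogue of \cite[(7.51)]{LS} for the per-box tail, and take a union bound --- is precisely the argument that is being referred to.

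One remark on the prefactor. Your covering of $2^n\mathcal{P}\subset B(0,R2^n)$ by boxes of side $r2^n$ has cardinality $\asymp R^3 r^{-3}$, not $\lambda^3$, and your attempted Step~4 (``comparing with the trivial bound'') does not convert $C_R r^{-3}e^{-c\lambda}$ into $C\lambda^3 e^{-c\lambda}$: for small $r$ and large $\lambda$ the former can be much larger. The $\lambda^3$ in the stated bound appears simply to have been copied from the infinite case, where the domain $B(\lambda r\delta^{-1})$ itself scales with $\lambda$ and the covering genuinely has $\asymp\lambda^3$ boxes; on a fixed polyhedron this mechanism is absent. However, this discrepancy is harmless for the paper: the only place the proposition is applied is in Lemma~\ref{lemma:cauchy}, with $\lambda=R\eps^{-1}$ and $r=\eps$, at which point $r^{-3}$ and $\lambda^3$ agree up to the $R$-dependence already present. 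So your bound $C_R r^{-3}e^{-c\lambda}$ is the honest output of the argument and suffices for all downstream uses; the form written in the statement should be read as a slightly loose transcription.
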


\begin{prop}[{cf.\ \cite[Theorem 1.4]{LS}}] \label{result:scalingLimitPoly}
  Let $\mathcal{P}$ be a dyadic polyhedron containing $0$ and bounded by $R$
  and let $\gamma_n^{\mathcal{P}}$ be the loop-erased random walk in \eqref{eq:defGammaPoly1}.
  The $\beta$-parameterization of this loop-erased random walk is the curve given by
  \[
    \bar{\gamma}_n^{\mathcal{P}} (t) =  \gamma_n^{\mathcal{P}} (2^{\beta n  } t),
    \qquad t \in [0, 2^{-\beta n}  \len ( \gamma_n^{\mathcal{P}} )]
  \]
and let $\bar{\gamma}_n^{\mathcal{P}}$ be the $\beta$-parameterization of the loop-erased random walk in \eqref{eq:defGammaPoly1}.
  Then the law of  $ \bar{\gamma}_n^{\mathcal{P}} $ converges as $n \to \infty$ with respect to the metric space $(\mathcal{C}_f, \psi)$.
\end{prop}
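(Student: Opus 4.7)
The plan is to follow the proof of \cite[Theorem 1.4]{LS} (the scaling limit for LERW in a ball), replacing its domain-specific inputs by the polyhedral analogs already established above: Propositions \ref{result:hitPoly}, \ref{result:quasiloopsPoly}, \ref{result:boundEPoly} and \ref{result:boundSPoly}. The argument of \cite{LS} is organized in such a way that the domain enters only through these four families of estimates together with a Kozma-style trace scaling limit, so once the polyhedral versions are in hand the rest of the proof transfers essentially verbatim.

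First I would establish tightness of $(\bar\gamma_n^{\mathcal{P}})_{n \geq N}$ in $(\mathcal{C}_f,\psi)$. The durations $T(\bar\gamma_n^{\mathcal{P}}) = 2^{-\beta n}\len(\gamma_n^{\mathcal{P}})$ are controlled from above because $2^n\mathcal{P}\subseteq B(2^nR)$, so $\len(\gamma_n^{\mathcal{P}})$ is dominated (via a standard Wilson-algorithm-type coupling) by an escape time of a LERW from $B(2^nR)$, which has exponential upper tail by Proposition \ref{result:upperLowTail}. Proposition \ref{result:boundEPoly} then supplies a uniform H\"older modulus of continuity (with exponent $b_2/b_1$) on an event of arbitrarily high probability. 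Since $\bar\gamma_n^{\mathcal{P}}$ lives in the bounded region $\mathcal{P}\subset B(R)$, these inputs are enough to apply an Arzel\`a--Ascoli-type criterion for $(\mathcal{C}_f,\psi)$, yielding tightness.

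To upgrade tightness to convergence, I would couple LERWs at two dyadic scales $n<m$ through a common simple random walk on $\mathbb{Z}^3$ run until its first exit from $2^m\mathcal{P}$. Using Proposition \ref{result:quasiloopsPoly}, one shows that the rescaled traces $2^{-n}\tr\gamma_n^{\mathcal{P}}$ and $2^{-m}\tr\gamma_m^{\mathcal{P}}$ are close in the Hausdorff metric with high probability; this is the Kozma-type trace convergence adapted to the polyhedron, which works because, as the paragraph above Proposition \ref{result:hitPoly} explains, the piecewise-flat boundary of $\mathcal{P}$ admits the necessary covering estimates via \cite[Lemma 3.3]{SS}. Hausdorff-closeness of traces is then promoted to $\psi$-closeness of the $\beta$-parameterized curves exactly as in the relevant sections of \cite{LS}: Proposition \ref{result:boundSPoly} bounds the intrinsic time spent in any Euclidean-small region, while Proposition \ref{result:hitPoly} (hittability) forces subarcs of $\gamma_m^{\mathcal{P}}$ to pass near each subarc of $\gamma_n^{\mathcal{P}}$, aligning the time parameterizations to within an arbitrarily small error.

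The main obstacle is the polyhedral analog of Kozma's trace convergence. In the ball case one exploits the radial symmetry of $B(R)$ to compare LERWs on different scales, but a dyadic polyhedron has corners and edges near which random walk hitting estimates degenerate. The technical challenge is to verify that the hittability and quasi-loop estimates of Propositions \ref{result:hitPoly}--\ref{result:quasiloopsPoly} are strong enough, \emph{uniformly up to the boundary}, to close Kozma's dyadic comparison argument for $\mathcal{P}$ and, crucially, uniformly as $n,m\to\infty$. The saving grace is that $\mathcal{P}$ is a finite union of axis-aligned cubes, so up to a bounded combinatorial cost its boundary can be covered by Euclidean balls of comparable size on which the hittability and quasi-loop bounds apply without modification; the rest of the argument then reduces, by a finite union bound, to a sum of ball-type contributions that is already controlled by the machinery of \cite{LS}.
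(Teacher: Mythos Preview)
Your proposal is correct and matches the paper's approach: the paper does not give a separate proof of this proposition but states it (together with Propositions \ref{result:boundEPoly} and \ref{result:boundSPoly}) as a direct consequence of following the argument of \cite{LS} once the polyhedral hittability and quasi-loop estimates (Propositions \ref{result:hitPoly} and \ref{result:quasiloopsPoly}) are in place. Your outline of tightness via equicontinuity plus the trace-to-parameterized upgrade is precisely the structure of that argument, and your remark that Kozma's trace result \cite{Kozma} already covers polyhedral domains is also consistent with what the paper uses later in Section \ref{finitesec}.
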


\subsection{Proof of Proposition \ref{prop:iterate} }

In this subsection we show that sub-paths of the ILERW are hittable in the sense required for the event \eqref{aidef} to hold, see Proposition \ref{second-moment} below. The latter result leads to the proof of Proposition \ref{prop:iterate}. With this objective in mind, we first study a conditioned LERW. We begin with a list of notation.
\begin{itemize}
\item Recall that $D(R)$ is the cube of radius $R$ centered at $0$, as defined in Subsection \ref{subsec:notationset}.
\item Take positive numbers $m, n$. Let $x \in \partial D (m)$ be a point lying in a ``face" of $D (m)$ (we denote the face containing $x$ by $F$). Write $\ell$ for the infinite half line started at $x$ which lies in $D(m)^{c}$ and is orthogonal to $F$. We let $y$ be the unique point which lies in $\ell$ and satisfies $|x-y| = n/2$. We set $D_{n} (x) := D (y, n/1000 ) $ for the box centered at $y$ with side length $n/500$. (Cf.\ the definition of $D_{\frac{R}{2 \lambda}} (x_{i} )$ above.)
\item Suppose that $m, n, x, D_{n} (x) $ are as above. Take $K \subseteq D(m) \cup \partial D (m)$. Let $X$ be a random walk started at $x$ and conditioned that $X [1, \infty ) \cap K = \emptyset $. We set $\eta = \text{LE} \left( X [0, \infty ) \right)$ for the loop-erasure of $X$, and $\sigma $ for the first time that $\eta$ exits $B (x, n)$. Finally, we denote the number of points lying in $\eta [0, \sigma ] \cap D_{n} (x)$ by $J^{K}_{m,n,x}$. This is an analogue of \cite[Definition 8.7]{S}.
\item Suppose that $X$ is the conditioned random walk as above. We write $G^{X} (\cdot , \cdot )$ for Green's function of $X$.
\end{itemize}
This setup is illustrated in Figure \ref{fige} (cf.\ \cite[Figure 3]{S}).

\begin{figure}[!b]
\begin{center}
  \includegraphics[width=0.75\textwidth]{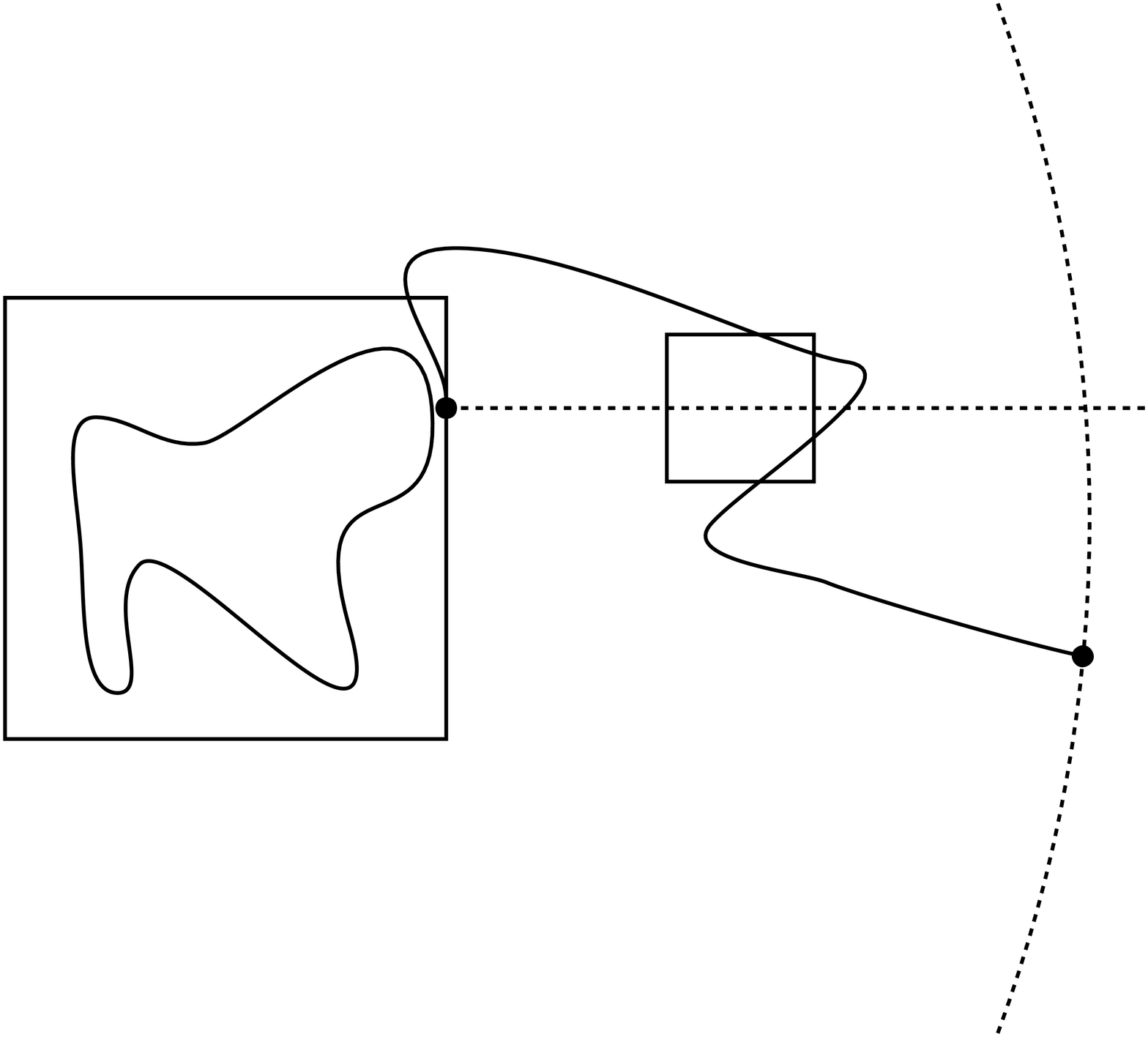}
\rput(	-325.2	pt,	208	pt	){	$D( m)$	}
\rput(	-282	pt,	160	pt	){	$K$	}
\rput(	-204	pt,	180	pt	){	$x$	}
\rput(	-126.6	pt,	198	pt	){	$D_{n}( x)$	}
\rput(	-30	pt,	194	pt	){	$\ell $	}
\rput(	-17	pt,	21.4	pt	){	$\partial B( x,n)$	}
\rput(	-95.4	pt,	125	pt	){	$\eta $	}
\rput(	-8	pt,	110.8	pt	){	$\eta ( \sigma )$	}
\end{center}
\caption{Notation used in the proof of Proposition \ref{prop:iterate}.}\label{fige}
\end{figure}

We will give one- and two-point function estimates for $\eta$ in the following proposition.

\begin{prop}
Suppose that $m, n, x, K, X, \eta, \sigma, J^{K}_{m,n,x}$ are as above. There exists a universal constant $c$ such that for all $z, w \in D_{n} (x) $ with $z \neq w$,
\begin{align}
\mathbf{P} \left( z \in \eta [0, \sigma ] \right) &\ge c n^{-3 + \beta}, \label{eq1} \\
\mathbf{P} \left( z, w \in \eta [0, \sigma ] \right) &\le \frac{1}{c} n^{-3 + \beta} |z-w|^{-3 + \beta}. \label{eq2}
\end{align}
 \end{prop}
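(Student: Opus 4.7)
The plan is to reduce both bounds to known one- and two-point function estimates for the unconditioned infinite LERW in $\mathbb{Z}^3$ (as established in \cite{S, Escape}), and then to absorb the conditioning on $X[1,\infty) \cap K = \emptyset$ into a bounded multiplicative factor via a Harnack/$h$-transform comparison. The key observation in the setup is that $D_n(x)$ lies in $D(m)^c$ on the far side of the face $F$ from $K$: every point of $D_n(x)$ is separated from $K$ by the hyperplane containing $F$, and sits at Euclidean distance $\asymp n$ from $K$ in a region where a simple random walk has a uniformly positive chance of escaping to infinity without touching $K$.

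For the one-point lower bound \eqref{eq1}, I would start from the standard last-visit decomposition: writing $\tau_z = \sup\{k: X_k = z\}$, one has $z \in \eta[0,\sigma]$ if and only if $z$ is visited by $X$ before its first exit of $B(x,n)$ and, at its last visit, the future walk neither returns to $z$ nor intersects the past of $X$ in a way that would cause $z$ to be erased. After time-reversing the segment $X[0,\tau_z]$ (using the appropriate reversed kernel for the $K$-conditioned walk), this probability factors as a product of a Green's-function term for travelling from $x$ to $z$ and a non-intersection probability between two independent walks started at $z$, which is exactly the structure behind the ILERW one-point estimate $\mathbf{P}(z \in \gamma_\infty) \asymp |z|^{-(3-\beta)}$ proved in \cite[Corollary 1.3]{Escape}. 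The conditioning on avoiding $K$ is the Doob $h$-transform of simple random walk by $h(y) = \mathbf{P}^y(\tau_K = \infty)$; since $h$ is positive and harmonic off $K$, Harnack's inequality on balls of radius $cn$ around points of $D_n(x)$ shows that $h$ is comparable to a constant throughout the relevant region. Hence both the Green's function of $X$ and its escape/non-intersection functionals agree with those of SRW up to absolute constants, and the lower bound $\mathbf{P}(z \in \eta[0,\sigma]) \gtrsim n^{-(3-\beta)}$ follows by transferring the corresponding SRW estimate.

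For the two-point upper bound \eqref{eq2}, I would use the domain Markov property of the LERW applied to the conditioned walk: conditionally on the sub-arc $\eta_0$ of $\eta$ from $x$ to the first hit of $z$, the remaining portion is distributed as the loop-erasure of an independent copy of $X$ started at $z$ and further conditioned to avoid $\eta_0 \cup K$. The upper-bound half of the one-point estimate applied to this new walk gives
\[
\mathbf{P}\bigl(w \in \eta \mid z \in \eta,\ \eta_0\bigr) \leq C\,|z-w|^{-(3-\beta)},
\]
where again Harnack comparison handles the extra conditioning on $\eta_0$ (avoiding a further simple path only increases the relevant escape probabilities by at most a constant factor on this scale, using that $\eta_0$ does not disconnect the neighbourhood of $w$ in $D_n(x)$). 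Multiplying by the upper one-point bound $\mathbf{P}(z \in \eta[0,\sigma]) \leq C n^{-(3-\beta)}$ yields \eqref{eq2}.

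The principal obstacle is the rigorous Harnack comparison: one must verify that $h(y) = \mathbf{P}^y(\tau_K = \infty)$ is comparable to a constant across all of $D_n(x)$ and its $\tfrac{n}{2}$-neighbourhood, uniformly in the geometry of $K \subset D(m) \cup \partial D(m)$. This is where the specific placement of $D_n(x)$ on the outward normal line $\ell$ at the face $F$ enters essentially, providing a half-space of size $\asymp n$ that is disjoint from $K$ and in which $X$ behaves, up to bounded factors, like unconditioned SRW. Once this comparison is in place, the remaining estimates are direct quotations of the one-point function estimate from \cite[Theorem 1.2 and Corollary 1.3]{Escape} together with the intersection-exponent machinery of \cite{S}, with the truncation at $\sigma$ being harmless since $D_n(x) \subset B(x,n)$.
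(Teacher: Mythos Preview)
Your treatment of \eqref{eq1} is fine and essentially matches the paper: the one-point lower bound is already established in \cite[(8.29)]{S} together with \cite[Corollary 1.3]{Escape}, and the underlying argument there is precisely the last-visit decomposition plus a Harnack comparison that removes the conditioning on $K$ (which is legitimate because $K\subset D(m)$ is separated from $D_n(x)$ by distance $\asymp n$).

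The gap is in your argument for \eqref{eq2}. After applying the domain Markov property, you need a one-point \emph{upper} bound of the form $\mathbf{P}(w\in\eta'\mid\eta_0)\le C|z-w|^{-(3-\beta)}$, uniform in the random initial segment $\eta_0$, where $\eta'$ is the loop-erasure of a walk from $z$ conditioned to avoid $\eta_0\cup K$. But $\eta_0$ terminates at $z$, so it is \emph{not} separated from the scale-$l$ ball $B(z,|z-w|)$; the Harnack principle therefore gives no uniform control on the $h$-transform near $z$, and in particular no uniform bound on the conditioned Green's function $G^{X''}(z,w)$ or on the associated non-intersection probability. The remark that ``$\eta_0$ does not disconnect the neighbourhood of $w$'' is a topological statement that does not by itself yield the required quantitative comparison. (Note also that you are using a one-point upper bound $\mathbf{P}(z\in\eta[0,\sigma])\le Cn^{-(3-\beta)}$ that is not part of the proposition, though this is the minor issue.)

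The paper circumvents this by not conditioning on $\eta_0$ at all. It invokes the two-point formula of \cite[Proposition 8.1]{S},
\[
\mathbf{P}\bigl(F^{\eta}_{z_1,z_2}\bigr)=G^{X}(x,z_1)\,G^{X}(z_1,z_2)\,\mathbf{P}(F_1\cap F_2),
\]
where the Green's functions are for the \emph{$K$-conditioned} walk (so Harnack does apply, giving $G^X(x,z_1)\le C/n$ and $G^X(z_1,z_2)\le C/l$), and $F_1,F_2$ are non-intersection events for three independent conditioned walks. The substantive work is then a multi-scale decoupling of $\mathbf{P}(F_1\cap F_2)$: one introduces nested balls $B_1,B_2\subset B_1'\subset B_1''$ at scales $l$ and $n$, uses time-reversal and the ``independence up to constants'' results of \cite[Propositions 4.2, 4.4, 4.6]{Mas} and \cite[Lemma 4.3]{LS} to factor the non-intersection probability, and finally applies the escape-probability asymptotics $\text{Es}(l)\,\text{Es}(l,n)\asymp n^{-2+\beta}$ and the analogous bound $\asymp l^{-2+\beta}$ at the smaller scale. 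This separation of scales is exactly what your domain-Markov shortcut tries to avoid, but it appears to be genuinely needed.
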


\begin{proof}
The inequality \eqref{eq1} follows from \cite[(8.29)]{S} and \cite[Corollary 1.3]{Escape}. So, it remains to prove \eqref{eq2}. We first recall \cite[Proposition 8.1]{S}, the setting of which is as follows. Take $z_{1}, z_{2} \in D_{n} (x) $ with $z_{1} \neq z_{2}$. We set $z_{0} = x$, and write $l = |z_{1}- z_{2}|$. Note that $1 \le l \le n/100$. For $i =0,1,2$, we let $X^{i}$ be independent versions of $X$ with $X^{i} (0) = z_{i}$. We write $\sigma^{i}_{w}$ for the first time that $X^{i}$ hits $w$. For $i=0,1$, let $Z^{i}$ be $X^{i}$ conditioned on the event $\{ \sigma^{i}_{z_{i+1}} < \infty \}$, and also let $Z^{2} = X^{2}$. Also for $i=0,1$, write $u (i) $ for the last time that $Z^{i}$ passes through $z_{i+1}$, and set $u(2) = \infty$. Define the event $F^{\eta}_{z_{1}, z_{2}}$ by
\[F^{\eta}_{z_{1}, z_{2}} = \left\{ \text{There exist } 0 < t_{1} < t_{2} < \infty \text{ such that } \eta (t_{1}) = z_{1} \text{ and } \eta (t_{2} ) = z_{2} \right\},\]
and non-intersection events $F_{1}$ and $F_{2}$ by
\begin{align*}
&F_{1} = \left\{ \text{LE} \left( Z^{0} [0, u (0) ] \right) \cap \left( Z^{1} [1, u(1) ] \cup Z^{2} [1, \infty ) \right) = \emptyset \right\}, \\
&F_{2} = \left\{ \text{LE} \left( Z^{1} [0, u (1) ] \right) \cap Z^{2} [1, \infty ) = \emptyset \right\}.
\end{align*}
Then \cite[Proposition 8.1]{S} shows that
\[\mathbf{P} \left( F^{\eta}_{z_{1}, z_{2}} \right) = G^{X} (z_{0}, z_{1} ) G^{X} (z_{1}, z_{2} ) \mathbf{P} \left( F_{1} \cap F_{2} \right).\]

Now, in the proof of \cite[Lemma 8.9]{S}, it is shown that
\[ G^{X} (z_{0}, z_{1} )  \le \frac{C}{n}, \ \  G^{X} (z_{1}, z_{2} )  \le \frac{C}{l},\]
and so it suffices to estimate $\mathbf{P}( F_{1} \cap F_{2})$. To do this, we consider four balls
\[ B_{1} = B (z_{1},  l/8 ), \ B_{2} = B (z_{2}, l/8 ), \ B_{1}' = B (z_{1}, 2 l ), \ B_{1}'' = B (z_{1},  n/16 ).\]
Note that $B_{1} \cup B_{2} \subset B_{1}' \subset B_{1}''$ and $B_{1} \cap B_{2} = \emptyset$. For $i=0,1$, let $Y^{i} = \left( Z^{i} [0, u (i) ] \right)^{R} $ be the time reversal of $Z^{i} [0, u (i) ] $ where for a path $\lambda = [ \lambda (0 ), \lambda (1), \dots , \lambda ( u) ]$, we write $( \lambda )^{R} =  [ \lambda (u ), \lambda (u-1), \dots , \lambda ( 0) ]$ for its time reversal. By the time reversibility of LERW (see \cite[Lemma 7.2.1]{Lawb} for the time reversibility), we see that $\mathbf{P} (F_{1} \cap F_{2} )  = \mathbf{P} (F_{1}' \cap F_{2}' )$, where the events $F_{1}'$ and $F_{2}'$ are defined by
\begin{align*}
 &F_{1}' = \left\{ \text{LE} \left( Y^{0} [0, \sigma (0) ] \right) \cap \left( Y^{1} [0,  \sigma (1) -1 ] \cup Z^{2} [1, \infty ) \right) = \emptyset \right\}, \\
&F_{2}' = \left\{ \text{LE} \left( Y^{1} [0, \sigma (1) ] \right) \cap Z^{2} [1, \infty ) = \emptyset \right\}.
\end{align*}
 Here $\sigma (i)$ is the first time that $Y^{i}$ hits $z_{i}$. We define several random times as follows:
 \begin{itemize}
 \item $s_{0}$ is the first time that $\text{LE} \left( Y^{0} [0, \sigma (0) ] \right)$ exits $B_{1}$;
 \item $s_{2}$ is the first time that $\text{LE} \left( Y^{0} [0, \sigma (0) ] \right)$ exits $B_{1}''$;
 \item $s_{1}$ is the last time up to $s_{2}$ that $\text{LE} \left( Y^{0} [0, \sigma (0) ] \right)$ exits $B_{1}'$;
 \item $t_{0}$ is the first time that  $\text{LE} \left( Y^{1} [0, \sigma (1) ] \right)$ exits $B_{2}$;
 \item $t_{1}$ is the last time up to $\sigma (1)$ that  $ Y^{1} [0, \sigma (1) ] $ hits $\partial B_{1}$;
 \item $u_{0}$ is the first time that $Z^{2}$ exits $B_{2}$;
 \item $u_{1}$ is the first time that $Z^{2}$ exits $B_{1}''$.
 \end{itemize}
See Figure \ref{figd} for an illustration showing these random times. If we write $\gamma_{i} = \text{LE} \left( Y^{i} [0, \sigma (i) ] \right)$ for $i =0,1$, we see that $\mathbf{P} (F_{1}' \cap F_{2}' ) \le \mathbf{P} (H_{1} \cap H_{2} \cap H_{3} )$, where the events $H_{1}, H_{2}, H_{3}$ are defined by
\begin{align*}
&H_{1} = \left\{ \gamma_{0} [0, s_{0} ] \cap Y^{1} [ t_{1} , \sigma (1) -1 ] = \emptyset \right\}, \\
&H_{2} = \left\{ \gamma_{1} [0, t_{0} ] \cap Z^{2} [ 1 ,  u_{0} ] = \emptyset \right\}, \\
&H_{3} = \left\{ \gamma_{0} [s_{1}, s_{2} ] \cap Z^{2} [ u_{0} , u_{1} ] = \emptyset \right\}.
\end{align*}

\begin{figure}[!t]
\begin{center}
  \includegraphics[width=0.75\textwidth]{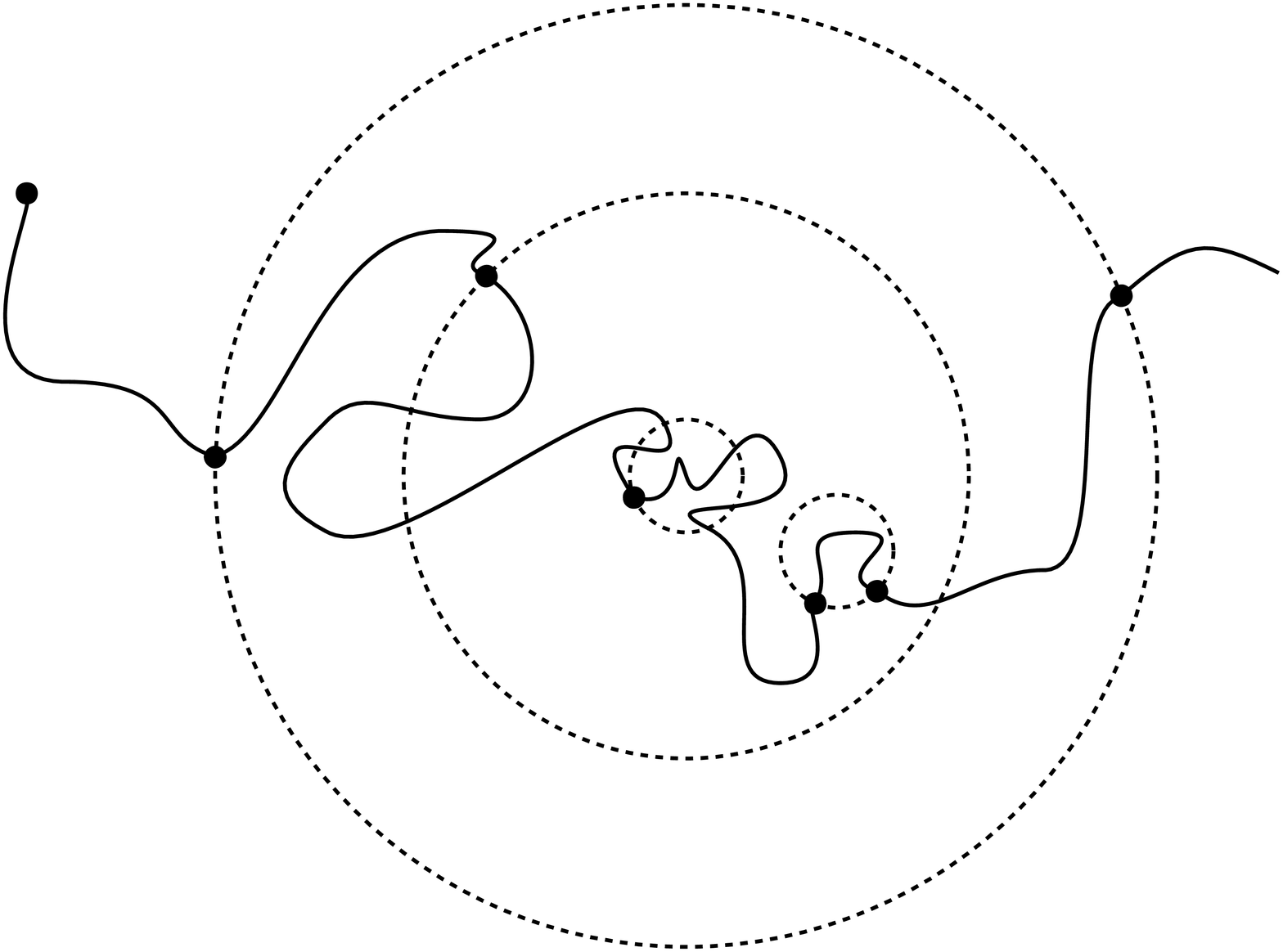}
\rput(-345pt,205pt){$z_{0}$}
\rput(-352pt,167pt){$\gamma _{0}$}
\rput(-304pt,122pt){$\gamma _{0}( s_{2})$}
\rput(-233pt,178pt){$\gamma _{0}( s_{1})$}
\rput(-191pt,110pt){$\gamma _{0}( s_{0})$}
\rput(-157pt,147pt	){	$\partial B_{1}$	}
\rput(-157pt,208pt	){	$\partial B_{1}'$	}
\rput(-157pt,258pt	){	$\partial B_{1}''$	}
\rput(-110pt,125pt	){	$\partial B_{2}$	}
\rput(	-136pt,	141pt	){	$\gamma _{1}$	}
\rput(	-66pt,	177pt	){	$Z^{2}( u_{1})$	}
\rput(	-97pt,	83pt	){	$Z^{2}( u_{0})$	}
\rput(	-143pt,	83pt	){	$\gamma _{1}( t_{0})$	}
\end{center}
\caption{The random times $s_0$, $s_1$, $s_2$, $t_0$, $u_0$, $u_1$.}\label{figd}
\end{figure}

Since $\text{dist} \left( D (m) , B_{1}'' \right) \ge n/4$, it follows from the discrete Harnack principle (see \cite[Theorem 1.7.6]{Lawb}, for example) that the distribution of $Z^{2} [0, u_{1} ]$ is comparable to that of $R_{2} [0, u_{1}']$, assuming $R_{2} (0) = z_{2}$ where $R_{2}$ is a simple random walk, and $u_{1}'$ is the first time that $R_2$ exits $B_{1}''$. More precisely, there exist universal constants $c, C \in (0, \infty)$ such that for any path $\lambda$
\[c \mathbf{P} \left( R_{2} [0, u_{1}' ] = \lambda \right) \le \mathbf{P}  \left( Z^{2} [0, u_{1} ] = \lambda \right) \le C  \mathbf{P} \left( R_{2} [0, u_{1}' ] = \lambda \right).\]
Also, since $\gamma_{0} [s_{1}, s_{2} ] \subseteq (B_{1}')^{c}  $, using the Harnack principle again, we see that
\begin{equation} \label{indep1}
\mathbf{P} (H_{1} \cap H_{2} \cap H_{3} ) \asymp E_{Y^{0}, Y^{1}} \left\{ {\bf 1}_{H_{1}} P_{R_{2}}^{z_{2}} \left(  \gamma_{1} [0, t_{0} ] \cap R_{2} [ 1 ,  u_{0}' ] = \emptyset \right)  P_{R_{2}}^{z_{1}} \left(  \gamma_{0} [s_{1}, s_{2} ] \cap R_{2} [ 0 ,  u_{1}' ] = \emptyset \right)  \right\},
\end{equation}
where $u_{0}'$ is the first time that $R_{2}$ exits $B_{2}$ and $E_{Y^{0}, Y^{1}}$ stands for the expectation with respect to the probability law of $(Y^{0}, Y^{1})$.

Another application of the Harnack principle tells that $ \gamma_{1} [0, t_{0} ]$ and $Y^{1} [ t_{1} , \sigma (1) -1 ] $ are ``independent up to constant'' (see \cite[Lemma 4.3]{LS}). Namely, there exist universal constants $c, C \in (0, \infty)$ such that for any paths $\lambda_{1}, \lambda_{2}$
\begin{align*}
\lefteqn{c \mathbf{P} \left(  \gamma_{1} [0, t_{0} ] = \lambda_{1} \right) \mathbf{P} \left(  Y^{1} [ t_{1} , \sigma (1) -1 ]  =\lambda_{2} \right)}\\
& \le  \mathbf{P} \left(  \gamma_{1} [0, t_{0} ] = \lambda_{1}, \ Y^{1} [ t_{1} , \sigma (1) -1 ]  = \lambda_{2} \right) \\
& \le C  \mathbf{P} \left(  \gamma_{1} [0, t_{0} ] = \lambda_{1} \right) \mathbf{P} \left(  Y^{1} [ t_{1} , \sigma (1) -1 ]  = \lambda_{2}\right).
\end{align*}
This implies that given $Y^{0}$, the two functions ${\bf 1}_{H_{1}}$ and $ P_{R_{2}}^{z_{2}} \left(  \gamma_{1} [0, t_{0} ] \cap R_{2} [ 1 ,  u_{0}' ] = \emptyset \right) $ are independent up to constant. Also, it is proved in \cite[Propositions 4.2 and 4.4]{Mas} that the distribution of $ \gamma_{1} [0, t_{0} ]$ is comparable with that of the ILERW started at $z_{2}$ until it exits $B_{2}$. Using the discrete Harnack principle again, we see that the distribution of the time reversal of $Y^{1} [ t_{1} , \sigma (1) -1 ] $ coincides with that of the SRW started at $z_{1}$ until it exits $B_{1}$. Therefore, if we write $R_{1}$ and $R_{3}$ for independent SRWs, the right hand side of \eqref{indep1} is comparable to
\begin{align}
\lefteqn{E_{Y^{0}} \left\{ P_{R_{2}}^{z_{1}} \left(  \gamma_{0} [s_{1}, s_{2} ] \cap R_{2} [ 0 ,  u_{1}' ] = \emptyset \right) P^{z_{1}}_{R_{1}} \left(  \gamma_{0} [0, s_{0} ] \cap  R_{1} [1, \sigma_{1}' ] = \emptyset \right) \right\} } \notag \\
&\hspace{100pt}\times P_{R_{2}, R_{3}}^{z_{2}, z_{2}} \left(  R_{2} [ 1 ,  u_{0}' ] \cap \text{LE} \left( R_{3} [0, \infty ) \right) [0, t_{3}' ] = \emptyset \right), \label{indep2}
\end{align}
where $\sigma_{1}'$ is the first time that $R_{1}$ exits $B_{1}$, and $t_{3}'$ is the first time that $\text{LE} \left( R_{3} [0, \infty ) \right)$ exits $B_{2}$. Moreover, it follows from \cite[Proposition 6.7]{S} and \cite[Corollary 1.3]{Escape} that
\[P_{R_{2}, R_{3}}^{z_{2}, z_{2}} \left(  R_{2} [ 1 ,  u_{0}' ] \cap \text{LE} \left( R_{3} [0, \infty ) \right) [0, t_{3}' ] = \emptyset \right) \asymp l^{-2 + \beta}.\]

Finally, let $R_{0}$ be a SRW started at $z_{1}$ and $\gamma_{0}' = \text{LE} \left( R_{0} [0, \infty ) \right)$ be the ILERW. Similarly to above, define:
\begin{itemize}
\item $s_{0}'$ to be the first time that $\gamma_{0}' $ exits $B_{1}$;
\item $s_{2}'$ to be the first time that $\gamma_{0}' $ exits $B_{1}''$;
\item $s_{1}'$ to be the last time up to $s_{2}'$ that $\gamma_{0}' $ exits $B_{1}'$.
\end{itemize}
We then have from \cite[Propositions 4.2 and 4.4]{Mas} that the distribution of $\gamma_{0} [0, s_{2}]$ is comparable with that of $\gamma_{0}' [0, s_{2}']$. Moreover, \cite[Proposition 4.6]{Mas} ensures that $\gamma_{0}' [0, s_{0}' ]$ and $\gamma_{0}' [s_{1}', s_{2}' ] $ are independent up to a constant. Therefore the expectation with respect to $Y^{0}$ in \eqref{indep2} is comparable to
\begin{equation}\label{indep4}
P^{z_{1}, z_{1}}_{R_{0}, R_{1}} \left(  \gamma_{0}' [0, s_{0}' ] \cap  R_{1} [1, \sigma_{1}' ] = \emptyset \right) P_{R_{0}, R_{2}}^{z_{1}, z_{1}} \left(  \gamma_{0}' [s_{1}', s_{2}' ] \cap R_{2} [ 0 ,  u_{1}' ] = \emptyset \right) \asymp \text{Es} (l) \text{Es} (l, n ),
\end{equation}
where we use the notation $\text{Es}$ defined in \cite{S}. Finally, by \cite[Corollary 1.3]{Escape}, it holds that the right hand side of \eqref{indep4} is comparable to $n^{-2 + \beta}$. This gives \eqref{eq2} and finishes the proof.
\end{proof}

\begin{dfn}\label{def1}
Suppose that $m, n, x, K, X, \eta, \sigma, J^{K}_{m,n,x}$ are as above. For $z \in B (x, n/8 )$, let $R^{z}$ be a SRW on $\mathbb{Z}^{3}$ started at $z$, independent of $X$. Write $\xi$ for the first time that $R^{z}$ exits $B (x, n)$, and let
\[N_{z} = \left|  R^{z} [0, \xi ] \cap \eta[0, \sigma ] \cap D_{n} (x) \right|\]
be the number of points in $D_{n} (x)$ hit by both $R^{z} [0, \xi ] $ and $\eta[0, \sigma ]$. Furthermore, define the (random) function $g(z)$ by setting
\[g (z) := P^{z} \left( N_{z} > 0 \right) = P^{z} \left( R^{z} [0, \xi ] \cap \left( \eta[0, \sigma ] \cap D_{n} (x) \right) \neq \emptyset \right),\]
where $P^{z}$ stands for the probability law of $R^{z}$. Note that $g (z)$ is a measurable function of $\eta [0, \sigma ]$,  and that,  given $ \eta [0, \sigma ]$, $g (\cdot )$ is a discrete harmonic function in $D_{n} (x)^{c}$.
\end{dfn}

The next proposition says that with positive probability (for $\eta$), $g (z)$ is bounded below by some universal positive constant for all $z \in B (x, n/8 )$.

\begin{prop}\label{second-moment}
Suppose that the function $g(z)$ is defined as in Definition \ref{def1}. There exists a universal constant $c_0> 0$ such that
\begin{equation}\label{2nd}
\mathbf{P} \left( g (z) \ge c_{0} \text{ for all } z \in B (x, n/8 ) \right) \ge c_{0}.
\end{equation}
\end{prop}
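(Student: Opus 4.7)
The plan is to run a second-moment (Paley--Zygmund) argument at a single reference point and then spread the resulting lower bound over all of $B(x,n/8)$ by means of the discrete Harnack inequality applied to the harmonic function $g(\cdot)$.

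Fix a reference point $z_0 = x$ (any $z_0 \in B(x,n/16)$ would do) and work on the joint law $\mathbf{P}$ of $\eta$ and an independent simple random walk $R^{z_0}$. For $w \in D_n(x)$, combine the one-point lower bound \eqref{eq1} with the standard Green's function estimate for the killed walk in $B(x,n)$,
\[
P^{z_0}\bigl(R^{z_0}\text{ hits }w\text{ before }\xi\bigr) \;\asymp\; G_{B(x,n)}(z_0, w) \;\asymp\; \frac{1}{n},
\]
valid since $z_0$ and $w$ both lie at macroscopic distance from $\partial B(x,n)$ with $|z_0 - w| \asymp n$. Summing over the $\asymp n^3$ points of $D_n(x)$ yields
\[
\mathbf{E}[N_{z_0}] \;\geq\; c\,n^{3}\cdot n^{\beta-3}\cdot n^{-1} \;=\; c\,n^{\beta-1}.
\]

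For the second moment, combine \eqref{eq2} with the upper bound
\[
P^{z_0}\bigl(R^{z_0}\text{ hits both }w_1,w_2\text{ before }\xi\bigr) \;\leq\; \frac{C}{n\,|w_1-w_2|},
\]
obtained by splitting on which of $w_1,w_2$ is hit first and using $P^{w_i}(R\text{ hits }w_j\text{ before exit}) \asymp |w_1-w_2|^{-1}$ (valid since $|w_1 - w_2| \leq \operatorname{diam}D_n(x) \ll n/2$). This gives
\[
\mathbf{E}[N_{z_0}^2] \;\leq\; Cn^{\beta-4}\sum_{w_1\neq w_2\in D_n(x)}|w_1-w_2|^{\beta-4} \;+\; \mathbf{E}[N_{z_0}].
\]
Since $\beta>1$, the inner sum over $w_2$ is $\asymp n^{\beta-1}$, hence the double sum is $\leq Cn^{\beta+2}$ and $\mathbf{E}[N_{z_0}^2] \leq Cn^{2\beta-2}$, matching $(\mathbf{E}[N_{z_0}])^2$ up to a constant. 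Paley--Zygmund then gives $\mathbf{P}(N_{z_0}>0) \geq (\mathbf{E}[N_{z_0}])^2 / \mathbf{E}[N_{z_0}^2] \geq c$. Disintegrating over $\eta$ reads $\mathbf{E}_\eta[g(z_0)] \geq c$, and since $g(z_0)\leq 1$, Markov's inequality yields $\mathbf{P}(g(z_0)\geq c/2)\geq c/2$.

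Finally, to propagate from $z_0$ to all of $B(x,n/8)$: the (random) function $g$ is discrete harmonic on $B(x,n)\setminus(\eta[0,\sigma]\cap D_n(x))$, with boundary values $1$ on $\eta\cap D_n(x)$ and $0$ off $B(x,n)$. Because $D_n(x)$ lies at Euclidean distance at least $n/2 - n/1000 > n/4$ from $x$, the restriction $g\vert_{B(x,n/4)}$ is a non-negative discrete harmonic function on the ball $B(x,n/4)$. The discrete Harnack inequality therefore yields a universal constant $C$ with $\sup_{B(x,n/8)}g \leq C\inf_{B(x,n/8)}g$. In particular, on the event $\{g(z_0)\geq c/2\}$, we have $g(z)\geq c/(2C)$ for every $z\in B(x,n/8)$; setting $c_0 := \min(c/2,\,c/(2C))$ completes the proof.

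The main technical hurdle is the quantitative input to the Paley--Zygmund step: one must carefully check that the one-point lower bound and the Green's-function estimate are valid uniformly over all $w\in D_n(x)$ (i.e.\ that $z_0, w$ are at macroscopic distance from $\partial B(x,n)$ and from one another), and that the two-point bound combined with the joint hitting kernel really produces the matching $n^{2\beta-2}$ order; the Harnack step is then a clean geometric consequence once the harmonic domain has been identified.
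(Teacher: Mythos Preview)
Your proof is correct and follows essentially the same approach as the paper: a second-moment computation at the reference point $z_0=x$ using the one- and two-point estimates \eqref{eq1}--\eqref{eq2} together with the Green's function bounds for the killed walk, followed by the Harnack inequality to spread the lower bound over $B(x,n/8)$. The paper's presentation differs only cosmetically, placing the Harnack reduction first and phrasing the second-moment step as ``$\mathbf{E}(g(x))\ge c_2$ hence $\mathbf{P}(g(x)\ge c_2/2)\ge c_2/3$'', which is equivalent to your disintegration of $\mathbf{P}(N_{z_0}>0)$.
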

\begin{proof}
We claim that it suffices to show that
\begin{equation}\label{iikae}
\mathbf{P} \left( g (x) \ge c_{0} \right) \ge c_{0}
\end{equation}
for some $c_{0} > 0$. The reason for this is as follows. Suppose that \eqref{iikae} is true and the event $\{ g( x) \ge c_{0} \}$ occurs. Since $\text{dist} \left( B (x, n/8), D_{n} (x) \right) \ge n/4 $, using the Harnack principle, there exists a universal constant $c_{1} > 0$ such that $g (z) \ge c_{1} g (x) \ge c_{1} c_{0}$ for all $z \in B (x, n/8 )$. Thus we have
$\mathbf{P}( g (z)  ) \ge c_{0}c_{1} \text{ for all } z \in B (x, n/8 )) \ge c_{0}$, which gives \eqref{2nd}.

We will prove \eqref{iikae}. Recall the definition of $N_{z}$ from Definition \ref{def1}. By \eqref{eq1}, we see that
\[\mathbf{E} (N_{x} ) = \sum_{w \in D_{n} (x) } \mathbf{P} \left( w \in \eta[0, \sigma ] \right) P^{x} \left( w \in R^{x} [0, \xi ]  \right) \ge c n^{-1 + \beta }\]
for some $c > 0$. On the other hand, by \eqref{eq2}, we have
\begin{align*}
\mathbf{E} (N_{x}^{2} ) &= \sum_{w_{1}, w_{2} \in D_{n} (x) } \mathbf{P} \left( w_{1}, w_{2} \in \eta[0, \sigma ] \right) P^{x} \left( w_{1}, w_{2} \in R^{x} [0, \xi ]  \right) \\
&\le C n^{-4 + \beta} \sum_{w_{1}, w_{2} \in D_{n} (x) }  |w_{1} - w_{2} |^{-4 + \beta } \\
&\le C n^{-4 + \beta} n^{2 + \beta}\\
& = C n^{-2 + 2 \beta}.
\end{align*}
This gives $\mathbf{E} (N_{x}^{2} ) \le C \{ \mathbf{E} (N_{x} ) \}^{2}$. Therefore, the second moment method tells us that $\mathbf{E} ( g (x) ) \ge c_{2}$ for some universal constant $c_{2} > 0$. This implies $\mathbf{P} \left( g (x) \ge c_{2}/ 2 \right) \ge c_{2}/3$, which gives \eqref{iikae}.
\end{proof}

\begin{proof}[Proof of Proposition \ref{prop:iterate}]
We will prove that for each $j =1,2, \dots , q$
\begin{equation}\label{enough}
\mathbf{P} (F_{j}^{c} ) \le (1 -c_{0} )^{\lambda^{a}},
\end{equation}
where $c_{0}$ is the constant of Proposition \ref{second-moment}. Since $q \le \lambda^{1-a}$, the inequality \eqref{enough} gives the desired inequality \eqref{good}. Take $j \in\{1,2, \dots , q\}$. Suppose that $F_{j}^{c}$ occurs. This implies that for every $i \in I_{j} $, the event $A_{i}$ does not occur. Setting $l = 2 j \lambda^{a} $, we need to estimate
\[\mathbf{P} \left( \bigcap_{i= l +1}^{l + \lambda^{a} } A_{i}^{c} \right) = \mathbf{P} \left( A_{l + \lambda^{a}}^{c} \:\vline\: \bigcap_{i= l +1}^{l + \lambda^{a} -1 } A_{i}^{c} \right) \mathbf{P} \left( \bigcap_{i= l +1}^{l + \lambda^{a} -1 } A_{i}^{c} \right).\]
Note that the event $\bigcap_{i= l +1}^{l + \lambda^{a} -1 } A_{i}^{c} $ is measurable with respect to $\gamma [0, t_{l + \lambda^{a} }] $ while the event $ A_{l + \lambda^{a}}^{c}$ is measurable with respect to $\gamma [ t_{l + \lambda^{a} }, \sigma_{l + \lambda^{a} } ]$. Therefore, using the domain Markov property of $\gamma$ (see \cite[Proposition 7.3.1]{Lawb}), Proposition \ref{second-moment} tells us that
\[\mathbf{P} \left( A_{l + \lambda^{a}}^{c} \:\vline\: \bigcap_{i= l +1}^{l + \lambda^{a} -1 } A_{i}^{c} \right) \leq 1-c_{0},\]
where we apply Proposition \ref{second-moment} with $m = \frac{(l + \lambda^{a}) R}{\lambda}$, $n = \frac{R}{2 \lambda} $, $x = \gamma \left( t_{l + \lambda^{a}} \right)$ and $K =  \gamma [0, t_{l + \lambda^{a} }] $. Thus we have that
\[\mathbf{P} \left( \bigcap_{i= l +1}^{l + \lambda^{a} } A_{i}^{c} \right) \le (1-c_{0} ) \mathbf{P} \left( \bigcap_{i= l +1}^{l + \lambda^{a} -1 } A_{i}^{c} \right).\]
Repeating this procedure $\lambda^{a}$ times, we obtain \eqref{enough}, and thereby finish the proof.
\end{proof}

\section{Checking the assumptions sufficient for tightness}\label{sec:assump}

The aim of this section is to check Assumptions \ref{a1}, \ref{a2} and \ref{a3}, as set out in Section \ref{topsec}. In what follows, we let $\gamma_{{\cal U}} (x,y)$ be the unique injective path in ${\cal U}$ between $x$ and $y$. In particular, $\gamma_{{\cal U}} (x,y) (k)$ is the location at $k$th step of the path. Note that $\gamma_{{\cal U}} (x,y) (0) = x$ and $\gamma_{{\cal U}} (x,y) \left( d_{{\cal U} } (x, y) \right) = y$.

\subsection{Assumption 1}  That the first assumption holds is readily obtained from the upper inclusion for $B_{{\cal U}} \left( 0,  \delta^{-\beta} \right)$ in the following proposition. The lower inclusion, which requires only a small additional argument, is necessary for Proposition \ref{last-step} below.

\begin{prop}\label{1st-assump}
There exist constants $c, C \in (0, \infty)$ such that: for all $\lambda \ge 1$ and $\delta \in (0, 1)$,
\[\mathbf{P} \left( B \left( 0, \lambda^{-1} \delta^{-1} \right) \subseteq  B_{{\cal U}} \left( 0,  \delta^{-\beta} \right) \subseteq B \left( 0, \lambda \delta^{-1} \right) \right) \ge 1 - C\lambda^{-c}.\]
In particular, Assumption \ref{a1} in Section \ref{topsec} holds.
\end{prop}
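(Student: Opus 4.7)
The proof splits into the upper inclusion $B_{\mathcal{U}}(0,\delta^{-\beta})\subseteq B(0,\lambda\delta^{-1})$ and the lower inclusion $B(0,\lambda^{-1}\delta^{-1})\subseteq B_{\mathcal{U}}(0,\delta^{-\beta})$. In both cases the plan is to use Wilson's algorithm rooted at infinity, together with the fact that for every vertex $v$ the unique ray from $v$ to infinity in $\mathcal{U}$ is marginally distributed as an ILERW $\gamma_\infty^v$. I write $\gamma_\infty=\gamma_\infty^0$.

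For the upper inclusion, I first bound the probability at a single vertex $y$ with $|y|=\lambda\delta^{-1}$. Let $w$ be the first point where $\gamma_\infty$ and $\gamma_\infty^y$ coalesce. Then
\[ d_{\mathcal{U}}(0,y) \;=\; d_{\gamma_\infty}(0,w) \;+\; d_{\gamma_\infty^y}(y,w). \]
Either $|w|\ge \lambda\delta^{-1}/2$ or $|y-w|\ge \lambda\delta^{-1}/2$, so at least one of the two summands on the right is bounded below by the exit time of an ILERW from a ball of radius $\lambda\delta^{-1}/2$. By the lower-tail bound of Proposition~\ref{result:upperLowTail} (applied with parameter $\mu\asymp\lambda^{\beta}$), this exit time is at least $\delta^{-\beta}$ with probability at least $1-C\exp(-c\lambda^{\beta/2})$. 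Hence $\bP(d_{\mathcal{U}}(0,y)\le\delta^{-\beta})\le 2C\exp(-c\lambda^{\beta/2})$.

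For the lower inclusion, fix $y\in B(0,\lambda^{-1}\delta^{-1})$. By Wilson's algorithm the tree path from $y$ to $0$ is the loop-erasure of an independent simple random walk $S^y$ started at $y$ and stopped on first hitting $\gamma_\infty$ at some vertex $w$, followed by $\gamma_\infty(0,w)$. Applying the hittability estimate of Proposition~\ref{result:hit} with radius parameter $\lambda^{-1/2}\delta^{-1}$, I obtain that, with probability at least $1-C\lambda^{-c}$, $S^y$ hits $\gamma_\infty$ inside $B(y,\lambda^{-1/2}\delta^{-1})$, and therefore $|w|\le 2\lambda^{-1/2}\delta^{-1}$. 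The upper-tail bound of Proposition~\ref{result:upperLowTail} then gives $d_{\gamma_\infty}(0,w)\le\tfrac12\delta^{-\beta}$ with high probability; and comparing the loop-erased walk from $y$ to $\gamma_\infty$ with an ILERW from $y$ via Masson's Radon--Nikodym bound \cite{Mas} yields the same bound for the other segment. Summing, $d_{\mathcal{U}}(0,y)\le\delta^{-\beta}$ with probability at least $1-C\lambda^{-c}$ for each fixed $y$.

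The main technical difficulty is turning these pointwise estimates into a uniform statement without incurring a $\delta$-dependent union bound: $\partial B(0,\lambda\delta^{-1})$ has cardinality of order $\lambda^2\delta^{-2}$, which a $\delta$-free exponential-in-$\lambda$ probability cannot absorb. To avoid this, I would union-bound only over a $\delta^{-1}$-net of the relevant set (of size polynomial in $\lambda$), and transfer the estimate from a net point to the vertices it controls using the equicontinuity of the tree metric (Proposition~\ref{result:boundE} applied to $\gamma_\infty$ and to the $\gamma_\infty^y$ at the net points) combined with the hittability bound (Proposition~\ref{result:hit}). This trades the exponential probability for the polynomial bound $1-C\lambda^{-c}$ asserted in the proposition, and delivers Assumption~\ref{a1} as an immediate corollary of the upper inclusion.
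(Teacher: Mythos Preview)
Your pointwise bounds are reasonable, and you correctly isolate the real obstacle: passing from a per-vertex estimate to a uniform one without a $\delta$-dependent union bound. However, your proposed ``net plus transfer'' step does not close. A $\delta^{-1}$-net of $\partial B(0,\lambda\delta^{-1})$ has $O(\lambda^{2})$ points, but a boundary vertex $y'$ within $\delta^{-1}$ of a net point $z$ is at \emph{macroscopic} distance from $z$, so $d_{\mathcal U}(z,y')$ is typically of order $\delta^{-\beta}$ --- comparable to the lower bound you are trying to protect, not negligible next to it. Proposition~\ref{result:boundE} controls distances \emph{along a single ILERW}, not tree distances between arbitrary nearby lattice points; and hittability of $\gamma_\infty^{z}$ only gives, for each fixed $y'$, a quenched-SRW probability $\le r^{\hat\eta}$ of escaping before hitting --- a bound that cannot be summed over the $\asymp(r\delta^{-1})^{3}$ lattice points $y'$ in the $r\delta^{-1}$-neighbourhood of $z$. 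So the transfer, as written, incurs exactly the $\delta$-dependence you sought to avoid.

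The paper's proof resolves this with a genuinely different mechanism: a \emph{multi-scale Wilson's algorithm}. One fixes a dyadic sequence of net scales $\varepsilon_{k}\asymp 2^{-k}\lambda^{-c}$ and builds trees $\mathcal U_{0}=\gamma_\infty\subseteq\mathcal U_{1}\subseteq\cdots\subseteq\mathcal U_{k_{0}}$ by successively adding branches from the $\varepsilon_{k}\delta^{-1}$-net $D_{k}$. At each stage one establishes hittability of the \emph{entire} partial tree $\mathcal U_{k-1}$ (not merely of $\gamma_\infty$ or of individual rays), via the events $I(k,\zeta)$ defined at~\eqref{event-i}. The crucial point is that a branch started from $y\in D_{k}$ must make $\asymp\varepsilon_{k-1}^{-1/6}$ separate crossings of $\varepsilon_{k-1}^{1/2}\delta^{-1}$-shells before reaching distance $\varepsilon_{k-1}^{1/3}\delta^{-1}$, and by $I(k-1,\zeta)$ each such crossing has a uniformly positive chance of hitting $\mathcal U_{k-1}$; iterating gives the super-polynomially small bound~\eqref{iterate}, which \emph{does} absorb the $|D_{k}|\asymp\varepsilon_{k}^{-3}$ union bound. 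Continuing until $\varepsilon_{k_{0}}\delta^{-1}<1$ (so that $D_{k_{0}}\supseteq\partial_{i}B(0,\delta^{-1})$) yields the uniform statement with a telescoping product of error terms summing to $O(\lambda^{-c})$. This dyadic bootstrapping is the missing ingredient in your sketch; once it is in place, both inclusions follow from the same construction, and the reparameterisation you describe gives the stated form.
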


\begin{proof}

Note that reparameterizing the upper inclusion estimate for $B_{{\cal U}} \left( 0,  \delta^{-\beta} \right)$ above gives that there exist constants $c_{1}, c_{2}, c_{3} > 0$ such that: for all $\lambda \ge 1$, $\delta \in (0, 1)$ and $R \ge 1$,
\[\mathbf{P} \left( \delta^{3} \mu_{{\cal U}} \left( B_{{\cal U}} \left( 0, R \delta^{-\beta} \right) \right) \le \lambda  \right) \ge 1- c_{1} R^{c_{2}} \lambda^{-c_{3}}.\]
Thus Assumption \ref{a1} follows.

We may assume that $\delta > 0$ is sufficiently small. The reason for this is as follows. Suppose that there exist universal constants $c, C > 0$ such that the result holds for $\delta \in (0, M^{-1}]$ where $M > 1$ is some absolute constant. For $\delta \ge M^{-1}$, it trivially follows that
\begin{equation*}
B(0,1)  \subseteq B_{{\cal U}} \left( 0,  \delta^{-\beta} \right)  \subseteq B\left(0,\delta^{-\beta}\right)\subseteq B\left(0,\delta^{-1}M^{\beta-1}\right),
\end{equation*}
where for the final inclusion we have used that $\beta>1$. Thus, since $\beta<2$, if $\delta \ge M^{-1}$ and $\lambda \ge  M$, the probabilities in the result are equal to 1. So the inequalities hold in this case.  However, when $\delta \ge M^{-1}$ and $1 \le \lambda < M$, by replacing the constant $C$ so that $C \ge M^{ c } $ if necessary, it follows that
\begin{equation*}
 1- C  \lambda^{-c} \le 0
 \end{equation*}
 in this case. Therefore, by changing the constant $C$ appropriately, the inequalities also hold for $\delta \ge M^{-1}$. Similarly, by taking $C \ge 2^{c}$, we may assume that $\lambda \ge 2$.

As mentioned above, we may assume that $\delta>0$ is sufficiently small so that
\begin{equation}\label{delta-small}
\frac{\delta^{-1}}{- 2 \log_{2} \delta + 2} \ge 10,
\end{equation}
and also that $\lambda \ge 2$. Let $\sigma$ and $\tau$ be the first time that the infinite LERW $\gamma_{\infty} = \text{LE} ( S[0, \infty  ) )$ exits $B ( \lambda^{2} \delta^{-1} ) $ and $B ( \lambda^{-4} \delta^{-1} ) $ respectively, where $S = ( S (n))_{n \ge 0}$ is a SRW on $\mathbb{Z}^{3}$ started at the origin. Define the event $F$ by setting
\[F = \left\{ \gamma_{\infty} [\sigma, \infty ) \cap B \left( 0, \lambda \delta^{- 1} \right) = \emptyset, \ \sigma \le \lambda^{4} \delta^{-\beta}, \ \tau \ge \lambda^{-9} \delta^{- \beta} \right\}.\]
Suppose that $\gamma_{\infty}$ returns to the ball $B( 0, \lambda \delta^{- 1}) $ after time $\sigma$. Then so does the SRW that defines $\gamma_\infty$ after the first time that it exits $B ( 0, \lambda^{2} \delta^{-1} )$. The probability of such a return by the SRW is, by \cite[Proposition 1.5.10]{Lawb}, smaller than $C \lambda^{-1}$ for some universal constant $C < \infty$. On the other hand, combining \cite[Theorem 1.4]{S} with \cite[Corollary 1.3]{Escape}, it follows that the probability that $\sigma$ is greater than $\lambda^{4} \delta^{-\beta}$ is bounded above by $C \exp \left\{ - c \lambda^{1/3} \right\}$ for some universal constants $c, C \in (0, \infty)$. Finally, the exponential tail lower bound on $\tau$ derived in \cite[Theorem 8.12]{S} with \cite[Corollary 1.3]{Escape} ensures that the probability that $\tau$ is smaller than $\lambda^{-9} \delta^{- \beta}$ is bounded above by $C \exp \{ - c \sqrt{\lambda} \}$ for some universal constants $c, C \in (0, \infty)$. Thus we have
\begin{equation}\label{4-1}
\mathbf{P} \left( F \right) \ge 1- C \lambda^{-1}.
\end{equation}
Note that on the event $F$, the number of steps (in $\gamma_{\infty}$) between the origin and $x \in \gamma_{\infty} \cap B \left( 0, \lambda \delta^{- 1} \right)$ is smaller than $\lambda^{4} \delta^{-\beta}$.

Next we introduce an event $G (\zeta )$, which ensures that $\gamma_{\infty}$ is a ``hittable'' set in the sense that if we consider another independent SRW $R$ whose starting point is close to $\gamma_{\infty} $, then, with high probability for $\gamma_{\infty} $, it is likely that $R$ intersects $\gamma_{\infty} $ quickly. Such hittability of LERW paths was studied in \cite[Theorem 3.1]{SS}. With this in mind, for $\zeta > 0$, we set
\[G (\zeta) = \left\{\forall x \in B \left( 0, 2  \delta^{-1} \right),  P_{R}^{x} \left( R \left[ 0, T_{R} \left( 0,  \lambda \delta^{-1} \right) \right] \cap \gamma_{\infty} = \emptyset \right) \le \lambda^{- \zeta}\right\},\]
where $R$ is a SRW, independent of $\gamma_{\infty}$, $P^{y}_{R}$ stands for its law assuming that $R(0) = y$, and $T_{R} (x, r)$ is the first time that $R$ exits $B (x, r)$. (For convenience, we omit the dependence of $G (\zeta)$ on $\delta$.) Note that the event $G (\zeta)$ roughly says that when $R (0)$ is close to  $\gamma_{\infty} $, it is likely for $R$ to intersect with $\gamma_{\infty} $ before it travels very far. From \cite[Lemma 3.2]{SS}, we have that there exist universal constants $C < \infty$ and $\zeta_{1} \in (0, 1)$ such that: for all $\lambda \ge 2$ and $\delta > 0$
\[\mathbf{P} \left( G (\zeta_{1} ) \right) \ge 1 - C \lambda^{-1}.\]

For each $k \ge 1$, let $\varepsilon_{k} = \lambda^{-\frac{\zeta_{1}}{6}} 2^{-k-10}$, $\eta_{k} = (2 k)^{-1}$ and
\[A_{k} = B ( (1+ \eta_{k}) \delta^{-1} ) \setminus B ((1-\eta_{k} ) \delta^{-1} ).\]
Write $k_{0}$ for the smallest integer satisfying $\delta^{-1} \varepsilon_{k_{0}} < 1$. We remark that the condition at \eqref{delta-small} ensures that $(1- \eta_{k_{0}} ) \delta^{-1} \le \delta^{-1} - 10 \le \delta^{-1} + 10 \le (1+ \eta_{k_{0}} ) \delta^{-1}$. Thus both the inner boundary $\partial_{i} B (0, \delta^{-1} )$ and the outer boundary $\partial B (0, \delta^{-1} )$ are contained in $A_{k_{0}}$. Here, for a subset $A$ of $\mathbb{Z}^{3}$, the inner boundary $\partial_{i} A$ is defined by
\begin{equation*}
\partial_{i} A := \left\{ x \in A \::\:\exists y \in \mathbb{Z}^{3} \setminus A \text{ such that } |x-y| =1 \right\}.
\end{equation*}
Moreover, let $D_{k}$ be a ``$\delta^{-1} \varepsilon_{k}$-net'' of $A_k$, i.e.\ $D_{k}$ is a set of lattice points in $A_k$ such that $A_{k} \subseteq \bigcup_{z \in D_{k} } B \left( z, \delta^{-1} \varepsilon_{k} \right)$. We may suppose that the number of points in $D_{k}$ is bounded above by $C \varepsilon_{k}^{-3}$. Since $\delta^{-1} \varepsilon_{k_{0}} < 1$ and $\partial_{i} B (0, \delta^{-1} ) \subseteq A_{k_{0}}$, it follows that $\partial_{i} B (0, \delta^{-1} ) \subseteq D_{k_{0}}$.

Now, to construct ${\cal U}$, we perform Wilson's algorithm (see \cite{Wilson}) as follows:
\begin{itemize}
\item Consider the infinite LERW $\gamma_{\infty} = \text{LE} ( S[0, \infty  ) )$, where $S = ( S (n))_{n \ge 0}$ is a SRW on $\mathbb{Z}^{3}$ started at the origin. We think of ${\cal U}_{0} = \gamma_{\infty } $ as the ``root'' in this algorithm.
\item Consider a SRW started at a point in $D_{1}$, and run until it hits ${\cal U}_{0}$; we add its loop-erasure to ${\cal U}_{0}$, and denote the union of them by ${\cal U}_{1}^{1}$.  We next consider a SRW from another point in $D_{1}$ until it hits ${\cal U}_{1}^{1}$; let ${\cal U}_{1}^{2}$ be the union of ${\cal U}_{1}^{1}$ and the loop-erasure of the second SRW. We continue this procedure until all points in $D_{1}$ are in the tree. Write ${\cal U}_{1}$ for the output random tree.
\item We now repeat the above procedure for $D_{2}$. Namely, we think of ${\cal U}_{1}$ as a root and add a loop-erasure of SRWs from each point in $D_{2}$. Let ${\cal U}_{2}$ be the output tree. We continue inductively to define ${\cal U}_{3}, {\cal U}_{4}, \dots, {\cal U}_{k_{0}}$.
\item Finally, we perform Wilson's algorithm for all points in $\mathbb{Z}^{3} \setminus {\cal U}_{k_{0}}$ to obtain ${\cal U}$.
\end{itemize}
Note that, by construction, ${\cal U}_{k} \subseteq {\cal U}_{k+1}$, and also $\partial_{i} B (0, \delta^{-1} )  \subseteq {\cal U}_{k_{0}}$.

Next, for each $x \in \mathbb{Z}^{3}$ and $0 \le j \le k_{0}$, let $t_{x, j} = \inf \{ k \ge 0 :\:\gamma_{{\cal U}} (x, 0)  (k) \in {\cal U}_{j} \}$
be the first time that $\gamma_{{\cal U}} (x, 0) $ hits ${\cal U}_{j}$ (recall that $\gamma_{{\cal U}}$ is defined in the beginning of this section). We write $\gamma_{{\cal U}} (x, {\cal U}_{j} ) = \gamma_{{\cal U}} (x, 0) [0, t_{x,j}]$ for the path in ${\cal U}$ connecting $x$ and ${\cal U}_{j}$. We remark that $t_{x,j} = 0$ and $\gamma_{{\cal U}} (x, {\cal U}_{j} ) = \{ x \}$ when $x \in {\cal U}_{j}$.

We condition the root $\gamma_{\infty}$ on the event $F \cap G (\zeta_{1} )$ and think of it as a deterministic set. Since the number of points in  $D_{1}$ is bounded above by $C \lambda^{\zeta_{1} / 2 }$, it follows that with high (conditional) probability, every branch $\gamma_{{\cal U}} (x, {\cal U}_{0} ) $ with $x \in D_{1}$ is contained in $ B (0, \lambda \delta^{-1} )$.
On the other hand, suppose that $\gamma_{{\cal U}} (x, {\cal U}_{0} )$ hits $B \left( 0, \lambda^{-4} \delta^{-1} \right)$ for some $x \in D_{1}$. This implies that in Wilson's algorithm, as described above, the SRW $R$ started at $x$ enters into $B \left( 0, \lambda^{-4} \delta^{-1} \right)$ before it hits $\gamma_{\infty}$. Since $\delta^{-1} / 2 \le |x| \le 2 \delta^{-1}$, it follows from \cite[Proposition 1.5.10]{Lawb} that
\begin{equation*}
P_{R}^{x} \left( R[0, \infty ) \cap B \left( 0, \lambda^{-4} \delta^{-1} \right) \neq \emptyset \right) \le C \lambda^{-4}
\end{equation*}
for some universal constant $C < \infty$, where $R$ is a SRW started at $x$, with $P_{R}^{x}$ denoting the law of the latter process. Taking the sum over $x \in D_{1}$ (recall that the number of points in $D_{1}$ is comparable to $\lambda^{\frac{\zeta_{1}}{2}}$ and that $0 < \zeta_{1} < 1$), we find that with high (conditional) probability, every branch $\gamma_{{\cal U}} (x, {\cal U}_{0} ) $ with $x \in D_{1}$ does not hit $B \left( 0, \lambda^{-4} \delta^{-1} \right)$. Consequently, if we define the event $H$ by
\begin{align*}
&H = \Big\{ \gamma_{{\cal U}} (x, {\cal U}_{0} ) \subseteq B (0, \lambda \delta^{-1} ),  \  \gamma_{{\cal U}} (x, {\cal U}_{0} ) \cap B \left( 0, \lambda^{-4} \delta^{-1} \right)  = \emptyset \\
& \  \  \    \    \    \   \    \  \text{ and } d_{{\cal U}} (x, {\cal U}_{0} ) \le \lambda^{2} \delta^{-\beta} \text{ for all } x \in D_{1}  \Big\},
\end{align*}
where $d_{{\cal U}} (x, {\cal U}_{0} )$ stands for the number of steps of the branch $\gamma_{{\cal U}} (x, {\cal U}_{0} ) $, then the condition of the event $G (\zeta_{1} )$, \cite[Theorem 1.4]{S} and \cite[Corollary 1.3]{Escape} ensure that the conditional probability of the event $H $ satisfies
\[P  (H ) \ge 1 - C \lambda^{-\zeta_{1} / 2}.\]

To complete the proof of the upper inclusion estimate on $B_{{\cal U}} \left( 0,  \delta^{-\beta} \right)$, we will consider several ``good'' events that ensure hittability of $\gamma_{\infty} \cup \gamma_{{\cal U}} (x, {\cal U}_{0} )$ with $x \in D_{k}$ ($k=1,2, \dots , k_{0}$),
similarly to the event $G (\zeta )$ defined above. Namely, for $k \ge 1$ and $\zeta > 0$, we define the event $I (k, \zeta)$ by setting
\begin{align}
&I (k, \zeta ) = \notag \\
&\left\{\forall x \in D_{k},\:y \in B \left( x, \varepsilon_{k} \delta^{-1} \right):
P_{R}^{y} \left( R \left[ 0, T_{R} \left( x,  \varepsilon_{k}^{1/2}  \delta^{-1} \right) \right] \cap \left( \gamma_{\infty} \cup \gamma_{{\cal U}} (x, {\cal U}_{0} ) \right) = \emptyset \right) \le \varepsilon_{k}^{ \zeta} \right\}. \label{event-i}
\end{align}
See Figure \ref{fig1} for this event.
From \cite[Lemma 3.2]{SS}, we see that the probability of the event $I (k, \zeta)$ is greater than $1-C \varepsilon_{k}^{2}$ if we take $\zeta$ sufficiently small. The reason for this is as follows. Suppose that the event $I (k, \zeta )$ does not occur, which means that there exist $x \in D_{k}$ and $y  \in B ( x, \varepsilon_{k} \delta^{-1} )$  such that the probability considered in \eqref{event-i} is greater than $\varepsilon_{k}^{\zeta}$. The existence of those two points $x \in D_{k}$ and $ y  \in B ( x, \varepsilon_{k} \delta^{-1} )$ implies the occurrence of the event $I (x, k, \zeta)$, as defined by
\[I (x, k, \zeta ) = \left\{\exists y \in B \left( x, \varepsilon_{k} \delta^{-1} \right) \text{ such that }P_{R}^{y} \left( R \left[ 0, T_{R} \left( x, \varepsilon_{k}^{1/2} \delta^{-1} \right) \right] \cap \gamma_{\infty}^{x} = \emptyset \right) > \varepsilon_{k}^{ \zeta}\right\},\]
where we write $\gamma_{\infty}^{x}$ for the unique infinite path started at $x$ in ${\cal U}$ (notice that $\gamma_{\infty}^{0} = \gamma_{\infty}$). Namely, we have
\begin{equation*}
I (k, \zeta)^{c} \subseteq \bigcup_{x \in D_{k}} I (x, k, \zeta ).
\end{equation*}
We mention that the distribution of $\gamma_{\infty}^{x}$ coincides with that of the infinite LERW started at $x$. With this in mind, applying \cite[Lemma 3.2]{SS} with $s = \varepsilon_{k} \delta^{-1} $, $t= \varepsilon_{k}^{1/2}   \delta^{-1}$ and $K = 10$, it follows that there exist universal constants $\zeta_{2} > 0$ and $C < \infty$ such that for all $k \ge 1$, $\lambda \ge 2$, $\delta \in (0, 1)$ and $x \in D_{k}$,
\[\mathbf{P} \left( I (x, k, \zeta_{2} ) \right) \le C \varepsilon_{k}^{5}.\]
Since the number of points in $D_{k}$ is bounded above by $ C \varepsilon_{k}^{-3}$, we see that
%\begin{equation}\label{prob-i}
\[\mathbf{P} \left( I (k, \zeta_{2} ) \right) \ge 1 - C \varepsilon_{k}^{2},\]
%\end{equation}
as desired.

\begin{figure}[b!]
\begin{center}
  \includegraphics[width=0.75\textwidth]{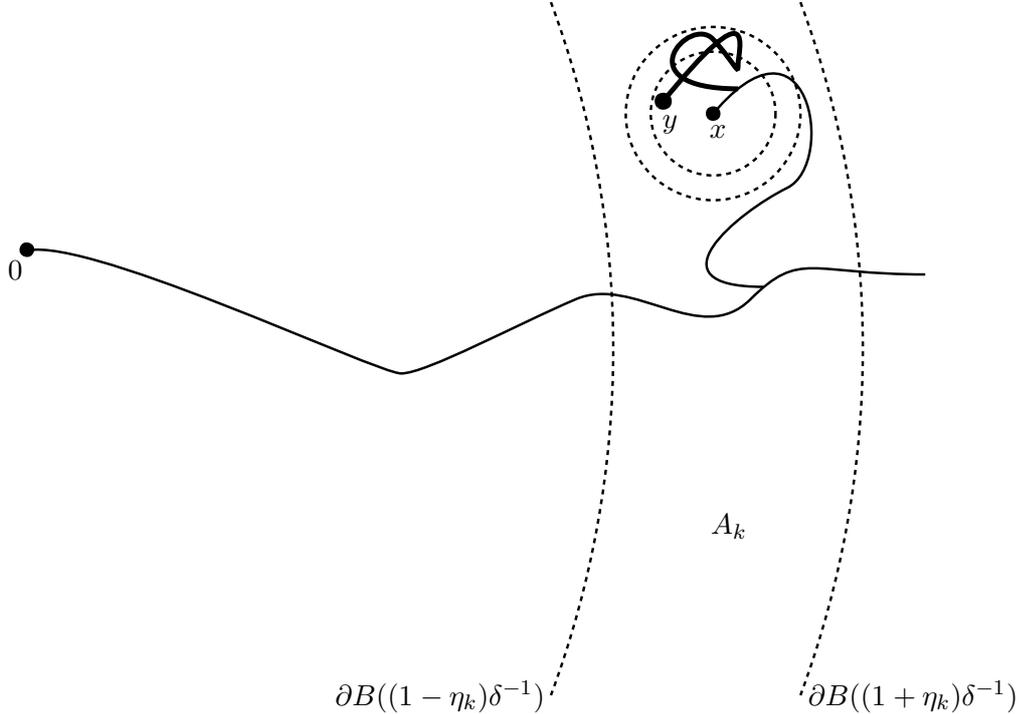}
  \rput(-345pt,161pt){$0$}
  \rput(-100pt,216pt){$y$}
  \rput(-82pt,213pt){$x$}
  \rput(-78pt,64pt){$A_k$}
  \rput(-9pt,0pt){$\partial B((1+ \eta_{k}) \delta^{-1})$}
  \rput(-186pt,0pt){$\partial B((1-\eta_k)\delta^{-1})$}
\end{center}
\caption{On the event $I(k,\zeta)$, as defined at \eqref{event-i}, the above configuration occurs with probability greater than $1-\varepsilon_k^\zeta$ for any $x\in D_k$, $y\in B(x,{\varepsilon_k}\delta^{-1})$. The circles shown are the boundaries of $B(x,{\varepsilon_k}\delta^{-1})$ and $B(x,\sqrt{\varepsilon_k}\delta^{-1})$. The non-bold paths represent $\gamma_\infty\cup\gamma_\mathcal{U}(x,\mathcal{U}_0)$, and the bold path $R[0,T_R(x,\sqrt{\varepsilon_k}\delta^{-1})]$.}\label{fig1}
\end{figure}

Set $A_{1}' := F \cap G (\zeta_{1} ) \cap H \cap  I (1, \zeta_{2} )$. Note that the event $A_{1}'$ is measurable with respect to ${\cal U}_{1}$ (recall that ${\cal U}_{1} $ is the union of $\gamma_{\infty}$ and all branches $ \gamma_{{\cal U}} (x, {\cal U}_{0} ) $ with $x \in D_{1}$). We have already proved that $\mathbf{P} (A_{1}' ) \ge 1 - C \lambda^{-1}$. Moreover, we note that on the event $A_{1}'$, it follows that
\begin{itemize}
\item  $d_{{\cal U}} (0, y) \ge \lambda^{-9} \delta^{-\beta}$ for all $y \in \gamma_{\infty} \cap B \left( 0, \delta^{-1} /3 \right)^{c}$,

\item  $d_{{\cal U}} (0, y) \le 2 \lambda^{4} \delta^{-\beta}$ for any $y \in \left( \gamma_{\infty} \cap B (0, \lambda \delta^{-1} ) \right) \cup (\cup_{x \in D_{1} } \gamma_{{\cal U}} (x, {\cal U}_{0} ))$,

\item the branch $ \gamma_{{\cal U}} (x, {\cal U}_{0} ) $ does not intersect with $B \left( 0, \lambda^{-4} \delta^{-1} \right)$ for any $x \in D_{1}$,

\item $d_{{\cal U}} (0, y) \ge \lambda^{-9} \delta^{-\beta}$ for all $y \in \gamma_{{\cal U}} (x, {\cal U}_{0} ) $ with $x \in D_{1}$.
\end{itemize}
Conditioning ${\cal U}_{1} $ on the event $A_{1}'$, we perform Wilson's algorithm for points in $D_{2}$. It is convenient to think of ${\cal U}_{1} $ as deterministic sets in this algorithm. Adopting this perspective, we take $y \in D_{2}$ and consider the SRW $R$ started at $y$ until it hits ${\cal U}_{1} $. Suppose that $R$ hits $\partial B (y, \varepsilon_{1}^{1/3} \delta^{-1}) $ before it hits ${\cal U}_{1}$. Since the number of ``$\varepsilon_{1}^{1/2} \delta^{-1}$-displacements'' of $R$ until it hits $\partial B (y, \varepsilon_{1}^{1/3} \delta^{-1})$ is bigger than $10^{-1} \varepsilon_{1}^{-1/6}$, the hittability condition $I (1, \zeta_{2} )$ ensures that
\begin{equation}\label{iterate}
P^{y}_{R} \left( R \text{ hits } \partial B (y, \varepsilon_{1}^{1/3} \delta^{-1}) \text{ before it hits } {\cal U}_{1} \right) \le \varepsilon_{1}^{c \zeta_{2} \varepsilon_{1}^{-1/6}},
\end{equation}
for some universal constant $c > 0$. On the other hand, it follows from \cite[Theorem 1.4]{S} and \cite[Corollary 1.3]{Escape} that
\begin{equation*}
P^{y}_{R} \left( \gamma_{{\cal U}} (y, {\cal U}_{1} ) \cap  \partial B (y, \varepsilon_{1}^{1/3} \delta^{-1}) = \emptyset \text{ and }  d_{{\cal U}} (y, {\cal U}_{1} ) > \varepsilon_{1}^{1/4} \delta^{-\beta} \right) \le C \exp \{ -c \varepsilon_{1}^{-\frac{1}{12}} \}.
\end{equation*}
With this in mind, we define the event $B_{2}$ by
\[B_{2} = \left\{ \forall y \in D_{2}, \gamma_{{\cal U}} (y, {\cal U}_{1} ) \cap  \partial B (y, \varepsilon_{1}^{1/3} \delta^{-1}) = \emptyset \text{ and }  d_{{\cal U}} (y, {\cal U}_{1} ) \le \varepsilon_{1}^{1/4} \delta^{-\beta} \right\}.\]
Taking the sum over $y \in D_{2}$, the conditional probability (recall that we condition ${\cal U}_{1}$ on the event $A_{1}'$) of the event $B_{2}$ satisfies
\[\mathbf{P} (B_{2} ) \ge 1 - C \varepsilon_{1}^{-3} \exp \{ -c \varepsilon_{1}^{-\frac{1}{12}} \}.\]
Thus, letting $A_{2}' := A_{1}' \cap B_{2} \cap I (2, \zeta_{2} )$, it follows that
\[\mathbf{P} (A_{2}' ) \ge 1 -C \lambda^{-\zeta_{1}/6},\]
where we also use that $\varepsilon_{1}$ is comparable to $ \varepsilon_{2}$, and that the number of points in $D_{2}$ is comparable to $\varepsilon_{2}^{-3}$.

Conditioning ${\cal U}_{2}$ on the event $A_{2}'$, we can do the same thing as above for a SRW started at $z \in D_{3}$. Hence if we define the event $B_{3}$ by setting
\[B_{3} = \left\{ \forall y \in D_{3}, \gamma_{{\cal U}} (y, {\cal U}_{2} ) \cap  \partial B (y, \varepsilon_{2}^{1/3} \delta^{-1}) = \emptyset \text{ and }  d_{{\cal U}} (y, {\cal U}_{2} ) \le \varepsilon_{2}^{1/4} \delta^{-\beta} \right\},\]
then the conditional probability of the event $B_{3}$ satisfies
\[\mathbf{P} (B_{3} ) \ge 1 - C \varepsilon_{2}^{-3} \exp \{ -c \varepsilon_{2}^{-\frac{1}{12}} \}.\]
So, letting $A_{3}' := A_{2}' \cap B_{3} \cap I (3, \zeta_{2} )$, it follows that
\[\mathbf{P} (A_{3}' ) \ge 1 -C \lambda^{-\zeta_{1}/6},\]
and we continue this until we reach the index $k_{0}$. In particular, if we define $B_{k}$ and $A_{k}'$ for each $k = 2, 3, \dots , k_{0}$ by
\[B_{k} = \left\{ \forall y \in D_{k}, \gamma_{{\cal U}} (y, {\cal U}_{k-1} ) \cap  \partial B (y, \varepsilon_{k-1}^{1/3} \delta^{-1}) = \emptyset \text{ and }  d_{{\cal U}} (y, {\cal U}_{k-1} ) \le \varepsilon_{k-1}^{1/4} \delta^{-\beta} \right\},\]
and $A_{k}' := A_{k-1}' \cap B_{k} \cap I (k, \zeta_{2} )$, we can conclude that
%\begin{equation}\label{gantan}
\[\mathbf{P} \left(A_{k_{0}}' \right) = \mathbf{P} \left(A_{1}'  \right) \prod_{k=2}^{k_{0}} \mathbf{P} \left(A_{k}'  | A_{k-1}' \right)  \ge \left(1 - C \lambda^{-\zeta_{1}/6}\right)  \prod_{k=1}^{\infty} \left( 1 - C \varepsilon_{k}^{2} \right) \ge 1 - C \lambda^{-\zeta_{1}/6}.\]
%\end{equation}
However, on the event $A_{k_{0}}'$, it is easy to see that:
\begin{itemize}
\item $d_{{\cal U}} (0, y) \ge \lambda^{-9} \delta^{-\beta} $ for all $y \in \gamma_{\infty} \cap B \left( 0, \delta^{-1}/3 \right)^{c}$,
\item $d_{{\cal U}} (0, y) \ge \lambda^{-9} \delta^{-\beta} $ for all $y \in \gamma_{{\cal U}} (x, {\cal U}_{0} ) $ and all $x \in D_{k}$ with $k = 1, 2, \dots , k_{0}$,
\item $d_{{\cal U}} (0, y) \le C \lambda^{4} \delta^{-\beta}$ for all $y \in  ( \gamma_{\infty} \cap B (0, \lambda \delta^{-1} ) ) \cup  {\cal U}_{k_{0}}$
\end{itemize}
This implies that $d_{{\cal U}} (0, y) \ge \lambda^{-9} \delta^{-\beta} $ for all $y \in B (0, \delta^{-1} )^{c}$ on the event $A_{k_{0}}'$ since $\partial_{i} B (0, \delta^{-1} ) \subseteq {\cal U}_{k_{0}}$. Therefore, it follows that
\[\mathbf{P} \left( B_{{\cal U}} \left( 0, \lambda^{-9} \delta^{-\beta} \right) \subseteq B (0, \delta^{-1} ) \right) \ge 1 - C \lambda^{-c}.\]
Reparameterizing this, we have
\begin{equation}\label{fukutsu-2}
\mathbf{P} \left( B_{{\cal U}} \left( 0, R \delta^{-\beta} \right) \subseteq B \left( 0, \lambda \delta^{-1} \right) \right) \ge 1 - c_{1} R^{c_{2}} \lambda^{-c_{3}},
\end{equation}
for some universal constants $c_{1}, c_{2}, c_{3} >0$. This gives the upper inclusion of the proposition.

For the lower inclusion estimate, since ${\cal U}_{k_{0}}$ contains $\partial_{i} B (0,  \delta^{-1} )$, using \cite[Theorem 1.4]{S} and \cite[Corollary 1.3]{Escape} again, we see that
\begin{equation}\label{tsukau}
\mathbf{P} \left( d_{{\cal U}} (0, y) \le C \lambda^{4} \delta^{-\beta} \text{ for all } y \in B (0,  \delta^{-1}) \right) \ge 1 - C \lambda^{-c},
\end{equation}
for some universal constants $c,C > 0$. This implies that
\[\mathbf{P} \left(  B (0,  \delta^{-1})   \subseteq  B_{{\cal U}} \left( 0,  C \lambda^{4} \delta^{-\beta} \right) \right) \ge 1- C \lambda^{-c}.\]
Reparameterizing this, it follows that for all $\lambda \ge 1$, $\delta \in (0, 1)$ and $\theta \in (0,1]$,
\begin{equation}\label{fukutsu}
\mathbf{P} \left(    B \left( 0, \lambda^{-1} \delta^{-1} \right)     \subseteq  B_{{\cal U}} \left( 0, \theta \delta^{-\beta} \right) \right) \ge 1- c_{1} \theta^{-c_{2}} \lambda^{-c_{3}},
\end{equation}
which finishes the proof.
\end{proof}

\subsection{Assumption \ref{a2}}

We will prove the following variation on Assumption \ref{a2}. Given Proposition \ref{1st-assump}, it is easy to check that this implies Assumption \ref{a2}. The restriction of balls to the relevant Euclidean ball will be useful in the proof of the scaling limit part of Theorem \ref{mainthm1}.

{\assu\label{a2'} For every $\varepsilon, R\in (0,\infty)$, it holds that
\[\lim_{\eta\rightarrow 0}\limsup_{\delta\rightarrow 0}
\bP\left(\inf_{x\in B(\delta^{-1}R)}\delta^3\mU\left(\BU(x,\delta^{-\beta}\varepsilon)\cap B(\delta^{-1}R)\right)<\eta\right)=0.\]}

We recall that Proposition \ref{1st-assump} gives a lower bound on the volume of $B_{\cal U} (0, \theta \delta^{-\beta} ) $ for each fixed $\theta \in (0,1]$ (see \eqref{fukutsu} for this). The next proposition shows that Assumption \ref{a2'} holds. We mention that a similar reparameterizing technique used in \eqref{fukutsu-2} and \eqref{fukutsu} cannot be used here, since only one of the parameters $R$ and $\theta$ can be eliminated in this way.

\begin{prop}\label{last-step}
There exist constants $c_{1}, c_{2}, c_{3}$ such that: for all $\lambda \ge 1$, $\delta \in (0, 1)$, $\theta \in (0, 1]$ and $R \ge 1$,
\begin{equation}\label{1255}
\bP\left(\inf_{x\in B(R \delta^{-1} )}\delta^3\mU\left(\BU(x,\theta\delta^{-\beta})\cap B( R \delta^{-1} )\right)<\lambda^{-1}\right) \le c_{1} ( R + \theta^{- 1} )^{c_{2}} \lambda^{-c_{3}}.
\end{equation}
In particular, Assumption \ref{a2'} holds.
\end{prop}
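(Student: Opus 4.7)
The plan is to establish the bound \eqref{1255} by combining translation-invariance of the UST $\mathcal{U}$ on $\mathbb{Z}^3$ with a net / union-bound argument anchored on the estimates \eqref{fukutsu} and \eqref{tsukau} already produced in the proof of Proposition \ref{1st-assump}. Since $\mathcal{U}$ is translation-invariant in distribution, both of those estimates remain valid with $0$ replaced by any fixed lattice point. I will apply \eqref{fukutsu} at each net point to place a Euclidean ball inside the intrinsic ball around that net point, and \eqref{tsukau} (rescaled) to connect an arbitrary $x \in B(R\delta^{-1})$ to its nearest net point in intrinsic distance. Once \eqref{1255} is in hand, Assumption \ref{a2'} follows immediately by taking $\theta = \varepsilon$ and $\lambda = \eta^{-1}$.

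Concretely, fix two auxiliary parameters $\epsilon \in (0,1)$ and $\nu \geq 1$, to be tuned as power-law functions of $\lambda$ and $\theta$, and pick an $\epsilon\delta^{-1}$-net $N_\epsilon \subset B((R-\epsilon)\delta^{-1}) \cap \mathbb{Z}^3$ of cardinality at most $C(R/\epsilon)^3$ such that every $x \in B(R\delta^{-1})$ lies within Euclidean distance $\epsilon\delta^{-1}$ of some $y \in N_\epsilon$ and $B(y, \epsilon\delta^{-1}) \subseteq B(R\delta^{-1})$ for all $y \in N_\epsilon$; the edge cases with $R$ comparable to $\epsilon$ are absorbed into constants. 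For each $y \in N_\epsilon$ introduce
\[A_y := \{B(y, \epsilon\delta^{-1}) \subseteq B_{\mathcal{U}}(y, \tfrac{\theta}{2}\delta^{-\beta})\}, \qquad B_y := \{d_{\mathcal{U}}(y, z) \leq \tfrac{\theta}{2}\delta^{-\beta} \text{ for all } z \in B(y, \epsilon\delta^{-1})\}.\]
The translated version of \eqref{fukutsu}, applied with $\theta$ replaced by $\theta/2$ and its $\lambda$-parameter equal to $\epsilon^{-1}$, yields $\mathbf{P}(A_y^c) \leq c_1 \theta^{-c_2}\epsilon^{c_3}$; the translated \eqref{tsukau}, applied at the rescaled lattice parameter $\delta'' := \epsilon^{-1}\delta$, yields $\mathbf{P}(B_y^c) \leq C\nu^{-c_\ast}$, provided $\epsilon$ and $\nu$ are coupled through $C\nu^4\epsilon^\beta \leq \theta/2$.

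On the complement of $\bigcup_{y \in N_\epsilon}(A_y^c \cup B_y^c)$, for each $x \in B(R\delta^{-1})$ I select a nearest net point $y(x) \in N_\epsilon$: the event $B_{y(x)}$ gives $d_{\mathcal{U}}(x, y(x)) \leq \theta\delta^{-\beta}/2$, so by the triangle inequality together with $A_{y(x)}$ every $z \in B(y(x), \epsilon\delta^{-1})$ satisfies $d_{\mathcal{U}}(x, z) \leq \theta\delta^{-\beta}$, whence $B(y(x), \epsilon\delta^{-1}) \subseteq B_{\mathcal{U}}(x, \theta\delta^{-\beta}) \cap B(R\delta^{-1})$. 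Since the latter contains a Euclidean ball of cardinality at least $c\epsilon^3\delta^{-3}$, we obtain $\delta^3\mu_{\mathcal{U}}(B_{\mathcal{U}}(x, \theta\delta^{-\beta}) \cap B(R\delta^{-1})) \geq c\epsilon^3$, which is $\geq \lambda^{-1}$ upon taking $\epsilon$ of order $\lambda^{-1/3}$. The main obstacle is the exponent bookkeeping in the resulting union bound: after $\epsilon \sim \lambda^{-1/3}$, $|N_\epsilon| \sim R^3\lambda$, so the per-point failure probabilities must decay faster than $\lambda^{-1}$. The $A_y$-contribution is of order $R^3\theta^{-c_2}\lambda^{1-c_3/3}$, demanding $c_3 > 3$; the $B_y$-contribution, after optimising $\nu$ as a suitable small positive power of $\lambda$, demands the exponent $c_\ast$ in \eqref{tsukau} to exceed $12/\beta$. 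Should either exponent be too small as stated, one can revisit the Wilson's-algorithm construction in the proof of Proposition \ref{1st-assump} and iterate the $D_k$-covering loop for extra generations to produce arbitrarily large polynomial decay in $\lambda$, at the cost of an inflated $\theta^{-1}$-exponent which is permitted by the $(R + \theta^{-1})^{c_2}$ prefactor in \eqref{1255}.
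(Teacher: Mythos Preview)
Your argument is a clean net $+$ translation-invariance $+$ union-bound scheme, and the logic on the good event is fine. However, there is a genuine gap in the exponents that you flag but do not close. First, a minor point: your events $A_y$ and $B_y$ are literally the same event (both say $d_{\mathcal U}(y,z)\le \tfrac{\theta}{2}\delta^{-\beta}$ for all $z\in B(y,\epsilon\delta^{-1})$), and \eqref{fukutsu} is itself just a reparameterisation of \eqref{tsukau}, so the two ``inputs'' you quote are not independent. More seriously, tracing the proof of Proposition~\ref{1st-assump} shows the exponent $c$ in \eqref{tsukau} is essentially $\zeta_1/6$ with $\zeta_1\in(0,1)$ a hittability exponent from \cite{SS}; after reparameterisation this gives $c_3\approx\beta\zeta_1/24$, far below $3$. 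With $\epsilon\asymp\lambda^{-1/3}$ your union bound over $|N_\epsilon|\asymp R^3\lambda$ points produces $R^3\theta^{-c_2}\lambda^{1-c_3/3}$, which \emph{diverges} in $\lambda$. Your proposed fix---``iterate the $D_k$-covering loop for extra generations''---is not the relevant knob: that loop already runs until the mesh is below $1$, and the bottleneck sits in the first-layer hittability input, not in the number of refinements. It is true that \cite[Lemma~3.2]{SS} permits an arbitrarily large polynomial decay (the parameter $K$ there is free), so in principle one could redo all of Proposition~\ref{1st-assump} with a much larger $K$ and push $c_3$ past $3$; but that is a substantial re-derivation with several interdependent parameters ($\zeta_1$, the mesh $\varepsilon_k$, $|D_k|$, \dots), and you have not carried it out.

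The paper circumvents this exponent trap by a two-layer construction that keeps the net \emph{coarse}. Layer one proves \eqref{warm-2nd}: the volume lower bound holds uniformly for every $x$ on a single ILERW path $\gamma_\infty^v\cap B(R\delta^{-1})$, not just at $v$. This is obtained by slicing $\gamma_\infty^v$ into short segments (the times $\tau_l,\tau_l',\rho_l$), placing net points $x_l,x_l'$ near the segment endpoints, and running Wilson's algorithm from them; hittability of $\gamma_\infty$ plus the no-quasi-loop property pin each branch to the correct segment and keep it short, while the translated event $A^1$ of \eqref{use-1} supplies the Euclidean ball inside $B_{\mathcal U}(x_l,\cdot)$. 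Layer two then takes a net $D'$ of $B(2R\delta^{-1})$ with mesh $\lambda^{-b_2}\delta^{-1}$ where $b_2$ is chosen \emph{tiny} (of order $b_1/10^{8}$), so that the union bound of \eqref{warm-2nd} over the $\asymp R^3\lambda^{3b_2}$ net points succeeds against the available $\lambda^{-b_1}$; a further Wilson's-algorithm step with refining nets $D^k$ then shows every $x\in B(R\delta^{-1})$ lies within intrinsic distance $\lambda^{-b_3}\delta^{-\beta}$ of the subtree $\mathcal U_M$ spanned by $D'$. In short, the paper trades your direct pointwise union bound (which needs $c_3>3$) for a path-uniform estimate combined with hittability-driven filling, so that the union bound only has to absorb $\lambda^{3b_2}$ with $b_2$ chosen at will.
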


\begin{proof}
Fix parameters $\theta \in (0, 1]$ and $R \ge 1$. By the reason explained at the beginning of the proof of Proposition \ref{1st-assump}, we may assume that
\begin{equation}\label{1110}
\lambda \ge \max \{ 2, R^{c_{\ast}} \}
\end{equation}
 is sufficiently large and that $\delta > 0$ is sufficiently small, where $c_{\ast} > 0$ is some absolute constant which will be given at \eqref{c-ast} below.

Let $\gamma_{\infty}^{v}$ be the infinite simple path in ${\cal U}$ started at $v$. When $v=0$, we write $\gamma_{\infty}^{0} = \gamma_{\infty}$. Fix a point $v$. In order to prove \eqref{1255}, we will first give a lower bound on the volume of $B_{{\cal U}} \left( x, \theta \delta^{-\beta} \right) \cap B(0,  R \delta^{-1}  )$ uniformly in $x \in \gamma_{\infty}^{v} \cap B (0,  R \delta^{-1}  )$. Namely, we will show that there exist universal constants $b_{0}, c_{1}, c_{2}, c_{3} \in (0, \infty) $ such that for all $v \in B ( 0, \lambda^{b_{0}} \delta^{-1} )$,
\begin{equation}
\mathbf{P} \left( \inf_{x \in \gamma_{\infty}^{v} \cap B (0, R \delta^{-1} )} \delta^{3} \mu_{{\cal U}} \left( B_{{\cal U}} \left( x, \theta \delta^{-\beta} \right)  \cap B (0, R \delta^{-1}  ) \right) < \lambda^{-1}  \right)
\le c_{1} (R + \theta^{-1} )^{c_{2}} \lambda^{-c_{3}}. \label{warm-2nd}
\end{equation}

Recall that we proved at \eqref{tsukau} that there exist universal constants $a_{1}, a_{2} \in (0, \infty )$ such that
\begin{equation}\label{use-1}
\mathbf{P} (A^{1} ) := \mathbf{P} \left( B (0, \lambda^{-5} \delta^{-1}) \subseteq  B_{{\cal U}} \left( 0, a_{2} \lambda^{-1/4} \delta^{-\beta} \right)   \right) \ge 1 - a_{2} \lambda^{-a_{1}}.
\end{equation}
Similarly to previously, we need to deal with the hittability of $\gamma_{\infty}$. To this end, define the event $A (\zeta)$ by
\[A ( \zeta ) = \left\{\sup_{x \in B (0, \lambda \delta^{-1} ):\:\text{dist} \left( x, \gamma_{\infty}  \right) \le \lambda^{-\frac{a_{1}}{10}}  \delta^{-1}} P_{R}^{x} \left( R \left[ 0, T_{R} \left( x, \lambda^{-\frac{a_{1}}{20}}  \delta^{-1} \right) \right] \cap \gamma_{\infty} = \emptyset \right) \le \lambda^{-\zeta a_{1}}\right\}.\]
From \cite[Lemmas 3.2 and 3.3]{SS}, it follows that there exist universal constants $\zeta_{4} \in (0, 1)$ and $C < \infty$ such that
\begin{equation}\label{zeta4}
\mathbf{P} \left( A (\zeta_{4} ) \right) \ge 1 - C \lambda^{-a_{1}}.
\end{equation}
Now we let $b_{0} = a_{1} \zeta_{4} /5000$ and take $v \in B \left( 0, \lambda^{b_{0}}  \delta^{-1} \right)$, henceforth in this proof only, we write $\gamma^{v}_{\infty} = \gamma_{\infty}$ to simplify notation. We also write $\rho_{0}$ for the first time that $\gamma_{\infty}$ exits $B (0, R \delta^{-1} )$ (we set $\rho_{0} = 0$ if $v \notin B (0, R \delta^{-1} )$), set $L = \lambda^{\frac{\zeta_{4} a_{1}}{100}} \delta^{-1}$, and define $\rho$ to be the first time that $\gamma_{\infty}$ exits $B (0, L)$.
Now we define the constant $c_{\ast} $ in \eqref{1110}. We let
\begin{equation}\label{c-ast}
c_{\ast} = \frac{200}{\zeta_{4} a_{1}}.
\end{equation}
The condition \eqref{1110} ensures that $L \ge R^{2} \delta^{-1}$. Then a similar argument used to deduce \eqref{4-1} gives that
\[\mathbf{P} (A^{2} ) := \mathbf{P} \left( \gamma_{\infty} [\rho, \infty ) \cap B (0, R \delta^{-1} ) = \emptyset, \ \rho \le \lambda^{\frac{a_{1}}{50}} \delta^{-\beta} \right) \ge 1 - C R  \lambda^{-\frac{\zeta_{4}  a_{1}}{100}} .\]
So, it suffices to deal with the event that there exists an $x \in \gamma_{\infty} [0, \rho ] \cap B (0,  R \delta^{-1} )$ for which the volume of $B_{{\cal U}} \left( x, \theta \delta^{-\beta} \right)  \cap B (0, R \delta^{-1} )$ is less than $\lambda^{-1} \delta^{-3}$.

Given these preparations, and moreover writing $r = \lambda^{-\frac{\zeta_{4} a_{1}}{100}} \delta^{-1}$, we decompose the path $\gamma_{\infty} [0, \rho]$ in the following way.
\begin{itemize}
\item Let $\tau_{0} = 0$. For $l \ge 1$, define $\tau_{l}$ by $\tau_{l} = \inf \{ j \ge \tau_{l-1} :\: | \gamma_{\infty} (j) - \gamma_{\infty}  (\tau_{l-1} ) | \ge r \}$.
\item Let $N$ be the unique integer such that $\tau_{N-1} < \rho_{0} \le \tau_{N}$.
\item Set $\tau_{1}' = \inf \{ j \ge \rho_{0} :\: |\gamma_{\infty} (j) - \gamma_{\infty} (\rho_{0} ) | \ge r \}$.
\item For $l \ge 1$, if $\rho_{l-1} < \rho$, then we define $\tau_{l}' = \inf \{ j \ge \rho_{l-1}:\:  |\gamma_{\infty} (j)  - \gamma_{\infty} (\rho_{l-1} ) | \ge r  \}$ and set $\rho_{l} = \inf \{ j \ge \tau_{l}' :\: \gamma_{\infty} (j) \in B (0, R \delta^{-1} ) \} \wedge \rho$. Otherwise, we let $\tau_{l}' = \infty$ and $\rho_{l} = \rho$.
\item Let $N'$ be the smallest integer $l$ such that $\rho_{l} = \rho$.
\item For $0 \le l \le N'-1$, we let $\tau_{l}'' = \max \{ j \le \rho_{l}  :\: |\gamma_{\infty} (j) -  \gamma_{\infty} ( \rho_{l} ) | \ge r \}$ if it is the case that $ \{ j \le \rho_{l}  :\: |\gamma_{\infty} (j) -  \gamma_{\infty} ( \rho_{l} ) | \ge r \} \neq \emptyset $. Otherwise, we set $\tau_{l}'' = \rho_{l}$.
\end{itemize}
Notice that we don't consider the sequence $\{ \tau_{l} \}$ if $v \notin B (0, R \delta^{-1} )$ since $\tau_{0} = \rho_{0} = 0$ in that case. (Namely, if $v \notin B (0, R \delta^{-1} )$, we only consider the sequence $\{ \tau_{l}' \}$.) We also note that for any $x \in \gamma_{\infty} [\rho_{0}, \rho] \cap B ( 0, R \delta^{-1} )$, there exists  $0 \le l  < N'$ such that $x \in \gamma_{\infty} [\rho_{l}, \tau_{l+1}' ] $.

Our first observation is that by considering the same decomposition for the corresponding SRW, it follows that the probability that $N + N' \ge \lambda^{\zeta_{4} a_{1}/10}$ is smaller than $C \exp \{ - c \lambda^{c} \}$. Furthermore, applying \cite[Theorem 1.4]{S} together with \cite[Corollary 1.3]{Escape}, with probability at least $1- C \exp \{ - c \lambda^{c} \}$, it holds that $\tau_{l} - \tau_{l-1} \le \lambda^{-\frac{\zeta_{4} a_{1}}{200}} \delta^{-\beta} $ for all $l =1,2, \dots, N$, and that $\tau_{l}' - \rho_{l-1} \le \lambda^{-\frac{\zeta_{4} a_{1}}{200}} \delta^{-\beta} $ for all $l =1,2, \dots, N'$. Consequently,
\[\mathbf{P} (A^{3} ) \ge 1- C \exp \{ - c \lambda^{c} \},\]
where the event $A^{3}$ is defined by setting
\[A^{3} =  \left\{
\begin{array}{c}
   N + N' \le \lambda^{\frac{\zeta_{4} a_{1}}{10}}, \ \tau_{l} - \tau_{l-1} \le \lambda^{-\frac{\zeta_{4} a_{1}}{200}} \delta^{-\beta}  \text{ for all } l =1,2, \dots, N\\
   \text{ and } \tau_{l+1}' - \tau_{l}'' \le \lambda^{-\frac{\zeta_{4} a_{1}}{200}}
 \delta^{-\beta} \text{ for all } l =0,1, \dots, N'-1
\end{array} \right\}.\]

Replacing the constant $\zeta_{4}$ by a smaller constant if necessary, \cite[Theorem 6.1]{SS} (see Proposition \ref{result:quasiloops}) guarantees that $\gamma_{\infty}$ has no ``quasi-loops''. Namely, it follows that
\[\mathbf{P} (A^{4} )  \ge 1 - C \lambda^{- c a_{1}},\]
where the event $A^{4}$ is defined by setting
\[A^{4}
=  \left\{
\begin{array}{c}
 B \left( \gamma_{\infty} (\tau_{l} ), \lambda^{-\frac{a_{1}}{30}} \delta^{-1} \right) \cap \left( \gamma_{\infty} [0, \tau_{l-1}] \cup  \gamma_{\infty} [\tau_{l+1}, \infty ) \right) = \emptyset \text{ for all } l= 1,2, \dots , N \text{ and}  \\
B \left( \gamma_{\infty} (\rho_{l} ), \lambda^{-\frac{a_{1}}{30}} \delta^{-1} \right) \cap \left( \gamma_{\infty} [0, \tau_{l}''] \cup  \gamma_{\infty} [\tau_{l+1}', \infty ) \right) = \emptyset \text{ for all } l= 0,2, \dots , N'-1
\end{array}\right\}.\]

We now consider a $\lambda^{-\frac{a_{1}}{10}} \delta^{-1}$-net of $B (L)$, which we denote by $D$. We may assume that for each $y \in D \cap B(R \delta^{-1} )$, it holds that $B (y, 2 \lambda^{-1} \delta^{-1} ) \subseteq B (R \delta^{-1} ) $. Notice that the number of the points in $D$ is bounded above by $C  \lambda^{a_{1}/3}$ (recall that we set $L = \lambda^{\frac{\zeta_{4} a_{1}}{100}} \delta^{-1}$). For each $1 \le l \le N=2$, we can find a point $x_{l} \in D \cap B (R \delta^{-1})$ satisfying $| x_{l} - \gamma_{\infty} (\tau_{l} ) | \le  \lambda^{-\frac{a_{1}}{10}} \delta^{-1}$. Also, for each $0 \le l \le N'-1$, there exists a point $x_{l}' \in D \cap B (R \delta^{-1} )$ satisfying $| x_{l}' - \gamma_{\infty} (\rho_{l} ) | \le  \lambda^{-\frac{a_{1}}{10}} \delta^{-1}$. (Here, note that we can find $x_{l}' $ in $B (R \delta^{-1} )$ since $\gamma_{\infty} ( \rho_{l } ) \in B (R \delta^{-1} )$.)

We perform Wilson's algorithm as follows.
\begin{itemize}
\item The root of the algorithm is $\gamma_{\infty}$.
\item Consider the SRW $R^{1}$ started from $x_{1}$, and run until it hits $\gamma_{\infty}$. We let ${\cal U}^{1}$ be the union of $\gamma_{\infty}$ and $\text{LE} (R^{1})$. Next, we consider the SRW $R^{2}$ started at $x_{2}$ until it hits ${\cal U}^{1}$; the union of it and ${\cal U}^{1}$ is denoted by ${\cal U}^{2}$. We define ${\cal U}^{l}$ for $l = 3, 4, \dots , N-2$ similarly.
\item Consider the SRW $Z^{0}$ starting from $x_{0}'$ until it hits ${\cal U}^{N-2}$. We let $\tilde{{\cal U}}^{0}$ be the union of ${\cal U}^{N-2}$ and $\text{LE} (Z^{0})$. Next, we consider the SRW $Z^{1}$ started at $x_{1}'$ until it hits $\tilde{{\cal U}}^{0}$; the union of $\text{LE} ( Z^{1} )$ and $\tilde{{\cal U}}^{0}$ is denoted by $\tilde{{\cal U}}^{1}$. We define $\tilde{{\cal U}}^{l}$ for $l = 2, 3, \dots , N'-1$ similarly.
\item Finally, run sequentially LERWs from every point in $\mathbb{Z}^{3} \setminus \tilde{{\cal U}}^{N'-1}$ to obtain ${\cal U}$.
\end{itemize}
Define $F^{0} :=  A^{2} \cap A^{3} \cap A^{4} \cap A (\zeta_{4} )$ as a ``good'' event for $\gamma_{\infty}$. Conditioning $\gamma_{\infty}$ on the event $F^{0}$, we consider all simple random walks $R^{1}, R^{2}, \dots , R^{N-2}, Z^{0}, Z^{1}, \dots, Z^{N'-1}$ starting from $x_{1}, x_{2}, \dots , x_{N-2}, x_{0}', x_{1}', \dots , x_{N'-1}'$ respectively. The event $A (\zeta_{4} )$ ensures that the probability that $R^{l}$ (respectively $Z^{l}$) exits $B ( x_{l},   \lambda^{-\frac{a_{1}}{20}}  \delta^{-1}) $ (resp. $B ( x_{l}',   \lambda^{-\frac{a_{1}}{20}}  \delta^{-1}$) before hitting $\gamma_{\infty}$ is smaller than $\lambda^{- \zeta_{4} a_{1}}$ for each $l$. Moreover, the event $A^{4}$ says that the endpoint of $R^{l}$ (resp. $Z^{l}$) lies in $\gamma_{\infty} [\tau_{l-1}, \tau_{l+1} ]$ (resp. $\gamma_{\infty} [ \tau_{l}'', \tau_{l+1}' ]$)  for each $l $. On the other hand, the number of SRW's $N + N' -2$ is less than $\lambda^{\frac{\zeta_{4} a_{1}}{10}}$ by the event $A^{3}$. Also, we can again appeal to \cite[Theorem 1.4]{S} and \cite[Corollary 1.3]{Escape} to see that with probability at least $1 - C \exp \{ -c \lambda^{c} \}$, the length of the branch $\text{LE} (R^{l})$ (resp. $\text{LE} (Z^{l})$) is less than $\lambda^{- \frac{a_{1}}{40}} \delta^{-\beta} $ for each $l = 1,2 , \dots , N-2$ (respectively $l=0,1, \dots , N'-1$). Thus, taking the sum over $l$, we see that
\[\mathbf{P} (F^{1} ) \ge 1 - C \lambda^{-\frac{\zeta_{4} a_{1}}{2}},\]
where the event $F^{1}$ is defined by setting
\begin{align*}
F^{1} =&  \left\{
\begin{array}{c}
   \text{LE} (R^{l} ) \subseteq B \left( x_{l},   \lambda^{-\frac{a_{1}}{20}}  \delta^{-1} \right), \text{ the endpoint of } R^{l} \text{ lies in } \gamma_{\infty} [\tau_{l-1}, \tau_{l+1} ], \notag \\
   \text{ and the length of } \text{LE} (R^{l} )  \text{ is smaller than } \lambda^{- \frac{a_{1}}{40}} \delta^{-\beta} \text{ for all } l = 1, 2, \dots , N-2
\end{array} \right\} \\
& \cap  \left\{
\begin{array}{c}
\text{LE} (Z^{l} ) \subseteq B \left( x_{l}',   \lambda^{-\frac{a_{1}}{20}}  \delta^{-1} \right), \text{ the endpoint of } Z^{l} \text{ lies in } \gamma_{\infty} [\tau_{l}'', \tau_{l+1}' ], \notag \\
 \text{ and the length of } \text{LE} (Z^{l} )  \text{ is smaller than } \lambda^{- \frac{a_{1}}{40}} \delta^{-\beta} \text{ for all } l = 0, 1, \dots , N'-1
\end{array}\right\}.
\end{align*}

Recall that for each $y \in D \cap B(R \delta^{-1} )$, it holds that $B (y, 2 \lambda^{-1} \delta^{-1} ) \subseteq B (R \delta^{-1} ) $. Since the number of the points in $D$ is bounded above by $C  \lambda^{a_{1}/3}$, the translation invariance of ${\cal U}$ and \eqref{use-1} tell that
\[\mathbf{P} (F^{2} )\ge 1 - a_{2}  \lambda^{- \frac{2 a_{1}}{3}},\]
where the event $F^{2}$ is defined by
\begin{align*}
&F^{2} =  \left\{ B (x_{l}, \lambda^{-5} \delta^{-1}) \subseteq  B_{{\cal U}} \left( x_{l}, a_{2} \lambda^{-1/4} \delta^{-\beta} \right) \text{ for all } l=1,2, \dots , N-2    \right\} \\
& \ \ \ \ \ \ \cap  \left\{ B (x_{l}', \lambda^{-5} \delta^{-1}) \subseteq  B_{{\cal U}} \left( x_{l}', a_{2} \lambda^{-1/4} \delta^{-\beta} \right) \text{ for all } l=0,1, \dots , N'-1    \right\}.
\end{align*}

We set $F^{3} : = F^{0} \cap F^{1} \cap F^{2}$. Suppose that the event $F^{3}$ occurs. Take a point $x \in \gamma_{\infty} [0, \rho_{0}] $. We can then find $l \in\{ 0, 1, \dots , N-2\} $ such that $x \in \gamma_{\infty} [\tau_{l}, \tau_{l+2} ]$. Let $y_{l}$ be the endpoint of $R^{l}$. Since $y_{l}$ lies in  $\gamma_{\infty} [\tau_{l-1}, \tau_{l+1} ]$, and the event $A^{3}$ holds, we see that $d_{\cal U} (x, y_{l} ) \le \tau_{l+2} - \tau_{l-1} \le 3 \lambda^{-\frac{\zeta_{4} a_{1}}{200}} \delta^{-\beta}$. However, the event $F^{1}$ says that $d_{\cal U} (y_{l}, x_{l} ) \le \lambda^{- \frac{a_{1}}{40}} \delta^{-\beta}$. Finally, the event $F^{2}$ ensures that for every point $z \in B (x_{l}, \lambda^{-5} \delta^{-1}) $, we have $d_{\cal U} (x_{l}, z ) \le a_{2} \lambda^{-1/4} \delta^{-\beta}$. So, the triangle inequality tells that $d_{\cal U} (x, z) \le 5 \lambda^{-\frac{\zeta_{4} a_{1}}{200}} \delta^{-\beta}$ for all $z \in B (x_{l}, \lambda^{-5} \delta^{-1}) \subseteq B (R \delta^{-1} ) $.

We next consider a point $x \in \gamma_{\infty} [\rho_{0}, \rho] \cap B (R \delta^{-1} )$. There then exists  $0 \le l  < N'$ such that $x \in \gamma_{\infty} [\rho_{l}, \tau_{l+1}' ] $. Let $y_{l}'$ be the endpoint of $Z^{l}$. Since $y_{l}'$ lies in  $\gamma_{\infty} [\tau_{l}'', \tau_{l+1}' ]$, and the event $A^{3}$ holds, we see that $d_{\cal U} (x, y_{l}' ) \le \tau_{l+1}' - \tau_{l}'' \le  \lambda^{-\frac{\zeta_{4} a_{1}}{200}} \delta^{-\beta}$. However, the event $F^{1}$ says that $d_{\cal U} (y_{l}', x_{l}' ) \le \lambda^{- \frac{a_{1}}{40}} \delta^{-\beta}$. Finally, the event $F^{2}$ ensures that for every point $z \in B (x_{l}', \lambda^{-5} \delta^{-1}) $, we have $d_{\cal U} (x_{l}', z ) \le a_{2} \lambda^{-1/4} \delta^{-\beta}$. So, the triangle inequality tells that $d_{\cal U} (x, z) \le 3 \lambda^{-\frac{\zeta_{4} a_{1}}{200}} \delta^{-\beta}$ for all $z \in B (x_{l}', \lambda^{-1} \delta^{-1}) \subseteq B (R \delta^{-1} ) $.

This implies that for all $x \in \gamma_{\infty} [0, \rho] \cap B (R \delta^{-1} )$,
\begin{equation}\label{tre}
\mu_{\cal U} \left\{ B_{\cal U} \left( x, 5 \lambda^{-\frac{\zeta_{4} a_{1}}{200}} \delta^{-\beta} \right) \cap B (R \delta^{-1} ) \right\} \ge c \lambda^{-15} \delta^{-3},
\end{equation}
which gives \eqref{warm-2nd} when $5 \lambda^{-\frac{\zeta_{4} a_{1}}{200}}  \le \theta$. For the case that $5 \lambda^{-\frac{\zeta_{4} a_{1}}{200}}  > \theta$, taking $c_{1}$ and $c_{2}$ in \eqref{warm-2nd} sufficiently large so that $c_{1} \ge 5^{\frac{200c_{3}}{\zeta_{4} a_{1}}}$ and $c_{2} \ge \frac{200c_{3}}{\zeta_{4} a_{1}}$ if necessary, the inequality  \eqref{warm-2nd} also holds in this case.

Now we are ready to prove the proposition. Recall that the constants $a_{1}$ and $\zeta_{4}$ appeared at \eqref{use-1} and \eqref{zeta4}, and that we defined $b_{0} := a_{1} \zeta_{4}/5000$ and $L :=\lambda^{\frac{\zeta_{4} a_{1}}{100}} \delta^{-1}$. For $v \in B ( \lambda^{b_{0}}  \delta^{-1})$,   $\rho $ was defined to be the first time that $\gamma_{\infty}^{v}$ exits $B ( L)$ ($\rho =0$ if $v \notin B (R \delta^{-1} )$). We have proved that for each $v \in B (   \lambda^{b_{0}}  \delta^{-1})$,
\begin{align}
&P \left( \mu_{\cal U} \left\{ B_{\cal U} \left( x,  \lambda^{-b_{1}} \delta^{-\beta} \right) \cap B (0, R \delta^{-1} ) \right\} \ge c \lambda^{-15} \delta^{-3} \text{ for all } x \in \gamma_{\infty}^{v} [0, \rho] \cap B (R \delta^{-1} ) \right) \notag \\
&\ge 1 - C \lambda^{-b_{1}}, \label{mendo}
\end{align}
for some $b_{1} > 0$, see \eqref{tre}. Let $b_{2} = \frac{\zeta_{4} a_{1}}{10^{8}} \wedge \frac{b_{1}}{10^{8}}$. We consider a $\lambda^{-b_{2}} \delta^{-1}$-net $D' = ( x^{l})_{l=1}^{M}$ of $B (0,  2 R \delta^{-1})$. Note the number of points in $D'$, which is denoted by $M$, can be assumed to be smaller than $C R^{3} \lambda^{3 b_{2}}$.

Now we perform Wilson's algorithm as follows:
\begin{itemize}
\item The root of the algorithm is $\gamma_{\infty} = \gamma_{\infty}^{0}$.
\item Consider the SRW $R^{1}$ started at $x^{1} \in D'$, and run until it hits $\gamma_{\infty}$. Let ${\cal U}_{1}$ be the union of $\text{LE} (R^{1})$ and $\gamma_{\infty}$. We then consider the SRW $R^{l}$ started from $x^{l} \in D'$, and run until it hits ${\cal U}_{l-1}$; add $\text{LE} (R^{l})$ to ${\cal U}_{l-1}$ -- this union is denoted by ${\cal U}_{l}$. Since $M \le  C R^{3} \lambda^{3 b_{2}}$, by applying \eqref{mendo} for each $x^{l}$, we have
    \[\mathbf{P} (V^{1} ) \ge 1- C R^{3} \lambda^{-b_{1}/2},\]
    where the event $V^{1}$ is defined by setting
    \begin{equation*}
    V^{1} := \left\{ \mu_{\cal U} \left\{ B_{\cal U} \left( x,  \lambda^{-b_{1}} \delta^{-\beta} \right) \cap B (R \delta^{-1} ) \right\} \ge c \lambda^{-15} \delta^{-3} \text{ for all } x \in {\cal U}_{M}  \cap B (R \delta^{-1} ) \right\}.
    \end{equation*}
\item Taking $a > 0$ such that $a \sum_{j=1}^{\infty} j^{-2} = 1/2$, we let $a_{k} = a \sum_{j=1}^{k} j^{-2}$, and consider a $2^{-k} \lambda^{-b_{2}} \delta^{-1}$-net $D^{k} = ( x_{i}^{k} )_i$ of $B (  (2- a_{k} ) R \delta^{-1} )$, where the number of points in $D^{k}$ is bounded above by $C R^{3} 2^{3k} \lambda^{3b_{2}}$. Let $k_{0}$ be the smallest integer $k$ such that $2^{-k} \lambda^{-b_{2}} \delta^{-1} \le 1$.
\item Perform Wilson's algorithm for all points in $D^{1}$ adding new branches to ${\cal U}_{M}$; the output tree is denoted by $\hat{\cal U}_{1}$. Then perform Wilson's algorithm for points $D^{k}$ ($k=2, 3, \dots , k_{0}$) inductively; the output trees are denoted by $\hat{\cal U}_{2}, \dots , \hat{\cal U}_{k_{0}}$. Note that $B( R \delta^{-1}) \subseteq \hat{\cal U}_{k_{0}}$.
\end{itemize}

Since every branch generated in the procedure above is a hittable set, we can prove that there exist universal $0< b_{3} < b_{2}$ and $C >0$ such that
\begin{equation}\label{aamo}
\mathbf{P} (V^{2})  \ge 1- C \lambda^{-b_{3}},
\end{equation}
where the event $V^{2}$ is defined by
\begin{equation*}
V^{2}:=\left\{ \forall x \in \hat{\cal U}_{k_{0}}, d_{{\cal U}} (x,  x(M) ) \le \lambda^{-b_{3}} \delta^{-\beta} \text{ and } x(M) \in B (R \delta^{-1} ) \right\}.
\end{equation*}
Here, for each $x$, we write $x(M) \in {\cal U}_{M}$ for the point such that $d_{{\cal U}} (x,  x(M) ) = d_{{\cal U}} (x,  {\cal U}_{M} )$. The inequality \eqref{aamo} can be proved in a similar way to the proof of Proposition \ref{1st-assump}, so the details are left to the reader.

Suppose that the event $V^{1} \cap V^{2}$ occurs. Since $B (  R \delta^{-1} ) \subseteq \hat{\cal U}_{k_{0}}$, this implies that for any $x \in B (R \delta^{-1} )$, we have
\[ \mu_{\cal U} \left\{ B_{\cal U} \left( x,  2 \lambda^{-b_{3}} \delta^{-\beta} \right) \cap B (0, R \delta^{-1} ) \right\} \ge c \lambda^{-3} \delta^{-3},\]
which finishes the proof when $\theta \ge 2 \lambda^{-b_{3}}$. Otherwise, taking $c_{1}$ and $c_{2}$ sufficiently large (similarly to the proof of \eqref{warm-2nd}, see \eqref{tre} and a few lines following it for this), the proposition follows.
\end{proof}

Combining Propositions \ref{1st-assump} and \ref{last-step}, we have the following.

\begin{cor}\label{ass-4}
Assumptions \ref{a2} holds.
\end{cor}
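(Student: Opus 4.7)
The plan is to deduce Assumption \ref{a2} as a direct combination of Propositions \ref{1st-assump} and \ref{last-step}. Proposition \ref{last-step} already establishes the Euclidean-ball variant Assumption \ref{a2'}, so the remaining task is simply to replace the intrinsic ball $\BU(0,R\delta^{-\beta})$ over which the infimum is taken by a sufficiently large Euclidean ball $B(0,M\delta^{-1})$. This replacement is legitimate whenever $\BU(0,R\delta^{-\beta})\subseteq B(0,M\delta^{-1})$, and the reparameterized estimate \eqref{fukutsu-2} from the proof of Proposition \ref{1st-assump} guarantees this inclusion with probability at least $1-c_1R^{c_2}M^{-c_3}$, uniformly in $\delta\in(0,1)$.

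Concretely, I would fix $R,\varepsilon>0$ and an auxiliary parameter $M\ge 1$. On the containment event $\{\BU(0,R\delta^{-\beta})\subseteq B(0,M\delta^{-1})\}$, every $x\in\BU(0,R\delta^{-\beta})$ lies in $B(0,M\delta^{-1})$, and $\mU(\BU(x,\varepsilon\delta^{-\beta}))\ge\mU(\BU(x,\varepsilon\delta^{-\beta})\cap B(0,M\delta^{-1}))$. Taking the infimum on each side (over $\BU(0,R\delta^{-\beta})$ on the left, over the larger set $B(0,M\delta^{-1})$ on the right) gives
\[\inf_{x\in\BU(0,R\delta^{-\beta})}\delta^3\mU\bigl(\BU(x,\varepsilon\delta^{-\beta})\bigr)\ge\inf_{x\in B(0,M\delta^{-1})}\delta^3\mU\bigl(\BU(x,\varepsilon\delta^{-\beta})\cap B(0,M\delta^{-1})\bigr)\]
on this event. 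A union bound with the containment event and its complement then yields
\begin{align*}
&\bP\left(\inf_{x\in\BU(0,R\delta^{-\beta})}\delta^3\mU(\BU(x,\varepsilon\delta^{-\beta}))<\eta\right)\\
&\qquad\le\bP\left(\inf_{x\in B(0,M\delta^{-1})}\delta^3\mU(\BU(x,\varepsilon\delta^{-\beta})\cap B(0,M\delta^{-1}))<\eta\right)+c_1R^{c_2}M^{-c_3}.
\end{align*}
Proposition \ref{last-step}, applied with $R'=M$ in the role of $R$ and with $\theta=\varepsilon$, forces the first term on the right to vanish as $\eta\to 0$ after taking $\limsup_{\delta\to 0}$, so
\[\limsup_{\eta\to 0}\limsup_{\delta\to 0}\bP\left(\inf_{x\in\BU(0,R\delta^{-\beta})}\delta^3\mU(\BU(x,\varepsilon\delta^{-\beta}))<\eta\right)\le c_1R^{c_2}M^{-c_3},\]
and letting $M\to\infty$ completes the proof.

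Since all the analytical work is already carried out in Propositions \ref{1st-assump} and \ref{last-step}, there is no substantive obstacle at this stage. The only minor point deserving attention is that $\BU(x,\varepsilon\delta^{-\beta})$ must be \emph{intersected} with (rather than enlarged to) $B(0,M\delta^{-1})$ in the comparison, so that the restricted volume is a lower bound for the unrestricted one and the direction of the inequality between the two infima is preserved; the introduction of this intersection is harmless because Proposition \ref{last-step} is stated precisely in that restricted form.
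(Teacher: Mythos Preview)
Your proposal is correct and follows exactly the approach the paper intends: the paper merely states that the corollary follows by ``combining Propositions \ref{1st-assump} and \ref{last-step}'' (and remarks just before Proposition \ref{last-step} that, given Proposition \ref{1st-assump}, Assumption \ref{a2'} implies Assumption \ref{a2}), leaving the details to the reader. Your argument supplies precisely those details, and the care you take with the direction of the inequality under intersection with $B(0,M\delta^{-1})$ is exactly the point that makes the comparison go through.
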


\subsection{Assumption \ref{a3}}

In this subsection, we will prove the following proposition.

\begin{prop}\label{5th}
Assumption \ref{a3} holds.
\end{prop}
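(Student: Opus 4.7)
The plan is to reduce Assumption~\ref{a3} to a uniform equicontinuity statement on the infinite branches $\gamma_\infty^v$ of $\mathcal{U}$, and then prove that statement via the iterated-net Wilson's algorithm construction from the proof of Proposition~\ref{1st-assump}. Fix $\varepsilon, R, \kappa > 0$. By Proposition~\ref{1st-assump}, choose $R' = R'(\kappa, R)$ so that $B_\mathcal{U}(0, \delta^{-\beta} R) \subset B(0, R'\delta^{-1})$ on an event of probability at least $1 - \kappa/3$, and restrict attention to this event. Since $\mathcal{U}$ is almost surely one-ended, for any vertices $x, y \in \mathcal{U}$ the path $\gamma_\mathcal{U}(x, y)$ splits as $\gamma_\mathcal{U}(x, z) \cup \gamma_\mathcal{U}(z, y)$, where $z$ is the coalescence point of $\gamma_\infty^x$ and $\gamma_\infty^y$ in the tree and each piece is an initial segment of the corresponding infinite LERW. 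Whenever $d_\mathcal{U}(x, y) < \eta\delta^{-\beta}$, both sub-segments have intrinsic length $< \eta\delta^{-\beta}$, so it suffices to prove: on an event of probability at least $1 - \kappa/3$, uniformly in $v \in B(0, R'\delta^{-1})$, every initial segment of $\gamma_\infty^v$ of intrinsic length $\leq \eta\delta^{-\beta}$ has Euclidean diameter $\leq \tfrac{\varepsilon}{2}\delta^{-1}$.

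To obtain this uniform equicontinuity, I would re-run the iterated Wilson's algorithm construction of the proof of Proposition~\ref{1st-assump}, adapted so that the covering extends over all of $B(0, R'\delta^{-1})$ instead of a thin annulus. Concretely, take nets $D_1, \ldots, D_{k_0}$ of $B(0, R'\delta^{-1})$ with spacings $\varepsilon_k\delta^{-1}$, where $\varepsilon_k = \lambda^{-\zeta}2^{-k-10}$ for small $\zeta > 0$, ending at a finest scale $\leq 1$ so that $D_{k_0}$ contains every lattice point of the ball. Build $\mathcal{U}_k = \mathcal{U}_{k-1} \cup \bigcup_{v \in D_k}\gamma_\mathcal{U}(v, \mathcal{U}_{k-1})$ by Wilson's algorithm, starting with $\mathcal{U}_0 = \gamma_\infty^0$. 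Exactly as in Proposition~\ref{1st-assump}, by combining the hittability events $I(k, \zeta_2)$ (via Proposition~\ref{result:hit}) with the LERW intrinsic-length tail bounds of Proposition~\ref{result:upperLowTail}, and union-bounding across $k$, one obtains an event of probability at least $1 - \kappa/3$ on which every branch $\gamma_\mathcal{U}(v, \mathcal{U}_{k-1})$ added at level $k$ has Euclidean diameter $\leq \varepsilon_{k-1}^{1/3}\delta^{-1}$ and intrinsic length $\leq \varepsilon_{k-1}^{1/4}\delta^{-\beta}$; moreover, on the same event, the root $\gamma_\infty^0$ satisfies the equicontinuity of Proposition~\ref{result:boundE} at scale $\lambda$.

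On this good event, the infinite branch $\gamma_\infty^v$ from any $v \in B(0, R'\delta^{-1})$ is the concatenation of the successive branches $\gamma_\mathcal{U}(v, \mathcal{U}_{k(v)-1}), \gamma_\mathcal{U}(\hat{v}_{k(v)-1}, \mathcal{U}_{k(v)-2}), \ldots$, down to an attachment point on $\gamma_\infty^0$, followed by the corresponding tail of $\gamma_\infty^0$. The total Euclidean diameter of this finite prefix is bounded by $\sum_{k=1}^{k(v)} \varepsilon_{k-1}^{1/3}\delta^{-1} \lesssim \lambda^{-c\zeta}\delta^{-1}$ for some $c > 0$, which is $\leq \tfrac{\varepsilon}{4}\delta^{-1}$ for $\lambda$ large. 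For any two points $p, q$ on an initial segment of $\gamma_\infty^v$ of intrinsic length $\leq \eta\delta^{-\beta}$, both lying in the finite prefix contributes $\leq \tfrac{\varepsilon}{4}\delta^{-1}$ to $|p - q|$; when one or both lie in the tail along $\gamma_\infty^0$, the remaining contribution is controlled by Proposition~\ref{result:boundE}, converting intrinsic distance $\leq \eta\delta^{-\beta}$ into Euclidean distance $\leq \eta^{b_2/b_1}\delta^{-1} \leq \tfrac{\varepsilon}{4}\delta^{-1}$ once $\eta$ is chosen small. Summing, $|p - q| \leq \tfrac{\varepsilon}{2}\delta^{-1}$, which together with the coalescence decomposition of the first paragraph gives $|x - y| \leq \varepsilon\delta^{-1}$ for the original pair. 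The main obstacle is securing the \emph{simultaneous} control of Euclidean diameter and intrinsic length across all branches at all levels: although the events $B_k$ and $I(k, \zeta_2)$ in Proposition~\ref{1st-assump} already encode both kinds of information, the net spacings $\varepsilon_k$ must be tuned relative to the final target $\eta$ (so that both the prefix contribution from the iterated branches and the tail contribution along $\gamma_\infty^0$ fit within the $\tfrac{\varepsilon}{2}\delta^{-1}$ budget), and the inductive union bounds from the proof of Proposition~\ref{1st-assump} must be rerun with this tuning verified at every step.
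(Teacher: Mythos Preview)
Your reduction via the coalescence point $z$ is correct, and the iterated-net construction does control branches at levels $k\ge 2$. The gap is at level $1$.

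In the construction of Proposition~\ref{1st-assump}, the events $B_k$ that bound the diameter and intrinsic length of the added branches are only defined (and only valid) for $k\ge 2$: they rely on the hittability event $I(k-1,\zeta_2)$, which in turn requires that each $y\in D_k$ is within $\varepsilon_{k-1}\delta^{-1}$ of the already-built tree $\mathcal{U}_{k-1}$. At level $1$ the existing tree is just $\mathcal{U}_0=\gamma_\infty^0$, a single curve, and a point $v\in D_1$ at Euclidean distance of order $R'\delta^{-1}$ from $\gamma_\infty^0$ produces a branch $\gamma_{\mathcal U}(v,\mathcal{U}_0)$ of macroscopic diameter and macroscopic intrinsic length (in Proposition~\ref{1st-assump} this is the event $H$, which only places these branches inside $B(0,\lambda\delta^{-1})$ with length $\le \lambda^2\delta^{-\beta}$). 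Hence your bound $\sum_{k=1}^{k(v)}\varepsilon_{k-1}^{1/3}\delta^{-1}$ on the prefix diameter is not justified: the $k=1$ term is not $\le \varepsilon_0^{1/3}\delta^{-1}$. The initial segment of $\gamma_\infty^v$ of intrinsic length $\eta\delta^{-\beta}$ may lie entirely inside a single level-$1$ branch, and then neither your prefix estimate nor the equicontinuity on $\gamma_\infty^0$ says anything about its Euclidean diameter.

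The paper closes this gap by applying the equicontinuity of Proposition~\ref{result:boundE} not only to $\gamma_\infty^0$ but, via translation invariance, to each infinite branch $\gamma_\infty^{z_l}$ from a coarse net $D''=(z_l)$ (the events $J_3^l$). Combined with the hittability event $J(\zeta_5)$ forcing the first-level LERWs to stay in moderate balls (event $J_2$), a triangle-inequality argument gives equicontinuity on all of $\mathcal{U}_{M''}$, after which the refinement to finer nets proceeds as you describe. Your outline becomes correct once you add this: impose Proposition~\ref{result:boundE} on every first-level branch (there are polynomially many in $\lambda$, so the union bound is harmless), and treat the ``tail'' as the portion lying on $\mathcal{U}_1$ rather than on $\gamma_\infty^0$ alone.
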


\begin{proof}
In \cite{LS}, it is proved that there exist universal constants $b_{3}, b_{4} \in (0, \infty )$ such that for all $\delta \in  (0, 1)$ and $\lambda \ge 1$,
\begin{equation}\label{buhuo}
\mathbf{P} (J_{1} ) \ge 1 - b_{4} \lambda^{-b_{3}},
\end{equation}
where the event $J_{1}$ is defined by setting
\begin{equation*}
J_{1} = \left\{ \forall x, y \in \gamma_{\infty} \cap B \left( \lambda^{b_{3}} \delta^{-1} \right) \text{ with } d_{{\cal U}} (x,y) \le \lambda^{-b_{4}} \delta^{-\beta}, |x-y| \le \lambda^{-b_{3}} \delta^{-1} \right\},
\end{equation*}
see \cite[(7.19)]{LS} in particular. We also need the hittability of $\gamma_{\infty}$ as follows. For $\zeta > 0$, define the event $J (\zeta )$ by setting
\[J ( \zeta ) = \left\{P_{R}^{x} \left( R \left[ 0, T_{R} \left( x, \lambda^{b_{3}/2}  \delta^{-1} \right) \right] \cap \gamma_{\infty} = \emptyset \right) \le \lambda^{-\zeta b_{3}} \text{ for all } x \in B \left(\lambda^{b_{3}/4} \delta^{-1} \right)\right\}.\]
It follows from \cite[Lemma 3.2 and Lemma 3.3]{SS} that there exist universal constants $C< \infty$ and $\zeta_{5} \in (0,1)$ such that for all $\delta > 0$ and $\lambda \ge 1$,
\[\mathbf{P}\left(J (\zeta_{5}) \right) \ge 1 - C \lambda^{- b_{3}}.\]

With this in mind, we set $b_{5} = \frac{\zeta_{5} b_{3}}{1000}$ and $R_{1} = \lambda^{b_{5}} \delta^{-1}$. Let $D'' = ( z_{l} )_l$ be a $\lambda^{-b_{5} } \delta^{-1}$-net of $B (R_{1} )$. The number of points $M''$ of $D''$ can be assumed to be smaller than $C \lambda^{6 b_{5}}$. We perform Wilson's algorithm as follows. The root of the algorithm is $\gamma_{\infty}$ as usual. Then we consider the loop-erasure of the SRWs $R^{1}, R^{2}, \dots , R^{M''}$ started from $z_{1}, z_{2}, \dots , z_{M''}$ respectively; we denote the output tree by ${\cal U}_{M''}$. Finally, we consider LERW's starting from all points in $\mathbb{Z}^{3} \setminus {\cal U}_{M''}$.

Conditioning $\gamma_{\infty}$ on the event $J_{1} \cap J ( \zeta_{5} )$, for each $l = 1, 2, \dots , M''$, the probability that $R^{l}$ exits $B \left( z_{l}, \lambda^{b_{3}/2}  \delta^{-1}  \right)$ before hitting $\gamma_{\infty}$ is, on the event $J (\zeta_{5} )$, bounded above by $\lambda^{- \zeta_{5} b_{3}}$. Taking the sum over $l$, we see that if
\[J_{2}:=\left\{R^{l} \left[ 0, T_{R^{l}} \left( x, \lambda^{b_{3}/2}  \delta^{-1} \right) \right] \cap \gamma_{\infty} \neq \emptyset \text{ for all } l=1,2, \dots , M''\right\},\]
then
\[\mathbf{P} (J_{2} ) \ge 1 - C \lambda^{-b_{5}}.\]
On the other hand, if we define
\[J_{3}^{l} = \left\{ \forall x, y \in \gamma_{\infty}^{z_{l}} \cap B \left( z_{l}, \lambda^{b_{3}} \delta^{-1} \right) \text{ with } d_{{\cal U}} (x,y) \le \lambda^{-b_{4}} \delta^{-\beta}, |x-y| \le \lambda^{-b_{3}} \delta^{-1} \right\},\]
for each $l =1,2, \dots , M''$, (recall that $\gamma_{\infty}^{x}$ stands for the unique infinite path in ${\cal U}$ starting from $x$,) by the translation invariance of ${\cal U}$ and \eqref{buhuo}, it follows that $\mathbf{P} (J_{3}^{l} )  \ge 1 - b_{4} \lambda^{-b_{3}}$ for all $l$. Thus, letting
\[J_{3} = \bigcap_{l=1}^{M''} J_{3}^{l},\]
we have $\mathbf{P} (J_{3} ) \ge 1 - \lambda^{- b_{3}}$.

Now, suppose that the event $J: = J_{1} \cap J (\zeta_{5}) \cap J_{2} \cap J_{3}$ occurs. The triangle inequality tells that on the event $J$, for all $x, y \in {\cal U}_{M''} \cap B ( \lambda^{\frac{2 b_{3}}{3}} \delta^{-1} )$ with $d_{{\cal U}} (x,y) \le \lambda^{-b_{4}} \delta^{-\beta}$, we have $ |x- y| \le 3 \lambda^{-b_{3}} \delta^{-1}$. Thus
\[\mathbf{P} \left(\forall x, y \in {\cal U}_{M''} \cap B \left( \lambda^{\frac{2 b_{3}}{3}} \delta^{-1} \right) \text{ with } d_{{\cal U}} (x,y) \le \lambda^{-b_{4}} \delta^{-\beta}, |x- y| \le 3 \lambda^{-b_{3}} \delta^{-1} \right)  \ge 1 - C \lambda^{-b_{5}}.\]
By the translation invariance of ${\cal U}$ again, we can prove that each branch $\gamma^{z_{l}}_{\infty}$ is also a hittable set with high probability. Namely, if we let
\[J^{l} ( \zeta ) = \left\{
P_{R}^{x} \left( R \left[ 0, T_{R} \left( x, \lambda^{- b_{5}/2}  \delta^{-1} \right) \right] \cap \gamma_{\infty}^{z_{l}} = \emptyset \right) \le \lambda^{-\zeta b_{5}} \text{ for all } x \in B (z_{l}, \lambda^{-b_{5}} \delta^{-1} )
 \right\}\]
for each $l = 1,2, \dots , M''$, then by using \cite[Lemma 3.2]{SS}, we see that there exist universal constants $\zeta_{6} \in (0, 1)$ and $C < \infty$ such that for all $\delta \in (0, 1)$, $\lambda \ge 1$ and $l =1,2, \dots M''$,
\[\mathbf{P} \left( J^{l} (\zeta_{6} ) \right) \ge 1- C \lambda^{- 100 b_{5}}.\]
With this in mind, we let
\[J_{4} : = \bigcap_{l=1}^{M''} J^{l} (\zeta_{6} ),\]
so that $\mathbf{P} (J_{4} ) \ge 1 - C \lambda^{- b_{5}}$.

Conditioning ${\cal U}_{M''}$ on the event $J \cap J_{4}$, we perform Wilson's algorithm for all points in $B( R_{1}/2) \setminus {\cal U}_{M''}$, considering finer and finer nets there as in the proof of Proposition \ref{1st-assump}. The event $J_{4}$ ensures that every SRW starting from a point $w$ in $B( R_{1}/2) $ hits ${\cal U}_{M''}$ before it exits $B( w, \lambda^{-b_{5}/3} \delta^{-1} )$ with probability at least $1- C \lambda^{-\zeta_{6} b_{5} \lambda^{\frac{b_{5}}{6}}}$. Thus we can conclude that with probability at least $1 - C \lambda^{-b_{5}}$, we have $\text{diam} ( \gamma_{{\cal U} } (w ,  {\cal U}_{M''} ) ) \le \lambda^{- b_{5} / 3 } \delta^{-1}$ and $d_{{\cal U} } (w ,  {\cal U}_{M''} ) \le \lambda^{-b_{5}/4} \delta^{-\beta}$ for all $w \in B \left( 0, R_{1}/2 \right)$. Therefore, by the triangle inequality again, it follows that
\begin{equation}\label{owari}
\mathbf{P} \left(\forall x, y \in {\cal U} \cap B \left( R_{1}/2 \right) \text{ with } d_{{\cal U}} (x,y) \le \lambda^{-b_{4}} \delta^{-\beta}, |x- y| \le  \lambda^{-b_{3} /5} \delta^{-1} \right)  \ge 1 - C \lambda^{-b_{5}}.
\end{equation}
Finally, Proposition \ref{1st-assump} shows that with probability $1 - C \lambda^{- c b_{5}}$, $B_{\cal U} (0, L \delta^{-\beta} ) \subseteq B ( R_{1} / 2 )$ for each fixed $L$. Combining this with \eqref{owari} completes the proof.
\end{proof}

\section{Exponential lower tail bound on the volume} \label{sec: exp lower volume}

In Proposition \ref{last-step}, we established a polynomial (in $\lambda$) lower tail bound on the volume of a ball. In this section, we will improve this bound to an exponential one, see Theorem \ref{2nd-goal} below. We start by proving the following analogue of \cite[Theorem 3.4]{BM} in three dimensions. The proof strategy is modelled on that of the latter result, though there is a key difference in that the Beurling estimate used there (see \cite[Theorem 2.5.2]{Lawb}) is not applicable in three dimensions, and we replace it with the hittability estimate of Proposition \ref{prop:iterate}.

\begin{thm}\label{1st-goal}
There exist constants $c, C , b \in (0, \infty)$ such that: for all $R \ge 1$ and $\lambda \ge 1$,
\begin{equation}\label{gla}
\mathbf{P} \left( \mu_{{\cal U} } \left(  B_{{\cal U}} (0, R ) \right) \le \lambda^{-1} R^{\frac{3}{\beta}} \right) \le C \exp \left\{ - c \lambda^{b} \right\}.
\end{equation}
\end{thm}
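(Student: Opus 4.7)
The plan is to show $B_\U(0,R)$ contains at least $c\lambda^{-1}R^{3/\beta}$ lattice vertices by combining Wilson's algorithm with the hittable-section decomposition of Proposition~\ref{prop:iterate}, which plays the role of Beurling's estimate (unavailable in three dimensions). After disposing of the trivial regimes $\lambda \le C_0$ (absorbing into the constant $C$) and $\lambda > cR^{3/\beta}$ (vacuous since $\mu_\U(B_\U(0,R)) \ge 1$), fix $n = \lfloor (R/K)^{1/\beta}\rfloor$ for a large constant $K$ and apply Proposition~\ref{prop:iterate} to the ILERW $\gamma_\infty = \LE(S[0,\infty))$ at Euclidean scale $n$ with $\mu = \lambda^{1/5}$ in the role of its $\lambda$ and exponent $a = 1/2$. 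This yields index sets $I_1,\dots,I_q$ with $q \asymp \mu^{1/2}$ and $|I_j| \asymp \mu^{1/2}$ such that, on an event of probability at least $1 - Ce^{-c\lambda^{1/10}}$, each $I_j$ contains some $i_j$ for which $A_{i_j}$ holds; the balls $B(x_{i_j}, n/(16\mu))$ are pairwise disjoint by the spacing built into the $I_j$. Combined with the exponentially likely event $\xi_n \le Cn^\beta$ from Proposition~\ref{result:upperLowTail} and $K$ chosen large, every point of $\gamma_\infty[0,\sigma_{i_j}]$ has intrinsic distance at most $R/4$ from the origin.

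Conditioning on $\gamma_\infty$ satisfying these events and on the choice of $\{i_j\}$, generate the rest of $\U$ via Wilson's algorithm, processing the lattice points in $W := \bigcup_j B(x_{i_j}, n/(16\mu))$ ball-by-ball. For $z \in B(x_{i_j}, n/(16\mu))$, event $A_{i_j}$ combined with the inclusion $\gamma_\infty[t_{i_j},\sigma_{i_j}] \cap D_{n/(2\mu)}(x_{i_j}) \subseteq T_{k-1}$, where $T_{k-1}$ denotes the current tree, gives that $z$'s driving SRW hits $T_{k-1}$ inside $B(x_{i_j}, n/(2\mu))$ with conditional probability at least $c_0$. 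On this event the resulting Wilson-branch is a LERW contained in $B(x_{i_j}, n/(2\mu))$, and a Schramm-to-intrinsic distance estimate in the spirit of Proposition~\ref{result:boundSPoly}, adapted to LERW in the complement of the current tree, forces the branch length to be at most $C(n/\mu)^\beta \le R/4$ with high conditional probability. Inductively in the processing order, the hit point of $z$'s branch has intrinsic distance at most $R/2$ from the origin (it lies either on $\gamma_\infty$ or on a previously successful branch inside the same ball), so $d_\U(0,z) \le R$ and $z \in B_\U(0,R)$.

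The number of successful attachments therefore stochastically dominates $\mathrm{Bin}(|W|, c_0/2)$, with $|W| \asymp n^3/\mu^{5/2} = n^3\lambda^{-1/2}$, and a Chernoff bound yields at least $c_0|W|/4 \ge \lambda^{-1}n^3 \asymp \lambda^{-1}R^{3/\beta}$ successes except with probability at most $e^{-c|W|}$; using $\lambda \le cn^3$ in the relevant regime, $|W| \ge c\lambda^{1/2}$, so this error is at most $e^{-c'\lambda^{1/2}}$. Summing the failure contributions from Proposition~\ref{prop:iterate}, the auxiliary LERW tail bounds, and the Chernoff step gives total error at most $Ce^{-c\lambda^{1/10}}$, proving the theorem with $b = 1/10$ (and $b$ can be pushed up towards $1/4$ by optimising the choice of $\mu$ and $a$). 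The main obstacle is the absence of Beurling's estimate in three dimensions, which we circumvent via the hittable-block decomposition of Proposition~\ref{prop:iterate}, combined with the observation that the current tree in Wilson's algorithm always contains the relevant hittable block, providing the quantitative conditional independence required to run Chernoff; some care is also needed to control the LERW branch length uniformly in the lattice scale by adapting the polyhedral LERW estimates of Subsection~\ref{subsec:LERWPoly} to the complement of the current tree.
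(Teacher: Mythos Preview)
Your high-level plan matches the paper's: use Proposition~\ref{prop:iterate} to locate hittable sections $\gamma_\infty[t_{i_j},\sigma_{i_j}]$ and then attach lattice points near them via Wilson's algorithm. The genuine gap is in the Chernoff step. You assert that, for each $z$ processed in Wilson's algorithm, on the event that its driving SRW hits the current tree $T_{k-1}$ before exiting $B(x_{i_j},n/(2\mu))$, the hit point ``lies either on $\gamma_\infty$ or on a previously successful branch inside the same ball'' and hence has intrinsic distance at most $R/2$ from $0$. This is not justified: $T_{k-1}$ contains \emph{all} previously added branches, including those from points $z'$ whose SRW escaped its ball. Such an unsuccessful branch begins at $z'$ (possibly in the same ball as $z$) and may well pass through $B(x_{i_j},n/(2\mu))$; if $S^z$ hits it first, then $d_\U(S^z(\tau_{T_{k-1}}),0)$ routes through that long bad branch and is uncontrolled. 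Thus the implication ``$z$ successful $\Rightarrow d_\U(0,z)\le R$'' fails, and with it the binomial minorization. A secondary issue is the conditional branch-length bound: Proposition~\ref{result:boundSPoly} is stated for polyhedral domains, not for the complement of the random set $T_{k-1}$; and while the Wilson branch from $z$ is an initial segment of $\gamma_\infty^z$, so that Proposition~\ref{result:upperLowTail} gives an \emph{unconditional} tail bound on its length, this bound is not adapted to the filtration your martingale/Chernoff argument requires.

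The paper avoids contamination by a ``ball step'' iteration in the style of \cite[Theorem~3.4]{BM}. At each step one adds the branch from a \emph{single} net point $z_j$ in the current ball $B_j$; if its SRW escapes $B_j$ (or the subsequent sub-ball filling fails) one marks every $B_{j'}$ it touched as bad and moves to the next unmarked ball. The marking guarantees that, inside the current ball, the tree built so far coincides with $\gamma_\infty$, so the branch from $z_j$ is genuinely LERW stopped at $\gamma_\infty$ and Proposition~\ref{result:upperLowTail} applies directly. Upon the first success one fills in a sub-ball $B_j''$ of radius $\asymp\lambda^{-6}R$ (via the hittability argument of Proposition~\ref{1st-assump}, whose merely polynomial error is acceptable here since it need only be below $c_0/2$), obtaining $\asymp\lambda^{-18}R^3$ points in $B_\U(0,C\lambda^{a'}R^\beta)$. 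The exponential tail then comes from having $\asymp q^{1/2}$ attempts, each succeeding with probability at least $c_0/2$ given the past. The device your argument is missing is precisely this bad-ball bookkeeping, which decouples the attempts and keeps the target set for each new SRW equal to $\gamma_\infty$ inside the working ball.
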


\begin{proof} We begin by describing the setting of the proof. We assume that $\lambda \ge 1$ is sufficiently large, and let $a = \frac{99}{100}$. Let $q = [\lambda^{(1-a)}/3 ]$ be the number of subsets $I_{0}, I_{1}, \dots , I_{q}$ of the index set $\{ 1, 2, \dots , \lambda \}$, as defined in \eqref{index}. Note that for all $0 \le j_{1} < j_{2} \le q$ and all $i_{1} \in I_{j_{1}}, i_{2} \in I_{j_{2}}$ we have
\begin{equation}\label{dist}
\text{dist} \left( \partial D_{i_{1}}, \partial D_{i_{2}} \right) \ge \lambda^{a-1} R,
\end{equation}
where $D_{i} = D (0, \frac{i R }{\lambda} )$ was defined in Subsection \ref{subsec:notationset}.
For each $j = 0, 1, \dots , q$, recall that the event $F_{j}$ stands for the event that there exists a ``good'' index $i \in I_{j}$ in the sense that $\gamma [t_{i}, \sigma_{i} ]$ is a hittable set.  By Proposition \ref{prop:iterate}, with probability at least $1 - \lambda^{1-a} \exp \left\{ - c_{1} \lambda^{a} \right\}$, the ILERW $\gamma$ has a good index in $I_{j}$ for every $j=0,1, \dots , q$. Let
\begin{equation}\label{evf}
F =  \bigcap_{j = 1}^{q} F_{j},
\end{equation}
and suppose that the event $F$ occurs. It then holds that, for each $j = 0, 1, \dots , q$, we can find a good index $i_{j} \in I_{j}$ such that the event $A_{i_{j}}$ occurs. We will moreover fix deterministic nets $W^{p} = ( w^{p}_{k})_{ k}$, $p =1, 2, 3$, of $B ( 2 R)$ satisfying
\begin{equation*}
B ( 2 R) \subseteq \bigcup_{k} B \left( w^{p}_{k},  \frac{R}{10^{2} \lambda^{2 p} } \right) \ \text{ and } \ |w^{p}_{k} - w^{p}_{k'} | \ge \frac{R}{10^{4} \lambda^{2 p} } \text{ for all } k \neq k'.
\end{equation*}
Note that we may assume that $|W^{p}| \asymp \lambda^{6 p}$.

From now on, we assume that the event $F$ occurs whenever we consider $\gamma$. We also highlight the correspondence between our setting and that of \cite[Theorem 3.4]{BM}. In the proof of \cite[Theorem 3.4]{BM}, $k$ points $z_{1}, z_{2}, \dots , z_{k}$ were  chosen on the ILERW. Here the points $x_{i_{0}} = \gamma (t_{i_{0}} ),  x_{i_{1}} = \gamma (t_{i_{1}} ), \dots , x_{i_{q}} = \gamma (t_{i_{q}} )$ correspond to those points, where $t_{i}$ stands for the first time that $\gamma$ exits $D_{i}$.  Setting $n = \frac{R}{2 \lambda }$, we write $B_{j} = B ( x_{i_{j}}, n )$ for $j =0, 1, \dots , q$. Note that for each $j_{1} \neq j_{2}$
\begin{equation}\label{dist-2}
\text{dist} \left( B_{j_{1}}, B_{j_{2}}\right) \ge \frac{ \lambda^{a-1} R}{2}
\end{equation}
by \eqref{dist}.

As in \cite[(3.18) and (3.19)]{BM}, we define the events $F^{1}, F^{2}$ by setting
\begin{equation}\label{f1def}
F^{1} = \left\{ \gamma [T_{2 R}, \infty ) \text{ hits more than } q/2 \text{ of } B_{0}, B_{1}, \dots B_{q} \right\},
\end{equation}
\[F^{2} = \left\{ T_{2 R} \ge \lambda^{a'} R^{\beta} \right\},\]
where $T_{r}$ is the first time that $\gamma$ (recall that $\gamma (0) = 0$) exits $B (r)$, and $a' = \frac{1}{1000}$, see Figure \ref{fig1b}. Here we also need to introduce the event $F^{3}$, as given by
\[F^{3} = \left\{ \exists w_{k}^{1} \in W^{1} \text{ such that } N^{1}_{k} \ge \lambda^{5} \right\},\]
where $N^{1}_{k}$ is defined by
\[N^{1}_{k} = \left| \left\{ w^{2}_{l} \in W^{2} \::\: B \left( w^{2}_{l},  \frac{R}{10^{2} \lambda^{4} } \right)  \subseteq B \left( w^{1}_{k},  \frac{R}{10^{2} \lambda^{2 } } \right) \text{ and } B \left( w^{2}_{l},  \frac{R}{10^{2} \lambda^{4} } \right) \cap \gamma [0, \infty ) \neq \emptyset \right\}\right|,\]
i.e., $N^{1}_{k}$ stands for the number of balls of the net $W^{2}$ contained in $B ( w^{1}_{k},  \frac{R}{10^{2} \lambda^{2 } } )$ and hit by ILERW $\gamma$.

\begin{figure}[!b]
\begin{center}
  \includegraphics[width=0.75\textwidth]{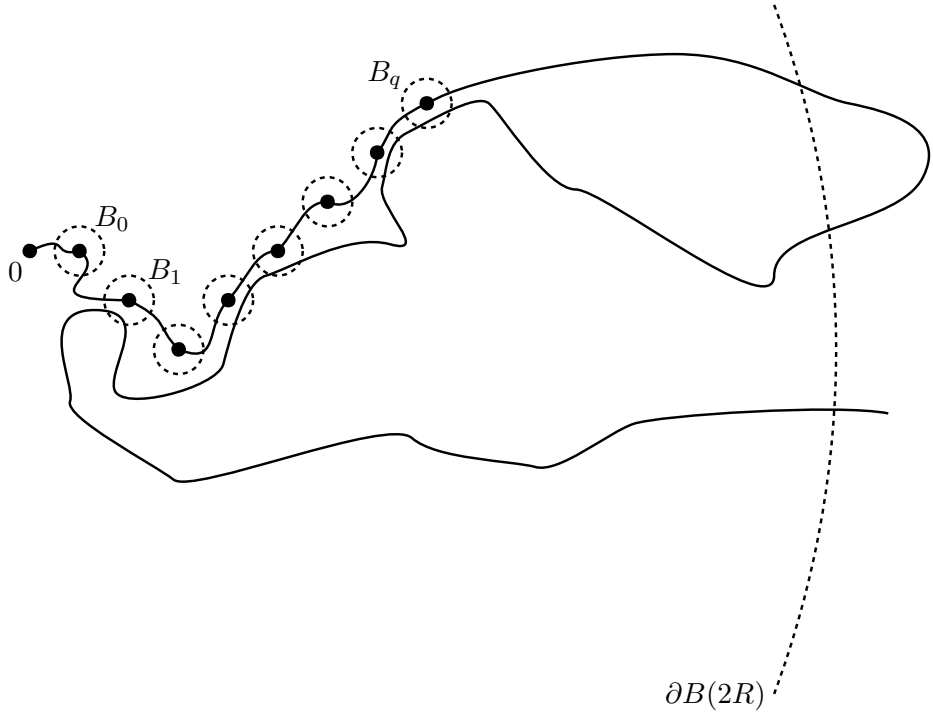}
  \rput(-346pt,160pt){$0$}
  \rput(-310pt,180pt){$B_0$}
  \rput(-290pt,160pt){$B_1$}
  \rput(-208pt,234pt){$B_q$}
  \rput(-84pt,0pt){$\partial B(2R)$}
\end{center}
\caption{A typical realisation of $\gamma$ on the event $F^1$, as defined at \eqref{f1def}.}\label{fig1b}
\end{figure}

We will first show that $\mathbf{P} (F^{1} )$ is exponentially small in $\lambda$. Let $\Gamma_{r} $ be the set of paths $\zeta$ satisfying $\mathbf{P} ( \gamma [0, T_{r} ] = \zeta ) > 0$. Namely, $\Gamma_{r} $ stands for  the set of all possible candidates for $\gamma [0, T_{r} ]$. Take $\zeta \in \Gamma_{2 R}$, and let $z = \zeta \left( \len (\zeta ) \right) $ be the endpoint of $\zeta$. Write $Y$ for the random walk started at $z$ and conditioned on the event that $Y [1, \infty ) \cap \zeta = \emptyset $. The domain Markov property (see \cite[Proposition 7.3.1]{Lawb}) yields that the distribution of $\gamma [T_{2 R}, \infty ) $ conditioned on the event $\{ \gamma [0, T_{2 R} ] = \zeta \}$ coincides with that of $\text{LE} ( Y [0, \infty )) $. Therefore, we have
\begin{equation*}
\mathbf{P} (F^{1} ) \le \sum_{\zeta \in \Gamma_{2 R} } \mathbf{P} (H_{\zeta} ) \mathbf{P} \left( \gamma [0, T_{2R} ] = \zeta \right),
\end{equation*}
where the event $H_{\zeta}$ is defined by
\begin{equation*}
H_{\zeta} = \left\{  Y \text{ hits more than } q/2  \text{ of } B_{0}, B_{1} \dots , B_{q} \right\},
\end{equation*}
where $B_{i}$ was defined in one line before \eqref{dist-2}.
Recall that $R^{z}$ stands for the SRW started at $z$. We remark that $\text{dist} (z, B_{j} ) \ge R/4 $ for all $j \in\{0, 1, \dots , q\}$. Let $\tau$ be the first time that $R^{z}$ exits $B (z, R/ 8)$, and observe that
\begin{equation}\label{ob}
\mathbf{P} \left( R^{z} [1, \infty ) \cap \zeta = \emptyset \right) \asymp  \mathbf{P} \left( R^{z} [1, \tau ] \cap \zeta = \emptyset \right).
\end{equation}
Indeed, it is clear that the left-hand side is bounded above by the right-hand side. To see the opposite inequality, we note that \cite[Proposition 6.1]{S} (see also \cite[Claim 3.4]{SS}) yields that
\begin{align*}
\lefteqn{\mathbf{P} \left( R^{z} [1, \infty ) \cap \zeta = \emptyset \right)  }\\
&\ge \mathbf{P} \left( R^{z} [1, \tau ] \cap \zeta = \emptyset, \ \text{dist} \left( B (2R), R^{z} ( \tau ) \right) \ge \frac{R}{16}, \  R^{z} [ \tau , \infty ) \cap B (2 R) = \emptyset  \right)  \\
&\ge c \mathbf{P} \left( R^{z} [1, \tau ] \cap \zeta = \emptyset, \ \text{dist} \left( B (2R), R^{z} ( \tau ) \right) \ge \frac{R}{16} \right) \ge c' \mathbf{P} \left( R^{z} [1, \tau ] \cap \zeta = \emptyset \right),
\end{align*}
which gives \eqref{ob}. Consequently, we obtain that
\begin{align*}
\mathbf{P} (H_{\zeta} )& \le  \frac{ C \mathbf{P} \left( R^{z} [1, \tau ] \cap \zeta = \emptyset, \ R^{z} \text{ hits more than } q/2  \text{ of } B_{0}, B_{1} \dots , B_{q} \right) }{ \mathbf{P} \left( R^{z} [1, \tau ] \cap \zeta = \emptyset \right) } \\
&\le C \max_{z' \in B (z, R/ 8)} \mathbf{P} \left( R^{z'} \text{ hits more than } q/2  \text{ of } B_{0}, B_{1} \dots , B_{q} \right).
\end{align*}
Take $z' \in B (z, R/ 8)$, and note that $\text{dist} (z', B_{j} ) \ge R/ 8$ for all $j\in\{0, 1, \dots , q\}$. We define a sequence of stopping times $u_{1}, u_{2}, \dots $ as follows. Let
\[u_{1} = \inf \left\{ t \ge 0 \::\: R^{z'} (t) \in \bigcup_{j =0}^{q} B_{j} \right\},\]
and $j^{1}$ be the unique index such that $R^{z'} (u_{1} ) \in B_{j^{1}}$. For $l \ge 2$, we define $u_{l}$ by setting
\[u_{l} = \inf \left\{ t \ge u_{l-1} \::\: R^{z'} (t) \in \left( \bigcup_{j =0}^{q} B_{j} \right) \setminus B_{j^{l-1}} \right\},\]
and write $j^{l}$ for the unique index such that $R^{z'} (u_{l} ) \in B_{j^{l}}$. Since the distance between two different balls is bigger than $\lambda^{a-1} R/ 2$ by \eqref{dist-2}, and each ball has radius $n = R/ 2 \lambda $, it follows from \cite[Proposition 1.5.10]{Lawb} that
\[\mathbf{P} \left( u_{l} < \infty \:\vline\: u_{l-1} < \infty \right) \le C \lambda^{-a} \lambda^{1-a} = C \lambda^{- \frac{49}{50}},\]
for all $l$. Thus, taking $\lambda$ sufficiently large so that $C \lambda^{- \frac{49}{50}} <1/2$, it holds that
\[\mathbf{P} (H_{\zeta} ) \le C (1/2)^{q/2} \le C \exp \left\{ - c \lambda^{\frac{1}{100} } \right\},\]
which gives
\begin{equation}\label{f1small}
\mathbf{P} (F^{1} ) \le C \exp \left\{ - c \lambda^{\frac{1}{100} } \right\}.
\end{equation}
As for the event $F^{2}$, we have from Proposition \ref{result:upperLowTail} that
 \begin{equation}\label{f2small}
\mathbf{P} (F^{2} ) \le C \exp \left\{ - c \lambda^{a'} \right\}.
\end{equation}
Finally, we will deal with the event $F^{3}$. Define
\begin{equation*}
M^{1}_{k} = \left| \left\{ w^{2}_{l} \in W^{2} \::\: B \left( w^{2}_{l},  \frac{R}{10^{2} \lambda^{4} } \right)  \subseteq B \left( w^{1}_{k},  \frac{R}{10^{2} \lambda^{2 } } \right) \text{ and } B \left( w^{2}_{l},  \frac{R}{10^{2} \lambda^{4} } \right) \cap S [0, \infty ) \neq \emptyset \right\}\right|,
\end{equation*}
i.e.\ $M^{1}_{k}$ stands for the number of balls of the net $W^{2}$ contained in $B \left( w^{1}_{k},  \frac{R}{10^{2} \lambda^{2 } } \right)$ and hit by SRW $S[0, \infty )$. It is clear that $N_{k}^{1} \le M_{k}^{1}$. Thus, on the event $F^{3}$, there exists $w_{k}^{1} \in W^{1}$  such that $M^{1}_{k} \ge \lambda^{5}$. However, for each $k$, it is easy to see that $\mathbf{P} ( M^{1}_{k} \ge \lambda^{5} ) \le C e^{- c \lambda }$. Therefore, since $| W^{1}| \asymp \lambda^{6}$, we see that
 \begin{equation}\label{f3small}
\mathbf{P} (F^{3} ) \le C \exp \left\{ - c \lambda^{1/2} \right\}.
\end{equation}

We are now ready to follow the proof of \cite[Theorem 3.4]{BM}. If the event $F^{c} \cup F^{1} \cup F^{2} \cup F^{3}$ (recall that the event $F$ is defined at \eqref{evf}) occurs, we terminate the algorithm with a `Type 1' failure.  Otherwise, for each $j = 0, 1, \dots , q$, we can find $z_{j} \in W^{3} \cap B ( x_{i_{j}}, n/8 )$ such that $ B ( z_{j} , \lambda^{-4} ) \cap \gamma [0, \infty ) = \emptyset $. Using this point $z_{j} $, we write
\begin{equation*}
B_{j}' = B \left( z_{j} , \lambda^{-4} R \right), \qquad B_{j}'' = B \left( z_{j} , \lambda^{-6} R \right).
\end{equation*}
Let ${\cal U}_{0} = \gamma [0, \infty )$. Suppose that the event $F \cap \bigcap_{k=1}^{3} (F^{k})^{c}$ occurs. We consider the SRW $R^{z_{0}}$ until it hits ${\cal U}_{0}$. Let $\gamma_{0} = \text{LE} (R^{z_{0}})$ be its loop-erasure which is the branch on ${\cal U}$ between $z_{0}$ and ${\cal U}_{0}$. Define the event $G^{0}_{1}$ by $G^{0}_{1} = \{ R^{z_{0}} \not\subseteq B_{0}\}$. Since $\gamma$ satisfies the event $F$, we see that
\begin{equation}\label{g01}
\mathbf{P} \left( (G^{0}_{1} )^{c} \right) \ge c_{0}.
\end{equation}
Suppose that the event $G^{0}_{1} $ occurs. We mark the ball $B_{j}$ as `bad' if $R^{z_{0}} \cap B_{j} \neq \emptyset$. Otherwise, we define the event $G^{0}_{2}:=\{ \len (\gamma_{0} ) \ge \lambda^{-1/2} R^{\beta} \} \cap  \{ R^{z_{0}} \subseteq B_{0} \}$. If the event $G^{0}_{2} $ occurs, we also mark $B_{0}$ as `bad' (we only mark $B_{0}$ in this case). By Proposition \ref{result:upperLowTail}, it holds that
\begin{equation}\label{g02}
\mathbf{P} \left( G^{0}_{2} \right) \le C \exp \left\{ - c \lambda^{1/2} \right\}.
\end{equation}
If the event $(G^{0}_{1})^{c} \cap (G^{0}_{2})^{c} $ occurs, we use Wilson's algorithm to fill in the reminder of $B_{0}''$. Define the event $G^{0}_{3}$ by setting
\[G^{0}_{3} \hspace{-1.35pt} =\hspace{-1.35pt} \left\{ \exists v \in B_{0}'' \text{ such that } \gamma_{{\cal U}} \left( v, \gamma_{0} \cup {\cal U}_{0} \right) \not\subseteq B_{0}' \text{ or }   \len \left( \gamma_{{\cal U}} \left( v, \gamma_{0} \cup {\cal U}_{0} \right) \right) \ge \lambda^{-2} R^{\beta } \right\}  \cap (G^{0}_{1})^{c} \cap (G^{0}_{2})^{c},\]
where we recall that $\gamma_{{\cal U} } (v , A)$ stands for the branch on ${\cal U}$ between $v$ and $A$. Modifying the proof of Proposition \ref{1st-assump}, we see that
\begin{equation}\label{g03}
\mathbf{P} \left( G^{0}_{3} \right) \le C \lambda^{-c}
\end{equation}
for some universal constants $c, C \in (0, \infty ) $. Suppose that the event $G^{0}_{3}$ occurs. We again mark the ball $B_{j}$ as `bad' if $S^{v}$ hits $B_{j}$ for some $v \in B_{0}''$ in the algorithm above. If the event $(G^{0}_{1})^{c} \cap (G^{0}_{2})^{c}  \cap (G^{0}_{3})^{c}$ occurs, we label this first `ball step' as successful and we terminate the whole algorithm. In this case, for all $v \in B_{0}''$
\begin{equation*}
d_{{\cal U}} (0, v) \le \left( \lambda^{a'} + \lambda^{-1/2} + \lambda^{-2} \right)  R^{\beta} \le C \lambda^{a'} R^{\beta},
\end{equation*}
and so
\begin{equation}\label{vol}
\mu_{{\cal U} } \left( B_{{\cal U} } \left( 0,  C \lambda^{a'} R^{\beta} \right) \right) \ge c \lambda^{-18} R^{3}.
\end{equation}
If the event $G^{0}_{1} \cup G^{0}_{2} \cup G^{0}_{3}$ occurs, we denote the number of bad balls by $N^{B}_{0}$.  Using a similar idea used to establish \eqref{f1small}, we see that
\begin{equation}\label{ballhit}
\mathbf{P} \left( N^{B}_{0} \ge \sqrt{q}/4 \right) \le C \exp \left\{ - c \lambda^{1/200} \right\}.
\end{equation}
If  $N^{B}_{0} \ge \sqrt{q}/4 $, we terminate the whole algorithm as `Type 2' failure. If $ N^{B}_{0} < \sqrt{q}/4 $, we can choose $B_{j}$ which is not bad and perform the second `ball step', replacing $B_{0}$ with $B_{j}$ in the above. We terminate this ball step algorithm whenever we get a successful ball step or we have Type 2 failure. We write $F^{4}$ for the event that some ball step ends with a Type 2 failure. Since we perform at most $q^{1/2}$ ball steps, it follows from \eqref{ballhit} that
\begin{equation}\label{f4small}
\mathbf{P} (F^{4} ) \le C \exp \left\{ - c \lambda^{1/400} \right\}.
\end{equation}
Finally, we let $F^{5}$ be the event that we can perform the $j$th ball step for all $j =1, 2 \dots , q^{1/2}$ without Type 2 failure and success. By combining \eqref{g01}, \eqref{g02} and \eqref{g03}, taking $\lambda$ sufficiently large, we see that each ball step has a probability at least $c_{0}/2$ of success. Therefore, we have
\begin{equation}\label{f5small}
\mathbf{P} (F^{5} ) \le C \exp \left\{ - c \lambda^{1/200} \right\}.
\end{equation}
Once we terminate the ball step algorithm with a success, we end up with a good volume estimate as in \eqref{vol}. Combining \eqref{f1small}, \eqref{f2small}, \eqref{f3small}, \eqref{f4small}, \eqref{f5small} with Proposition \ref{prop:iterate}, we conclude that
\[\mathbf{P} \left( \mu_{{\cal U} } \left( B_{{\cal U} } \left( 0,  C \lambda^{a'} R^{\beta} \right) \right) \ge c \lambda^{-18} R^{3} \right) \ge 1 - C \exp \left\{ - c \lambda^{a'} \right\}.\]
Reparameterizing this gives the desired result.
\end{proof}

We are now ready to derive the main result of this section, which gives exponential control of the volume of balls, uniformly over spatial regions.

\begin{thm}\label{2nd-goal} There exist constants $c, C , b \in (0, \infty)$ such that: for all $R \ge 1$ and $\lambda \ge 1$,
\begin{equation}\label{gl}
\mathbf{P} \left( \inf_{x \in B(R^{1/\beta} ) } \mu_{{\cal U} } \left( B_{{\cal U}} \left( x, \lambda^{-b'} R \right) \right)  \le \lambda^{-1} R^{\frac{3}{\beta}} \right) \le C \exp \left\{ - c \lambda^{b} \right\}.
\end{equation}
\end{thm}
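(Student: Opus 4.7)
The plan is to bootstrap Theorem \ref{1st-goal}, which controls the volume of a single ball centred at the origin with exponential tails, to a uniform lower bound over all points $x\in B(R^{1/\beta})$. The template is the same as for the derivation of Proposition \ref{last-step} from Proposition \ref{1st-assump}, but with every polynomial input replaced by the exponential counterpart now available.

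First, by translation invariance of $\mathbf{P}$, Theorem \ref{1st-goal} implies that for each fixed $x\in\mathbb{Z}^3$ one has
\[
\mathbf{P}\left( \mu_{\mathcal{U}}(B_{\mathcal{U}}(x, R)) \leq \lambda^{-1} R^{3/\beta} \right) \leq C\exp\{-c\lambda^{b}\}.
\]
I would fix a deterministic $\lambda^{-b'}R^{1/\beta}$-net $\{x_k\}_{k=1}^M \subset B(R^{1/\beta})$ for a small exponent $b'>0$ to be chosen, so that $M\leq C\lambda^{3b'}$ is polynomial in $\lambda$. A union bound then yields that, with probability at least $1 - CM\exp\{-c\lambda^{b}\}$, the volume of $B_{\mathcal{U}}(x_k, \tfrac{1}{2}\lambda^{-b'\beta}R)$ exceeds $c\lambda^{-1-3b'}R^{3/\beta}$ simultaneously for every $k$. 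Choosing $b'$ sufficiently small compared to $b$ keeps the failure probability exponentially small in $\lambda$.

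The second step is to show that every $x\in B(R^{1/\beta})$ is within intrinsic distance $\tfrac{1}{2}\lambda^{-b'\beta}R$ of some net point $x_k$. Concretely, one runs Wilson's algorithm in the order: the infinite LERW $\gamma_\infty$ from $0$ first, then the branches from the net points $x_1,\ldots,x_M$, and finally the branches from all remaining vertices in $B(R^{1/\beta})$. For any $x\in B(R^{1/\beta})$, one wants the connecting branch $\gamma_{\mathcal{U}}(x,\mathcal{U}_{\mathrm{net}})$ to have both Euclidean diameter at most $\lambda^{-b'}R^{1/\beta}$ and intrinsic length at most $\tfrac{1}{2}\lambda^{-b'\beta}R$, uniformly in $x$. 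Once this holds, the triangle inequality puts $B_{\mathcal{U}}(x_{k^\ast},\tfrac{1}{2}\lambda^{-b'\beta}R)$ inside $B_{\mathcal{U}}(x,\lambda^{-b'\beta}R)$ for a nearby net point $x_{k^\ast}$, and the volume lower bound transfers.

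The main obstacle is turning the polynomial hittability estimate of Proposition \ref{result:hit} into an exponential bound at this stage: a single application of the $\eta$-hittable event only gives failure probability $Cr^{\hat\eta}$, which is polynomial in $\lambda$ at the scale considered. To resolve this, I would iterate hittability across a geometric sequence of shrinking scales $\varepsilon_j = 2^{-j}\lambda^{-b'}$, exactly as in the net refinement in the proof of Proposition \ref{1st-assump}: at the $j$-th refinement each of $C\varepsilon_j^{-3}$ random walks needs only to travel a distance $\varepsilon_{j-1}^{1/2}$ to hit the current tree, so using the hittability bound iteratively $\varepsilon_{j-1}^{-1/6}$ times yields failure probability $\exp\{-c\varepsilon_j^{-\eta'}\}$ per scale; summing over $j$ gives an overall exponential bound $\exp\{-c\lambda^{b''}\}$. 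Combining this iteration with the exponential upper tail for LERW lengths (Proposition \ref{result:upperLowTail}) and with Proposition \ref{prop:iterate} for hittability of sub-paths, and choosing $b'$ small enough that all three exponential contributions (one-point bounds at the $M$ net points, scale-by-scale hittability, and branch-length tails) compose to a single $\exp\{-c\lambda^{b}\}$, completes the argument. The delicate bookkeeping is to ensure that $b'$ can be selected \emph{a priori} without reference to the target exponent $b$, which is done by fixing $b'$ small relative to the exponents in Theorem \ref{1st-goal}, Proposition \ref{result:upperLowTail} and the hittability scale cascade before carrying out the union bound.
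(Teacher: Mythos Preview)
Your overall strategy—bootstrap Theorem \ref{1st-goal} to a uniform bound via a net, Wilson's algorithm, and hittability—matches the paper's template. The gap is in the choice of mesh scale. You take a net of mesh $\lambda^{-b'}R^{1/\beta}$ and claim the multi-scale refinement from Proposition \ref{1st-assump} produces an exponential failure bound. It does not: the event that the tree is $\hat\eta$-hittable at scale $\varepsilon_k$ (the events $I(k,\zeta)$ in that proof) holds only with probability $\ge 1-C\varepsilon_k^c$ by Proposition \ref{result:hit}. At your coarsest scale $\varepsilon_1\asymp\lambda^{-b'}$ this is $1-C\lambda^{-cb'}$, polynomial in $\lambda$. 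The super-exponential bound you quote from \eqref{iterate} is \emph{conditional} on the hittability event; the unconditional failure probability is dominated by $\mathbf{P}(I(1,\zeta_2)^c)$, and summing over $j$ inherits this polynomial first term. With a polynomial mesh you recover Proposition \ref{last-step}, not \eqref{gl}.

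The paper's resolution is to take the mesh \emph{exponentially} small, $\exp\{-\lambda^{b_1}\}R^{1/\beta}$ with $b_1=b/1000$. Then Proposition \ref{result:hit} gives hittability with probability $\ge 1-C\exp\{-\lambda^{b_1}\}$, and although the net now has $\asymp\exp\{3\lambda^{b_1}\}$ points, this is still beaten in the union bound by the $\exp\{-c\lambda^b\}$ tail from Theorem \ref{1st-goal}. Two further ingredients you omit are essential at this exponentially small scale: the quasi-loop estimate (Proposition \ref{result:quasiloops}) and the decomposition of $\gamma_\infty$ via exit times $s_i$, which together ensure that a random walk hitting $\gamma_\infty$ near $x_i$ in the Euclidean sense also lands near $x_i$ in the \emph{intrinsic} sense—without this, Euclidean proximity to a net point does not transfer to the intrinsic ball inclusion you need. (Proposition \ref{prop:iterate}, which you cite, is an input to Theorem \ref{1st-goal} and plays no role here.) The paper also structures the argument in two stages—first uniformly along $\gamma_\infty\cap B(4R^{1/\beta})$ (see \eqref{hobo}), then extending to all of $B(R^{1/\beta})$ as in Proposition \ref{last-step}—which is the natural way to feed the single-LERW hittability estimate into a statement about the whole ball.
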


\begin{proof} We will follow the strategy used in the proof of Proposition \ref{last-step}. Theorem \ref{1st-goal} tells us that if $A_{1} := \{ |B_{{\cal U}} ( 0, \lambda^{-1} R )|  \le \lambda^{-4} R^{\frac{3}{\beta}} \}$, then
\begin{equation}\label{30-1}
\mathbf{P} (A_{1} ) \le C \exp \left\{ - c \lambda^{b} \right\},
\end{equation}
We may assume that $b \in (0,1)$. We also let $b_{1} = b/1000$. Applying Proposition \ref{result:hit} with $s= \exp \{ - \lambda^{b_{1}} \} R^{1/\beta}$, $r = \exp \{ -  \lambda^{b_{1}} /2 \} R^{1/\beta}$ and $K=100$, we find that there exists universal constants $\eta \in (0, 1)$ and $C > 0$ such that
\[\mathbf{P} (A_{2} ) \le C \exp \left\{ -  \lambda^{b_{1}} \right\},\]
where $A_{2}$ is defined to be the event
\[\left\{\exists v \in B(5 R^{1/\beta}) \text{ such that } \text{dist} (v, \gamma ) \le e^{- \lambda^{b_{1}}}  R^{1/\beta}\text{ and } P^{v} \left( R^{v} [0, t_{v} ] \cap \gamma = \emptyset \right) \ge e^{- \eta \lambda^{ b_{1}}}\right\}.\]
Here, $\gamma$ represents the ILERW started at the origin, $R^{v}$ stands for a SRW started at $v$, the probability law of which is denoted by $P^{v}$, and $t_{v}$ stands for the first time that $R^{v}$ exits $B( v, \exp \{ - \lambda^{b_{1}} / 2 \} R^{1/\beta})$. We next use Proposition \ref{result:quasiloops} to conclude that there exists universal constants $C, M < \infty$ such that
\[\mathbf{P} (A_{3} ) \le C \exp \left\{ -  \lambda^{b_{1}} / M \right\},\]
where the event $A_{3}$ is defined by
\begin{eqnarray*}
A_{3}= \left\{
\begin{array}{cc}
\exists v \in B(5 R^{1/\beta}) \text{ and } i< j \text{ such that } \gamma (i), \gamma (j) \in B \left( v, 10 \exp \{ - \lambda^{b_{1}} /2 \} R^{1/\beta} \right) \\
 \text{ and } \gamma [i, j] \not\subseteq B \left( v, 10^{-1} \exp \{ - \lambda^{b_{1}} / M \} R^{1/\beta} \right)
\end{array}
\right\},
\end{eqnarray*}
Namely, the event $A_{3}$ says that $\gamma$ has a quasi-loop in $B (5 R^{1/\beta})$. We next let
\begin{equation}\label{del}
\delta = 10^{-3} \min \{ \eta, 1/M \},
\end{equation}
and define a sequence of random times $s_{1}, s_{2}, \dots $ by setting $s_{0} = 0$,
\begin{align*}
s_{1} &= \inf \left\{ t \ge 0 \::\: \gamma (t) \notin B \left( \exp \{ - \delta \lambda^{b_{1}} \} R^{1/\beta} \right) \right\},\\
s_{i} &= \inf \left\{ t \ge s_{i-1} \::\: \gamma (t) \notin B \left( \gamma (s_{i-1} ), \exp \{ - \delta \lambda^{b_{1}} \} R^{1/\beta} \right) \right\}, \qquad \forall i \ge 2.
\end{align*}
Let $x_{i} = \gamma (s_{i} )$, write
\[I = \left\{ i \ge 1 \::\: \left(\gamma [s_{i-1}, s_{ i} ] \cup \gamma [s_{i}, s_{ i+ 1} ]  \cup \gamma [s_{i+1}, s_{ i+ 2} ] \right)\cap B (4 R^{1/\beta}) \neq \emptyset \right\},\]
and set $N = |I|$. By considering the number of balls of radius $\exp \{ - \delta \lambda^{b_{1}} \} R^{1/\beta} $ crossed by a SRW before ultimately leaving $B (4 R^{1/\beta})$, we see that
\[\mathbf{P} (A_{4} ) \le C \exp \left\{ - c  e^{\delta \lambda^{b_{1}} }     \right\},\]
where $ A_{4}: = \{ N \ge \exp \{ 3 \delta \lambda^{b_{1}} \}\}$. A similar argument to that used in \cite[(7.51)]{LS} yields that
\[\mathbf{P} (A_{5} ) \le C \exp \left\{ - c  e^{\frac{\delta \lambda^{b_{1}}}{2} }     \right\},\]
where $A_{5}$ is the event that there exists an $i \in I$ such that $s_{i} - s_{i-1} \ge \exp \{ - \delta \lambda^{b_{1}} /2 \} R$. Thus, defining the event $A$ by setting
 \begin{equation*}
 A = \bigcap_{i=1}^{5} A_{i}^{c},
 \end{equation*}
combining the above estimates gives us that
\begin{equation}\label{30-5}
\mathbf{P} (A) \ge 1 - C \exp \left\{ -  \lambda^{b_{1}} / M \right\}.
\end{equation}

We now fix a net $W = ( w_{j})_{j}$ of $B (5 R^{1/\beta})$ such that
\[ B (5 R^{1/\beta}) \subseteq \bigcup_{j } B \left( w_{j},  \exp \{ - \lambda^{b_{1}} \} R^{1/\beta} \right)\]
and $| W| \asymp \exp \{ 3 \lambda^{b_{1}} \}$. For $i \in I$, let $w_{i} \in W$ be a point for which $| x_{i} - w_{i} | \le \exp \{ - \lambda^{b_{1}} \} R^{1/\beta} $. We now use Wilson's algorithm for all points $w_{i}$. On $A_{2}^{c}$, it holds that, for each $i \in I$,
\[P^{w_{i}} \left( R^{w_{i}} [0, t_{w_{i}} ] \cap \gamma = \emptyset \right) \le \exp \{ - \eta \lambda^{ b_{1}} \}.\]
Therefore we have
\begin{equation}\label{30-6}
\mathbf{P} (B_{1} ) \le C \exp \{ - \eta \lambda^{ b_{1}} \} \exp \{ 3 \delta \lambda^{b_{1}} \}  \le C \exp \{ - \eta \lambda^{ b_{1}} / 2 \},
\end{equation}
where $B_{1}$ is the event that there exists $i \in I$ such that $R^{w_{i}} [0, t_{w_{i}} ] \cap \gamma = \emptyset$. Suppose that the event $B_{1}^{c}$ occurs. For $i \in I$, write $u_{i}$ for the first time that $R^{w_{i}}$ hits $\gamma$, and let $z_{i} = R^{w_{i}} (u_{i} )$. On $A_{3}^{c}$, we have that $z_{i} \in \gamma [s_{i-1}, s_{i} ] \cup  \gamma [s_{i}, s_{i+1} ]$, because otherwise $\gamma$ has a quasi-loop. We define the events $B_{2}$ and $B_{3}$ by setting
\[B_{2} = \left\{\exists i \in I \text{ such that } \len \left( \text{LE} \left( R^{w_{i}} [0, u_{i} ] \right) \right) \ge \exp \{ - \lambda^{b_{1}}/4 \} R \right\},\]
\[B_{3} = \left\{ \exists i \in I \text{ such that }  \left| B_{{\cal U}} \left( w_{i}, \lambda^{-1} R \right)\right|  \le \lambda^{-4} R^{\frac{3}{\beta}} \right\}.\]
Combining the translation invariance of ${\cal U}$ with Proposition \ref{result:upperLowTail} ensures that
\begin{equation}\label{30-7}
\mathbf{P} (B_{2} ) \le C \exp \left\{ -c e^{\lambda^{b_{1}}/ 4 } \right\}.
\end{equation}
Moreover, by \eqref{30-1} and the translation invariance of the UST again, we have
\[\mathbf{P} (B_{3} ) \le C e^{- c \lambda^{b} } \times  e^{3 \lambda^{b_{1}}} \le C e^{ - c \lambda^{b} / 2 },\]
where we use the fact that $| W| \asymp e^{3 \lambda^{b_{1}}}$ and $b_{1} = b/1000$. Defining
\[B = \bigcap_{j= 1}^{3} B_{j}^{c},\]
we have proved that
\[\mathbf{P} (A \cap B) \ge 1 - C \exp \left\{ - \delta \lambda^{b_{1}}  \right\},\]
where we recall that $\delta > 0$ was defined as in \eqref{del}.

Next, suppose that the event $A \cap B$ occurs. Take $x \in \gamma \cap B (4 R^{1/\beta})$. We can then find some $i \in I$ such that $x \in \gamma [s_{i}, s_{i+1} ]$. On $A_{5}^{c}$, we have
\[d_{{\cal U}} (x_{i-1}, x_{i} ) \le \exp \{ - \delta \lambda^{b_{1}} /2 \} R \text{ and } d_{{\cal U}} (x_{i}, x_{i+1} ) \le \exp \{ - \delta \lambda^{b_{1}} /2 \} R.\]
Furthermore, on $B_{1}^{c}\cap A_{3}^{c}\cap B_{2}^{c}$, it holds that
\[z_{i} \in \gamma [s_{i-1}, s_{i} ] \cup \gamma [s_{i}, s_{i+ 1} ] \text{ and } d_{{\cal U}} (w_{i}, z_{i} ) \le \exp \{ - \lambda^{b_{1}}/4 \} R.\]
This implies that $d_{{\cal U}} (w_{i}, x) \le \exp \{ - \delta \lambda^{b_{1}} /4 \} R$. If $B_{3}^{c}$ also holds, it follows that we have
\[\mu_{{\cal U}} \left( B_{{\cal U}} \left( x, 2 \lambda^{-1} R \right) \right) \ge \lambda^{-4} R^{\frac{3}{\beta}}.\]
Consequently, we have proved that there exist universal constants $C, \delta, b_{1} \in (0, \infty )$ such that for all $R$ and $\lambda$
\begin{equation}\label{hobo}
\mathbf{P} \left(  \mu_{{\cal U}} \left( B_{{\cal U}} \left( x,  \lambda^{-1} R \right) \right) \ge \lambda^{-5} R^{\frac{3}{\beta}} \text{ for all } x \in \gamma \cap B (4 R^{1/\beta}) \right) \ge 1 - C \exp \left\{ - \delta \lambda^{b_{1}}  \right\}.
\end{equation}

Finally, once we get \eqref{hobo}, the proof of \eqref{gl} can be completed by following the strategy used to prove \eqref{1255} given \eqref{warm-2nd}. Indeed, thanks to \eqref{hobo}, we can use a net whose mesh size is exponentially small in $\lambda$, which guarantees the exponential bound as in \eqref{gla}. The simple modification is left to the reader.
\end{proof}

\section{Exponential upper tail bound on the volume}\label{sec:expupper}

Complementing the main result of the previous section, we next establish an exponential tail upper bound on the volume, see Theorem \ref{darui-1}, which improves the polynomial tail upper bound on the volume proved in Proposition \ref{1st-assump}. We begin with the following proposition.

\begin{prop}\label{2-4-1}
There exist constants $c, C, a \in (0, \infty )$ such that: for all $R \ge 1$ and $\lambda \ge 1$,
\[\mathbf{P} \left( B_{{\cal U}} \left( 0, \lambda^{-1} R^{\beta}  \right) \not\subseteq B ( R) \right) \le C \exp \{ - c \lambda^{a}\}.\]
In particular, it holds that
\[\mathbf{P}\left(\mu_{{\cal U}}\left(B_{{\cal U}}\left(0,\lambda^{-1}R^{\beta}\right)\right)\ge R^{3}\right)\le C\exp\{-c\lambda^{a}\}.\]
\end{prop}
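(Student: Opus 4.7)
The second inequality follows from the first: on $\{B_\mathcal{U}(0, \lambda^{-1}R^\beta) \subseteq B(R)\}$ we have $\mu_\mathcal{U}(B_\mathcal{U}(0, \lambda^{-1}R^\beta)) \le |B(R)\cap\mathbb{Z}^3| \le C_0 R^3$; applying the first statement with $R$ replaced by $R/C_0^{1/3}$ and absorbing the resulting constant into $\lambda$ yields the second.

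For the first statement, the plan is to revisit the Wilson's algorithm argument of Proposition \ref{1st-assump}, sharpening the polynomial-tail ingredients to exponential ones in the spirit of Theorem \ref{1st-goal}. Writing $\partial^o B(R)=\{y\in\mathbb{Z}^3\setminus B(R):\dist(y,B(R))=1\}$, the event $\{B_\mathcal{U}(0,\lambda^{-1}R^\beta)\not\subseteq B(R)\}$ forces the UST-path from $0$ to some exterior vertex to cross an edge leaving $B(R)$; letting $y\in\partial^o B(R)$ denote the outer endpoint of any such crossing edge gives $d_\mathcal{U}(0,y)\le 1+\lambda^{-1}R^\beta$. Constructing $\mathcal{U}$ via Wilson's algorithm rooted at $0$ through the infinite LERW $\gamma_\infty$, and letting $S^y$ denote the independent SRW from $y$ used at the Wilson step adding $y$, with hitting time $\tau^y=\inf\{n\ge 0:S^y(n)\in\gamma_\infty\}$ and hit point $Z^y=S^y(\tau^y)$, the tree distance decomposes as
\[
d_\mathcal{U}(0,y) \;=\; \len(\LE(S^y[0,\tau^y])) \;+\; \mathrm{idx}(Z^y, \gamma_\infty).
\]

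I would then impose several good events for $\gamma_\infty$, each of failure probability $\le Ce^{-c\lambda^{a'}}$ for some $a'>0$: (i) $\xi_R(\gamma_\infty)\ge C\lambda^{-1}R^\beta$ by Proposition \ref{result:upperLowTail}; (ii) the existence of $\asymp \lambda^{1-a}$ disjoint sub-paths of $\gamma_\infty$ inside the annulus $B(4R)\setminus B(R/2)$, each hittable with constant probability in the sense of \eqref{aidef}, via a rescaling of Proposition \ref{prop:iterate} (with the parameter $a$ from Proposition \ref{prop:iterate} chosen close to $1$); and (iii) absence of quasi-loops at the relevant scales, by Proposition \ref{result:quasiloops}. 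On the intersection of these events, for any $y\in\partial^o B(R)$, the hitting position $Z^y$ either has index $\ge c\lambda^{-1}R^\beta$ along $\gamma_\infty$, in which case we are done, or else $S^y$ has travelled Euclidean distance at least $R/4$ before hitting $\gamma_\infty$, in which case $\len(\LE(S^y[0,\tau^y]))\ge c\lambda^{-1}R^\beta$ with exponentially high conditional probability by the analogue of Proposition \ref{result:upperLowTail} for the loop-erased walk in question. Covering $\partial^o B(R)$ by a net of polynomial-in-$\lambda^{-1}$ mesh, the conclusion will follow from a union bound over net points, with quasi-loop absence handling interpolation between them. The main obstacle is to upgrade, uniformly over $y\in\partial^o B(R)$, the polynomial single-scale hittability bound used in the proof of Proposition \ref{1st-assump} to an exponential one. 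The decisive gain comes from Proposition \ref{prop:iterate}: rather than requiring a single hittability event to hold with high probability, one iterates $\asymp \lambda^{(1-a)/2}$ constant-probability attempts along disjoint hittable segments of $\gamma_\infty$, yielding aggregate failure probability of exponential order, precisely as in the ball-step mechanism used in the proof of Theorem \ref{1st-goal}.
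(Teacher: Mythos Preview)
Your reduction of the second inequality to the first is fine, and the observation that $\{B_\mathcal{U}(0,\lambda^{-1}R^\beta)\not\subseteq B(R)\}$ forces $d_\mathcal{U}(0,y)\le 1+\lambda^{-1}R^\beta$ for some $y\in\partial^o B(R)$ is correct. However, the mechanism you propose for obtaining the exponential bound is misdirected, and the paper proceeds quite differently.

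The central issue is your appeal to Proposition~\ref{prop:iterate} and the ball-step iteration from Theorem~\ref{1st-goal}. Those tools produce many disjoint \emph{hittable} segments of $\gamma_\infty$, which is exactly what one needs to force a simple random walk to \emph{hit} $\gamma_\infty$ with constant probability at each of many independent attempts. That is a device for volume \emph{lower} bounds (showing branches attach quickly so that many lattice points lie close in $d_\mathcal{U}$). Here the task is the opposite: you must show that every path $\gamma_\mathcal{U}(0,y)$ with $y$ on the Euclidean boundary is \emph{long}. Hittability of $\gamma_\infty$ does not, by itself, force the branch point to have large index or the branch to be long; it only says $S^y$ will collide with $\gamma_\infty$ before wandering far, which says nothing about the index of the collision point. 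Your dichotomy can be salvaged for a single $y$ via a Masson-type comparison (the branch from $\partial B(R)$ to $B(R/4)$ must have length $\gtrsim R^\beta$ with exponential probability), but you then need a union bound over \emph{all} of $\partial^o B(R)$, and your proposed interpolation via quasi-loop absence does not control $|d_\mathcal{U}(0,y_1)-d_\mathcal{U}(0,y_2)|$ for nearby boundary points: quasi-loops concern $\gamma_\infty$, not the full branching structure near $\partial B(R)$, and the available Schramm-metric comparisons are only polynomial in the tail and in any case depend on the very estimate you are trying to prove.

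The paper's route avoids Proposition~\ref{prop:iterate} entirely. The key observation is that for each fixed net point $w$, the initial segment of $\gamma_\mathcal{U}(0,w)$ until first exit from $B(R/8)$ has length whose lower tail is comparable (via \cite[Proposition~4.4]{Mas}) to that of the ILERW, so $\mathbf{P}(N_w<\lambda^{-1}R^\beta)\le C\exp\{-c\lambda^{a_0}\}$ directly from Proposition~\ref{result:upperLowTail}. This per-point exponential bound is strong enough to survive a union over an \emph{exponentially} fine first net $D_1$ of mesh $\delta_1\asymp\exp\{-\lambda^{a_1}\}$ with $a_1=a_0/10$, so that $|D_1|\asymp\exp\{3\lambda^{a_1}\}$ is beaten by $\exp\{-c\lambda^{a_0}\}$. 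One then refines dyadically with nets $D_k$ of mesh $\delta_k=2^{-k}\delta_1$, using the hittability of $\gamma_\mathcal{U}(w,\infty)$ for $w\in D_{k-1}$ (Proposition~\ref{result:hit}) to show that the new branches from $D_k$ never reach $B(R/2)$; this is the same layered construction as in Proposition~\ref{1st-assump}, now with exponentially small starting mesh. On the resulting good event $J$, every $w$ in the full cover of $\partial_i B(R)$ satisfies $N_w\ge\lambda^{-1}R^\beta$, which forces $B_\mathcal{U}(0,\lambda^{-1}R^\beta)\subseteq B(R)$. In short: the exponential gain comes from the exponential lower tail of $\xi_n$ combined with an exponentially fine net, not from iterating constant-probability hittability events.
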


\begin{proof}
The second inequality immediately follows from the first one. Thus it remains to prove the first inequality. We follow the strategy used in the proof of \cite[Theorem 3.1]{BM}. We may assume that $\lambda$ is sufficiently large. It follows from Proposition \ref{result:upperLowTail} that there exist constants $C, c$ and $a_{0} > 0$ such that
\[\mathbf{P} \left( T_{R/ 8}  < \lambda^{-1} R^{\beta} \right) \le C \exp \{ - c \lambda^{a_{0} } \},\]
where again $T_{r}$ stands for the first time that the ILERW $\gamma$ exits $B (r)$. Setting $a_{1} = a_{0} / 10$, we define a sequence of nets $D_{k}$ as follows. For $k \ge 1$, set $\delta_{k} = 2^{-k} \exp \{ - \lambda^{a_{1}} \}$, $\eta_{k} = (2k)^{-1}$, and $k_{0}$ be the smallest integer such that $\delta_{k_{0}} R < 1$. Defining
\[A_{k} := B (R) \setminus B \left( (1- \eta_{k} ) R \right),\]
let $D_{k}$ be a set of points in $A_{k}$ satisfying $|D_{k}| \asymp \delta_{k}^{-3}$ and also that
\[A_{k} \subseteq \bigcup_{w \in D_{k} } B \left( w, \delta_{k} R \right).\]
We then perform Wilson's algorithm as follows.
\begin{itemize}
\item Let ${\cal U}_{0} = \gamma$ be the ILERW, which is the root of the algorithm.
\item Take $w \in D_{1}$, and consider the SRW $R^{w}$ started at $w$, and run until it hits ${\cal U}_{0}$. We add $\text{LE} (R^{w} ) $ to ${\cal U}_{0}$. We choose another point $w' \in D_{1}$ and add the loop-erasure of $R^{w'}$, a SRW started at $w'$ and run until it hits the part of the tree already constructed. We perform the same procedure for every point in $D_{1}$. Let ${\cal U}_{1}$ be the output tree.
\item We perform the same algorithm as above for all points in $D_{2}$. Let ${\cal U}_{2}$ be the output tree. Similarly, we define ${\cal U}_{k}$.
\item We perform Wilson's algorithm for all points in $ {\cal U}_{k_{0}}^{c}$.
\end{itemize}
Since $\delta_{k_{0}} R < 1$, we note that $\partial_{i} B (R)  \subseteq A_{k_{0}} \subseteq {\cal U}_{k_{0}}$.

Now, take $w \in D_{1}$, and let $N_{w}$ be the first time that $\gamma_{{\cal U}} (0, w)$  exits $B (R/ 8 )$. Using \cite[Proposition 4.4]{Mas}, we see that
\[\mathbf{P} \left( N_{w}  < \lambda^{-1} R^{\beta} \right) \le C \mathbf{P} \left( T_{R/ 8}  < \lambda^{-1} R^{\beta} \right) \le C \exp \{ - c \lambda^{a_{0} } \}\]
for each $w \in D_{1}$. Thus if we define the event $F_{1}$ by setting
\[F_{1} = \left\{ T_{R/ 8}  < \lambda^{-1} R^{\beta} \right\}\cup\bigcup_{w \in D_{1}} \left\{ N_{w}  < \lambda^{-1} R^{\beta} \right\},\]
then it follows that
\[\mathbf{P} (F_{1} ) \le C \delta_{1}^{-3} \exp \{ - c \lambda^{a_{0} } \} \le C \exp \{ - c' \lambda^{a_{0} } \},\]
where we have used the fact that  $|D_{1}| \asymp \delta_{1}^{-3} \asymp \exp \{ 3 \lambda^{a_{1}} \}$ and that $a_{1} = a_{0} /10$.

Next, for $b > 0$, we define $G_{1}^{w} (b)$ to be the event
\[\left\{\exists  v \in B (2 R) \text{ with } \text{dist} \left( v, \gamma_{{\cal U}} (w, \infty ) \right) \le \delta_{1} R \text{ such that } P^{v} \left( R^{v} [0, \xi ] \cap \gamma_{{\cal U}} (w, \infty ) = \emptyset \right) \ge \delta_{1}^{b}\right\},\]
where $\xi$ is the first time that $R^{v}$ exits $B \left( v, \sqrt{\delta_{1}} R \right)$. Applying Proposition \ref{result:hit} to the case that $K = 100$, it holds that there exists $b_{0} > 0$ such that
\begin{equation}\label{darui}
\mathbf{P} (G_{1}^{w} ) := \mathbf{P} \left(  G_{1}^{w} (b_{0}) \right) \le C \delta_{1}^{50}.
\end{equation}
So, if we define the event $G_{1}:=\cup_{w \in D_{1}} G_{1}^{w}$, then
\[\mathbf{P} (G_{1} ) \le C \delta_{1}^{47}.\]

Suppose that the event $F_{1}^{c} \cap G_{1}^{c}$ occurs, and perform Wilson's algorithm (see \cite{Wilson}) from all points in $D_{2}$. For $w \in D_{2}$, define $H_{2}^{w}:=\{ \gamma_{{\cal U}} (w , 0 ) \text{ enters } B (R/ 2 ) \text{ before it hits } {\cal U}_{1}\}$, and let $H_{2} = \cup_{w \in D_{2}}  H^{w}_{2}$. The event $H^{w}_{2}$ implies that $R^{w}$ enters $B (R/ 2)$ without hitting ${\cal U}_{1}$. Since the event $G_{1}^{c}$ occurs, we see that
\[\mathbf{P} (H^{w}_{2} ) \le \left( \delta_{1}^{b_{0}} \right)^{c \delta_{1}^{-1/2} },\]
and thus we have
\[\mathbf{P} (H_{2} ) \le C \delta_{1}^{10}.\]

For $w \in D_{2}$, we then define $G_{2}^{w} = G_{2}^{w} (b_{0})$ to be the event
\[\left\{\exists  v \in B (2 R) \text{ with } \text{dist} \left( v, \gamma_{{\cal U}} (w, \infty ) \right) \le \delta_{2} R \text{ such that }P^{v} \left( R^{v} [0, \xi ] \cap \gamma_{{\cal U}} (w, \infty ) = \emptyset \right) \ge \delta_{2}^{b_{0}}\right\},\]
where $\xi$ is the first time that $R^{v}$ exits $B( v, \sqrt{\delta_{2}} R)$, and $b_{0}$ is the constant defined as above (see \eqref{darui} for $b_{0}$). Using \cite[Lemma 3.2 and Lemma 3.3]{SS} once again (with $r = \sqrt{\delta_{2}} R$ and $s = \delta_{2} R$), we have
\[\mathbf{P} (G^{w}_{2} ) \le C \delta_{2}^{50}.\]
Importantly, we can take $b_{0}$ depending only on $K = 100$. Define the event $G_{2}$ by setting $G_{2} := \cup_{w \in D_{2} } G^{w}_{2}$, and then
\[\mathbf{P} (G_{2} ) \le C \delta_{2}^{47}.\]
Defining $H_{k}$ and $G_{k}$, $k \ge 3$ similarly, it follows that
\[\mathbf{P} (H_{k} \cup G_{k} ) \le C \delta_{k}^{47}.\]

Finally, we define
\[J = F_{1}^{c} \cap G_{1}^{c} \cap \bigcap_{k=2}^{k_{0}} (H_{k}^{c} \cap G_{k}^{c} ).\]
On the event $J$, we have the following.
\begin{itemize}
\item For all $k = 1, 2, \dots k_{0}$ and every $w \in D_{k}$, the first time that $\gamma_{{\cal U}} (0, w)$ exits $B (R/ 8) $ is greater than $\lambda^{-1} R^{\beta} $.
\item The set $D_{k_{0}}$ disconnects $0$ and $B (R)^{c}$.
\end{itemize}
Thus, on the event $J$, it holds that $ B_{{\cal U}} \left( 0, \lambda^{-1} R^{\beta} \right) \subseteq B (R)$. Since
\[\mathbf{P} (J^{c} ) \le C \exp \{ - c' \lambda^{a_{0} } \} + C \sum_{k= 1}^{k_{0}} \delta_{k}^{47} \le C \exp \{ - \lambda^{a_{1}} \},\]
we have thus completed the proof.
\end{proof}

We are now ready to establish the main result of the section.

\begin{thm}\label{darui-1}
There exist constants $c', C', a' \in (0, \infty )$ such that: for all $R \ge 1$ and $\lambda \ge 1$,
\begin{equation}\label{darui-2}
\mathbf{P} \left( \max_{z \in B (R^{1/\beta}) }  \mu_{{\cal U}} \left( B_{{\cal U}} \left( z, \lambda^{a'} R \right) \right) \ge \lambda R^{3/\beta} \right) \le C' \exp \{ - c' \lambda^{a'} \}.
\end{equation}
\end{thm}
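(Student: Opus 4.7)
The strategy is to promote the single-point upper bound of Proposition \ref{2-4-1} to a uniform-in-$z$ statement via a fine Euclidean net combined with Wilson's algorithm, mirroring the structure of the lower-bound version Theorem \ref{2nd-goal}. By translation invariance of $\mathcal{U}$ and a direct reparametrization of Proposition \ref{2-4-1} (apply it with its $R$ replaced by $(\lambda R^{3/\beta})^{1/3}$ and its $\lambda$ replaced by $\lambda^{\beta/3 - a'}$), one obtains that for every fixed $z \in \mathbb{Z}^3$ and every $a' \in (0, \beta/3)$,
\[\mathbf{P}\bigl(\mu_\mathcal{U}(B_\mathcal{U}(z, \lambda^{a'}R)) \ge \lambda R^{3/\beta}\bigr) \le C\exp\{-c\lambda^{a(\beta/3 - a')}\},\]
where $a$ is the constant furnished by Proposition \ref{2-4-1}.

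I would then take a Euclidean net $W \subset B(R^{1/\beta})$ of spacing $e^{-\lambda^{b}}R^{1/\beta}$ with $b > 0$ chosen so that $3b < a(\beta/3 - a')$. Since $|W| \asymp e^{3\lambda^{b}}$ independently of $R$, a union bound over $W$ yields that, off an event of probability at most $C\exp\{-c_1 \lambda^{a_1}\}$, one has $\mu_\mathcal{U}(B_\mathcal{U}(w,\lambda^{a'}R)) < \lambda R^{3/\beta}$ for every $w \in W$. The conclusion then follows (after relabelling the final exponent $a'$) provided that with probability $\ge 1 - C\exp\{-c_2\lambda^{a_2}\}$ every lattice point $z \in B(R^{1/\beta})$ satisfies $d_\mathcal{U}(z, W) \le \lambda^{a'}R/2$: indeed, for such $z$, choosing $w \in W$ with $d_\mathcal{U}(z, w) \le \lambda^{a'}R/2$ gives $B_\mathcal{U}(z, \lambda^{a'}R/2) \subseteq B_\mathcal{U}(w, \lambda^{a'}R)$, hence the required volume bound.

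The principal difficulty lies in establishing this uniform intrinsic-distance bound with exponential failure probability. I would address it via a multi-scale Wilson's algorithm in the spirit of the proofs of Propositions \ref{1st-assump} and \ref{last-step}: root at the infinite LERW from the origin, then successively add branches from $W$ and from finer and finer dyadic sub-nets of $B(R^{1/\beta})$ down to spacing $1$. At each scale, the hittability estimate (Proposition \ref{result:hit}) bounds the Euclidean extent of the SRW segment added before it meets the existing tree, Proposition \ref{result:quasiloops} rules out quasi-loops that could extend a branch far in Euclidean terms, and Proposition \ref{result:upperLowTail} converts the Euclidean extent into a bound on the intrinsic length of each added branch. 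The main obstacle is balancing these exponents at the finest scales so that the $\asymp R^{3/\beta}$ lattice-point union bound is still absorbed by the exponential tail from hittability at mesh $e^{-\lambda^b}R^{1/\beta}$; as in the proof of Theorem \ref{2nd-goal}, this should be achievable by invoking hittability at the derived scale $\sqrt\epsilon$ with $\epsilon = e^{-\lambda^b}$, after which the contributions from all scales sum to the required $C\exp\{-c\lambda^{a'}\}$.
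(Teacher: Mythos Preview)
Your proposal is correct and takes essentially the same approach as the paper: both combine the single-point bound of Proposition~\ref{2-4-1} (via translation invariance and a union bound over an exponentially fine Euclidean net) with the multi-scale Wilson's algorithm plus hittability/quasi-loop/tail estimates to show every $z\in B(R^{1/\beta})$ lies within intrinsic distance $o(\lambda^{a'}R)$ of a net point. The paper organises this slightly differently---it first mirrors the proof of Theorem~\ref{2nd-goal} line-by-line (swapping the role of $A_1$ for your $A_1'$ and $B_3$ for the corresponding upper-volume event $B_3'$) to obtain the uniform bound along the ILERW $\gamma\cap B(4R^{1/\beta})$, and only then invokes the extension strategy from Proposition~\ref{last-step} to reach all lattice points---but this two-stage decomposition is precisely the content of the ``multi-scale Wilson's algorithm rooted at the ILERW'' that you sketch.
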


\begin{proof}
Since the proof is very similar to that of Theorem \ref{2nd-goal}, we will only explain how to modify it here. Also, we will use the same notation used in the proof of Theorem \ref{2nd-goal}. Proposition \ref{2-4-1} tells that there exist constants $c, C, b \in (0, \infty )$ such that
\begin{equation}\label{tsurai}
\mathbf{P} (A_{1}' ) \le C \exp \{ - c \lambda^{b} \},
\end{equation}
where $A_{1}':=\{ \mu_{{\cal U}} ( B_{{\cal U}} ( 0, \lambda R )) \le \lambda^{10} R^{\frac{3}{\beta}} \}$. In this proof, we choose the constant $b$ in this way, and let $b_{1} = b / 1000$. Using this constant $b_{1}$, we define the events $A_{2}, \dots, A_{5}$ as in the proof of Theorem \ref{2nd-goal}. Let $A = (A_{1}' )^{c} \cap (\cap_{i= 2 }^{5} A_{i}^{c})$ so that
\[\mathbf{P} (A) \ge 1 - C \exp \left\{ -  \lambda^{b_{1}} / M \right\},\]
see \eqref{30-5}. We also recall the events $B_{1}$ and $B_{2}$ defined in the proof of Theorem \ref{2nd-goal}, for which
\[\mathbf{P} (B_{1}  \cup B_{2}) \le C \exp \{ - \eta \lambda^{ b_{1}} / 2 \},\]
see \eqref{30-6} and \eqref{30-7}. Moreover, let
\begin{equation*}
B_{3}' = \left\{ \exists i \in I \text{ such that }  \mu_{{\cal U}} \left( B_{{\cal U}} \left( w_{i}, \lambda R \right) \right) \ge \lambda^{10} R^{\frac{3}{\beta}} \right\}.
\end{equation*}
Combining \eqref{tsurai} with the translation invariance of the UST, we have
 \[\mathbf{P} (B_{3}' ) \le C e^{- c \lambda^{b} } \times  e^{3 \lambda^{b_{1}}} \le C e^{ - c \lambda^{b} / 2 },\]
where we have also used the fact that $|W| \asymp e^{3 \lambda^{b_{1}}}$ and $b_{1} = b/1000$. Setting $B = B_{1}^{c} \cap B_{2}^{c} \cap (B_{3}' )^{c} $, we then have that
\[\mathbf{P} (B)  \ge 1-  C \exp \{ - \eta \lambda^{ b_{1}} / 4 \}.\]

Now, suppose that the event $A \cap B$ occurs, and let $x \in \gamma \cap B (4 R^{1/\beta})$. We can then find some $i \in I$ such that $x \in \gamma [s_{i}, s_{i+1} ]$. Since $A_{5}^{c}$ holds, we have
\[ d_{{\cal U}} (x_{i-1}, x_{i} ) \le \exp \{ - \delta \lambda^{b_{1}} /2 \} R \text{ and } d_{{\cal U}} (x_{i}, x_{i+1} ) \le \exp \{ - \delta \lambda^{b_{1}} /2 \} R.\]
Furthermore, since $B_{1}^{c}\cap A_{3}^{c}\cap B_{2}^{c}$ holds, we have that
\[z_{i} \in \gamma [s_{i-1}, s_{i} ] \cup \gamma [s_{i}, s_{i+ 1} ] \text{ and } d_{{\cal U}} (w_{i}, z_{i} ) \le \exp \{ - \lambda^{b_{1}}/4 \} R.\]
This implies that $d_{{\cal U}} (w_{i}, x) \le \exp \{ - \delta \lambda^{b_{1}} /4 \} R$. Given $(B_{3}')^{c}$ also holds, we therefore have
\[\mu_{\cal U} \left( B_{{\cal U}} \left( x,  \lambda R /2  \right) \right)  \le \lambda^{10} R^{\frac{3}{\beta}}.\]
Consequently, we have proved that there exist universal constants $C, \delta, b_{1} \in (0, \infty )$ such that: for all $R$ and $\lambda$,
\begin{equation}\label{hobo-1}
\mathbf{P} \left(  \mu_{\cal U} \left(  B_{{\cal U}} \left( x,  \lambda R/2 \right) \right)  \le \lambda^{10} R^{\frac{3}{\beta}} \text{ for all } x \in \gamma \cap B (4 R^{1/\beta}) \right) \ge 1 - C \exp \left\{ - \delta \lambda^{b_{1}}  \right\}.
\end{equation}
Similarly to the comment at the end of the proof of Theorem \ref{2nd-goal}, given \eqref{hobo-1}, the proof of \eqref{darui-2} follows by applying the same strategy as that used to prove \eqref{1255} given \eqref{warm-2nd}. Indeed, given \eqref{hobo-1}, we can use a net whose mesh size is exponentially small in $\lambda$, which guarantees the exponential bound as in \eqref{darui-2}. The simple modification is again left to the reader.
\end{proof}

\section{Convergence of finite-dimensional distributions}\label{finitesec}

As noted in the introduction, the existence of a scaling limit for the three-dimensional LERW was first demonstrated in \cite{Kozma}. The work in \cite{Kozma} established the result in the Hausdorff topology, and this was recently extended in \cite{LS} to the uniform topology for parameterized curves. Whilst the latter seems a particularly appropriate topology for understanding the scaling limit of the LERW, the results in \cite{Kozma, LS} are restrictive when it comes to the domain upon which the LERW is defined. More specifically, we say that a LERW is defined in a domain $D$ if it starts in an interior point of $D$ and ends when it reaches the boundary of $D$. The assumptions in \cite{Kozma} cover the case of LERWs defined in domains with a polyhedral boundary, while \cite{LS} requires the domain to be a ball or the full space.

In this section, we extend the existence of the scaling limit to LERWs defined in the domain $\mathbb{R}^3 \backslash \cup_{j=1}^K \tr \mathcal{K}_j$, where each $\mathcal{K}_j$ is itself a path of the scaling limit of a LERW. Once we gain this level of generality, we use Wilson's algorithm to obtain the convergence in distribution of rescaled subtrees of the UST (see Figure \ref{3dsubtree} for an example realisation of the subtree spanning a finite collection of points). This will be crucial for establishing the convergence part of Theorem \ref{mainthm1}. We begin by introducing some notation for subtrees.

\begin{figure}[t]
\begin{center}
 \includegraphics[width=0.4\textwidth]{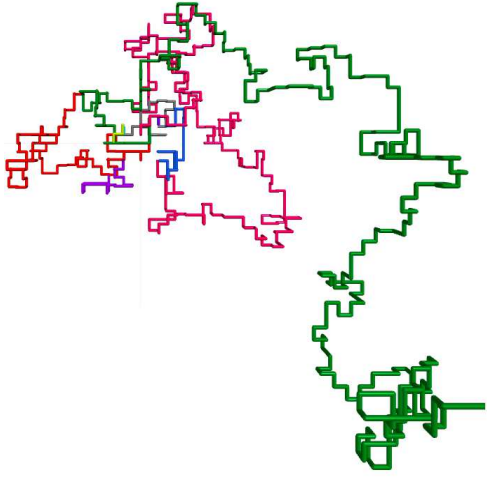}
\end{center}
\caption{A realisation of a subtree of the UST of $\delta\mathbb{Z}^3$ spanned by $0$ and the corners of the cube $[-1,1]^3$. The tree includes part of its path towards infinity (in green). Colours indicate different LERWs used in Wilson's algorithm.}\label{3dsubtree}
\end{figure}

\subsection{Parameterized trees}

A parameterized tree is an encoding for an infinite tree embedded in the closure of $\mathbb{R}^3$.
This encoding is specialized for infinite trees with a finite number of spanning points and one end.
More precisely, a \textbf{parameterized tree} $\gls{parameterizedTree}$ with $K$ spanning points is defined as $\mathscr{T} = (X, \Gamma) $ where:
\begin{enumerate}
  \item $X = \{ x(1), \ldots , x(K) \} \subset \mathbb{R}^3  $ are the spanning (or distinguished) points; and
  \item $ \gamma^{x(i)} $ is a transient parameterized (simple) curve starting at $x(i)$, and
  \begin{equation*}
    \Gamma = \{ \gamma^{x(i)} \colon 0 \leq i \leq K \}.
  \end{equation*}
  We require that for any pair $i,j$ there exist \textbf{merging times} $s^{i,j}, s^{j,i} \geq 0$ satisfying
  \begin{enumerate}
  \item $ \gamma^{x(i)} \vert_{[s^{i,j}, \infty)} = \gamma^{x(j)} \vert_{[s^{j,i}, \infty)} $; and  \label{condition:conected}
  \item $  \tr \gamma^{x(i)} \vert_{[0, s^{i,j})} \cap \tr \gamma^{x(j)} \vert_{[0, s^{j,i})}  = \emptyset $. \label{condition:simple}
  \end{enumerate}
\end{enumerate}

Let $\gls{parameterizedForest}$ be the space of parameterized trees with $K$ distinguished points. We endow $\mathscr{F}^K$ with the distance
%\begin{equation}
%\label{def:metric-PTree}
 \[ d_{\mathscr{F}^K} \left( \mathscr{T} , \tilde{\mathscr{T}} \right) :=
  \max_{1 \leq i \leq K} \left\lbrace \chi \left( \gamma^{x(i)}, \tilde{\gamma}^{\tilde{x}(i)} \right) \right\rbrace + \max_{ 1 \leq i,j \leq K} \{ \vert s^{i,j} - \tilde{s}^{i,j} \vert \},\]
%\end{equation}
for $\mathscr{T} = (X, \Gamma) , \tilde{\mathscr{T}} = (\tilde{X}, \tilde{\Gamma}) \in \mathscr{F}^K$. Recall that $\chi$ is the distance for transient parameterized curves defined in \eqref{eq:distchi}.

We write
\begin{equation*}
  \tr \mathscr{T} = \bigcup_{\gamma \in \Gamma} \tr \gamma
\end{equation*}
for the trace of a parameterized tree.

\begin{prop} \label{prop:treeistree}
  Let $\mathscr{T}$ be a parameterized tree. Then $\tr \mathscr{T}$ is a topological tree with one end.
  Additionally,   for any $z,w \in \tr \mathscr{T}$ there exists a unique curve from $z$ to infinity on $\mathscr{T}$, denoted by $\gamma^z$ and a unique curve from $z$ to $w$ in $\mathscr{T}$ denoted by $\gamma^{z,w}$.
\end{prop}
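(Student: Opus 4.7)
The plan is to argue directly from the defining conditions (a) and (b), exploiting the fact that a parameterized tree is really an encoding of $K$ simple transient curves that merge pairwise. The qualitative picture to keep in mind is that $\tr \mathscr{T}$ consists of finitely many ``pre-merge'' initial segments glued onto a common infinite tail obtained once all pairwise merges have occurred.

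First I would construct, for each $z \in \tr \mathscr{T}$, the curve $\gamma^z$ to infinity. Pick any $i$ with $z \in \tr \gamma^{x(i)}$, write $z = \gamma^{x(i)}(t_z^i)$, and set $\gamma^z(t) := \gamma^{x(i)}(t_z^i + t)$ for $t \geq 0$. The key claim is that this definition is independent of $i$. Indeed, if $z$ also lies on $\gamma^{x(j)}$, then condition (b) forces at least one of the representations to be in the merged portion, and on that merged portion the two reparameterisations agree by condition (a). Transience of $\gamma^{x(i)}$ transfers to $\gamma^z$. For uniqueness of $\gamma^z$ among injective continuous paths from $z$ to infinity, observe that any such path can only change which $\gamma^{x(k)}$ it lies on by passing through a point of intersection of two pre-merge segments; by condition (b) this is impossible, so the path must follow the tree's single end, agreeing with the constructed $\gamma^z$.

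Next I would define $\gamma^{z,w}$. Pick $i,j$ with $z \in \tr\gamma^{x(i)}$, $w \in \tr\gamma^{x(j)}$ and consider $\gamma^z$, $\gamma^w$. They eventually coincide by the merging condition, so $\tau_z := \inf\{t \geq 0 : \gamma^z(t) \in \tr \gamma^w\}$ is finite; set $p := \gamma^z(\tau_z)$. Condition (b) together with simplicity of each individual curve implies that there is a unique $\tau_w$ with $\gamma^w(\tau_w) = p$ and that $\tr\gamma^z|_{[0,\tau_z]} \cap \tr\gamma^w|_{[0,\tau_w]} = \{p\}$. Define $\gamma^{z,w}$ as the concatenation of $\gamma^z|_{[0,\tau_z]}$ with the time-reversal of $\gamma^w|_{[0,\tau_w]}$. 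This yields an injective continuous path from $z$ to $w$, and its uniqueness again follows from condition (b): any alternative simple path would at some point have to jump between different $\gamma^{x(k)}$'s outside their merged segments, which (b) rules out. Consequently $\tr \mathscr{T}$ is path-connected and any two points are joined by a unique injective continuous path, which gives the topological (real) tree structure.

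Finally, one-endedness: let $T^\ast := \max_{i,j} s^{i,j}$, so that after their respective times $T^\ast$ all curves $\gamma^{x(i)}$ have become identical to a single common curve $\gamma^\infty$, which is transient. Given any compact $K \subset \mathbb{R}^3$, transience of each $\gamma^{x(i)}|_{[T^\ast, \infty)}$ ensures that only boundedly many points of $\tr \mathscr{T}$ lie outside the tail $\gamma^\infty|_{[T, \infty)}$ for $T$ large, while the pre-merge segments $\gamma^{x(i)}|_{[0, T^\ast]}$ have compact trace; hence $\tr \mathscr{T} \setminus K$ has exactly one unbounded connected component, namely the one containing the tail of $\gamma^\infty$. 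The main subtlety in the whole argument is the careful use of condition (b) to rule out hidden cycles and to pin down both the existence of the merging time $\tau_z$ (via (a)) and the single-point nature of the intersection at $p$ (via (b)); once this is established, everything else is elementary bookkeeping with the merging data.
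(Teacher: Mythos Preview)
Your approach is correct and takes a genuinely different route from the paper. The paper argues by contradiction, assuming an embedding $\varphi:S^1\to\tr\mathscr{T}$ and showing that the merging conditions (a), (b) are violated once one tries to fit the circle inside the union of two, then three, of the curves. You instead construct $\gamma^z$ and $\gamma^{z,w}$ explicitly and argue their uniqueness directly. The constructive part is solid: the key observation---that any point lying on both $\gamma^{x(i)}$ and $\gamma^{x(j)}$ must lie in their merged portion---is correct, but note it uses (a) \emph{and} simplicity, not just (b) (if $z=\gamma^{x(i)}(t)$ with $t<s^{i,j}$ and $z\in\tr\gamma^{x(j)}|_{[s^{j,i},\infty)}$, then (a) puts $z$ on $\gamma^{x(i)}|_{[s^{i,j},\infty)}$ as well, contradicting simplicity).

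Where your write-up is softest is the uniqueness step: ``any alternative simple path would at some point have to jump between different $\gamma^{x(k)}$'s outside their merged segments'' does not by itself exclude a path that switches \emph{at} a merge point and then runs backwards along another curve's pre-merge segment. The cleanest way to close this within your framework is by induction on $K$: once you know intersections occur only in merged portions, adding $\gamma^{x(K)}$ to the tree built from $x(1),\dots,x(K-1)$ attaches the arc $\gamma^{x(K)}|_{[0,s^\ast)}$, $s^\ast=\min_{j<K}s^{K,j}$, at a single point, and gluing an arc to a one-ended tree at one endpoint preserves both the tree property and one-endedness. This is what your argument is reaching for, and it avoids the paper's $S^1$-embedding case analysis entirely. (Minor point: in your one-endedness paragraph, the curves agree on a common tail only up to individual time-shifts $T_i^\ast=\max_j s^{i,j}$, not after a single global $T^\ast$.)
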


\begin{proof}
  The set $\tr \mathscr{T}$ is path-connected as a consequence of
  condition \eqref{condition:conected} in the definition of a parameterized tree.
  It is also one-ended, since $\cap_{i = 1}^K \gamma^{x(i)}$ is a single parameterized curve towards infinity.

  The main task  in this proof is to show that there cannot be cycles embedded in $\tr \mathscr{T}$.
  We proceed by contradiction.
  Let $S^1$ be the circle and assume that $ \varphi : S^{1} \to \tr \mathscr{T} $ is an injective embedding.
  Since every curve in $\Gamma$ is simple and $\varphi$ is injective, then $\varphi(S^1)$ intersects at least two different curves, say $\gamma^{x(i)}$ and $\gamma^{x(j)}$.
  From the definition of merging times, we see that $T^2 = \tr \gamma^{x(i)} \cup \tr \gamma^{x(j)} $ is homeomorphic to $ \left( [0 , \infty) \times \{ 0 \} \right) \cup \left( \{ 1 \} \times [0, 1] \right) $, but the latter space cannot contain a embedding of $S^1$.
  It follows that $\varphi (S^1)$ intersects at least a third curve $\gamma^{x(\ell)}$.
  We assume that $\varphi (S^1)$ is contained in $ T^3 =  \tr (\gamma^{x(i)}) \cup \tr (\gamma^{x(j)}) \cup \tr (\gamma^{x(\ell)}) $.
  Under the last assumption, it is necessary that $\gamma^{x(\ell)}$ intersects $\gamma^{x(i)}$ and $\gamma^{x(j)}$ before these last two curves merge (otherwise the case is similar to $T^2$).
  Denote the intersection times by $t^{\ell,i}$ and $ t^{i, \ell} $, so $\gamma^{x(\ell)} (t^{\ell, i}) = \gamma^{x(i)} (t^{i, \ell})$. We use the same notation for $\gamma^{x(j)}$.
  Then, we have that $ t^{i, \ell} < s^{i, j} $ and $ t^{j,\ell} < s^{j, i} $. Without loss of generality, $ t^{\ell, i} < t^{\ell, j}$.
  However, it is easy to verify that $ t^{\ell, i} $ is not the merging time $s^{\ell, i}$, since $\gamma^{x(\ell)} \vert _{ [t^{\ell, i}, \infty) }$ does not merge with $\gamma^{x(i)}$ at that point. Therefore $s^{\ell, i}$ does not exist, and this conclusion contradicts the definition of $\Gamma$.
  It follows that $\varphi (S^1)$ is not contained in $T^3$, but it intersects more curves, e.g. all of them in $\tr \mathscr{T} =  \cup_{i = 1}^K \tr ((\gamma^{x(i)})  )$. However, the argument that we used for $T^3$ also applies to $\tr \mathscr{T}$. We conclude that the embedding $\varphi$ does not exist.

  Finally, observe that $\tr \mathscr{T}$ is one ended and all curves in $\Gamma$ are parameterized towards infinity. It is then straightforward to define $\gamma^z$ and $\gamma^{z,w}$.
\end{proof}

A corollary of Proposition \ref{prop:treeistree} is that the intrinsic distance in $\tr \mathscr{T}$ is well-defined.
It is given by
\begin{equation*}
  d_{\mathscr{T}} (z,w) := T (\gamma^{z,w} ), \qquad z , w \in \tr \mathscr{T}
\end{equation*}
where $T(\cdot)$ is the duration of a curve.

We will consider restrictions of parameterized trees to balls centred at the origin. For a parameterized tree $\mathscr{T} = (X, \Gamma)$, let $R \geq 1$ be large enough so that $X \subseteq B_E(R)$. We restrict each curve in $\Gamma$ to $\gamma^{x(i)} \vert^{R}$ (where the restriction to the ball of radius $R > 0$ is in the sense described in Subsection \ref{subsec:notationpaths}), and define the restriction of a parameterized tree to $B_E(R)$ as the subset of $\mathbb{R}^3$
\[\mathscr{T}\vert^{R} := \bigcup_{\gamma \in \Gamma} \tr  \gamma^{x(i)} \vert^{R} .\]
Note that$\mathscr{T} \vert^{R}$ may not be connected for some values of $R > 0$. But for $R$ large enough, $\mathscr{T} \vert^{R}$ is a topological tree (Figure~\ref{fig:trees-differ} gives an example of both cases).

\begin{figure}
\begin{center}
  \includegraphics{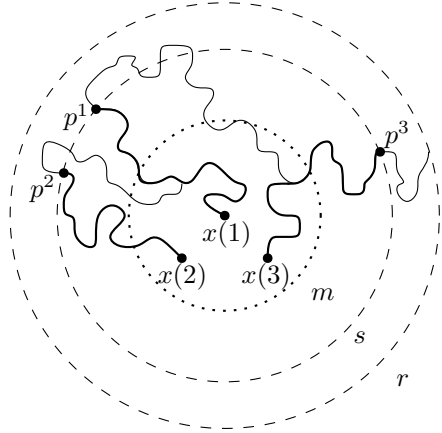}
\end{center}
\caption{
$\mathscr{S}$ is a parameterized tree with spanning points $ x(1), x(2) $ and $x(3)$. The restriction $\mathscr{S} \vert^s$ is the union of the paths between $x(i)$ and $p^i$, with $i  = 1, 2, 3$.
In this example, $\mathscr{S} \vert^r$ and $\mathscr{S} \vert^s$ are different inside  the radius $m$. A crucial difference between these two sets is that $\mathscr{S} \vert^r$ is connected, but $\mathscr{S}  \vert^s $ is disconnected. }\label{fig:trees-differ}
\end{figure}

\subsection{The scaling limit of subtrees of the UST}

We introduce the main results of this section.

Let $\mathcal{U}_n$ be the uniform spanning tree on $2^{-n} \mathbb{Z}^3$.
We are interested in subtrees of $\mathcal{U}_n$ spanned by $K$ distinguished points. Let $x(1), \ldots , x (K)$ be different points in $\mathbb{R}^3$ and let $X_n = \{  x_n (1), \ldots , x_n (K) \}$ be a subset of $2^{-n} \mathbb{Z}^3$ such that $x_n(i) \to x (i)$ as $n \to \infty$, for each $ i = 1, \ldots, n$.
Denote by $\gamma_n^{x_n(i)}$ the transient path in $\mathcal{U}_n$ starting at $x_n (i)$ and parameterized by path length.
We set $\bar{\gamma}_n^{x(i)}$ to be the $\beta$-parameterization of $\gamma_n^{x_n(i)}$ and $\Gamma_n = \{ \gamma_n^{x_n(i)} \colon  1 \leq i \leq K \}$.
Then $\mathscr{S}_n^K = (X , \Gamma_n ) $ is the parameterized tree corresponding to the subtree of $\mathcal{U}_n$ spanned by $x_n (1), \ldots , x_n (K)$ and the point at infinity.

\begin{thm} \label{thm:convergecetrees}
  The sequence of parameterized trees $(\mathscr{S}_n^K)_{n \in \mathbb{N}}$ converges weakly in the space $\mathscr{F}^K$ as $n \to \infty$.
  We denote a sample of the limit measure by $\hat{\mathscr{S}}^K$.
\end{thm}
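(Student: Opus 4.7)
The plan is to prove Theorem \ref{thm:convergecetrees} by induction on $K$, building each tree $\mathscr{S}_n^K$ branch-by-branch via Wilson's algorithm: the first branch is the ILERW $\gamma_n^{x_n(1)}$, and for $2\le i\le K$, the $i$-th branch is obtained by running a simple random walk $S_n^{(i)}$ from $x_n(i)$ until it hits the previously constructed tree $\mathscr{S}_n^{i-1}$, then taking its loop-erasure and concatenating with the branch at the hitting point. For the base case $K=1$, convergence of the $\beta$-parameterized ILERW in $(\mathcal{C},\chi)$ is essentially \cite[Theorem 1.4]{LS} (which is restated here as Proposition~\ref{result:scalingLimitPoly} in the whole-space / ball setting); the merging-time data for $K=1$ is vacuous.

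For the inductive step, assume joint weak convergence of $\mathscr{S}_n^{K-1}$ to some limit $\hat{\mathscr{S}}^{K-1}$, and invoke Skorokhod's representation theorem to place everything on a common probability space where the convergence is almost sure. The task is to show that the new branch starting from $x_n(K)$, together with its merging times $s_n^{i,K}$ with the previous branches, converges jointly with $\mathscr{S}_n^{K-1}$. Almost sure finiteness of the SRW hitting time of $\mathscr{S}_n^{K-1}$ is provided by Proposition~\ref{prop:hitas}. The core analytic input is an approximation argument: for each small $\varepsilon>0$, sandwich $\tr\mathscr{S}_n^{K-1}$ between two scaled dyadic polyhedra $\mathcal{P}^{-}_{n,\varepsilon}\subseteq \tr\mathscr{S}_n^{K-1}\subseteq \mathcal{P}^{+}_{n,\varepsilon}$ of Hausdorff-distance $O(\varepsilon)$. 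Couple three loop-erased walks --- the true new branch, and the LERWs run up until hitting $\mathcal{P}^{\pm}_{n,\varepsilon}$ --- all driven by the same underlying simple random walk $S_n^{(K)}$. Proposition~\ref{result:scalingLimitPoly} supplies scaling limits for the two polyhedral LERWs (applied to the complements of $\mathcal{P}^\pm_{n,\varepsilon}$, which are themselves dyadic polyhedral domains), and Propositions~\ref{result:hitPoly} and~\ref{result:quasiloopsPoly} provide the key quantitative control: once the SRW enters the $\varepsilon$-thickening of $\mathscr{S}_n^{K-1}$, it is absorbed very quickly (no long excursions between $\mathcal{P}^{-}$ and $\mathcal{P}^{+}$), and the LERWs up to their respective hitting times agree on all but a negligible tail.

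Given this coupling, the three hitting points (and the three loop-erased paths up to them) are close in $(\mathcal{C}_f,\psi)$, with the error going to $0$ as first $n\to\infty$ and then $\varepsilon\to 0$. The final step is to recover the parameterized-tree data. The merging time $s_n^{j,K}$ (for $j<K$) equals the time parameter at which the hitting point on $\gamma_n^{x_n(j)}$ occurs; continuity of the evaluation map (Proposition~\ref{prop:continuity-chi}\ref{item:evaluation-infty}) together with equicontinuity of the limiting curve $\hat\gamma^{x(j)}$ (inherited from Proposition~\ref{result:boundEPoly}) shows that the merging times converge. The full branch $\gamma_n^{x_n(K)}$ is the concatenation of the new LERW with $\gamma_n^{x_n(j)}$ after the merging point; its convergence in $(\mathcal{C},\chi)$ follows from Proposition~\ref{prop:continuity-chi}\ref{item:concatenation-infty}, and consistency of merging times for $i,j<K$ is automatic from the inductive hypothesis.

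The main obstacle is step three above, namely controlling the LERW in the genuinely rough random domain $\mathbb{Z}^3\setminus \tr\mathscr{S}_n^{K-1}$ by the polyhedral approximations. Unlike the two-dimensional setting of \cite{BCroyK}, where Beurling's estimate forces a random walk started near any path to hit it almost immediately, in three dimensions the SRW can travel macroscopic distances next to a curve without hitting it; the polyhedral sandwich is then not automatically tight. Resolving this requires exploiting the \emph{hittability} of the previously constructed LERW branches through Proposition~\ref{result:hitPoly} (inherited inductively from Proposition~\ref{result:hit}) to show that the SRW's entry into the $\varepsilon$-tube around $\mathscr{S}_n^{K-1}$ is followed by a hit on $\mathscr{S}_n^{K-1}$ within an $\varepsilon^{1/2}$-displacement with high probability, so that the rough and polyhedral hitting points (and their loop-erasures) coincide outside an event of probability $O(\varepsilon^{\hat\eta})$.
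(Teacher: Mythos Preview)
Your overall strategy matches the paper's: induction on $K$ via Wilson's algorithm, Skorokhod coupling of $\mathscr{S}_n^{K-1}$ to its limit, polyhedral approximation of the previous tree, and hittability (in place of Beurling) to control the LERW in the rough random domain. Your identification of Proposition~\ref{result:hit}/\ref{result:hitPoly} as the essential three-dimensional input is correct, as is the role of the quasi-loop estimate in bounding the discrepancy between the true and approximated branches.

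The gap is the two-sided sandwich. You posit dyadic polyhedra with $\mathcal{P}^-_{n,\varepsilon}\subseteq \tr\mathscr{S}_n^{K-1}\subseteq \mathcal{P}^+_{n,\varepsilon}$ at Hausdorff distance $O(\varepsilon)$, but $\tr\mathscr{S}_n^{K-1}$ is a union of simple curves --- a one-dimensional subset of $\mathbb{R}^3$ --- so no non-degenerate dyadic polyhedron can sit inside it, and Proposition~\ref{result:scalingLimitPoly} cannot be applied to the complement of any such $\mathcal{P}^-$. Without the inner approximation you lose the lower comparison that would trap the true LERW between two polyhedral ones, and the limit is not identified. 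The paper's remedy is to use \emph{only outer} approximations, but a whole sequence of them indexed by dyadic scale $u$: set $A_n^{u,R}$ to be the union of scale-$2^{-u}$ cubes near the tree, so $A_n^{u,R}\supseteq \tr\mathscr{S}_n^{K,R}$ for every $u$. For fixed $u$ the set $A_n^{u,R}$ stabilises in $n$ (Proposition~\ref{prop:aCauchy}), so Proposition~\ref{result:scalingLimitPoly} yields a limit $\hat\gamma^{u,R}$ for the LERW stopped on it. The key step is then a Cauchy estimate in $u$, \emph{uniform in $n$} (Lemma~\ref{lemma:cauchy}): hittability and the quasi-loop bound control $\psi(\bar\gamma_n^{u,R},\bar\gamma_n^{v,R})$ exactly as in your final paragraph. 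This forces $(\hat\gamma^{u,R})_u$ to converge, and since the true branch satisfies $\bar\gamma_n^R=\bar\gamma_n^{n,R}$, it shares the same limit (Lemma~\ref{lemma:convLERW}). A separate $R\to\infty$ step (Lemma~\ref{lemma:finallimit}), which you omit, then removes the bounded-box restriction via Proposition~\ref{prop:ust-equal-larger} and hittability once more.
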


The proof of Theorem \ref{thm:convergecetrees} relies on the convergence of the branches of the uniform spanning tree as they appear in Wilson's algorithm.
In the next section, we control the behaviour of a LERW before it hits an approximation of a parameterized tree. These loop-erased random walks correspond to the branches  of the UST.
Then Proposition \ref{prop:continuity} shows that convergence of such branches implies convergence of parameterized trees.
After these arguments, we are prepared for the proof of Theorem \ref{thm:convergecetrees}. We present it in Subsection \ref{subsec:essential-branches-UST}.

Conversely, Proposition \ref{prop:convdist} shows that convergence of parameterized trees implies the convergence of the intrinsic distance.
We thus get the following corollary of Theorem \ref{thm:convergecetrees}.

\begin{cor}\label{fddcor}
Let $(x_\delta (i))_{i=1}^K$ be a collection of points in $\delta\mathbb{Z}^3$ such that $x_\delta (i) \rightarrow x (i)$, for all  $i=1,\dots,K$, for some collection of distinct points $(x(i))_{i=1}^K$ in $\mathbb{R}^3$. Along the subsequence $\delta_n=2^{-n}$, it holds that
\[\left(\delta_n^\beta d_\sU\left(x_{\delta_n} (i),x_{\delta_n} (j)\right)\right)_{ i,j=1}^K\]
converges in distribution.
\end{cor}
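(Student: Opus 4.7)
The plan is a direct application of the continuous mapping theorem to the weak convergence $\mathscr{S}_n^K \Rightarrow \hat{\mathscr{S}}^K$ from Theorem \ref{thm:convergecetrees}. The key observation is that the $(i,j)$-entry of the rescaled distance matrix coincides, for each realisation of $\mathcal{U}$, with the sum of merging times $s_n^{i,j} + s_n^{j,i}$ of the parameterized tree $\mathscr{S}_n^K$, and this sum is manifestly a continuous functional on $(\mathscr{F}^K, d_{\mathscr{F}^K})$.

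To justify the identification, I would argue as follows. In $\mathscr{S}_n^K$, the curves $\gamma_n^{x_n(i)}$ and $\gamma_n^{x_n(j)}$ coincide from their respective merging times $s_n^{i,j}$ and $s_n^{j,i}$ onward and are disjoint before. Since $\mathcal{U}$ is a.s.\ a spanning tree of $\mathbb{Z}^3$ \cite[Theorem 4.3]{Pemantle}, there is a unique simple path between $x_n(i)$ and $x_n(j)$ in $\mathcal{U}_n$, and this path is necessarily the concatenation of $\gamma_n^{x_n(i)}|_{[0,s_n^{i,j}]}$ with the reverse of $\gamma_n^{x_n(j)}|_{[0,s_n^{j,i}]}$. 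Because each unit of the $\beta$-parameterization of $\gamma_n$ accounts for $\delta_n^\beta$ units of (unrescaled) edge-length, this yields the a.s.\ identity
\[
\delta_n^\beta\, d_\mathcal{U}\!\left(x_{\delta_n}(i), x_{\delta_n}(j)\right) \;=\; s_n^{i,j} + s_n^{j,i}.
\]

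Finally, the map $\Psi:\mathscr{F}^K \to \mathbb{R}_{\geq 0}^{K\times K}$ given by $(X,\Gamma) \mapsto (s^{i,j}+s^{j,i})_{i,j=1}^K$ is $2$-Lipschitz in $d_{\mathscr{F}^K}$, directly because that metric includes the summand $\max_{i,j}|s^{i,j} - \tilde s^{i,j}|$. The continuous mapping theorem then upgrades Theorem \ref{thm:convergecetrees} (stated along $\delta_n = 2^{-n}$) to convergence in distribution of $\Psi(\mathscr{S}_n^K)$, which is exactly the distance matrix to be studied.

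There is no substantive obstacle beyond Theorem \ref{thm:convergecetrees} itself: this corollary is, in effect, just reading off one continuous coordinate of the parameterized-tree convergence. The only subsidiary point to verify is that the $s_n^{i,j}$ are a.s.\ finite, so that $\mathscr{S}_n^K$ lies in $\mathscr{F}^K$ to begin with, but this is again immediate from \cite[Theorem 4.3]{Pemantle}. The restriction to the subsequence $\delta_n = 2^{-n}$ is inherited verbatim from Theorem \ref{thm:convergecetrees} and from the scaling limit of the LERW on dyadic polyhedra (Proposition \ref{result:scalingLimitPoly}) on which that theorem depends.
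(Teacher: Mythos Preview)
Your proof is correct and follows essentially the same approach as the paper: both deduce the corollary from Theorem \ref{thm:convergecetrees} via the continuous mapping theorem, the point being that the intrinsic distance between spanning points is a continuous functional of the parameterized tree. The paper routes this through Proposition \ref{prop:convdist}, which decomposes $\gamma^{x(i),x(j)}$ into essential branches and curves between branching points and invokes continuity of restriction and concatenation; your identity $\delta_n^\beta d_\mathcal{U}(x_{\delta_n}(i),x_{\delta_n}(j)) = s_n^{i,j}+s_n^{j,i}$ is a cleaner shortcut, since the merging times appear explicitly as a summand in $d_{\mathscr{F}^K}$, making the Lipschitz continuity immediate without unpacking the branch structure.
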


%%%%%%%%%%%%%%%%%%%%%%%%% NEW SECTION

\subsection{Parameterized trees and random walks}
%\label{subsec:pathpoly}

We presented the notion of an $\eta$-hittable set in \eqref{def:etaHittable}.
Following the notation of Section~\ref{sec:lerw} and for $\delta \in (0,1)$, let $\delta^{-1} \tr \mathscr{T}_{\delta}$ be a $\delta^{-1}$-expansion  (this expansion is similar to the one in  \eqref{eq:expansion}).
We denote by $H (\mathscr{T}_{\delta}, R, \varepsilon; \eta )$ the event where the set  $\delta^{-1} \tr \mathscr{T}_{\delta}$  is $\eta$-hittable around $0$ in $B(0, R\delta^{-1} )$, and with parameters $R \geq 1$ and $\varepsilon \in (0,1)$.
In the context of this section, we scale the lattice by $\delta$ instead of scaling subsets of $\mathbb{Z}^3$ with an $\delta^{-1}$-expansion. Hence, we use the equivalent event:
\[
H (\mathscr{T}_{\delta}, R, \varepsilon; \eta ) =
\left\{
\begin{array}{c}
\forall x \in B_{\delta} ( 0, R )  \text{ with } \dist ( x, \tr \mathscr{T}_{\delta} ) \leq \eps^2, \\ P^x \left(  S^x \left[ 0, \xi_S ( B_{\delta}(x, \eps^{1/2} ) ) \right] \cap \tr \mathscr{T}_{\delta} = \emptyset  \right) \leq\eps^{\eta}
\end{array}
\right\}.
\]
In the event above, recall that $\xi_S ( B_{\delta}(x, \eps^{1/2})) $ stands for the first exit time from the $\delta$-scaled discrete ball $B_{\delta}(x, \eps^{1/2})$.

\begin{prop} \label{prop:treehittable}
There exist constants $\eta > 0$ and $C < \infty$ such that the following holds. If $\mathscr{S}^K_{\delta}$ is a parameterized subtree of the uniform spanning tree on $\delta\mathbb{Z}^3$ with $K$ spanning points then for all $\delta\in(0,1)$, $R \geq 1$ and $\eps > 0$,
\[\mathbf{P} \left( H ( \mathscr{S}^K_{\delta}\vert^{R}, \varepsilon ; \eta ) \right)\geq 1 - C K R^3  \eps.\]
\end{prop}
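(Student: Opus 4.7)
The plan is to reduce the claim to the single-branch hittability bound of Proposition~\ref{result:hit} via Wilson's algorithm and a union bound over the $K$ branches of $\mathscr{S}^K_\delta$.

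By the UST-LERW correspondence (Wilson's algorithm rooted at infinity), each transient branch $\gamma^{x_\delta(i)} \in \Gamma_\delta$ has, marginally, the law of an infinite loop-erased random walk on $\delta\mathbb{Z}^3$ started at $x_\delta(i)$. The trace of the restricted tree decomposes as $\tr \mathscr{S}^K_\delta\vert^R = \bigcup_{i = 1}^K \tr(\gamma^{x_\delta(i)} \vert^R)$, so any $x$ within Euclidean distance $\varepsilon^2$ of the tree lies within $\varepsilon^2$ of at least one individual branch. The strategy is therefore to show each branch is separately $\hat{\eta}$-hittable in the sense of Proposition~\ref{result:hit} and to combine via a union bound.

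Since the $x(i)$ are fixed and distinct, there is an $R_0$ (depending only on them) with $|x_\delta(i)| \leq R_0$ for all $\delta$ small; enlarging the constant $C$ as needed, I may assume $R \geq R_0$, so that $B_\delta(0, R) \subseteq B_\delta(x_\delta(i), 2R)$ for each $i$. Applying Proposition~\ref{result:hit} to $\gamma^{x_\delta(i)}$ with ball-radius parameter $2R$ and hittability parameter $r = \varepsilon^2$ then yields
\[
\mathbf{P}\bigl(A_\delta(x_\delta(i), 2R, \varepsilon^2; \hat{\eta})\bigr) \geq 1 - CR^3\varepsilon^2,
\]
the $R^3$-dependence being the natural one coming from covering $B_\delta(x_\delta(i), 2R)$ by a net of size $O(R^3 \varepsilon^{-6})$ in the proof of \cite[Theorem~3.1]{SS}. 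A further union bound over $i = 1, \ldots, K$ shows that all $K$ such events hold simultaneously with probability at least $1 - CKR^3\varepsilon^2 \geq 1 - CKR^3\varepsilon$, which is the target.

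On this intersection, for any $x \in B_\delta(0, R)$ with $\dist(x, \tr \mathscr{S}^K_\delta) \leq \varepsilon^2$, choose $i$ so that $\dist(x, \tr \gamma^{x_\delta(i)}) \leq \varepsilon^2$, and apply the hittability event $A_\delta(x_\delta(i), 2R, \varepsilon^2; \hat{\eta})$ at the point $x$ to obtain
\[
P^x\bigl( S[0, \xi_S(B_\delta(x, \varepsilon))] \cap \tr \gamma^{x_\delta(i)} = \emptyset \bigr) \leq \varepsilon^{2\hat{\eta}}.
\]
Because $B_\delta(x, \varepsilon) \subseteq B_\delta(x, \sqrt{\varepsilon})$ (so the walk has weakly more time to hit before exiting the larger ball) and $\tr \gamma^{x_\delta(i)} \subseteq \tr \mathscr{S}^K_\delta$, monotonicity gives
\[
P^x\bigl( S[0, \xi_S(B_\delta(x, \sqrt{\varepsilon}))] \cap \tr \mathscr{S}^K_\delta = \emptyset \bigr) \leq \varepsilon^{2\hat{\eta}},
\]
so setting $\eta := 2\hat{\eta}$ establishes $H(\mathscr{S}^K_\delta\vert^R, \varepsilon; \eta)$. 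The only technical wrinkle is the mismatched pairing in the definition of $H$ (distance $\varepsilon^2$ with exit radius $\sqrt{\varepsilon}$) versus that in Proposition~\ref{result:hit} (distance $r$ with exit radius $\sqrt{r}$), reconciled above by the choice $r = \varepsilon^2$; and it is important to note that the correlations among the branches $\gamma^{x_\delta(i)}$ play no role, since only marginal distributions enter the union bound.
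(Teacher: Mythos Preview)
Your proposal is correct and follows the same approach as the paper: each branch to infinity in the UST is marginally an ILERW, so tree hittability reduces to Proposition~\ref{result:hit} applied branch-by-branch together with a union bound over the $K$ branches; you spell out the parameter matching ($r=\varepsilon^2$, exit radius $\varepsilon\subseteq\sqrt{\varepsilon}$, $\eta=2\hat\eta$) and the $R^3$ dependence more carefully than the paper's two-line sketch. One minor slip: in the final display you conclude hittability of $\tr\mathscr{S}^K_\delta$ rather than $\tr\mathscr{S}^K_\delta\vert^R$, and the inclusion $\tr\gamma^{x_\delta(i)}\subseteq\tr\mathscr{S}^K_\delta\vert^R$ is false in general; the paper sidesteps this by phrasing the reduction in terms of the \emph{restricted} ILERW (the branch up to its first exit from $B_\delta(0,R)$), which genuinely sits inside the restricted tree, and then invoking Proposition~\ref{result:hit} for that object.
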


\begin{proof}
  Recall that any path towards infinity in the uniform spanning tree is equal, in distribution, to a ILERW.
  Then, the probability that  $x \in B_{\delta} (0, R)$ hits the tree $ \mathscr{S}^K_{\delta}\vert^{R} $ is at least the probability that $x$ hits a restricted ILERW, where such restriction is up to the first exit of the LERW from $B_{\delta} (0,R)$.
  Then Proposition \ref{prop:treeistree} is a consequence of Proposition \ref{result:hit}.
\end{proof}

\begin{rem}
%\textcolor{red}{This remark is about the general conditions for Propositions \ref{result:quasiloopsPoly}, \ref{result:boundEPoly} and \ref{result:boundSPoly} to be true. I need something else besides hittability.}
The proof of Proposition \ref{prop:extension} can be generalized to any subset of $\mathbb{R}^3$ that is $\eta$-hittable with high probability. We restrict to the case of parameterized subtrees for clarity, and because it is the most relevant for our purposes. To further increase the clarity of the proof of Proposition \ref{prop:extension}, the reader can think of the subtree $\mathscr{S}^K_n$ as consisting of a single ILERW.
\end{rem}

\subsection{Essential branches of parameterized trees} %\label{subsec:essentialbranch}

Let $\mathscr{T} = (X, \Gamma)$ be a parameterized tree.
For a leaf $x(i) \in X$ with $i>1$, let
\begin{equation} \label{eq:branch-point}
  y(i) := \tr \gamma^{x(i)} \cap \bigcup_{j = 1}^{i-1} \tr \gamma^{x(j)}
\end{equation}
be the intersection point of $\gamma^{x(i)}$ with any of the curves with an smaller index. We define $y (1)$ to be the point at infinity and say $y (i)$ is a \textbf{branching point}.
When we compare \eqref{eq:branch-point} with conditions \eqref{condition:conected} and \eqref{condition:simple} in the definition of parameterized tree, we see that
\[
  y(i) = \gamma^{x(i)} (s^{i, m(j)}),
\]
where $ s^{i, m(j)} = \min_{ j < i } \{ s^{i,j} \}  $ is the first merging time.

The parameterized curves $\gamma^{x(i), y(i)}$ are called \textbf{essential branches} for $i = 1, \ldots, K$.
Note that   $ \gamma^{x(1), y(1)} $ is the transient curve $\gamma^{x(1)} \in \mathcal{C} $, while $  \gamma^{ x(i), y(i) } \in \mathcal{C}_f $ for $ i = 1, \ldots, K $.
We denote the set of essential branches by
$ \gls{essentialBranches} := \{ \gamma^{x(i),y(i)}  \}_{ 1 \leq i \leq K} $.

\begin{prop}  \label{prop:conv-essential-branch}
  Assume that $\mathscr{T}_n \to \mathscr{T}$ in the space of parameterized trees $\mathscr{F}^K$.
  Then
  \[
    \gamma_n^{x_n(1), y(1)} \to \gamma^{x_n(1), y(1)} \qquad \text{as } n \to \infty
  \]
  in the space $\mathcal{C}$. For $i = 2, \ldots, K$, the essential branches and the curves between branching points converge:
  \[
    \gamma_n^{x_n(i), y_n(i)} \to \gamma^{x(i), y(i)}, \qquad \gamma_n^{y_n(i), y_n(j)} \to \gamma^{y(i), y(j)} \qquad \text{as } n \to \infty
  \]
  in the space of finite parameterized curves $\mathcal{C}_f$.
\end{prop}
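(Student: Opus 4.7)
The plan is to unpack the convergence $\mathscr{T}_n\to\mathscr{T}$ in $\mathscr{F}^K$ into its two components and then apply the restriction-continuity properties of Propositions \ref{prop:continuity-psi} and \ref{prop:continuity-chi}. Directly from the definition of $d_{\mathscr{F}^K}$, the hypothesis gives us simultaneously: (a) $\chi(\gamma_n^{x_n(i)},\gamma^{x(i)})\to 0$ for each $i=1,\dots,K$, and (b) $s_n^{i,j}\to s^{i,j}$ for all pairs $i,j$. The case $i=1$ is then immediate, since $y(1)$ is the point at infinity and by definition $\gamma^{x(1),y(1)}=\gamma^{x(1)}$, so the claim is just (a) for $i=1$.

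For $i\geq 2$, recall that $y(i)=\gamma^{x(i)}(s^{i,m(i)})$ with $s^{i,m(i)}=\min_{j<i}s^{i,j}$, so the essential branch is the restriction $\gamma^{x(i)}\vert_{[0,s^{i,m(i)}]}$. Setting $b_n=\min_{j<i}s_n^{i,j}$ and $b=s^{i,m(i)}$, we have $b_n\to b$ by (b), and we need $\gamma_n^{x_n(i)}\vert_{[0,b_n]}\to\gamma^{x(i)}\vert_{[0,b]}$ in $(\mathcal{C}_f,\psi)$. Because Proposition \ref{prop:continuity-psi}\ref{item:restriction} is stated for fixed endpoints while $b_n$ varies with $n$, I would argue this by hand: fix $T>b\vee\sup_n b_n$; the $\chi$-convergence from (a) yields uniform convergence of $\gamma_n^{x_n(i)}$ to $\gamma^{x(i)}$ on $[0,T]$, and combined with the uniform continuity of $\gamma^{x(i)}$ on $[0,T]$ one obtains
\[
\max_{0\leq s\leq 1}\bigl|\gamma_n^{x_n(i)}(sb_n)-\gamma^{x(i)}(sb)\bigr|\leq\sup_{t\leq T}\bigl|\gamma_n^{x_n(i)}(t)-\gamma^{x(i)}(t)\bigr|+\omega_{\gamma^{x(i)}}(|b_n-b|)\longrightarrow 0,
\]
where $\omega_{\gamma^{x(i)}}$ is the modulus of continuity on $[0,T]$. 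Together with $|b_n-b|\to 0$ this gives the desired $\psi$-convergence.

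For the curves between branching points $\gamma^{y(i),y(j)}$, the key observation is to locate them inside the data we already control. By Proposition \ref{prop:treeistree}, the unique path in $\tr\mathscr{T}$ from $y(i)$ to $y(j)$ is a segment of some $\gamma^{x(k)}$; moreover, by the merging structure of $\Gamma$, both endpoints $y(i),y(j)$ are themselves merging points of $\gamma^{x(k)}$ with earlier curves, so this segment takes the explicit form $\gamma^{x(k)}\vert_{[a,b]}$ for endpoints $a,b$ that are among the merging times $\{s^{k,\ell}\}_{\ell}$. One then applies exactly the same varying-endpoint restriction argument as above (this time with both endpoints moving in $n$) to conclude convergence in $(\mathcal{C}_f,\psi)$.

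The main obstacle is really just the combinatorial bookkeeping needed to identify, for each pair $(i,j)$, which curve $\gamma^{x(k)}$ carries the inter-branch segment and which merging times play the roles of $a$ and $b$; once this is set up, the analytical content is entirely contained in the varying-endpoint restriction lemma of the second paragraph. A clean way to organize the bookkeeping would be to induct on $K$, so that at each step only the newly added essential branch (and its unique merging time with the previously constructed subtree) needs to be handled.
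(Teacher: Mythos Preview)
Your approach is essentially the same as the paper's: extract from $d_{\mathscr{F}^K}$-convergence the convergence of the curves $\gamma_n^{x_n(i)}$ in $\chi$ and of all merging times $s_n^{i,j}$, deduce convergence of the branching points, and then pass to restrictions. You are in fact more careful than the paper about the varying-endpoint issue, which the paper absorbs into a one-line appeal to Propositions~\ref{prop:continuity-psi} and~\ref{prop:continuity-chi}.

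There is one inaccuracy in your treatment of $\gamma^{y(i),y(j)}$: it is \emph{not} always true that this path is a segment of a single $\gamma^{x(k)}$. For a concrete counterexample with $K=4$, let $\gamma^{x(2)}$ merge with $\gamma^{x(1)}$ at $y(2)$, let $\gamma^{x(3)}$ merge with the essential branch of $\gamma^{x(2)}$ at $y(3)$, and let $\gamma^{x(4)}$ merge with $\gamma^{x(1)}$ at $y(4)=\gamma^{x(1)}(s^{1,4})$ with $s^{1,4}<s^{1,2}$. Then the path from $y(3)$ to $y(4)$ goes along $\gamma^{x(2)}$ from $y(3)$ to $y(2)$ and then \emph{backwards} along $\gamma^{x(1)}$ from $y(2)$ to $y(4)$; no single $\gamma^{x(k)}$ contains both $y(3)$ and $y(4)$. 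The fix is immediate with the tools already in hand: the path between any two branching points decomposes as a concatenation of segments (and time-reversals of segments) of the $\gamma^{x(k)}$, each with endpoints at merging times, so one applies your varying-endpoint restriction argument to each piece and then Proposition~\ref{prop:continuity-psi}\ref{item:time-reversal},\ref{item:concatenation}. Your proposed induction on $K$ organizes exactly this.
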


\begin{proof}
  The convergence of the first essential branch is immediate from the definition of the metric $ d_{\mathscr{F}^K} $, since $\gamma_n^{x_n(1), y(1)} = \gamma_n^{x_n(1)}$.

  To prove the convergence of the other essential branches, and the curves between branching points, we first need to show that spanning and branching points converge.

  Each $x_n(i) \in X$ is the initial point of a curve in $\Gamma$ and hence
  convergence in the space of parameterized trees implies that $x_n \to x$ as $n \to \infty$.
  Now we consider a branching point $y_n(i)$, with $i = 2, \ldots, K$.
  Recall that $y_n (i) = \gamma_n^{x(i)} (s_n^{i, m(j)})$, where $s_n^{i, m(j)} = \min_{ j < i} \{ s_n^{i, j} \}$.
  Since convergence of the parameterized trees $ \mathscr{T_n} $ imply convergence of the merging times $s^{i,j}_n \to s^{i,j}$ as $n \to \infty$, then, for the sequence of minima, $ s_n^{i,m(j)} \to s^{i,m(j)} $.
  With an application of Proposition \ref{prop:continuity-chi} \ref{item:evaluation-infty}, we get convergence of the branching points
  $ y_n (i) = \gamma_n^{x(i)} (s_n^{i, m(j)}) \to \gamma^{x(i)} (s^{i, m(j)}) = y(i) $.

  With convergence of both the spanning and branching points, Proposition~\ref{prop:continuity-psi} and Proposition~\ref{prop:continuity-chi} imply that the corresponding restrictions of $\gamma^{x(i)}$ converge.
\end{proof}

\begin{prop} \label{prop:convdist}
  Assume that $\mathscr{T}_n = (X_n, \Gamma_n)$ converges to $\mathscr{T}$ in the space of parameterized trees.
  If the corresponding collections of spanning points are $X_n = \{  x_n(1), \ldots , x_n (K)  \}$ and $X = \{ x(1), \ldots , x(K) \} $, then
  \begin{equation} \label{eq:convdist}
    \left( d_{\mathscr{T}_n} (x_n (i), x_n (j)) \right)_{1 \leq  i, j \leq K } \to \left( d_{\mathscr{T}} (x(i), x (j)) \right)_{1 \leq  i, j \leq K }
  \end{equation}
  as $n \to \infty$.
\end{prop}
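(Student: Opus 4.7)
The plan is to express the intrinsic distance between spanning points in a parameterized tree explicitly in terms of the merging times, and then invoke the definition of $d_{\mathscr{F}^K}$, which has convergence of merging times baked in.

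The key identity is
\[d_\mathscr{T}(x(i), x(j)) = s^{i,j} + s^{j,i}, \qquad 1 \leq i,j \leq K,\]
which I would justify as follows. For $i = j$ both sides vanish (condition (b) forces $s^{i,i}=0$), so suppose $i \neq j$. Let $\eta$ be the concatenation of $\gamma^{x(i)}\vert_{[0, s^{i,j}]}$ with the time-reversal of $\gamma^{x(j)}\vert_{[0, s^{j,i}]}$. Condition (a) in the definition of a parameterized tree ensures the two segments share the endpoint $\gamma^{x(i)}(s^{i,j}) = \gamma^{x(j)}(s^{j,i})$, so $\eta$ is well-defined; condition (b) together with the simplicity of the individual curves in $\Gamma$ makes $\eta$ a simple continuous curve from $x(i)$ to $x(j)$ whose trace lies in $\tr \mathscr{T}$. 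Since $\tr \mathscr{T}$ is a topological tree by Proposition~\ref{prop:treeistree}, $\eta$ must coincide with the unique path $\gamma^{x(i), x(j)}$, and since $\eta$ inherits the parameterization from $\Gamma$, its duration equals the sum of the durations of the two segments, namely $s^{i,j} + s^{j,i}$.

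The conclusion is then immediate. By the very definition of $d_{\mathscr{F}^K}$, we have $|s_n^{i,j} - s^{i,j}| \leq d_{\mathscr{F}^K}(\mathscr{T}_n, \mathscr{T})$, so the hypothesis $\mathscr{T}_n \to \mathscr{T}$ gives $s_n^{i,j} \to s^{i,j}$ for every pair. Combined with the identity applied both to $\mathscr{T}_n$ and to $\mathscr{T}$, this yields
\[d_{\mathscr{T}_n}(x_n(i), x_n(j)) = s_n^{i,j} + s_n^{j,i} \to s^{i,j} + s^{j,i} = d_\mathscr{T}(x(i), x(j))\]
for each $1 \leq i,j \leq K$, proving \eqref{eq:convdist}. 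I do not foresee any substantial obstacle: the proposition is essentially structural, since the full distance matrix is already encoded in the merging-time data, and its continuity with respect to $d_{\mathscr{F}^K}$ follows simply by unwinding the definitions.
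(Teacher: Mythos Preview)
Your argument is correct and in fact more direct than the paper's. The paper's proof proceeds by first decomposing the path $\gamma_n^{x_n(i),x_n(j)}$ into a concatenation of essential branches $\gamma_n^{x_n(\cdot),y_n(\cdot)}$ and segments $\gamma_n^{y_n(\cdot),y_n(\cdot)}$ between branching points (equation~\eqref{eq:curve-decomp}), invoking Proposition~\ref{prop:conv-essential-branch} to get convergence of each piece, and then Propositions~\ref{prop:continuity-psi}--\ref{prop:continuity-chi} to conclude that the full concatenated curve, and in particular its duration, converges.

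You bypass all of this by observing the clean identity $d_{\mathscr{T}}(x(i),x(j)) = s^{i,j} + s^{j,i}$, which reduces the problem immediately to convergence of merging times, built directly into the metric $d_{\mathscr{F}^K}$. This is more elementary and requires none of the preliminary machinery on essential branches. The paper's route has the incidental benefit of actually proving $\gamma_n^{x_n(i),x_n(j)} \to \gamma^{x(i),x(j)}$ in $(\mathcal{C}_f,\psi)$, a stronger statement than convergence of durations alone; but for the proposition as stated your shortcut is entirely sufficient.
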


\begin{proof}
  Proposition \ref{prop:treeistree} shows that restriction, concatenation and time-reversal of the curves in $\Gamma_n$ define $\gamma_n^{x_n(i), x_n(j)}$.  In fact,
  \begin{equation} \label{eq:curve-decomp}
    \gamma_n^{x_n(i), x_n(j)} = \gamma_n^{x_n(i), y_n(i)} \oplus \gamma_n^{y_n(\ell_1), y_n(\ell_2) } \oplus \ldots \oplus \gamma_n^{ y_n(\ell_{m-1}), y_n(\ell_m) } \oplus \gamma_n^{y_n(j) x_n(j)},
  \end{equation}
  where $\ell_1 = i$ and $\ell_m = j $.
  Then Proposition \ref{prop:conv-essential-branch} implies the convergence of each essential branch, and Proposition \ref{prop:continuity-psi} and Proposition \ref{prop:continuity-chi} imply the convergence of $ ( \gamma_n^{x_n(i), x_n(k)} )_{n \in \mathbb{N}}$.
  In particular, the duration of each curve in \eqref{eq:curve-decomp} converges and we get \eqref{eq:convdist}.
\end{proof}

Conversely, we can reconstruct a tree from a set of essential branches.

\begin{prop} \label{prop:essentialbranch-const}
  Let $X = \{ x(1), \ldots, x(K) \} \subset \mathbb{R}^3 $ and consider a collection of curves with the following conditions:
  \begin{enumerate}[label=(\alph*)]
    \item Let $\gamma^{x(1), \tilde{y}(1)}$ be a transient parameterized curve starting at $x(1)$; recall that $\tilde{y}(1)$ denotes the point at infinity.
    \item For $ i = 2, \ldots n$, $\gamma^{x(i), \tilde{y}(i)}$ is a parameterized curve starting at $x(i)$ and ending at $\tilde{y}(i)$, where the endpoint $\tilde{y}(i)$ is the first hitting point to $ \bigcup_{j = 1}^{i-1} \tr \gamma^{x(i), \tilde{y}(i)} $.
  \end{enumerate}
  Then $ \{ \gamma^{x(i), \tilde{y} (i)} \}_{1 \leq i \leq K} $ defines a set of transient curves $\Gamma = \{ \gamma^{x(i)} \}_{1 \leq i \leq K}$ and a  parameterized tree $ \mathscr{T} = (X, \Gamma) $.
\end{prop}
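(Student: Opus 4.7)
The plan is to construct the transient curves $\gamma^{x(i)}$ inductively by concatenating each essential branch with a tail inherited from a previously constructed curve, and then verify that the resulting collection $\Gamma = \{\gamma^{x(i)}\}$ satisfies the definition of a parameterized tree. Set $\gamma^{x(1)} := \gamma^{x(1), \tilde{y}(1)}$, which is transient by hypothesis (a). For $i \geq 2$, let $j(i)$ be the smallest index $j < i$ such that $\tilde{y}(i) \in \tr \gamma^{x(j), \tilde{y}(j)}$ (such a $j$ exists by hypothesis (b)), and let $t(i)$ denote the time at which the previously constructed curve $\gamma^{x(j(i))}$ visits $\tilde{y}(i)$. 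Then define
\[\gamma^{x(i)} := \gamma^{x(i), \tilde{y}(i)} \oplus \gamma^{x(j(i))}\vert_{[t(i), \infty)},\]
where the tail is implicitly time-shifted to start at $0$.

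The first step is to show by induction on $i$ that $\gamma^{x(i)}$ is a transient simple curve with $\tr \gamma^{x(i)} \subseteq \bigcup_{\ell \leq i} \tr \gamma^{x(\ell), \tilde{y}(\ell)}$. Transience is inherited directly from the tail $\gamma^{x(j(i))}$, while simplicity rests on the observation that hypothesis (b) forces $\gamma^{x(i), \tilde{y}(i)}$ to meet $\bigcup_{j < i} \tr \gamma^{x(j), \tilde{y}(j)}$ (and hence, by the inductive trace inclusion, $\tr \gamma^{x(j(i))}$) only at its endpoint $\tilde{y}(i)$. The second step is to produce merging times. For $i < j$, set $s^{j,i}$ to be the first time $\gamma^{x(j)}$ meets $\tr \gamma^{x(i)}$; this is finite because iterating the construction shows that the tail of $\gamma^{x(j)}$ eventually coincides with $\gamma^{x(1)}$, which itself is contained in $\tr \gamma^{x(i)}$ past some time. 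Let $s^{i,j}$ be the time at which $\gamma^{x(i)}$ visits the corresponding merging point $\gamma^{x(j)}(s^{j,i})$. Condition (a) of the definition (agreement of tails beyond the merging times) then follows from the recursive construction, since past the merging point both curves follow the same inherited transient path, and condition (b) (disjointness before merging) follows from the simplicity established in the previous step.

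The main obstacle will be the bookkeeping around the possibility that several essential branches share endpoints (so the choice of $j(i)$ could appear ambiguous) and ensuring that the tail of $\gamma^{x(i)}$ is parameterized consistently along the descending chain $i > j(i) > j(j(i)) > \cdots > 1$. Fixing $j(i)$ as the minimal valid index, together with the set-theoretic identity
\[\bigcup_{j < i} \tr \gamma^{x(j)} = \bigcup_{j < i} \tr \gamma^{x(j), \tilde{y}(j)},\]
(both inclusions are immediate from the inductive construction) resolves these issues and closes the induction, yielding a well-defined collection $\Gamma$ so that $\mathscr{T} = (X, \Gamma)$ is a parameterized tree in the sense of the definition.
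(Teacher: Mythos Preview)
Your proof is correct and follows essentially the same approach as the paper: both arguments set $\gamma^{x(1)}$ equal to the given transient curve and then recursively define $\gamma^{x(i)}$ by concatenating the essential branch $\gamma^{x(i),\tilde y(i)}$ with the tail of a previously constructed $\gamma^{x(j)}$ past the point $\tilde y(i)$, using the ``first hitting point'' hypothesis to guarantee the disjointness condition. Your version is somewhat more explicit than the paper's (you spell out the choice of $j(i)$ as the minimal index, the inductive trace identity $\bigcup_{j<i}\tr\gamma^{x(j)}=\bigcup_{j<i}\tr\gamma^{x(j),\tilde y(j)}$, and the construction of the merging times), but there is no substantive difference in strategy.
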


\begin{proof}
  First we to construct $\Gamma$ from the collection of curves $\{ \gamma^{ x(i), \tilde{y} (i) }  \}_{1 \leq i \leq K}$.
  Note that $ \gamma^{x(1), \tilde{y}(1)} $ is already a transient curve starting at $x (1)$.
  We construct the other elements in $\Gamma$ recursively.
  Assume that $\gamma^{x(1)}, \ldots \gamma^{x(i-1)}$ have been defined and satisfy conditions \eqref{condition:conected} and \eqref{condition:simple} in the definition of parameterized tree.
  Recall that the endpoint of $\gamma^{x(i), \tilde{y}(i)}$ is $\tilde{y} (i)$, and this point intersects some $\gamma^{x(j)}$ with $j < i$.
  Then
  \[
    \gamma^{x(i)} = \gamma^{x(i), \tilde{y}(i)} \oplus \gamma^{x(j)} \vert_{ [\tilde{y}(j) , \infty ) }.
  \]
  Since the endpoint of each  $ \gamma^{x(i), \tilde{y}(i)} $ is the  first hitting point to $ \bigcup_{j = 1}^{i-1} \tr \gamma^{x(i), \tilde{y}(i)} =  \bigcup_{j = 1}^{i-1} \tr \gamma^{x(i)}  $, we have that
  $( \tr  \gamma^{x(i)} \vert_{ [x(i), \tilde{y}^i)} \cap \tr  \gamma^{x(j)} ) = \emptyset  $ for $ j < i $.
  This construction ensures that $\gamma^{x(i)}$ satisfies conditions \eqref{condition:conected} and \eqref{condition:simple}, when we compare it against curves with smaller indexes.
  We continue with this construction for $i = 2, \ldots K$ to define $\Gamma$. Therefore $\mathscr{T} = (X, \Gamma) $ is a parameterized tree.
  Finally, note that  $\tilde{y} (i)$ satisfies \eqref{eq:branch-point} and hence $\tilde{y} (i) = y(i) $, for $i = 2, \ldots, K$.
\end{proof}

\begin{prop} \label{prop:continuity}
  Let $ ( \mathscr{T}_n )_{n \in \mathbb{N}} $ be a sequence of parameterized trees with essential branches $\Gamma^{e} (\mathscr{T}_n) = \{ \gamma_n^{x_n(i), y_n(i)} \}_{1 \leq i \leq K} $.
  Assume that
  \begin{equation} \label{eq:continuity1}
    \left( \gamma_n^{x_n(i), y_n(i)} \right)_{ 1 \leq i \leq K } \rightarrow \left( \gamma^{x(i), y(i)} \right)_{1 \leq i \leq K}
  \end{equation}
  in the product topology as $n \to \infty$ and $ \{ \gamma^{x(i), y(i)} \} $ satisfy the conditions in Proposition \ref{prop:essentialbranch-const}.
  Then $ ( \mathscr{T}_n )_{n \in \mathbb{N}} $ converges in the metric space $\mathscr{F}^K$ to a parameterized tree $\mathscr{T}$ for which $\Gamma^{e} (\mathscr{T}) = \{ \gamma^{x(i), y(i)} \}_{0 \leq i \leq K}$ is a set of essential branches.
\end{prop}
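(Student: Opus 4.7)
My plan is to first construct the limit tree and then verify the two components of the metric $d_{\mathscr{F}^K}$ separately. I will apply Proposition~\ref{prop:essentialbranch-const} to the limit essential branches $\{\gamma^{x(i), y(i)}\}$ to obtain the candidate parameterized tree $\mathscr{T} = (X, \Gamma)$ whose essential branches are exactly the limiting ones. Then I need to show that $\gamma_n^{x_n(i)} \to \gamma^{x(i)}$ in $(\mathcal{C}, \chi)$ for each $i$ and that the merging times $s_n^{i,j} \to s^{i,j}$ for each pair. The argument proceeds by strong induction on $i$, leveraging the recursive decomposition $\gamma^{x(i)} = \gamma^{x(i), y(i)} \oplus \gamma^{x(k)}\vert_{[t, \infty)}$ furnished by Proposition~\ref{prop:essentialbranch-const}, where $k < i$ is an index with $y(i) \in \tr \gamma^{x(k)}$ and $t$ is the unique time with $\gamma^{x(k)}(t) = y(i)$ (unique by simplicity of $\gamma^{x(k)}$).

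\textbf{Inductive step for curves.} For $i = 1$ convergence is immediate, as $\gamma_n^{x_n(1)} = \gamma_n^{x_n(1), y_n(1)}$. For $i \geq 2$, assuming the claim for all $j < i$, I write $\gamma_n^{x_n(i)} = \gamma_n^{x_n(i), y_n(i)} \oplus \tau_n$ where $\tau_n \in \mathcal{C}$ is the tail starting at $y_n(i)$. The essential branch converges by hypothesis, so Proposition~\ref{prop:continuity-chi}\ref{item:concatenation-infty} reduces the task to showing $\tau_n \to \tau$ in $(\mathcal{C}, \chi)$. To identify the parent branch, I let $m_n < i$ be the smallest index with $y_n(i) \in \tr \gamma_n^{x_n(m_n), y_n(m_n)}$; by pigeonhole, pass to a subsequence along which $m_n = m_*$ is constant. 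Writing $y_n(i) = \gamma_n^{x_n(m_*)}(\sigma_n)$, so that $\tau_n = \gamma_n^{x_n(m_*)}\vert_{[\sigma_n, \infty)}$, the induction hypothesis $\gamma_n^{x_n(m_*)} \to \gamma^{x(m_*)}$ in $\chi$ together with the time-shift continuity (immediate from the definition of $\chi$ and uniform continuity of $\gamma^{x(m_*)}$ on compact intervals) reduces everything to showing $\sigma_n \to \sigma$, where $\sigma$ is the unique time with $\gamma^{x(m_*)}(\sigma) = y(i)$.

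\textbf{The main obstacle: bounding $\sigma_n$.} When $m_* \geq 2$, the essential branch lies in $\mathcal{C}_f$ with duration $T_n^{m_*} \to T^{m_*} < \infty$, so $\sigma_n \in [0, T_n^{m_*}]$ is automatically bounded; any subsequential limit $\sigma$ satisfies $\gamma^{x(m_*), y(m_*)}(\sigma) = y(i)$ by evaluation continuity, and simplicity pins $\sigma$ down uniquely. The hard case will be $m_* = 1$, where the root essential branch is the infinite transient curve $\gamma^{x(1)}$ and $\sigma_n$ is a priori unbounded. To handle it, I will use simplicity and transience of $\gamma^{x(1)}$: the set $A_\eps = \{t \geq 0 : |\gamma^{x(1)}(t) - y(i)| \leq \eps\}$ is bounded for every $\eps > 0$ (since $|\gamma^{x(1)}(t)| \to \infty$) and contracts to $\{\sigma\}$ as $\eps \to 0$ (by simplicity). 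Uniform convergence of $\gamma_n^{x_n(1)}$ to $\gamma^{x(1)}$ on compact time intervals then traps any $\sigma_n$ lying in such intervals inside $A_{2\eps}$ for large $n$. The residual difficulty is ruling out $\sigma_n \to \infty$, which needs uniform transience of the approximants; this is not given by $\chi$-convergence alone, but in the applications of the proposition (notably Theorem~\ref{thm:convergecetrees}) it is supplied by the LERW escape and equicontinuity estimates from Propositions~\ref{result:upperLowTail} and~\ref{result:boundE}.

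\textbf{Completion.} Once $\sigma_n \to \sigma$ is established, the tail convergence $\tau_n \to \tau$ follows from the induction hypothesis, shift continuity, and (for $m_* \geq 2$) the restriction continuity of Proposition~\ref{prop:continuity-psi}\ref{item:restriction}; Proposition~\ref{prop:continuity-chi}\ref{item:concatenation-infty} applied to $\gamma_n^{x_n(i), y_n(i)} \oplus \tau_n$ then closes the induction. For the merging times, each $s^{i,j}$ equals the intrinsic tree distance from $x(i)$ along $\gamma^{x(i)}$ to the first point where $\gamma^{x(i)}$ and $\gamma^{x(j)}$ coincide, which is determined by the positions of the branching points $y(k)$ in the tree. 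Convergence of the full curves combined with the same subsequence-plus-simplicity argument used to handle $\sigma_n$ (and again requiring the uniform-transience input when the meeting point lies on the root) yields $s_n^{i,j} \to s^{i,j}$. Assembling these two pieces gives $d_{\mathscr{F}^K}(\mathscr{T}_n, \mathscr{T}) \to 0$.
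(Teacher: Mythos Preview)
Your approach coincides with the paper's: build the limit tree via Proposition~\ref{prop:essentialbranch-const}, then use the concatenation and restriction continuity of Propositions~\ref{prop:continuity-psi} and~\ref{prop:continuity-chi} to upgrade essential-branch convergence to convergence of the full curves $\gamma_n^{x_n(i)}$ and of the merging times. The paper's proof is much terser; it simply asserts that convergence of the branching points $y_n(i)$ (which follows from endpoint continuity under $\psi$) yields convergence of the merging times, and then invokes Propositions~\ref{prop:continuity-psi} and~\ref{prop:continuity-chi}, without isolating your time $\sigma_n$.

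You have, however, put your finger on a genuine gap that the paper's short argument also leaves open. The merging time $s_n^{1,i}$ is exactly your $\sigma_n$, the time at which the infinite root curve $\gamma_n^{x_n(1)}$ visits $y_n(i)$, and its convergence does \emph{not} follow from the stated hypotheses alone. A counterexample with $K=2$: take $\gamma^{x(1)}(t)=(t,0,0)$, and let $\gamma_n^{x_n(1)}$ agree with $\gamma^{x(1)}$ on $[0,n]$, then make a simple detour in $\mathbb{R}^3$ passing through $(1,1/n,0)$ at time $2n$ before escaping to infinity. With $y_n(2)=(1,1/n,0)$ and a short second essential branch ending there, the essential branches converge (in $\chi$ and $\psi$ respectively) to limits satisfying the conditions of Proposition~\ref{prop:essentialbranch-const}, yet $s_n^{1,2}=2n\to\infty$. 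So the proposition as written needs the additional uniform-transience input you describe; your observation that in the application to Theorem~\ref{thm:convergecetrees} this is supplied by the LERW escape estimates (Proposition~\ref{result:upperLowTail}) is the correct patch.
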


\begin{proof}
  Convergence of $ ( \gamma_n^{x_n(i), y_n(i)}  )_{ 0 \leq i \leq K } $ in the product topology implies that each element in $\Gamma_n^{e}$ converges.
  Proposition \ref{prop:essentialbranch-const} shows that every curve $\gamma_n^{x_n(i)}$ is the concatenation of sub-curves of $\Gamma^e$.
  Moreover, $\{ \gamma^{x(i), y(i)} \}_{ i = 1 \ldots K}$ satisfy the conditions in Proposition \ref{prop:essentialbranch-const} and hence they define a parameterized tree $\mathscr{T}$ with $\Gamma = ( \gamma^{x(i)} )$.
  Finally, \eqref{eq:continuity1} implies the convergence of the branching points $y_n (i)$, and from here we get convergence of the merging times.
  Then, Proposition \ref{prop:continuity-psi} and Proposition \ref{prop:continuity-chi} imply that $ \chi (\gamma_n^{x_n(i)}, \gamma^{x(i)}) \to 0 $ as $n \to \infty$.
  Therefore $ d_{\mathscr{F}^K} (\mathscr{T}_n, \mathscr{T}) \to 0 $ as $n \to \infty$.
\end{proof}

\subsection{Proof of Theorem \ref{thm:convergecetrees}} \label{subsec:essential-branches-UST}

The proof of Theorem \ref{thm:convergecetrees} is by mathematical induction.
The convergence in the scaling limit of the ILERW provides the base case.
We state the inductive step in Proposition~\ref{prop:extension}.

\begin{prop} \label{prop:extension}
  Let $\mathcal{U}_n$ be the uniform spanning tree on $2^{-n} \mathbb{Z}^3$.
  Let $(x_n (i) )_{i = 1, \ldots , K+1} $ be a set of vertices in $2^{-n} \mathbb{Z}^3$  and assume that $ x_n (i)$ converges to $x (i) \in \mathbb{R}^3$ as $n \to \infty$.
  Let $\bar{\gamma}_n^{x_n(i)}$ be the $\beta$-parameterization of the transient path in $\mathcal{U}_n$ starting at $x_n(i)$ and directed towards infinity.
  Assume that the law of $ ( \bar{\gamma}_n^{x_n(i)} )_{i = 1, \ldots, K} $ converges weakly, as measures on parameterized trees, to that of $\hat{\mathscr{S}}^K$. Then the law of $ (\bar{\gamma}_n^{x_n(i)})_{i= 1, \ldots, K+1} $ converges weakly, as measures on parameterized trees, to that of $ \hat{\mathscr{S}}^{K+1} $, with respect to the metric $\mathscr{F}^{K+1}$ for parameterized trees.
\end{prop}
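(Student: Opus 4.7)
The plan is to extend $\mathscr{S}^K_n$ to $\mathscr{S}^{K+1}_n$ via Wilson's algorithm, show that the single new essential branch converges, and then invoke Proposition~\ref{prop:continuity} to conclude. First I would apply Skorokhod's representation theorem to work on a probability space where $\mathscr{S}^K_n \to \mathscr{S}^K$ almost surely in $\mathscr{F}^K$. Independently of the $K$-subtree, I would sample a SRW $S_n$ starting at $x_n(K+1)$ on $2^{-n}\mathbb{Z}^3$, stop it at the first hitting time $\tau_n$ of $\tr \mathscr{S}^K_n$, and set the new essential branch to be $\gamma_n^{x_n(K+1), y_n(K+1)} := \LE(S_n[0,\tau_n])$. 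The problem then reduces to proving convergence in probability of its $\beta$-parameterization in $(\mathcal{C}_f, \psi)$ to some limit curve terminating on $\tr \mathscr{S}^K$, after which Proposition~\ref{prop:continuity} applied to the joint convergence of all $K+1$ essential branches yields convergence of $\mathscr{S}^{K+1}_n$ in $\mathscr{F}^{K+1}$.

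The main obstacle is that the target $\tr \mathscr{S}^K_n$ is a rough random set rather than a smooth polyhedral domain, so Proposition~\ref{result:scalingLimitPoly} does not apply directly. I would resolve this by a polyhedral sandwich argument. Fix $R$ large enough that $x(i) \in B_E(R/2)$ for all $i$, and, for each $\varepsilon > 0$, choose a dyadic polyhedron $\mathcal{P}_\varepsilon$ containing $\tr \mathscr{S}^K \cap B_E(R)$ and lying inside its $\varepsilon$-neighborhood, with $x_n(K+1) \notin \mathcal{P}_\varepsilon$ for all $n$ large. Convergence in $\mathscr{F}^K$, combined with Proposition~\ref{prop:conv-essential-branch}, yields Hausdorff convergence of traces in $B_E(R)$, so $\tr \mathscr{S}^K_n \cap B_E(R) \subset \mathcal{P}_\varepsilon$ for $n$ large. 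Consider the auxiliary loop-erased walk $\tilde\gamma_n^\varepsilon := \LE(S_n[0,\tau_n^\varepsilon])$, where $\tau_n^\varepsilon$ is the exit time of $S_n$ from $B_E(2R) \setminus \mathcal{P}_\varepsilon$. Since this is a polyhedral domain, the argument of Proposition~\ref{result:scalingLimitPoly} carries over with cosmetic modifications to an arbitrary starting point and polyhedral complement to give convergence $\bar{\tilde\gamma}_n^\varepsilon \to \bar{\tilde\gamma}^\varepsilon$ in $\mathcal{C}_f$.

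The final step is the comparison between $\gamma_n^{x_n(K+1), y_n(K+1)}$ and $\tilde\gamma_n^\varepsilon$. Since both are loop-erasures of the same walk $S_n$ with $\tau_n \geq \tau_n^\varepsilon$, the discrepancy lies in the terminal segment $S_n[\tau_n^\varepsilon, \tau_n]$. The hittability estimate of Proposition~\ref{prop:treehittable} applied to $\mathscr{S}^K_n$ guarantees that, with probability at least $1 - C_R \varepsilon^\eta$, this terminal segment remains within a Euclidean ball of radius $\varepsilon^{1/2}$ around $S_n(\tau_n^\varepsilon)$, while Proposition~\ref{result:quasiloopsPoly} controls the stability of the loop-erasure under such a short terminal addition; tightness of intrinsic lengths (Proposition~\ref{result:upperLowTail}) converts this to a $\psi$-distance bound polynomial in $\varepsilon$, uniformly in large $n$. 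Consequently $(\bar{\tilde\gamma}^\varepsilon)_{\varepsilon}$ is Cauchy in $\mathcal{C}_f$ as $\varepsilon \downarrow 0$, its limit $\bar\gamma^{x(K+1), y(K+1)}$ terminates on $\tr \mathscr{S}^K$, and a diagonal argument yields $\bar\gamma_n^{x_n(K+1), y_n(K+1)} \to \bar\gamma^{x(K+1), y(K+1)}$ in probability, which is what is needed to complete the inductive step via Proposition~\ref{prop:continuity}.
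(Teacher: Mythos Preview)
Your approach is essentially the paper's own: Skorohod coupling, polyhedral approximation of the target tree, the LERW scaling limit on polyhedra (Proposition~\ref{result:scalingLimitPoly}), hittability (Proposition~\ref{prop:treehittable}) and quasi-loop estimates (Proposition~\ref{result:quasiloopsPoly}) to control the discrepancy between the true and approximate stopping times, and finally Proposition~\ref{prop:continuity} to assemble the $(K+1)$-tree. The paper organises the approximation through a two-parameter family of dyadic thickenings $A_n^{u,R}$ and takes limits in the order $n\to\infty$, then $u\to\infty$, then $R\to\infty$, whereas you use a single $\varepsilon$-thickening plus a diagonal argument; these are equivalent in spirit.

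One point does need care, however: your claim that $\tau_n \geq \tau_n^\varepsilon$ always holds is not correct as stated. Your polyhedron $\mathcal{P}_\varepsilon$ covers only $\tr\hat{\mathscr{S}}^K\cap B_E(R)$, so the walk $S_n$ can hit the infinite tree $\mathscr{S}^K_n$ in the annulus $B_E(2R)\setminus B_E(R)$ without having entered $\mathcal{P}_\varepsilon$, giving $\tau_n < \tau_n^\varepsilon$. The paper avoids this by first replacing the full target by its restriction to the box, i.e.\ defining $\bar\gamma_n^R$ as the LERW stopped at $\partial D_n(R)\cup\tr\mathscr{S}_n^{K,R}$, establishing convergence of $\bar\gamma_n^R$ via the polyhedral machinery (Lemmas~\ref{lemma:convDyadicSet}--\ref{lemma:convLERW}), and only then taking $R\to\infty$ as a separate step (Lemma~\ref{lemma:finallimit}), using Proposition~\ref{prop:ust-equal-larger} together with hittability to show that the unrestricted branch agrees with $\bar\gamma_n^R$ with probability tending to $1$ as $R\to\infty$. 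You need to make this $R\to\infty$ step explicit; fixing a single large $R$ does not suffice.
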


We devote the rest of this section to the proof of Proposition~\ref{prop:extension}. It is based on  Proposition~\ref{prop:continuity}.
According to the latter proposition, it suffices to prove convergence of the essential branches with respect to the product topology.
We then shift our attention to the essential branches of an infinite subtree of the uniform spanning tree.
Wilson's algorithm provides a natural construction of them; and we present it below.
Subsection~\ref{subsec:prooflemmaext} develops the arguments for the proof of Proposition~\ref{prop:extension}.

Let $\mathcal{U}_n$ be the uniform spanning tree on $2^{-n} \mathbb{Z}^3$.
Let $ x_n (i) \in \mathbb{Z}^3 $ and $x(i) \in \mathbb{R}^3$ as in the statement of Proposition~\ref{prop:extension}, so $ x_{n} (i) \to x(i) $ as $n \to \infty$ for $ i = 1, \ldots, K+1$. Now we apply Wilson's algorithm on the scaled lattice $2^{-n} \mathbb{Z}^3$.
\begin{itemize}
  \item
  Let $\gamma_{n}^1$ be an ILERW starting at $x_{n} (1)$, and
\[\bar{\gamma}_{n}^{x(1), y(1)}(t) =  \gamma_{n}^{1} ( 2^{\beta n} t ), \qquad \forall t \geq 0.\]
  be its $\beta$-parameterization.
  Note that we omit the subscript $n$ on $x(1)$ and $y(1)$ to ease the notation.
  This transient curve is the first branch of the parameterized tree.
  \item
  Let $ \gamma_{n}^i$ be the loop-erased random walk started at $x_{n} (i)$, and stopped when it hits any of the previous loop-erased random walks $\gamma_{n}^1, \ldots , \gamma_{n}^{i-1}$.
  Let $y_{n} (i) \in 2^{-n} \mathbb{Z}^3 $ be the hitting point, and set
  \[\bar{\gamma}_{n}^{x(i), y(i)} = \gamma_{n}^i (2^{\beta n} t), \qquad \forall t\in[0,
 2^{-\beta n} \len(\gamma_{n}^i)].\]
  If $x_{n} (i) \in \cup_1^{i-1} \tr \gamma_{n}^j$, then $\bar{\gamma}_{n}^{x(i), y(i)}$ is a curve with length zero and  $\bar{\gamma}_{n}^{x(i), y(i)} (0) = x_{n} (i)$.
  The duration of the curve $\bar{\gamma}_{n}^{x(i), y(i)}$ is $2^{- \beta n}\len( \gamma_{n}^i)$, i.e. the length of the path $ \gamma_{n}^i $ with the appropriate scaling.
  We also omit the subscript $n$ on $x (i)$ and $y(i)$ when they appear as superscripts on the curve $\bar{\gamma}_{n}^{x(i), y(i)}$.
\end{itemize}

Set
$X_{n} = \{  x_{n} (1), \ldots , x_{n} (K)  \}$ and $\Gamma^e_{n} = \{ \bar{\gamma}_{n}^{x(i), y(i)}:\:i=1,\dots,K  \}$.
By Proposition \ref{prop:essentialbranch-const}, $X_{n}$ and  $\Gamma^e_{n}$ determine a parameterized tree $\mathscr{S}^K_{n}$, and Wilson's algorithm  shows that $\tr \mathscr{S}^K_{n}$ is equal in distribution to the subtree of $\mathcal{U}_{n}$ spanned by $X_{n}$ and the point at infinity.

As part of the proof of Theorem \ref{thm:convergecetrees}, we will show that the limit of parameterized trees $\hat{\mathscr{S}}^{K}$ has the following (formal) representation (see Lemma~\ref{lemma:Wilson-curves}). The next construction is Wilson's algorithm, but in this case, the branches have the distribution of the scaling limit of the ILERW.
\begin{itemize}
\item Let $ \hat{\gamma}^{x(1), y(1)} \in \mathcal{C} $ be the scaling limit of ILERW starting at $x(1)$, endowed with the natural parameterization, see \cite{LS}.
\item
Let $\hat{\gamma}^{x(i), y(i)} \in \mathcal{C}_f$ be the scaling limit of LERW started at $x(i)$, and stopped when it hits any of $\hat{\gamma}^{x(1), y(1)}, \ldots , \hat{\gamma}^{x (i-1), y(i-1)} $. (Our construction will give that this hitting time is finite, see Lemma~\ref{lemma:finallimit}.) Here we denote the hitting point by $y(i)$.
Similarly to the discrete case, if $x (i) \in \cup_{1}^{i-1} \tr \hat{\gamma}^{x(j), y(j)}$, then  $\hat{\gamma}^{x(i), y(i)}$  is a curve with length zero and   $\hat{\gamma}^{x(i), y(i)}(0) = x (i)$.
\end{itemize}
Set $X = \{ x(1), \ldots , x(K) \}$  and $\hat{\Gamma}^e  =  \{ \hat{\gamma}^{x(i), y(i)} \}_{1 \leq i \leq K} $.
Proposition~\ref{prop:essentialbranch-const} defines the parameterized tree $\hat{\mathscr{S}}^K = (X, \hat{\Gamma})$ with set of essential branches
$\Gamma^e (\mathscr{S}^K) =  \hat{\Gamma}^e$.

\subsection{Proof of Proposition \ref{prop:extension}}
\label{subsec:prooflemmaext}

The next proposition allows us to work with restrictions of parameterized trees.
We compare traces of parameterized trees  within a smaller subset with the symmetric difference $\triangle$.

\begin{prop} \label{prop:ust-equal-larger}
  Let $\mathscr{S}^K_{\delta}$ be a parameterized subtree of the uniform spanning tree on $\delta\mathbb{Z}^3$ with $K$ spanning points.
  Assume that $\vert x \vert < m$ for each $x \in K$.
  Then for $ r > s  \geq m^2 > 0$,
  \begin{equation} \label{eq:returntree}
    \mathbf{P} \left( (\tr \mathscr{S}^K_{\delta} \vert^{r} \triangle \tr \mathscr{S}^K_{\delta}\vert^{s})\cap B_E (m) \neq \emptyset  \right)
    \leq
    K \delta m^{-1}   [ 1 + O (m^{-1}) ].
  \end{equation}
\end{prop}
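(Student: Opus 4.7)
The plan is to reduce the event to a classical 3D random-walk return estimate. First I would observe that, since $s\le r$, each constituent curve $\gamma^{x(i)}|^s$ is an initial segment of $\gamma^{x(i)}|^r$, so
\[\tr \mathscr{S}^K_\delta|^r \triangle \tr \mathscr{S}^K_\delta|^s = \bigcup_{i=1}^K\bigl(\tr \gamma^{x(i)}|^r\setminus \tr \gamma^{x(i)}|^s\bigr).\]
This intersects $B_E(m)$ iff some branch $\gamma^{x_\delta(i)}$ revisits $B_E(m)$ at a time strictly after its first exit from $B_E(s)$ (and no later than its first exit from $B_E(r)$). By a union bound over $i=1,\ldots,K$ it suffices to estimate this probability for a single branch.

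For a fixed $i$, the transient path $\gamma^{x_\delta(i)}$ in $\mathcal{U}_\delta$ has the law of an infinite loop-erased random walk on $\delta\mathbb{Z}^3$ started at $x_\delta(i)\in B_E(m)$, by Wilson's algorithm. The key deterministic observation is that the trace of the ILERW is a subset of the trace of the generating SRW $S^{(i)}$, and that the chronological order in which the ILERW visits its vertices matches the order of $S^{(i)}$'s last visits to those vertices (the indexing $T(0)<T(1)<\ldots$ in \eqref{eq:deflerw} is strictly increasing). Consequently, if the ILERW is at a point $\tilde v_j\in B_E(m)$ at a step $j$ strictly larger than the LERW index $j_0$ at which it first exits $B_E(s)$, then $T(j_0)$ exceeds the SRW's first exit time from $B_E(s)$ (the SRW must first reach $\tilde v_{j_0}\notin B_E(s)$), while $S^{(i)}$ is at $\tilde v_j\in B_E(m)$ at time $T(j)>T(j_0)$. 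Thus the branch-$i$ event is contained in the event that $S^{(i)}$ re-enters $B_E(m)$ after first exiting $B_E(s)$.

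By the strong Markov property of $S^{(i)}$ at its first exit time $\xi_s$ from $B_E(s)$, this last event has probability at most $\max_{y\in\partial B_E(s)\cap\delta\mathbb{Z}^3} P_y^\delta(\tau_{B_E(m)}<\infty)$. The classical 3D harmonic-measure estimate, obtained on $\delta\mathbb{Z}^3$ by rescaling the standard $\mathbb{Z}^3$ result, gives
\[P_y^\delta\bigl(\tau_{B_E(m)}<\infty\bigr) = \frac{m}{|y|}\bigl(1+O(m/|y|)\bigr)\]
uniformly in $\delta$ for $|y|\gg m$. Substituting $|y|=s\ge m^2$ yields a per-branch bound of $m^{-1}(1+O(m^{-1}))$, and summing over the $K$ branches delivers the claimed estimate (up to the constant prefactor explicit in the statement).

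The only genuinely non-routine ingredient is the LERW-to-SRW comparison in the second paragraph: one must track carefully that although loop erasure discards vertices and can permute their apparent visit order, the retained vertices are precisely those corresponding to last SRW visits, and the monotonicity of $T(\cdot)$ lets the re-entry event transfer cleanly from the LERW to $S^{(i)}$. Everything else -- Wilson's algorithm, the strong Markov property, and the uniform 3D harmonic-measure estimate with its $O(m/|y|)$ error -- is classical and yields constants independent of the lattice scale.
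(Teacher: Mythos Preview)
Your approach is essentially the same as the paper's: reduce the symmetric-difference event to a return event for the branches, pass from LERW to SRW via Wilson's algorithm, apply the strong Markov property at the first exit from $B_E(s)$, and invoke the classical three-dimensional hitting-probability estimate. The paper compresses your second paragraph into the single phrase ``by virtue of Wilson's algorithm and a union bound''; your explicit tracking of the monotonicity of $T(\cdot)$ is a welcome clarification of that step (note, though, that $T(j)$ is not literally the \emph{last} SRW visit to $\tilde v_j$ --- it is one step after the last visit to $\tilde v_{j-1}$ --- but all your argument actually uses is that $T$ is strictly increasing and $S(T(j))=\tilde v_j$, which is correct). Both your bound and the paper's own proof yield $K m^{-1}(1+O(m^{-1}))$; the extra factor of~$\delta$ in the displayed statement appears to be a typo in the paper, and in the only application (Lemma~\ref{lemma:finallimit}) one takes $m=s^{1/2}\to\infty$, so the bound $Km^{-1}\to 0$ is what is actually needed.
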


\begin{proof}{}
  The restrictions $\mathscr{S}^K_{\delta}  \vert^{r}$ and $\mathscr{S}^K_{\delta} \vert^s$ are different inside $B_E (m)$ when a path returns to $B_E (m)$ after its first exit from $B_E (r)$; we refer to Figure~\ref{fig:trees-differ} as an example of this situation.
  By virtue of Wilson's algorithm and a union bound, the probability in \eqref{eq:returntree} is bounded above by the probability of return to $B (m \delta^{-1})$ of $K$ simple random walks on $\mathbb{Z}^3$:
  \[
    K \sup_{x \in \partial B(s\delta^{-1})} P_S^x \left(  \tau_S (B(m\delta^{-1})) < \infty \right),
  \]
  where $P_S^x$ indicates the probability measure of a simple random walk on $\mathbb{Z}^3$ started at $x$, and $\tau_S (B(m\delta^{-1}))$ is the first time that the random walk $S$ hits the ball $B(m\delta^{-1})$.
  Therefore, the upper bound in \eqref{eq:returntree} follows from well-known estimates on the return probability for the simple random walk, see e.g. \cite[Proposition 6.4.2]{LawlerLimic}.
\end{proof}

Then we see an immediate consequence of the hypothesis in Proposition~\ref{prop:extension}.
The following lemma is straightforward since $x(K+1)$ is a fixed point.

\begin{lemma} \label{lemma:HausdorffDistance}
  Fix $r > 0$.
  Under the assumptions in Proposition~\ref{prop:extension},
  the law of the subsets $(\tr \mathscr{S}_n^{K}\vert^r)_{n}  $ converge weakly to the law of $\tr \hat{\mathscr{S}}^{K}\vert^r $, as $n \to \infty$, with respect to the Hausdorff distance.
  Moreover, with probability one, $x(K+1) \notin \tr   \hat{\mathscr{S}}^{K}$.
\end{lemma}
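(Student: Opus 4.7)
The plan is to apply the continuous mapping theorem to the weak convergence $\mathscr{S}_n^K \Rightarrow \hat{\mathscr{S}}^K$ in $\mathscr{F}^K$, and to establish the second assertion through the one-point function estimate for three-dimensional LERW.

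By Skorokhod's representation theorem, I may work on a common probability space on which $\mathscr{S}_n^K \to \hat{\mathscr{S}}^K$ almost surely. By the definitions of $d_{\mathscr{F}^K}$ and of $\chi$, this gives the a.s.~uniform convergence on compact time intervals of each branch, $\bar\gamma_n^{x_n(i)} \to \hat\gamma^{x(i)}$, for $i=1,\dots,K$. Since each $\hat\gamma^{x(i)}$ is transient, its restriction to $B_E(r)$ uses only a bounded portion of the curve, on which uniform convergence holds.

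To deduce Hausdorff convergence of $\tr \mathscr{S}_n^K \vert^r$, I would note that the map $\gamma \mapsto \tr \gamma\vert^r$ is continuous at curves that exit $B_E(r)$ \emph{strictly}, in the sense that $|\gamma(t)|>r$ for all $t$ in some right-neighbourhood of the first-exit time $\xi_r$. Under strict exit, uniform convergence of curves implies convergence of exit times, from which Hausdorff convergence of the traces follows by uniform continuity. For the fixed radius $r$, the tangency-free property should hold almost surely for each $\hat\gamma^{x(i)}$, since for any given realisation tangent radii are countable (at most one per non-strict local maximum of $|\hat\gamma^{x(i)}|$), and a Fubini/symmetry argument, combined with the fact that each $\hat\gamma^{x(i)}$ has locally non-degenerate hitting distributions on concentric spheres (inherited from the corresponding discrete LERW hitability estimates of Sections~\ref{subsec:pathprop} and~\ref{subsec:LERWPoly}), excludes any fixed $r$ being tangent with positive probability. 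Taking the finite union $\bigcup_{i=1}^K \tr \hat\gamma^{x(i)}\vert^r$, which is a continuous operation in the Hausdorff topology, then yields the first assertion.

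For the second assertion, by a union bound it suffices to show $\mathbf{P}(x(K+1) \in \tr \hat\gamma^{x(i)})=0$ for each $i$. Combining the one-point estimate $\mathbf{P}(v \in \gamma_\infty) \asymp |v|^{\beta-3}$ from \cite[Corollary 1.3]{Escape} with the two-point upper bound used in the proof of Proposition~\ref{second-moment} and a second-moment argument, one obtains the uniform bound
\[\mathbf{P}\bigl(\tr \bar\gamma_n^{x_n(i)} \cap B_E(x(K+1), \varepsilon) \neq \emptyset\bigr) \le C \varepsilon^{3-\beta},\]
reflecting the Hausdorff codimension $3-\beta$ of the LERW scaling limit in $\mathbb{R}^3$. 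Since the event of entering an open Euclidean ball is open in the Hausdorff topology, this bound passes to the weak limit (applying the first assertion with $r$ chosen large enough that $x(K+1)\in B_E(r)$), and letting $\varepsilon\to0$ completes the argument.

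The main technical obstacle is the tangency-free property of the limit curves at the fixed radius $r$; granting this, the rest of the proof is a routine combination of Skorokhod representation, the continuous mapping theorem, and the one-point / two-point function estimates for three-dimensional LERW.
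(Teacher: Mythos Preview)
Your approach is essentially the paper's: deduce Hausdorff convergence of the restricted traces from the assumed convergence in $\mathscr{F}^K$, and use the one-point function for the second assertion. You are in fact more careful than the paper on both fronts. You correctly flag that the restriction $\gamma\mapsto\tr\gamma\vert^r$ is discontinuous at curves tangent to $\partial B_E(r)$; the paper does not mention this at all. And your $\varepsilon$-ball plus Portmanteau argument is the honest way to transfer the discrete estimate to the continuum---the paper simply records $\mathbf{P}(x_n(K+1)\in\tr\mathscr{S}_n^K\vert^r)=O(2^{-4n/3})$ from the one-point function and asserts that ``in the limit'' the conclusion holds, which as written is a gap that your argument fills.

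One simplification: the two-point bound and second-moment argument are unnecessary for your hitting estimate. Since the LERW trace is contained in the trace of the underlying simple random walk, the Green's function bound $\mathbf{P}(S_n\cap B_E(x(K+1),\varepsilon)\neq\emptyset)\le C\varepsilon$ already gives a uniform-in-$n$ estimate that tends to zero with $\varepsilon$, which is all you need.
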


\begin{proof}
  The weak convergence of $ ( \bar{\gamma}_n^{x_n(i)} )_{i = 1, \ldots, K} $ as measures on parameterized trees implies that the laws of
   $ (  \tr \bar{\gamma}_n^{x_n(i)} )_{i = 1, \ldots, K} $ converge weakly to $ ( \tr \hat{\gamma}^{x_n(i)} )_{i = 1, \ldots , K}$ in the Hausdorff distance, as $n \to \infty$.
   Since each $\tr \mathscr{S}_n^{K}\vert^r$ is the union of $\tr \bar{\gamma}_n^{x_n(i)}$ over $i = 1, \ldots , K$,
    it then follows that $(\tr \mathscr{S}_n^{K}\vert^r)_{n}  $
   converge in law to  $\tr \hat{\mathscr{S}}^{K}\vert^r $  as $n \to \infty$, with respect to the Hausdorff distance.

   Estimates on the one-point function of the LERW imply that $ P \left( x_n (K+1) \notin \tr \mathscr{S}_n^{K}\vert^r \right) =  O( 2^{- \frac{4}{3}n})  $ as $n \to \infty$ \cite{LLERW,Escape}. In the limit, we get that, with probability one, $ x (K+1) \notin \tr \hat{\mathscr{S}}^{K}\vert^r $. Taking the union of these almost sure event for all $r \in \mathbb{N}$ gives the desired result.
\end{proof}

The proof of Proposition \ref{prop:extension} is divided into a sequence of lemmas, and these are grouped into five steps. The final and sixth step finishes the proof.

\subsubsection{Step 1: set-up.}
%\label{subsec:step1}

We begin with the set-up of the proof. First note that the assumptions of Proposition \ref{prop:extension} indicate that $(\bar{\gamma}_n^{x(i)})_{1 \leq i\leq K}$ converges in distribution. From now on, we work in the coupling given by the Skorohod representation theorem where $(\bar{\gamma}_n^{x(i)})_{1 \leq i\leq K}$ converges to a collection of continuous curves $(\hat{\gamma}^{x(i)})_{ 1 \leq i\leq K}$, almost surely.
In this case, note that the a.s. convergence of $ \mathscr{S}_n^{K,R} $ implies the a.s. convergence of $ \tr \mathscr{S}_n^{K,R} $ to $ \tr \hat{\mathscr{S}}^{K,R} $ in the Hausdorff topology (as in Lemma~\ref{lemma:HausdorffDistance}).

Let $S_n = (S_n(t))_{t \in \mathbb{N}}$ be an independent random walk on ${\delta}_n \mathbb{Z}^3$ starting at $x_n (K+1) $. Consider the hitting time of the parameterized tree $\mathscr{S}^K_n$, as given by
\[\xi^{\mathscr{S} }_n = \inf \left\{ t \geq 0 : S_n (t) \cap  \tr  \mathscr{S}^K_n  \neq \emptyset \right\}.\]
We let $ \gamma_n =  \LE  \left( S_n [0, \xi^{\mathscr{S}}] \right) $ be the corresponding LERW from $x_n (K+1)$ to  $\mathscr{S}^K_n$, and set
\[  \bar{\gamma}_n (t) = \gamma_n ( 2^{\beta n} t ), \qquad\forall t\in[0, 2^{-\beta n }\len (\gamma_n) ].\]
We want to show that $\bar{\gamma}_n$ converges to a scaling limit. Since the domain $\mathbb{Z}^3 \setminus \cup_{i = 1}^K \tr  \bar{\gamma}^{x(i)}  $ does not have a polyhedral boundary, we cannot use \cite[Theorem 1.3]{LS} directly. To get around this obstacle, we approximate with a simpler domain. Furthermore, to gain some control over the paths of the loop-erased random walks $(\bar{\gamma}_n^{x(i)})_{1\leq i\leq K}$, we also need to work within a bounded domain.

We write $D_n(R) = D_{2^{-n}} (R) $ to denote a scaled discrete box with side length $R \geq 1$ around the origin. Since the points $ x(1), \ldots, x (K+1)$ are fixed, we can take $R$ large enough so that $x(1), \ldots,  x (K+1) \in D_n(R)$.
For each curve $\bar{\gamma}_n^{x(i)} \in \Gamma$ and for the parameterized tree $\mathscr{S}^K_n$, we denote its restriction to the closed box $\bar{D_n}(R)$ with a superscript, e.g. $\gamma_n^{x(i),R}$ and $\mathscr{S}_n^{K,R}$, respectively. We also consider the ILERW in the domain $D_n(R) \setminus \tr \mathscr{S}_n^{K,R} $. The exit time from such domain for the random walk $S_n$ is
 \[\xi^{\mathscr{S}, R }_n = \inf \left\{ t \geq 0 : S_n (t) \cap  \left(  \partial D_n(R) \cup \tr  \mathscr{S}^{K,R}_n \right)  \neq \emptyset \right\}.\]
The curve $ \gamma_n^R  = \LE  \left( S_n [0, \xi^{\mathscr{S},R}] \right) $ is the LERW from $x_n (K+1)$ to either $\mathscr{S}^{K,R}_n$ or the boundary of $D_n(R)$; and we set
\begin{equation} \label{eq:gamma2}
  \bar{\gamma}_n^R (t) = \gamma_n^R ( 2^{\beta n} t ), \qquad\forall t\in[0,  2^{-\beta n }\len ( \gamma^R_n) ].
\end{equation}
Note that we omit $x_n (K+1)$ as a superscript of $\gamma_n^R$ to simplify the notation.
We emphasize that $\bar{\gamma}_n^R$ is not necessarily the same as $ \bar{\gamma}_n \vert^R $, where the latter is the restriction of the ILERW to the box $D_n(R)$,

For each integer $u$, the cubes $C^{(u)}$ at scale $u$ are closed cubes with vertices in $2^{-u} (\mathbb{Z}^3  + \frac{1}{2})$ and side length $2^{-u}$ (the translation by $1/2$ is relevant when $u = n$ since it allows us to guarantee that $ \bar{\gamma}_n^R =  \bar{\gamma}_n^{n,R} $).
For each $n \in \mathbb{N}$, let $d_n = d_H ( \mathscr{S}_n^{K,R},\hat{\mathscr{S}}^{K,R}  )$.
Then, for each $u \leq n$, we define the sets of cubes at scale $u$:
\begin{equation} \label{eq:dyadicCollection}
  \mathcal{C}_n^{u,R} = \left\{ C^{(u)} \: \colon \:  \inf \left\{   \vert x - y \vert \: \colon \: x \in C^{(u)}, y \in \tr \mathscr{S}_n^{K,R} \right\} \leq   \min\{ 2^{-2u}, 2d_n \} \right\},
\end{equation}
and for the limit tree we consider the collection
 $\mathcal{C}^{u,R}  = \{ C^{(u)} \: \colon \:   C^{(u)} \cap \tr \hat{\mathscr{S}}^{K,R}  \neq \emptyset  \}$.
We define $A_n^{u,R}$ and $A^{u,R}$ as the $\mathbf{u}$\textbf{-dyadic approximation} to $\tr \mathscr{S}_n^{K,R}$ and $\tr \hat{\mathscr{S}}_n^{K,R}$, respectively, defined as
\begin{equation} \label{eq:dyadicP}
  A_n^{u,R} := \bigcup_{\mathcal{C}_n^{u,R}}  C^{(u)} , \qquad  A^{u,R} := \bigcup_{\mathcal{C}^{u,R}}  C^{(u)}
\end{equation}

\begin{prop} \label{prop:aCauchy}
If $n \in \mathbb{N}$ is fixed, then $ \tr \mathscr{S}_n^{K,R} \subset  A_n^{u,R}$ and $d_H ( A_n^{u,R}, \tr \mathscr{S}_n^{K,R} )  \leq 2^{- (u-2)}$ for $n$ large enough.
If $u \in \mathbb{N}$ fixed then, with probability one, the sequence $  ( A_n^{u,R} )_{n \in \mathbb{N}} $ is eventually constant with limit $A^{u,R}$.

\end{prop}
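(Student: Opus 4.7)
The plan is to treat the two claims separately.

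For the first claim, fix $n\ge u$. The inclusion $\tr\mathscr{S}_n^{K,R}\subset A_n^{u,R}$ is immediate: given $x\in\tr\mathscr{S}_n^{K,R}$, any closed dyadic cube $C^{(u)}$ containing $x$ satisfies the condition in \eqref{eq:dyadicCollection} with infimum zero, hence $C^{(u)}\in\mathcal{C}_n^{u,R}$. For the Hausdorff bound, this same inclusion makes the one-sided distance from $\tr\mathscr{S}_n^{K,R}$ to $A_n^{u,R}$ zero, and for the other side any $y\in A_n^{u,R}$ lies in a cube $C^{(u)}$ of diameter $\sqrt{3}\,2^{-u}$ whose distance to $\tr\mathscr{S}_n^{K,R}$ is at most $\min\{2^{-2u},2d_n\}\le 2^{-2u}$; combining these gives $\dist(y,\tr\mathscr{S}_n^{K,R})\le\sqrt{3}\,2^{-u}+2^{-2u}\le 2^{-(u-2)}$. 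The phrase ``$n$ large enough'' simply ensures $u\le n$ so that the construction is defined on the lattice $2^{-n}\mathbb{Z}^3$.

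For the second claim, work in the Skorohod coupling supplied by Lemma \ref{lemma:HausdorffDistance}, under which $d_n\to 0$ almost surely. Only finitely many cubes at scale $u$ meet any fixed bounded neighbourhood of $\bar{D_n}(R)$, so it suffices to show that, for each such cube $C^{(u)}$, membership in $\mathcal{C}_n^{u,R}$ agrees with membership in $\mathcal{C}^{u,R}$ once $n$ is large enough; the claim then follows by taking the maximum of the finitely many resulting thresholds. I split into two cases. If $C^{(u)}\cap\tr\hat{\mathscr{S}}^{K,R}\neq\emptyset$, pick $z$ in this intersection and approximate by $z'\in\tr\mathscr{S}_n^{K,R}$ with $|z-z'|\le d_n$; this gives $\dist(C^{(u)},\tr\mathscr{S}_n^{K,R})\le d_n\le 2d_n=\min\{2^{-2u},2d_n\}$ once $2d_n\le 2^{-2u}$, so $C^{(u)}\in\mathcal{C}_n^{u,R}$. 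If instead $C^{(u)}\cap\tr\hat{\mathscr{S}}^{K,R}=\emptyset$, compactness of both closed sets yields $\epsilon:=\dist(C^{(u)},\tr\hat{\mathscr{S}}^{K,R})>0$, and then $\dist(C^{(u)},\tr\mathscr{S}_n^{K,R})\ge\epsilon-d_n>2d_n$ for $n$ large, so $C^{(u)}\notin\mathcal{C}_n^{u,R}$.

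The critical feature here is the $2d_n$ factor appearing in \eqref{eq:dyadicCollection}: it is this shrinking tolerance that prevents $\mathcal{C}_n^{u,R}$ from including cubes that are merely $2^{-2u}$-close to the discrete trace yet disjoint from the scaling limit $\tr\hat{\mathscr{S}}^{K,R}$. No substantive obstacle is anticipated; once the almost sure Hausdorff convergence $d_n\to 0$ is in hand, the proof amounts to unwinding the definitions.
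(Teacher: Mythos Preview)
Your proof is correct and follows essentially the same approach as the paper: the inclusion and Hausdorff bound come directly from the cube diameter and the tolerance in \eqref{eq:dyadicCollection}, and the eventual constancy is checked cube-by-cube using compactness and $d_n\to 0$. One minor point of attribution: the almost-sure coupling you invoke is set up in the text immediately preceding the proposition (via Skorohod representation applied to the convergence assumed in Proposition~\ref{prop:extension}), rather than in Lemma~\ref{lemma:HausdorffDistance} itself, which only records the weak convergence of traces.
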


\begin{proof}
    The definition of $\mathcal{C}_n^{u,R}$ on \eqref{eq:dyadicCollection} implies that $\mathscr{S}_n^{K,R} \subset A_n^{u,R}$.
    Then the bound on the Hausdorff distance follows immediately from the diameter of each cube $C^{(u)}$ in the collection $\mathcal{C}_n^{u,R}$.

    Now we fix $u \in \mathbb{N}$ and let $n \geq u$.
    The convergence of  $ \{  \mathscr{S}_n^{K,R} \}_{n \in \mathbb{N}} $ implies that $d_n \to 0$ (with $d_n$ as defined above), and hence, there exists $N_0 \geq 0$ such that
    the limit tree $ \hat{\mathscr{S}} \subset  A_n^{u,R} $ for all $n \geq N_0$. It follows that $A^{u,R} \subset A_n^{u,R}$ (equivalently $\mathcal{C}^{u,R} \subset \mathcal{C}_n^{u,R} $)for all $n \geq N_0$. Now we take $C_1^{(u)} \notin \mathcal{C}^{u,R} $. Since both sets are closed,
    then $\delta_1 = \inf \left\{   \vert x - y \vert \: \colon \: x \in C_1^{(u)}, y \in \tr \hat{\mathscr{S}}^{K,R} \right\} $   is positive.
    There exists $N_1 \geq 0$ such that $ d_n = d_H ( \mathscr{S}_n^{K,R}, \hat{\mathscr{S}}^{K,R} ) < \delta_1 / 4 $.
    It is straightforward to verify that $C_1^{u} \notin \mathcal{C}_n^{K,R}$ for all $n \geq  N_1$.
    We finish with the observation that, since $u$ is a fixed number, the total amount of cubes at scale $u$ within the box $D_n(R)$ is finite. So the previous argument shows the existence of some $N \geq 0$ after which  the collections $\mathcal{C}^{u,R} $  and $ \mathcal{C}_n^{u,R}$ are equal, and in particular, $A_n^{u,R} = A_m^{u,R} = A^{u,R}$ for all $n,  m \geq N$.
\end{proof}

If $x_n (K+1) \in \tr \mathscr{S}_n^{K,R}$ then we set $\bar{\gamma}^{u, R}_n$ as the curve with length zero and constant value $x_n(K+1)$ for all $u \leq n$.
On the event where $x_n (K+1) \notin \tr \mathscr{S}^{K,R}$, Proposition~\ref{prop:aCauchy} implies that  $x_n (K+1) \in D_n (R) \setminus A_n^{u,R}$ for $n$ and $u$ large enough.
Lemma~\ref{lemma:HausdorffDistance} implies that, with high probability on $n$, $x_n (K+1) \notin \tr \mathscr{S}_n^{K,R}$, so the latter case is the relevant one.
Let $S_n$ be the loop-erased random walk started from $x_n (K+1)$, and stopped when on its first entrance to  $A_n^{u,R}$ (w.r.t the lattice $2^{-n} \mathbb{Z}^3$). We denote the latter hitting time by $\xi^{\mathcal{P}, R}_u$, and the corresponding LERW by
\begin{equation} \label{def:gammaEscaledPoly} \gamma_{n}^{u,R}    = \LE (S_n [0, \xi^{\mathcal{P}, R}_u]).
\end{equation}
The superscript refers to the polyhedron $\mathcal{P} = D_n(R) \setminus A_n^{u,R}$. This curve has $\beta$-parameterization
\begin{equation} \label{eq:defGamma}
\bar{\gamma}_n^{u,R} (t) := \gamma_n^{u,R} (2^{\beta n} t), \qquad \forall t\in[0,2^{-\beta n}\len(\gamma_n^{u,R})].
\end{equation}
The weak convergence of $\eqref{eq:defGamma}$ is an immediate consequence of \cite[Theorem 1.4]{LS} and Proposition \ref{prop:aCauchy}. We state this observation below as Lemma \ref{lemma:convDyadicSet}.

\subsubsection{Step 2: comparing $\bar{\gamma}_n^{u,R}$ and $\bar{\gamma}_n^{v,R}$}
%\label{subsec:comparing-gamma}

\begin{figure}
\begin{center}
  \includegraphics{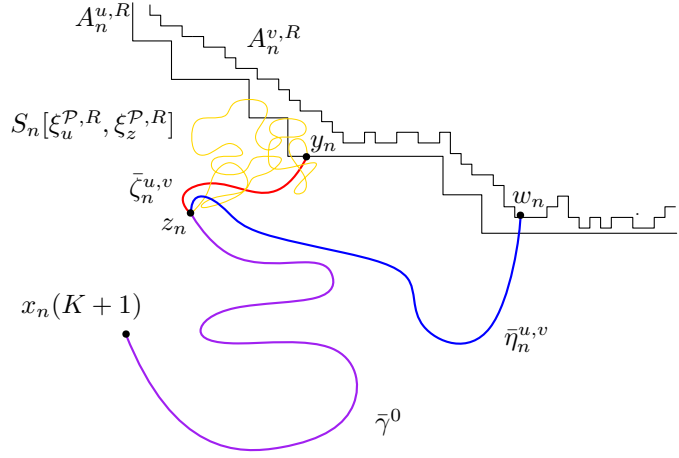}
\end{center}
\caption{The figure shows the decomposition of the curves $\bar{\gamma}_n^{u,R}$ and $\bar{\gamma}_n^{v,R}$ used in the proof of Proposition \ref{prop:extension}.
The curve $\bar{\gamma}_n^{u,R}$ is the concatenation of $\bar{\gamma}_0$ (in purple) and $\bar{\zeta}_n^{u,v}$ (in red). The curve $\bar{\gamma}_n^{v,R}$ is the concatenation of $\bar{\gamma}_0$ and $\bar{\eta}_n^{u,v}$ (in blue). The figure also shows a restriction of the random walk $S_n$ from $y_n$ to $z_n$ (in yellow). In this case, $S_n$ avoids hitting $A_n^{v,R}$ when it is close to $y_n$.}\label{fig:diagramAuv}
\end{figure}

Recall that $n$ is a fixed integer and $u < v$.
In this section we are interested in the event where $x_n (K+1) \notin \tr \mathscr{S}^{K,R}$, otherwise the sequence $\left( \bar{\gamma}^{u, R}_n \right)_{u \leq n}$ is constant.
Then we consider integers $u$ and $v$ with $u < v \leq n$ and $u$ and $v$ large enough so that $ x_n (K+1) \in D_n (R) \setminus A^{u,R} $ and $  x_n (K+1) \in D_n (R) \setminus A^{v,R} $.
Let $\bar{\gamma}_n^{u,R}$ and $\bar{\gamma}_n^{v,R}$ be the loop-erased random walks defined in \eqref{eq:defGamma}. Our aim is to bound the distance $ \psi ( \bar{\gamma}_n^{u,R}, \bar{\gamma}_n^{v,R} ) $ for large values of $n$, $u$ and $v$.
Let us consider the event where this distance is large. More precisely, for $\eps > 0$ and integers $ u < v \leq n$, we define
\[\mathcal{E}^{u,v}_n (\eps) := \left\{ \psi (\bar{\gamma}_n^{u,R}, \bar{\gamma}_n^{v,R}) \geq \eps   \right\},\]
where $\psi$ is the distance for finite curves defined in \eqref{eq:distpsi}.
Recall that we use the same random walk on $2^{-n} \mathbb{Z}^3$ to generate $\bar{\gamma}^{u,R}_n$ and $\bar{\gamma}^{v,R}_n$.  Typically, these two curves have a segment in common, $\bar{\gamma}^0$ (see Figure~\ref{fig:diagramAuv}). We claim that $\bar{\gamma}^{u,R}_n \setminus \bar{\gamma}^0$ and $\bar{\gamma}^{v,R}_n \setminus \bar{\gamma}^0$ have a small effect on $\psi (\bar{\gamma}^u_n, \bar{\gamma}^v_n)$.

Towards proving the preceding claim, we start by introducing some further notation for elements in the curves $\bar{\gamma}_n^{u,R}$ and $\bar{\gamma}_n^{v,R}$; Figure \ref{fig:diagramAuv} serves as a reference.
For clarity, and without loss of generality, we elaborate our arguments on the event where the random walk $S_n$ hits the boundary of $\mathcal{P}^{u,R}$ first, that is
\[  \mathcal{F}^u := \left\{ \xi^{\mathcal{P},R}_u \leq \xi^{\mathcal{P},R}_v \right\},\]
and hence it generates $\bar{\gamma}_n^{u,R}$ before $\bar{\gamma}_n^{v,R}$.
The symmetric event is $\mathcal{F}^v := \{ \xi^{\mathcal{P},R}_v \leq \xi^{\mathcal{P},R}_u \}$. We will consider the restriction of the random walk $S_n$:
\[S^{u,v}_n := S_n \vert_{[ \xi^{\mathcal{P},R}_u, \xi^{\mathcal{P},R}_v ]}.\]
Denote the endpoint of $\bar{\gamma}_n^{u,R}$ by $y_n := S_n (\xi^{\mathcal{P},R}_u)$.
To simplify notation, we denote the durations of $\bar{\gamma}_n^{u,R}$ and $\bar{\gamma}_n^{v,R}$  by
\[T^u = T (\bar{\gamma}_n^{u,R}), \qquad T^v = T (\bar{\gamma}_n^{v,R}),\]
respectively.
The last time that $S_n^{u,v}$ hits its past $\bar{\gamma}_n^{u,R}$ determines the endpoint of $\bar{\gamma}^0$.
Let
\[
  \xi^{\mathcal{P},R}_z := \sup \{ t \leq   \xi^{\mathcal{P},R}_v \colon S(t) \in \bar{\gamma}_n^{u,R} \}
\]
and set $z_n := S ( \xi^{\mathcal{P},R}_z ) $.
Let $T_{\text{z}}$ be such that $ \bar{\gamma}_n^{v,R} (T_{\text{z}}) = \bar{\gamma}_n^{v,R} (T_{\text{z}}) = z_n $.
We then define the common curve $\bar{\gamma}^0 := \bar{\gamma}_n^{u,R} [0, T_{\text{z}}] = \gamma_n^{v,R} [0, T_{\text{z}}]$. The difference between $\bar{\gamma}_n^{u,R}$ and $\bar{\gamma}_n^{v,R}$ is contained in the union of the curves
\[ \bar{\zeta}_n^{u,v} := \bar{\gamma}_n^{u,R} [ T_{\text{z}}, T^u ] , \qquad   \bar{\eta}_n^{u,v} = \bar{\gamma}_n^{v,R} [T_{\text{z}}, T^v].\]
Note that the range of $\bar{\eta}_n^{u,v}$ is a subset of $S^{u,v}_n$.

\begin{figure}
\begin{center}
  \includegraphics{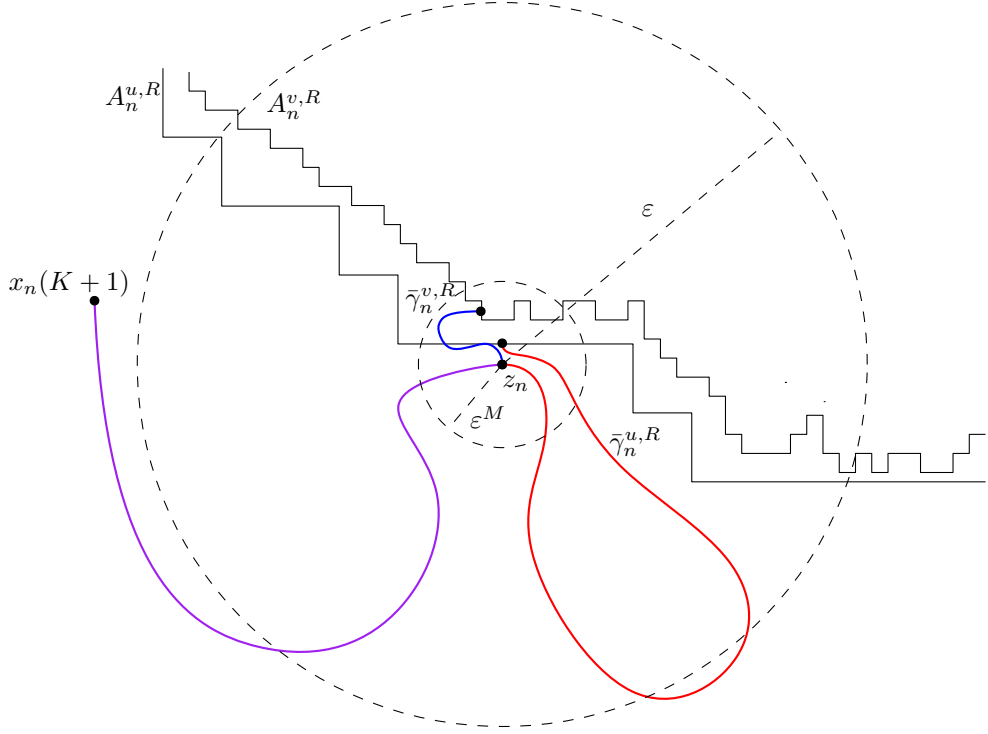}
\end{center}
\caption{A realization of the event $\mathcal{D}_n^{u,v} (\varepsilon)^c \cap \mathcal{Q} (\varepsilon^M, \varepsilon) $.
In the figure, $\bar{\gamma}_n^{u,R}$ is the concatenation of $\bar{\gamma}_0$ (in purple) and $\bar{\zeta}_n^{u,v}$ (in red). The curve $\bar{\gamma}_n^{v,R}$ is the concatenation of $\bar{\gamma}_0$ and $\bar{\eta}_n^{u,v}$ (in blue).}
\label{fig:quasiloops}
\end{figure}

We now compare the shapes of $\bar{\gamma}_n^{v,R}$ and $\bar{\gamma}_n^{u,R}$. In particular, we note that the respective traces of these curves can be significantly different if one of the next two bad events occur. The first event controls the diameter of $\bar{\eta}_n^{u,v}$, while the second event imposes a limit on the size of $\bar{\zeta}_n^{u,v}$.
\begin{itemize}
  \item Since $\bar{\eta}_n^{u,v}$ is a subset of $S_n^{u,v}$, $\bar{\eta}_n^{u,v}$ has a diameter larger than $\eps_0$ only if $S^{u,v}_n$, the segment of the random walk $S_n$ between the hitting times $\xi_u^{\mathcal{P},R}$ and $\xi_v^{\mathcal{P},R}$, has a similarly large diameter. We denote this event by
  \[\mathcal{D}_n^{u,v} (\eps_0) := \left\{ \diam (S^{u,v}_n) \geq \eps_0 \right\}.\]
  \item On the complementary event $ \mathcal{D}_n^{u,v} (\eps_0)^c  $, the curve $\bar{\zeta}_n^{u,v}$ has diameter larger than $\eps$ only if $\bar{\gamma}_n^{u,R}$ has an $(\eps_0, \eps)$--quasi-loop. Figure \ref{fig:quasiloops} shows an example of this situation.
  Let
\[\mathcal{Q} ( \eps_0, \eps; \gamma) := \{ \gamma \text{ has an } (\eps_0, \eps)\text{-quasi-loop} \},\]
and  $\mathcal{Q}_n ( \eps_0, \eps) = \mathcal{Q} ( \eps_0, \eps; \bar{\gamma}_n^u) \cup \mathcal{Q} ( \eps_0, \eps; \bar{\gamma}_n^v)$.
\end{itemize}
Combining the definitions above, we introduce a bad event for the shape by setting
\[\mathcal{B}^{u,v}_n (\eps) := \mathcal{D}_n^{u,v} (\eps^M) \cup \mathcal{Q}_n ( \eps^{M}, \eps),\]
noting that we have taken $\eps_0 = \eps^M $, with $M > 1$ being the exponent of Proposition \ref{result:quasiloops}.
We highlight that, on $(\mathcal{B}^{u,v}_n(\eps))^c$, it holds that $d_H (\bar{\gamma}^{u,R}_n, \bar{\gamma}^{v,R}_n) \leq \eps$.

The following result establishes that, on $(\mathcal{B}^{u,v}_n(\eps))^c$, $\bar{\gamma}_n^{u,R}$ and $\bar{\gamma}_n^{v,R}$ are also close as parameterized curves. The issue here is that even if the traces of two curves may be close in shape, they may take a large number of steps in a small diameter. We will compare the Schramm and intrinsic distances, as defined at \eqref{eq:Schramm} and \eqref{eq:Intrinsic}, on the event $( \mathcal{B}^{u,v}_n(\eps))^c$ where the shapes are close to each other.
The Schramm and intrinsic distances of $\bar{\gamma}^{u, R}_n$ are comparable on the events $S_{2^{-n}}^{\dag} (R, \eps)$ and $E_{2^{-n}}^{\dag} (R, \eps)$. These events are introduced in Subsection~\ref{subsec:LERWPoly}. To simplify notation, we write $S_{n}^{\dag} (R, \eps) = S_{2^{-n} }^{\dag} (R, \eps)$ and $E_{n}^{\dag} (R, \eps) = E_{2^{-n}}^{\dag} (R, \varepsilon)$.

\begin{lemma} \label{lemma:steps}
Fix $R \geq 1$ and let  $\eps \in (0,1)$. On the event $ \left( \mathcal{B}^{u,v}_n (\eps) \right)^c \cap S^{\dag}_{n}  (R \eps^{-1}, \eps) $, we have that
\[T (\bar{\eta}_n^{u,v}) \leq  R \eps^{\beta - 1},\qquad  T (\bar{\zeta}_n^{u,v}) \leq  R \eps^{\beta - 1}.\]
\end{lemma}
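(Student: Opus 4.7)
The plan is to reduce the intrinsic length bound to a Schramm--to--intrinsic comparison via the quasi--loop estimate, using that the two endpoints of each of $\bar{\zeta}_n^{u,v}$ and $\bar{\eta}_n^{u,v}$ are Euclidean--close because they both lie in the small random--walk segment $S^{u,v}_n$.

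First I would observe that by definition $y_n = S_n(\xi^{\mathcal{P},R}_u)$ is the start of $S^{u,v}_n$, while $z_n = S_n(\xi^{\mathcal{P},R}_z)$ is a later point of $S^{u,v}_n$, and $y'_n := S_n(\xi^{\mathcal{P},R}_v)$ is its endpoint. Hence $\{y_n,z_n,y'_n\}\subseteq \tr S^{u,v}_n$, so on $(\mathcal{D}_n^{u,v}(\eps^M))^c$ one has
\[
  |y_n-z_n|\le \diam(S^{u,v}_n)<\eps^M,\qquad |y'_n-z_n|\le \diam(S^{u,v}_n)<\eps^M.
\]
By construction $y_n,z_n\in\bar{\gamma}_n^{u,R}$ and $y'_n,z_n\in\bar{\gamma}_n^{v,R}$ (recall $\bar{\gamma}^0$ ends at $z_n$ and is a common initial segment of both curves), so these Euclidean bounds apply to pairs of points on the same parameterised curve.

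Next, I would use the absence of $(\eps^M,\eps)$--quasi--loops on $(\mathcal{Q}_n(\eps^M,\eps))^c$. By the definition of a quasi--loop applied at $z_n$, the sub--curves between our endpoints satisfy
\[
  \bar{\zeta}_n^{u,v}=\bar{\gamma}_n^{u,R}(z_n,y_n)\subseteq B_E(z_n,\eps),\qquad \bar{\eta}_n^{u,v}=\bar{\gamma}_n^{v,R}(z_n,y'_n)\subseteq B_E(z_n,\eps),
\]
and therefore the Schramm distances along the respective curves satisfy $d^S_{\bar{\gamma}_n^{u,R}}(z_n,y_n)\le 2\eps$ and $d^S_{\bar{\gamma}_n^{v,R}}(z_n,y'_n)\le 2\eps$.

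Finally, I would invoke the event $S^{\dag}_n(R\eps^{-1},\eps)$, which is the polyhedral analogue (Proposition~\ref{result:boundSPoly}) of the Schramm--to--intrinsic comparison applied simultaneously to $\bar{\gamma}_n^{u,R}$ and $\bar{\gamma}_n^{v,R}$: it asserts that any two points on either curve with Schramm distance below the threshold have intrinsic distance at most $R\eps^{-1}\cdot\eps^{\beta}=R\eps^{\beta-1}$. Substituting the Schramm bounds from the previous step gives $T(\bar{\zeta}_n^{u,v})=d_{\bar{\gamma}_n^{u,R}}(z_n,y_n)\le R\eps^{\beta-1}$ and $T(\bar{\eta}_n^{u,v})=d_{\bar{\gamma}_n^{v,R}}(z_n,y'_n)\le R\eps^{\beta-1}$, as required.

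The only non--routine point is the book--keeping of multiplicative constants between the quasi--loop threshold ($\eps$) and the Schramm threshold in $S^{\dag}_n$ (we actually produce $d^S\le 2\eps$ rather than $\eps$), and verifying that the symmetric case $\mathcal{F}^v$ is handled by the same argument after swapping $u\leftrightarrow v$ in the roles of $y_n$ and $y'_n$. Both are cosmetic: the factor of $2$ can be absorbed either by tightening the quasi--loop parameters used in the definition of $\mathcal{B}^{u,v}_n(\eps)$ (replacing $\eps$ with $\eps/2$, which changes nothing in the tightness estimates up to constants) or by a harmless rescaling of $\eps$ in the application of $S^{\dag}_n$.
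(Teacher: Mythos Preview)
Your proof is correct and follows essentially the same approach as the paper: bound the Euclidean separation of the endpoints using containment in $S^{u,v}_n$, convert to a Schramm--distance bound, then apply $S^{\dag}_n(R\eps^{-1},\eps)$. The only difference is that for $\bar{\eta}_n^{u,v}$ the paper bypasses the quasi--loop step entirely, observing directly that $\tr\bar{\eta}_n^{u,v}\subseteq \tr S^{u,v}_n$, so $d^S_{\bar{\gamma}_n^{v}}(z_n,y'_n)\le\diam(S^{u,v}_n)<\eps^M<\eps$ without invoking $\mathcal{Q}_n$; your route via the quasi--loop event on $\bar{\gamma}_n^{v,R}$ is slightly less direct but equally valid. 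Your remark on the harmless factor of $2$ in the Schramm bound and on the symmetric case $\mathcal{F}^v$ is accurate (indeed the paper itself writes $d^S<\eps$ from the quasi--loop condition, which strictly only gives $d^S\le 2\eps$, so the same cosmetic adjustment is implicit there too).
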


\begin{proof} In this proof, we write $G = \left( \mathcal{B}^{u,v}_n (\eps) \right)^c \cap S^{\dag}  (R \eps^{-1}, \eps)$. We begin with an upper bound for the duration of $\bar{\eta}_n^{u,v}$. On $G$, the random walk $S^{u,v}_n$ is localized in a neighbourhood around $y_n$. Indeed, on $\mathcal{D}_n^{u,v} (\eps^M)^c$ we have that $\diam (S^{u,v}_n) \leq \eps^{M}$. Since $\bar{\eta}_n^{u,v}$ is a subset of $S^{u,v}_n$, it follows that $\diam (\bar{\eta}_n^{u,v}) < \eps^{M}$, and, in particular, for the endpoints of $\bar{\eta}_n^{u,v}$, $z_n$ and $w_n$ say (as in Figure~\ref{fig:diagramAuv}), we have that $d_{\bar{\gamma}^v_n}^S (z_n,w_n) = d^S_{\bar{\eta}} (z_n,w_n) \leq \eps^M < \eps$. On $G \subseteq S^{\dag}_n (R \eps^{-1}, \eps) $, this implies that
\[T (\bar{\eta}_n^{u,v}) = d_{\bar{\gamma}_n^v} (z_n,w_n) \leq R  \eps^{\beta - 1}.\]
Next we bound the duration of $\bar{\zeta}_n^{u,v}$ on the event $G$. We have that the endpoints of $\bar{\zeta}_n^{u,v}$ are in $S_n^{u,v}$. Indeed, $y_n = S_n^{u,v} (0)$ and $z_n \in \bar{\zeta}_n^{u,v} \subseteq S^{u,v}_n $. Thus
\begin{equation} \label{eq:steps2}
\vert z_n - y_n \vert < \eps^{M}.
\end{equation}
On the event $G \subseteq \mathcal{Q}(\eps^{M}, \eps)^c$, the loop-erased random walk $\bar{\gamma}_n^{u,R}$ does not have $(\eps^M,\eps)$--quasi-loops, and so \eqref{eq:steps2} implies that $d_{\bar{\gamma}^u_n}^S (z_n,y_n) < \eps$. The argument used for $\bar{\eta}^{u,v}_n$ also gives $ T (\bar{\zeta}^{u,v}_n) = d_{\bar{\gamma}_n^u} (z_n,y_n) < R \eps^{\beta - 1} $.
\end{proof}

We finish this step by showing that $\mathcal{E}^{u,v}_n (\eps)$ can be contained in the events already described.

\begin{lemma} \label{lemma:decompE}
Let $ R \geq 1$ and $\eps > 0$.
On the event $ \mathcal{B}^{u,v}_n (\eps)^c \cap   S^{\dag}_n (R \eps^{-1}, \eps) \cap E^{\dag}_n (R, \eps) $, we have that
\[
  \psi (\bar{\gamma}_n^{u,R}, \bar{\gamma}_n^{v,R}) \leq C R^{b_3} \varepsilon^{\beta'},
\]
where $0 < b_3, C < \infty$ are universal constants, and $\beta' = b_3 (\beta - 1) $.
\end{lemma}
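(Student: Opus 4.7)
The plan is to bound the two summands in
\[\psi(\bar{\gamma}_n^{u,R}, \bar{\gamma}_n^{v,R}) = |T^u - T^v| + \max_{s \in [0,1]} |\bar{\gamma}_n^{u,R}(sT^u) - \bar{\gamma}_n^{v,R}(sT^v)|\]
separately on the good event $G := (\mathcal{B}^{u,v}_n(\varepsilon))^c \cap S^{\dag}_n(R\varepsilon^{-1}, \varepsilon) \cap E^{\dag}_n(R, \varepsilon)$. The duration term is immediate: since $T^u = T_{\mathrm{z}} + T(\bar{\zeta}_n^{u,v})$ and $T^v = T_{\mathrm{z}} + T(\bar{\eta}_n^{u,v})$, Lemma~\ref{lemma:steps} gives $|T^u - T^v| \leq 2R\varepsilon^{\beta - 1}$ directly.

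For the supremum, I would partition $[0,1]$ according to the location of $sT^u$ and $sT^v$ relative to $T_{\mathrm{z}}$. In the \emph{common prefix} regime where both $sT^u \leq T_{\mathrm{z}}$ and $sT^v \leq T_{\mathrm{z}}$, the two points lie on $\bar{\gamma}^0$ at durations differing by $|sT^u - sT^v| \leq |T^u - T^v| \leq 2R\varepsilon^{\beta - 1}$; the $\beta$-parameterized form of Proposition~\ref{result:boundEPoly}, encoded in $E^{\dag}_n(R,\varepsilon)$ with exponents $b_1, b_2$, converts this duration bound into a Euclidean bound of the form $CR^{b_3}\varepsilon^{b_3(\beta-1)}$, where $b_3 = b_2/b_1$. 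In the \emph{tail} regime where $sT^u \geq T_{\mathrm{z}}$ and $sT^v \geq T_{\mathrm{z}}$, both points lie on $\bar{\zeta}_n^{u,v}$ and $\bar{\eta}_n^{u,v}$ respectively, each of which emanates from $z_n$: on $\mathcal{D}^{u,v}_n(\varepsilon^M)^c$ the curve $\bar{\eta}_n^{u,v}$ has diameter at most $\varepsilon^M$ (being a subset of $S^{u,v}_n$), while on $\mathcal{Q}_n(\varepsilon^M,\varepsilon)^c$ the absence of $(\varepsilon^M,\varepsilon)$-quasi-loops in $\bar{\gamma}_n^{u,R}$, together with $|z_n - y_n| \leq \varepsilon^M$, forces $\bar{\zeta}_n^{u,v}$ to have diameter at most $\varepsilon$, so the triangle inequality through $z_n$ yields a bound of order $\varepsilon$. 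In the \emph{mixed} regime (say $T^u > T^v$, hence $T_{\mathrm{z}}/T^u < s < T_{\mathrm{z}}/T^v$), one point is within $\varepsilon$ of $z_n$ as above, while the other, $\bar{\gamma}^0(sT^v)$, sits at duration $T_{\mathrm{z}} - sT^v \leq T_{\mathrm{z}}(T^u - T^v)/T^u \leq 2R\varepsilon^{\beta - 1}$ from $z_n$; another application of $E^{\dag}_n(R,\varepsilon)$ puts it within $CR^{b_3}\varepsilon^{b_3(\beta - 1)}$ of $z_n$. Combining the three regimes and absorbing the lower-order terms $\varepsilon$ and $2R\varepsilon^{\beta-1}$ into the dominant $CR^{b_3}\varepsilon^{b_3(\beta-1)}$, we obtain $\psi(\bar{\gamma}_n^{u,R}, \bar{\gamma}_n^{v,R}) \leq CR^{b_3}\varepsilon^{\beta'}$ with $\beta' := b_3(\beta - 1)$.

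The only subtle point is bookkeeping the exponents: the first argument of $S^{\dag}_n$ has been set to $R\varepsilon^{-1}$ precisely so that the Schramm-to-intrinsic conversion of Proposition~\ref{result:boundSPoly} outputs the scale $R\varepsilon^{\beta - 1}$ used throughout, and $E^{\dag}_n(R,\varepsilon)$ must be applied with $\lambda$ chosen so that $\lambda^{-b_1}$ matches this duration scale, which forces the Euclidean bound $\lambda^{-b_2} = CR^{b_3}\varepsilon^{b_3(\beta-1)}$. Up to this routine computation, and the obvious symmetric argument for $T^u < T^v$ on the event $\mathcal{F}^v$, no further probabilistic input is required: the conclusion is a deterministic consequence of the shape-proximity established by $(\mathcal{B}^{u,v}_n(\varepsilon))^c$ together with the two path-regularity events.
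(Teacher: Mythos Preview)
Your proposal is correct and follows essentially the same approach as the paper's proof. The only cosmetic difference is that the paper packages your three-regime analysis into a single upper bound $N^u + N^v + N^\eta + N^\zeta$ (and bounds the tail diameters $N^\eta, N^\zeta$ by applying $E^\dag_n$ after the duration bound of Lemma~\ref{lemma:steps}, rather than reading them off directly from $(\mathcal{B}^{u,v}_n(\varepsilon))^c$ as you do).
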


\begin{proof} It suffices to show that on the event $G_2 = \mathcal{B}_n^{u,v} (\eps)^c \cap S^{\dag} (R \eps^{-1}, \eps) \cap E^{\dag} (R, \eps)$,
\begin{equation} \label{eq:decompE1}
\psi (\bar{\gamma}_n^u, \bar{\gamma}_n^v)= \vert T^u - T^v \vert+ \max_{0 \leq s \leq 1}\vert \bar{\gamma}_n^u (s T^u ) - \bar{\gamma}_n^v (s T^v ) \vert \leq C R^{b_3} \eps^{\beta'} .
\end{equation}
Lemma \ref{lemma:steps} gives $\vert T^u - T^v \vert < 2 R \eps^{\beta - 1} $. Next we bound the second term in \eqref{eq:decompE1}.
Let $a = \bar{\gamma}_n^{u,R} (sT^u)$ and $b = \bar{\gamma}_n^{v, R} (sT^v)$ and assume that one of these points belongs to the common path, say $ b \in \bar{\gamma}^0$.
In this case, $ sT^v \leq T^u $ and we can re-write $ b = \bar{\gamma}_n^{u,R} ( ( s(T^v/T^{u}) ) T^{u}  ) $.
Then, with respect to the intrinsic metric of $\bar{\gamma}_n^{u,R}$, we compare points within distance
\[
  d_{\bar{\gamma}_n^{u}} (a,b) \leq \vert s T^u - \left( s T^v / T^{u}  \right) T^u \vert \leq 2 R \eps^{\beta - 1}.
\]
We introduce
\[N^u = \sup \left\{ \vert a - b \vert : a,b \in \tr \bar{\gamma}^{u,R}_n, d_{\bar{\gamma}_n^u} (x,y) \leq   2R \eps^{\beta - 1} \right\},\]
define $N^v$ similarly from $\bar{\gamma}^v_n$, and also introduce notation for the diameter of the segments $\bar{\eta}^{u,v}_n$ and $\bar{\zeta}^{u,v}_n$ by setting
\[N^{\eta} = \sup_{ 0 \leq t \leq \len (\bar{\eta}_n^{u,v}) } \vert \bar{\eta}^{u,v}_n (t) - z_n \vert, \qquad   N^{\zeta} = \sup_{ 0 \leq t \leq \len (\bar{\zeta}_n^{u,v})} \vert \bar{\zeta}_n^{u,v} (t) - z_n \vert.\]
It readily holds that we have the following bound:
\begin{equation} \label{eq:decompE2}
\max_{0 \leq s \leq 1}    \vert \bar{\gamma}_n^u (s T^u ) - \bar{\gamma}_n^v (s T^v ) \vert   \leq  N^u + N^v + N^{\eta} + N^{\xi}.
\end{equation}
Lemma \ref{lemma:steps} implies that $d_{\eta} (a, b) < R \eps^{\beta - 1} $ for all $a \in \tr \eta$.
Let $b_3 = \frac{b_2}{b_1}$, where $b_1$ and $b_2$ are the constants of Proposition \ref{result:boundE}.
On the event $ E^{\dag}_n (R, \varepsilon) $, we thus have that $N^{\eta} \leq R^{ b_3} \varepsilon^{ b_3 (\beta-1) }$, and similarly for $N^{\xi}$. On the event $ E^{\dag}_n (R, \eps) $, the loop-erased random walks $\bar{\gamma}_n^u$ and $\bar{\gamma}_n^v$ are uniformly equicontinuous, so that $N^u \leq C R^{ b_3 } \varepsilon^{ b_3 (\beta-1) }$, and the same bound holds for $N^v$. Adding the upper bounds for $N^u$, $N^v$ $N^{\eta}$ and $N^{\xi}$ in \eqref{eq:decompE2}, we get \eqref{eq:decompE1}.
\end{proof}

\subsubsection{Step 3: bounding $\mathbf{P} ( \mathcal{E}_n^{u,v} (\varepsilon) )$} %\label{subsubsec:boundprob}

In this step, we give an upper bound on the probability of the bad event $\mathcal{E}_n^{u,v} (\varepsilon)$. The key is that, given $u$, and $v$, this estimate is uniform over all $n$ for $u$ and $v$ large enough.

\begin{lemma} \label{lemma:cauchy}
  Fix $R \geq 1$.
  For each $ \varepsilon \in (0,1) $, there exists $U = U(\varepsilon)$ such that
  for all $ n \geq u, v \geq U (\varepsilon)$
  \[
    \mathbf{P} \left( \mathcal{E}_n^{u,v} (\varepsilon) \right) \leq C \varepsilon^{\theta},
  \]
  for constants $C = C(R) > 0$ and $\theta = \theta (R) > 0$, depending only on $R$.
\end{lemma}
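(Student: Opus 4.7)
The plan is to derive Lemma \ref{lemma:cauchy} from Lemma \ref{lemma:decompE} by bounding the probability of the complement of its hypothesis event. Since Lemma \ref{lemma:decompE} only gives the conclusion $\psi \leq CR^{b_3}\tilde\varepsilon^{\beta'}$, I first rescale the parameter: set $\tilde\varepsilon := (\varepsilon/(CR^{b_3}))^{1/\beta'}$ so that on the good event
\[
  G_{\tilde\varepsilon} := \mathcal{B}_n^{u,v}(\tilde\varepsilon)^c \cap S_n^\dag(R\tilde\varepsilon^{-1}, \tilde\varepsilon) \cap E_n^\dag(R, \tilde\varepsilon)
\]
Lemma \ref{lemma:decompE} gives $\psi(\bar\gamma_n^{u,R}, \bar\gamma_n^{v,R}) \leq \varepsilon$, whence $\mathcal{E}_n^{u,v}(\varepsilon) \subseteq G_{\tilde\varepsilon}^c$. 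A union bound reduces matters to four probabilities: $\mathbf{P}(S_n^\dag(R\tilde\varepsilon^{-1},\tilde\varepsilon)^c)$, $\mathbf{P}(E_n^\dag(R,\tilde\varepsilon)^c)$, $\mathbf{P}(\mathcal{Q}_n(\tilde\varepsilon^M,\tilde\varepsilon))$, and $\mathbf{P}(\mathcal{D}_n^{u,v}(\tilde\varepsilon^M))$.

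Three of these four are controlled directly by the polyhedral LERW estimates from Section \ref{subsec:LERWPoly}: Propositions \ref{result:boundSPoly}, \ref{result:boundEPoly} and \ref{result:quasiloopsPoly} each yield polynomial upper bounds in $\tilde\varepsilon$ (with strictly positive exponents), provided $n$ exceeds the thresholds $N(R,\tilde\varepsilon)$ furnished by those propositions; these thresholds can be absorbed into the final $U(\varepsilon)$. Here the polyhedral LERW framework applies because, after conditioning on the tree $\mathscr{S}_n^{K,R}$, each curve $\bar\gamma_n^{u,R}$ is a LERW on the (random) dyadic polyhedron $D_n(R) \setminus A_n^{u,R}$.

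The main step, and the principal obstacle, is the bound on $\mathbf{P}(\mathcal{D}_n^{u,v}(\tilde\varepsilon^M))$, i.e.\ controlling how far $S_n^{u,v}$ travels between hitting $A_n^{u,R}$ and $A_n^{v,R}$. A first useful observation is that $v>u$ implies $A_n^{v,R}\subseteq A_n^{u,R}$: any dyadic cube $C^{(v)}$ at scale $v$ within distance $2^{-2v}$ of the tree sits inside a dyadic cube $C^{(u)}$ at scale $u$ within the same distance $2^{-2v} \leq 2^{-2u}$ of the tree, so $C^{(u)} \in \mathcal{C}_n^{u,R}$ as well. Consequently the walk always hits $A_n^{u,R}$ first, and $S_n^{u,v}$ begins at $y_n \in A_n^{u,R}$ with $\dist(y_n, \tr \mathscr{S}_n^{K,R}) \leq C \cdot 2^{-u}$; since $\tr \mathscr{S}_n^{K,R} \subseteq A_n^{v,R}$, it suffices to show the fresh random walk from $y_n$ hits the tree inside a ball of radius $\tilde\varepsilon^M$ around $y_n$. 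This is exactly the content of Proposition \ref{prop:treehittable}, applied at scale $\varepsilon_{\text{hit}} := C\cdot 2^{-u/2}$ (chosen so that $\varepsilon_{\text{hit}}^2$ matches the distance from $y_n$ to the tree): with probability at least $1 - CKR^3 \varepsilon_{\text{hit}}$ the tree is $\eta$-hittable in this sense, and on that event the independent random walk from $y_n$ reaches the tree inside $B(y_n, \varepsilon_{\text{hit}}^{1/2})$ with conditional probability at least $1 - \varepsilon_{\text{hit}}^\eta$. Choosing $U(\varepsilon)$ so large that $C \cdot 2^{-U/4} \leq \tilde\varepsilon^M$ yields $\mathbf{P}(\mathcal{D}_n^{u,v}(\tilde\varepsilon^M)) \leq C K R^3 \cdot 2^{-u/2} + 2^{-u\eta/2}$ for all $u \geq U(\varepsilon)$, which is polynomial in $\tilde\varepsilon$.

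Combining the four tail bounds and translating back through $\tilde\varepsilon = (\varepsilon/(CR^{b_3}))^{1/\beta'}$ produces $\mathbf{P}(\mathcal{E}_n^{u,v}(\varepsilon)) \leq C(R)\varepsilon^{\theta(R)}$ with some $\theta(R)>0$, as required. The hardest aspect is the hittability step for $\mathcal{D}_n^{u,v}$: it requires carefully coupling the conditioning on the random tree with the independence of the subsequent random walk, and using the strong Markov property to guarantee that, once close to the tree, the walk cannot re-wander through $A_n^{u,R}\setminus A_n^{v,R}$ far from $y_n$ before finally striking the tree.
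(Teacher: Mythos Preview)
Your proposal is correct and follows essentially the same route as the paper: invoke Lemma~\ref{lemma:decompE}, then bound the four resulting terms via Propositions~\ref{result:quasiloopsPoly}, \ref{result:boundEPoly}, \ref{result:boundSPoly}, and~\ref{prop:treehittable}, with the hittability of $\mathscr{S}_n^{K,R}$ controlling $\mathcal{D}_n^{u,v}$. Your explicit reparameterisation $\tilde\varepsilon=(\varepsilon/(CR^{b_3}))^{1/\beta'}$ up front is equivalent to the paper's reparameterisation at the end.

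One point needs correcting: your claim that $A_n^{v,R}\subseteq A_n^{u,R}$ for $v>u$ is not valid. The cubes $C^{(u)}$ in this paper have vertices in $2^{-u}(\mathbb{Z}^3+\tfrac12)$, i.e.\ centres in $2^{-u}\mathbb{Z}^3$, and these families are \emph{not} nested across scales: a scale-$(u+1)$ cube centred at a point of $2^{-u-1}\mathbb{Z}^3\setminus 2^{-u}\mathbb{Z}^3$ straddles two adjacent scale-$u$ cubes. So it need not be true that a cube $C^{(v)}\in\mathcal{C}_n^{v,R}$ lies inside some $C^{(u)}\in\mathcal{C}_n^{u,R}$, and the walk may hit $A_n^{v,R}$ before $A_n^{u,R}$. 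This is precisely why the paper treats both $\mathcal{F}^u$ and $\mathcal{F}^v$. The fix is immediate and costs nothing: whichever approximation is hit first, the hitting point lies within $C\cdot 2^{-\min(u,v)}$ of $\tr\mathscr{S}_n^{K,R}$ by Proposition~\ref{prop:aCauchy}, and your strong-Markov plus hittability argument then shows the walk reaches the tree (hence the other approximation) inside the small ball. With this symmetric treatment your bound on $\mathbf{P}(\mathcal{D}_n^{u,v}(\tilde\varepsilon^M))$ stands, and the rest of the argument is as in the paper.
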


\begin{proof}
Lemma~\ref{lemma:decompE} gives that
\begin{align}
\mathbf{P} \left( \mathcal{E}_n^{u,v} ( C R \eps^{\beta'} ) \right) &\leq
   \mathbf{P} \left( \mathcal{D}_n^{u,v} (\eps^{M}) \right)+    \mathbf{P} \left( \mathcal{Q} (\eps^M, \eps) \right)  \nonumber \\
   &\quad  +   \mathbf{P} \left( \left( S^{\dag}_n (R \eps^{-1}, \eps ) \right )^c  \right)    +   \mathbf{P} \left( \left( E^{\dag}_n (R, \eps ) \right)^c \right).\label{eq:unionBoundE}
\end{align}
Proposition~\ref{result:quasiloopsPoly} implies
\[
\mathbf{P} ( \mathcal{Q} (\eps^{M}, \eps))\leq \mathbf{P} ( \mathcal{Q} (\eps^{M}, \eps^2)) \leq C R^3 \varepsilon^{\hat{b}_2},
\]
and Propositions~\ref{result:boundEPoly} and \ref{result:boundSPoly} give upper bounds for the last two terms of \eqref{eq:unionBoundE}. Thus we are left to bound the probability of $\mathcal{D}_n^{u,v} (\eps^{M})$. For this, we need $U (\eps)$ large enough so that, by Proposition \ref{prop:aCauchy}, we have that
\begin{equation} \label{eq:closeA}
  d_H ( A^{u,R}_n, A^{v,R}_n )< \eps^{4M} \qquad \text{ for all } u, v \geq U(\eps).
\end{equation}
On $\mathcal{F}^u$, $\bar{\gamma}_n^{u,R}$ is the first walk LERW to stop, and we call its endpoint $y_n \in \partial \mathcal{P}^{u,R} $. From \eqref{eq:closeA}, we  have $\dist (y_n, \partial \mathcal{P}^{v,R} ) < \eps^{4M} $. But, along $S_n^{u,v}$, the random walk $S_n$ reaches distance $ \eps^{M} $ before hitting $\partial \mathcal{P}^{v,R}$. The same argument on the complement of $\mathcal{F}^u$, i.e. on $\mathcal{F}^v$. Hence Proposition \ref{prop:treehittable} implies that $\mathbf{P}(\mathcal{D}_n^{u,v} (\eps^{M}))\leq C K R^3 \eps^{2M} + \eps^{2M\hat{\eta}}$. In conjunction with the aforementioned bounds,
\begin{align*}
\sup_{\substack{n,u,v:\\n\geq u,v\geq U}}\mathbf{P} \left( \mathcal{E}_n^{u,v} ( R^{b_3} \eps^{\beta'}   ) \right)
    &\leq
      C K R^3 \eps^{2M} + \eps^{2M \hat{\eta} }
      +
      C \eps^{\tilde{\eta}} \nonumber \\
      &\quad + C \left( \frac{R}{\varepsilon} \right)^3 e^{-c \left( \frac{R}{\varepsilon} \right)^a}
      +
      C \eps^{b_2}.
\end{align*}
The dominant term above is $\varepsilon^{\theta (R)}$, and a reparameterization completes the proof.
\end{proof}

\subsubsection{Step 4: the scaling limit of a loop-erased random walk}
%\label{subsubsec:step4}

Recall that $  \bar{\gamma}_n^{R} $ is the LERW on $\bar{D}_n(R) \setminus \mathscr{S}_n^{K,R} $ defined in \eqref{eq:gamma2}.
In \eqref{eq:defGamma}, we defined $ \bar{\gamma}_n^{u,R} $, for $ u \leq n $, as the $\beta$-parameterization of the loop-erased random walk $ \LE \left(  S_n [0, \xi_m^{\mathcal{P},R}] \right) $, where $\xi_m^{\mathcal{P},R}$ is the first exit time from the dyadic polyhedron $\mathcal{P}^{m,R}$.
In this step, we establish that $\bar{\gamma}_n^{R} $ and  $ \bar{\gamma}_n^{u,R}$ converge to the same limit.
We take limits on each variable in the following order.
For $\bar{\gamma}_n^{u,R}$, we first take $n \to \infty$. The limit object is a curve on the bounded and polyhedral domain $D_E (R) \setminus A^{u,R} \subset \mathbb{R}^3$, where $A^{u,R}$ is the polyhedral domain defined in \eqref{eq:dyadicP}.
Then we take $u \to \infty$, and the limit is a curve on  the bounded set $D_E (R) \setminus \tr \mathscr{S}^{K, R} $.
In Step 5, we take $R \to \infty$, and we thus define $\hat{\gamma}$ as a limit curve on the full space $\mathbb{R}^3 \setminus \tr \mathscr{S}^K$.

\begin{lemma} \label{lemma:convDyadicSet}
Fix $R \geq 1$. For each $u \in \mathbb{N}$, the law of $ \bar{\gamma}_{n}^{u,R} $ converges with respect to the metric $ \psi $, as $n \to \infty$.
\end{lemma}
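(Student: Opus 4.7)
The plan is to reduce the convergence of $\bar{\gamma}_n^{u,R}$ to an application of Proposition~\ref{result:scalingLimitPoly}, the scaling limit of LERW on a fixed dyadic polyhedron.

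I first pass to the Skorohod coupling in which $\mathscr{S}_n^{K,R} \to \hat{\mathscr{S}}^{K,R}$ almost surely in the Hausdorff topology (legitimate by the standing hypothesis of the subsection). Proposition~\ref{prop:aCauchy} then guarantees that, for the fixed scale $u$, there is almost surely a random $N = N(u)$ such that $A_n^{u,R} = A^{u,R}$ for all $n \geq N$. In particular, for such $n$ the stopping time $\xi_u^{\mathcal{P},R}$ is exactly the first exit time of the random walk $S_n$ from the deterministic (given $\hat{\mathscr{S}}^{K,R}$) polyhedron
\[\mathcal{D}^{u,R} := \overline{D_E(R) \setminus A^{u,R}},\]
whose boundary is a union of faces of unit cubes at scale $u$ (possibly after approximating $D_E(R)$ by a nearby dyadic sub-cube at scale $u$, an $O(2^{-u})$ adjustment that is irrelevant in the scaling limit as $n\to\infty$).

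Next, I condition on the realization of $\hat{\mathscr{S}}^{K,R}$. Under this conditioning, $\mathcal{D}^{u,R}$ becomes a fixed dyadic polyhedron, and $\gamma_n^{u,R}$ is precisely the LERW on $2^{-n}\mathbb{Z}^3$ inside $\mathcal{D}^{u,R}$ started at $x_n(K+1)\to x(K+1)$; by Lemma~\ref{lemma:HausdorffDistance}, the point $x(K+1)$ lies almost surely in the interior of $\mathcal{D}^{u,R}$. Proposition~\ref{result:scalingLimitPoly} then yields convergence of $\bar{\gamma}_n^{u,R}$ in $(\mathcal{C}_f,\psi)$, conditionally on $\hat{\mathscr{S}}^{K,R}$. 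Although Proposition~\ref{result:scalingLimitPoly} is stated for walks started at $0$, its proof relies only on the translation-invariant ingredients of Propositions~\ref{result:hitPoly}--\ref{result:boundSPoly} together with the scaling argument of \cite{LS}, and so extends without change to walks started from any sequence of lattice points converging to a fixed interior point of the polyhedron.

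Finally, I remove the conditioning: since the conditional law of $\bar{\gamma}_n^{u,R}$ given $\hat{\mathscr{S}}^{K,R}$ converges a.s.\ as $n\to\infty$ to a measurable function of $\hat{\mathscr{S}}^{K,R}$, bounded convergence applied to an arbitrary bounded continuous test function on $\mathcal{C}_f$ lifts this to the unconditional weak convergence claimed in the lemma. The principal obstacle is the mild technical extension of Proposition~\ref{result:scalingLimitPoly} needed to accommodate simultaneously (i) moving starting points and (ii) polyhedra of the form ``big cube minus a smaller dyadic polyhedron in its interior''; both extensions are routine in view of the translation invariance of simple random walk and of the inputs from Subsection~\ref{subsec:LERWPoly}, which already hold uniformly over a class of dyadic polyhedra with differing topology.
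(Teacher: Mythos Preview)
Your proposal is correct and follows essentially the same approach as the paper: use Proposition~\ref{prop:aCauchy} to freeze the polyhedron $A_n^{u,R}=A^{u,R}$ for large $n$, then invoke Proposition~\ref{result:scalingLimitPoly}. The paper's own proof is a two-sentence version of yours (the Skorohod coupling is already in place from Step~1 of the subsection), so your additional care with the conditioning/unconditioning step and the remarks about moving starting points and polyhedron topology simply make explicit what the paper leaves implicit.
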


\begin{proof}
  Proposition \ref{prop:aCauchy} shows that $\bar{\gamma}_n^{u,R}$ is in the polyhedron $D_E (R) \setminus A^{u,R}$, for $n$ large enough.
  Then, the weak convergence of $\{ \bar{\gamma}_{n}^{u,R} \}_{n \in \mathbb{N}}$ is an immediate consequence of Proposition~\ref{result:scalingLimitPoly}.
\end{proof}

We denote by $\hat{\gamma}^{u,R}$ a curve with the limit law of Lemma~\ref{lemma:convDyadicSet}.

\begin{lemma}  \label{lemma:convBox}
Fix $R \geq  1$. Let $(\hat{\gamma}^{u,R})_{u \in  \mathbb{N} }$ be the sequence of limit elements from Lemma \ref{lemma:convDyadicSet}. It is then the case that $(\hat{\gamma}^{u,R})_{u \in  \mathbb{N} }$ converges in distribution in the metric $\psi$ as $u \to \infty$.
\end{lemma}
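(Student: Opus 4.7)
The plan is to show that the laws of $\hat{\gamma}^{u,R}$ form a Cauchy sequence in the Prokhorov metric on Borel probability measures on $(\mathcal{C}_f, \psi)$, and then conclude by completeness. The space $(\mathcal{C}_f, \psi)$ is Polish --- via linear reparameterization each curve is identified with a pair $(T, f) \in [0,\infty) \times C([0,1], \mathbb{R}^3)$, and this identification is a homeomorphism --- so the associated space of Borel probability measures equipped with the Prokhorov metric is itself Polish, and in particular complete, so any Cauchy sequence converges.

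Fix $\varepsilon > 0$ and let $U = U(\varepsilon)$ be the integer provided by Lemma \ref{lemma:cauchy}. For every $n \geq u, v \geq U(\varepsilon)$, the natural coupling via the shared random walk $S_n$ yields
\[
  \mathbf{P}\bigl(\psi(\bar{\gamma}_n^{u,R}, \bar{\gamma}_n^{v,R}) \geq \varepsilon\bigr) \leq C \varepsilon^{\theta}.
\]
Fix such $u$ and $v$. The sequence of joint laws of $(\bar{\gamma}_n^{u,R}, \bar{\gamma}_n^{v,R})_{n \geq u \vee v}$ on $\mathcal{C}_f \times \mathcal{C}_f$ is tight, since each marginal sequence converges by Lemma \ref{lemma:convDyadicSet}. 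Extract a subsequence $(n_k)$ along which these joint laws converge weakly to some coupling $(X, Y)$ whose marginals are distributed as $\hat{\gamma}^{u,R}$ and $\hat{\gamma}^{v,R}$ respectively. Because the set $\{(f, g) \in \mathcal{C}_f \times \mathcal{C}_f : \psi(f, g) \geq \varepsilon\}$ is closed, the Portmanteau theorem gives
\[
  \mathbf{P}\bigl(\psi(X, Y) \geq \varepsilon\bigr) \leq \limsup_{k \to \infty} \mathbf{P}\bigl(\psi(\bar{\gamma}_{n_k}^{u,R}, \bar{\gamma}_{n_k}^{v,R}) \geq \varepsilon\bigr) \leq C \varepsilon^{\theta}.
\]

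The existence of such a coupling $(X, Y)$ with $\mathbf{P}(\psi(X, Y) \geq \varepsilon) \leq C \varepsilon^{\theta}$ immediately forces the Prokhorov distance between the laws of $\hat{\gamma}^{u,R}$ and $\hat{\gamma}^{v,R}$ to be at most $\max(\varepsilon, C \varepsilon^{\theta})$, uniformly for $u, v \geq U(\varepsilon)$. Since $\varepsilon$ is arbitrary and $\max(\varepsilon, C \varepsilon^{\theta}) \to 0$ as $\varepsilon \to 0$, the sequence of laws of $(\hat{\gamma}^{u,R})_{u \in \mathbb{N}}$ is Cauchy in the Prokhorov metric, and hence converges to a limit law, which gives the desired weak convergence.

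The real content is Lemma \ref{lemma:cauchy}, which already provides a Cauchy-type estimate uniform in $n$; what remains is to show this estimate survives the passage to the limit $n \to \infty$, and this is precisely what the tightness and Portmanteau arguments accomplish. I do not foresee any essential obstacle beyond the standard care needed to transfer a coupling-based estimate at finite $n$ to the limit objects, the only mildly delicate point being that the coupling between $\bar{\gamma}_n^{u,R}$ and $\bar{\gamma}_n^{v,R}$ (through the walk $S_n$) must be preserved by the weak limit, which is handled by extracting a joint subsequential limit rather than working with the marginals separately.
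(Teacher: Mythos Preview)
Your proof is correct and follows essentially the same strategy as the paper: both show the laws of $\hat{\gamma}^{u,R}$ are Cauchy in the Prokhorov metric on the Polish space $(\mathcal{C}_f,\psi)$, using Lemma \ref{lemma:cauchy} as the key input. The only minor difference is in how the finite-$n$ estimate is transferred to the limit: the paper applies the triangle inequality
\[
d_\mathbf{P}(\mathcal{L}(\hat{\gamma}^{u,R}),\mathcal{L}(\hat{\gamma}^{v,R})) \leq d_\mathbf{P}(\mathcal{L}(\hat{\gamma}^{u,R}),\mathcal{L}(\bar{\gamma}_n^{u,R})) + d_\mathbf{P}(\mathcal{L}(\hat{\gamma}^{v,R}),\mathcal{L}(\bar{\gamma}_n^{v,R})) + \sup_{n\geq u,v} d_\mathbf{P}(\mathcal{L}(\bar{\gamma}_n^{u,R}),\mathcal{L}(\bar{\gamma}_n^{v,R}))
\]
and lets $n\to\infty$ to kill the first two terms, whereas you extract a subsequential joint limit and apply Portmanteau to pass the coupling inequality through. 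Both routes are standard and of comparable length; the paper's triangle-inequality argument is marginally more direct since it avoids constructing the limiting coupling explicitly.
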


\begin{proof}
Denote the laws of $\bar{\gamma}^{u,R}_n$ and $ \hat{\gamma}^{u,R} $ by $\mathcal{L} (\bar{\gamma}_n^{u,R})$ and $\mathcal{L} (\hat{\gamma}^{u,R})$, respectively. Since $(\mathcal{C}_f, \psi)$ is a complete and separable metric space (see \cite[Section 2.4]{KS}), to prove weak convergence it suffices to show that $( \mathcal{L} ( \hat{\gamma}^{u,R}))_{u \in \mathbb{N}}$ is a Cauchy sequence in the Prohorov metric $d_\mathbf{P}$. Let $u,v \in \mathbb{N} $. By the triangle inequality, for $n \geq u,v$,
\begin{align}
d_\mathbf{P} (\mathcal{L}(\hat{\gamma}^{u,R}),\mathcal{L}(\hat{\gamma}^{v,R}) )
& \leq
d_\mathbf{P} ( \mathcal{L}( \hat{\gamma}^{u,R}), \mathcal{L} (\bar{\gamma}_n^{u,R})) +
d_\mathbf{P} (\mathcal{L}(\hat{\gamma}^{v,R}), \mathcal{L}(\bar{\gamma}_n^{v,R})) \nonumber \\
& \quad + \sup_{n \geq u,v} d_\mathbf{P} (   \mathcal{L}(\bar{\gamma}_n^{u,R}), \mathcal{L}(\bar{\gamma}_n^{v,R})). \label{eq:convergencegamma1}
\end{align}
Letting $n \to \infty$, the first two terms on the right hand side of \eqref{eq:convergencegamma1} converge to $0$ by Lemma \ref{lemma:convDyadicSet}. Moreover, Lemma \ref{lemma:cauchy} shows that the last term of \eqref{eq:convergencegamma1} converges to 0 as $u,v \to \infty $. Therefore $(\mathcal{L}( \hat{\gamma}^{u,R}))_{u \in \mathbb{N} }$ is a Cauchy sequence in the Prohorov metric. It follows that $(\hat{\gamma}^{u,R})_{u \in \mathbb{N}  }$ converges weakly.
\end{proof}

We denote by $\hat{\gamma}^R$ a curve with the limit law of Lemma~\ref{lemma:convBox}.
The random curve $\hat{\gamma}^R$ is the limit of dyadic approximations.
We see below that it is also the limit of the LERWs stopped when they hit $\mathscr{S}^{K,R}$.

\begin{lemma} \label{lemma:convLERW}
Fix $R \geq 1$. Then
$\bar{\gamma}_n^R \to \hat{\gamma}^R $ in distribution as $n \to \infty$, with respect to the metric $\psi$.
\end{lemma}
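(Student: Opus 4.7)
The plan is to compare $\bar{\gamma}_n^R$ with $\hat{\gamma}^R$ by inserting, for every $u \le n$, the dyadic intermediaries $\bar{\gamma}_n^{u,R}$ and $\hat{\gamma}^{u,R}$. The triangle inequality in the Prohorov distance $d_{\mathbf{P}}$ gives
\begin{align*}
d_{\mathbf{P}}\!\left(\mathcal{L}(\bar{\gamma}_n^R),\mathcal{L}(\hat{\gamma}^R)\right)
&\le d_{\mathbf{P}}\!\left(\mathcal{L}(\bar{\gamma}_n^R),\mathcal{L}(\bar{\gamma}_n^{u,R})\right)
+ d_{\mathbf{P}}\!\left(\mathcal{L}(\bar{\gamma}_n^{u,R}),\mathcal{L}(\hat{\gamma}^{u,R})\right) \\
&\quad + d_{\mathbf{P}}\!\left(\mathcal{L}(\hat{\gamma}^{u,R}),\mathcal{L}(\hat{\gamma}^R)\right).
\end{align*}
For each fixed $u$, the middle term vanishes as $n \to \infty$ by Lemma \ref{lemma:convDyadicSet}, and the last term vanishes as $u \to \infty$ by Lemma \ref{lemma:convBox}. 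Everything therefore reduces to showing that, for every $\eps > 0$, there exists $U = U(\eps, R)$ such that $\sup_{n \ge u \ge U} \mathbf{P}(\psi(\bar{\gamma}_n^R, \bar{\gamma}_n^{u,R}) > \eps) \le \eps$.

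I would establish this first-term bound by mimicking, almost verbatim, the shape-then-reparameterisation argument of Lemmas \ref{lemma:steps} and \ref{lemma:decompE}, with the pair $(\tr \mathscr{S}_n^{K,R}, A_n^{u,R})$ playing the role of the pair $(A_n^{v,R}, A_n^{u,R})$. Both $\bar{\gamma}_n^R$ and $\bar{\gamma}_n^{u,R}$ are loop-erasures of the same random walk $S_n$ starting at $x_n(K+1)$, stopped at the first hit of $\tr \mathscr{S}_n^{K,R}$ and of $A_n^{u,R}$ respectively. Since Proposition \ref{prop:aCauchy} yields $\tr \mathscr{S}_n^{K,R} \subseteq A_n^{u,R}$ with $d_H \le 2^{-(u-2)}$, the walk hits $A_n^{u,R}$ first, at a point $y_n$ within Euclidean distance $2^{-(u-2)}$ of $\tr \mathscr{S}_n^{K,R}$. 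Choosing $u$ so that $2^{-(u-2)} < \eps^{2M}$, the hittability estimate for the UST subtree (Proposition \ref{prop:treehittable}) ensures that the continuation of $S_n$ from $y_n$ up to its first hit of $\tr \mathscr{S}_n^{K,R}$ stays inside the ball of radius $\eps^M$ about $y_n$, except on an event of probability at most $C(R)\eps^{\hat{\eta}}$ uniformly in $n$; this is the analogue of the diameter control $\mathcal{D}_n^{u,v}(\eps^M)^c$ in Step 2.

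On the intersection of this no-excursion event with the complements of the quasi-loop events $\mathcal{Q}(\eps^M,\eps;\bar{\gamma}_n^R) \cup \mathcal{Q}(\eps^M,\eps;\bar{\gamma}_n^{u,R})$ and of the equicontinuity bad events $(S_n^{\dag}(R\eps^{-1},\eps))^c$ and $(E_n^{\dag}(R,\eps))^c$, the calculation of Lemma \ref{lemma:steps} shows that the two sub-curves distinguishing $\bar{\gamma}_n^R$ from $\bar{\gamma}_n^{u,R}$ each have intrinsic duration at most $R\eps^{\beta-1}$ and Euclidean diameter at most $R^{b_3}\eps^{\beta'}$, whence $\psi(\bar{\gamma}_n^R, \bar{\gamma}_n^{u,R}) \le C R^{b_3}\eps^{\beta'}$ as in Lemma \ref{lemma:decompE}. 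A union bound over the bad events, using Propositions \ref{result:quasiloops}--\ref{result:boundS} together with their polyhedral counterparts Propositions \ref{result:quasiloopsPoly}--\ref{result:boundSPoly} and the $O_R(\eps^{\hat{\eta}})$ estimate above, then gives $\mathbf{P}(\psi(\bar{\gamma}_n^R,\bar{\gamma}_n^{u,R}) > CR^{b_3}\eps^{\beta'}) \le C(R)\eps^{\theta(R)}$ uniformly in $n \ge u \ge U(\eps)$; reparameterising $\eps$ closes the argument.

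The principal obstacle is that the path-property estimates of Subsection \ref{subsec:LERWPoly} are stated for LERWs stopped on the boundary of a dyadic polyhedron, whereas $\bar{\gamma}_n^R$ is stopped on the random tree $\tr \mathscr{S}_n^{K,R}$. The proofs of Propositions \ref{result:quasiloopsPoly}--\ref{result:boundSPoly} rest, however, on ILERW-type one- and two-point bounds from \cite{S,Escape} combined with hittability of the stopping set; and Proposition \ref{prop:treehittable} provides precisely that hittability for $\mathscr{S}_n^{K,R}$, uniformly in $n$. The corresponding quasi-loop and equicontinuity estimates for $\bar{\gamma}_n^R$ therefore follow by repeating those proofs with the polyhedral boundary replaced by $\tr \mathscr{S}_n^{K,R}$, which is the only non-routine input required for the plan above to go through.
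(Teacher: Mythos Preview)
Your triangle-inequality setup is exactly the one the paper uses, and you have correctly identified that the only non-immediate term is $\sup_n d_{\mathbf P}(\mathcal L(\bar{\gamma}_n^R),\mathcal L(\bar{\gamma}_n^{u,R}))$. However, you are working much harder than necessary for that term. The key point, noted in Step~1 of the paper's argument, is that the dyadic cubes at scale $u=n$ are centred precisely at the lattice points of $2^{-n}\mathbb Z^3$ (this is why the $\tfrac12$-shift is built into the definition of $C^{(u)}$), so the dyadic approximation $A_n^{n,R}$ coincides with $\tr\mathscr S_n^{K,R}$ and hence $\bar{\gamma}_n^R=\bar{\gamma}_n^{n,R}$. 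Consequently the first term is literally the $v=n$ case of Lemma~\ref{lemma:cauchy}, which has already been proved with uniform bounds in $n\ge u,v\ge U(\eps)$.

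Your route---repeating the shape-then-reparameterisation analysis with the pair $(\tr\mathscr S_n^{K,R},A_n^{u,R})$---would in principle give the same conclusion, but it forces you to address the ``principal obstacle'' you yourself flag: extending the quasi-loop and equicontinuity estimates (Propositions~\ref{result:quasiloopsPoly}--\ref{result:boundSPoly}) from LERW stopped at a dyadic polyhedron to LERW stopped at the random tree $\tr\mathscr S_n^{K,R}$. This is plausible but is genuinely additional work, and the paper's one-line observation $\bar{\gamma}_n^R=\bar{\gamma}_n^{n,R}$ sidesteps it entirely by keeping both curves in the polyhedral framework where those estimates are already available.
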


\begin{proof}
  Since $\bar{\gamma}_n^{u,R}\rightarrow \hat{\gamma}^{u,R}$ in distribution as $n\rightarrow\infty$, and $\bar{\gamma}^{u,R}\rightarrow\hat{\gamma}^{R}$ in distribution as  $u\rightarrow\infty$, to complete the proof it suffices to notice that, for $\varepsilon>0$,
  \[ \lim_{u\rightarrow\infty}\limsup_{n\rightarrow\infty}\mathbf{P}\left(\psi ( \bar{\gamma}_n^{u,R}, \bar{\gamma}_n^R )>\varepsilon\right)=0, \]
  see \cite[Theorem 3.2]{Billingsley}, for example.
  However, since $\bar{\gamma}_n^R =  \bar{\gamma}_n^{n,R} $, the above statement readily follows from Lemma \ref{lemma:cauchy}.
\end{proof}

\subsubsection{Step 5: taking $R \to \infty$}
%\label{subsec:Rinfinity}

Until this point, we have only considered loop-erased random walks inside a box $D_E (R)$. Indeed, $\bar{\gamma}_n^R$ is LERW in $D_n(R) \setminus \mathscr{S}^K_n$, and its scaling limit $\hat{\gamma}^R$ is within $D_E (R)$.
In this final step, we will take $R \to \infty$ to consider the tree $\hat{\mathscr{S}}^K$ and the random walk $S_n$ in the full space.

\begin{lemma}  \label{lemma:finallimit}
  Let $\left( \hat{\gamma}^R \right)_{R \geq 1}$ be the sequence of limit elements from Lemma \ref{lemma:convBox} and $\hat{\mathscr{S}}^K$ is the parameterized tree in Proposition \ref{prop:extension}. There exists a random element $\hat{\gamma} \in \mathcal{C}_f$ such that $\hat{\gamma}^R$ converges in distribution to $\hat{\gamma}$ in the metric $\psi$ as $ R \to \infty $. Moreover, the intersection of $\tr \hat{\gamma}$ and $\tr \hat{\mathscr{S}}^K $ is the endpoint of $\hat{\gamma}$.
\end{lemma}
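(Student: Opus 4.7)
The plan is to prove the two claims of the lemma separately: first the weak convergence of $\hat{\gamma}^R$ as $R \to \infty$, and then the endpoint-only intersection property. For the convergence, I would show that the laws $(\mathcal{L}(\hat{\gamma}^R))_{R \geq 1}$ form a Cauchy sequence in the Prohorov metric on $(\mathcal{C}_f, \psi)$, mirroring the strategy of Lemma~\ref{lemma:convBox}. The essential coupling observation is that $\bar{\gamma}_n^{R_1}$ and $\bar{\gamma}_n^{R_2}$ are both constructed via loop-erasure of a common random walk $S_n$ stopped at the first hitting of $\partial D_n(R_i) \cup \tr \mathscr{S}_n^{K,R_i}$, so the two curves are identical as parameterized curves on the event
\[
  B_n(R_1)^c := \{S_n \text{ hits } \tr \mathscr{S}_n^{K,R_1} \text{ before } \partial D_n(R_1)\},
\]
since then the walk has already hit a point of $\tr \mathscr{S}_n^{K,R_1} \subseteq \tr \mathscr{S}_n^{K,R_2}$ while still inside $D_n(R_1) \subseteq D_n(R_2)$. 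Hence $d_\mathbf{P}(\mathcal{L}(\bar{\gamma}_n^{R_1}), \mathcal{L}(\bar{\gamma}_n^{R_2})) \leq \mathbf{P}(B_n(R_1))$, and passing $n \to \infty$ using Lemma~\ref{lemma:convLERW} and lower semicontinuity of Prohorov distance gives $d_\mathbf{P}(\mathcal{L}(\hat{\gamma}^{R_1}), \mathcal{L}(\hat{\gamma}^{R_2})) \leq \limsup_n \mathbf{P}(B_n(R_1))$.

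The remaining input for convergence is the uniform-in-$n$ bound $\limsup_n \mathbf{P}(B_n(R)) \to 0$ as $R \to \infty$. Here I exploit the Wilson construction of $\mathscr{S}_n^K$: its first branch $\bar{\gamma}_n^{x_n(1)}$ is an ILERW on $2^{-n} \mathbb{Z}^3$ starting at $x_n(1)$, independent of $S_n$. Combining Proposition~\ref{prop:hitas} (which states that SRW hits an ILERW almost surely) with the $\hat{\eta}$-hittability bound of Proposition~\ref{result:hit} and the standard 3D SRW Green's function estimates for the number of close returns a walk makes to a given target before escaping distance $R$, one obtains that $\mathbf{P}(B_n(R))$ decays in $R$ at a rate independent of $n$. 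This gives the Cauchy property of $(\mathcal{L}(\hat{\gamma}^R))_R$ and hence the existence of the weak limit $\hat{\gamma}$.

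For part (ii), the statement that the endpoint of $\hat{\gamma}$ lies in $\tr \hat{\mathscr{S}}^K$ follows readily: on $B_n(R)^c$ the endpoint of $\bar{\gamma}_n^R$ lies in $\tr \mathscr{S}_n^{K,R}$, and a Skorohod coupling together with Lemma~\ref{lemma:HausdorffDistance} and Proposition~\ref{prop:continuity-psi} transfer this property to the limit. The main obstacle is the ``only endpoint'' claim. My plan is to show that for each $\varepsilon > 0$, the $\varepsilon$-neighbourhood of $\tr \hat{\mathscr{S}}^K$ meets $\tr \hat{\gamma}$ only within Euclidean distance $O(\varepsilon^{1/2})$ of the endpoint. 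At the discrete level, any interior vertex of $\bar{\gamma}_n^R$ within distance $\varepsilon$ of the tree corresponds to a time at which the underlying walk $S_n$ was near $\tr \mathscr{S}_n^{K,R}$ and then, by definition of loop-erasure, avoided re-hitting that vertex; unless the walk hit the tree within a subsequent $\sqrt{\varepsilon}$-scale displacement (which would force the vertex itself to be close to the endpoint), this would force either a failure of the $\hat{\eta}$-hittability of the tree (controlled by Proposition~\ref{prop:treehittable}) or a macroscopic quasi-loop in $\bar{\gamma}_n^R$ (controlled by Proposition~\ref{result:quasiloopsPoly}). A union bound over a $\varepsilon^{1/2}$-net of scales, combined with taking $\varepsilon \to 0$ along a summable sequence and applying Borel--Cantelli, then completes the argument.
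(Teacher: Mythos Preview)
Your overall Cauchy-sequence strategy for part (i) matches the paper's approach, but there is a genuine gap in the coupling step. You claim that on the event $B_n(R_1)^c$ one has $\bar{\gamma}_n^{R_1} = \bar{\gamma}_n^{R_2}$, reasoning that the walk hits a point of $\tr\mathscr{S}_n^{K,R_1} \subseteq \tr\mathscr{S}_n^{K,R_2}$ while still inside $D_n(R_1) \subseteq D_n(R_2)$. The inclusion of traces is correct, but the conclusion is not: $\tr\mathscr{S}_n^{K,R_2}$ may contain vertices inside $D_n(R_1)$ that do \emph{not} belong to $\tr\mathscr{S}_n^{K,R_1}$, namely those lying on a branch that has exited $D_n(R_1)$ and then re-entered (cf.\ Figure~\ref{fig:trees-differ}). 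If $S_n$ hits such a vertex first, then $\xi_n^{\mathscr{S},R_2} < \xi_n^{\mathscr{S},R_1}$ and the two loop-erasures differ, even though your event $B_n(R_1)^c$ holds. The paper handles precisely this issue via Proposition~\ref{prop:ust-equal-larger}, which bounds $\mathbf{P}\bigl((\tr\mathscr{S}_n^{K,r} \triangle \tr\mathscr{S}_n^{K,s})\cap B_n(m)\neq\emptyset\bigr)$ by a simple-random-walk return probability. The paper then works inside an intermediate ball $B_n(s^{1/2})$: on the joint event that the two restricted trees agree there \emph{and} that $S_n$ hits the tree before exiting $B_n(s^{1/2})$ (controlled by Proposition~\ref{prop:treehittable}), the curves coincide. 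Adding this ingredient to your argument closes the gap.

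For part (ii), your plan is considerably more thorough than the paper's own proof, which only argues that the endpoint of $\hat{\gamma}$ lies in $\tr\hat{\mathscr{S}}^K$ (via continuity of the endpoint map and the Hausdorff convergence of Lemma~\ref{lemma:HausdorffDistance}) and does not explicitly verify that no earlier point of $\hat{\gamma}$ meets the tree. Your hittability/quasi-loop argument is a reasonable route to the full ``only the endpoint'' claim, though the phrase ``avoided re-hitting that vertex'' is imprecise. The clean statement is: at the walk's last visit to an interior LERW vertex $v$ with $\dist(v,\tr\mathscr{S}_n^{K,R})<\varepsilon$, Proposition~\ref{prop:treehittable} forces the walk to hit the tree before its next $\sqrt{\varepsilon}$-displacement with high probability, so the endpoint is within Euclidean distance $\sqrt{\varepsilon}$ of $v$; if $v$ is nevertheless far from the endpoint along $\bar{\gamma}_n^R$, this produces a quasi-loop ruled out by Proposition~\ref{result:quasiloopsPoly}.
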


\begin{proof}
  Denote the laws of $ \bar{\gamma}_n^R $ and $\hat{\gamma}^R$ by $\mathcal{L} ( \bar{\gamma}_n^R )$ and $ \mathcal{L} (\hat{\gamma}^R) $, respectively.
  This proof is similar to the one for Lemma \ref{lemma:convBox} as we will show that $ \left( \mathcal{L} (\hat{\gamma}^R) \right)_{R \geq 1} $ is a Cauchy sequence in the Prohorov metric $d_{\mathbf{P}}$.
  For two integers $ r  >   s > 0 $, the triangle inequality yields
  \begin{equation} \label{eq:gammardif1}
    d_{\mathbf{P}} \left( \mathcal{L} ( \hat{\gamma}^r, \hat{\gamma}^s ) \right)
    \leq
    d_{\mathbf{P}} \left( \mathcal{L} ( \hat{\gamma}^r, \bar{\gamma}^r_n ) \right)
    +
    d_{\mathbf{P}} \left( \mathcal{L} ( \hat{\gamma}^s, \bar{\gamma}^s_n ) \right)
    +
    \sup_n  d_{\mathbf{P}} \left( \mathcal{L} ( \bar{\gamma}^r_n, \bar{\gamma}^s_n ) \right).
  \end{equation}
  Letting $n \to \infty$, the first two terms on the right hand side of \eqref{eq:gammardif1} converge to 0 by Lemma \ref{lemma:convLERW}.
  Then we are left to bound $ \sup_n  d_{\mathbf{P}} \left( \mathcal{L} ( \bar{\gamma}^r_n, \bar{\gamma}^s_n ) \right) $.
  \begin{equation} \label{eq:gammardif2}
    d_{\mathbf{P}} \left( \mathcal{L} ( \hat{\gamma}^r, \hat{\gamma}^s ) \right)
    \leq
    \sup_n  d_{\mathbf{P}} \left( \mathcal{L} ( \bar{\gamma}^r_n, \bar{\gamma}^s_n ) \right).
  \end{equation}
  Recall that we sample $\bar{\gamma}^r_n$ and $\bar{\gamma}^r_n$ as loop-erasures of the simple random walk $S_n$.
  On the event that
  $ \mathscr{S}_n^{K,r} \triangle \mathscr{S}_n^{K, s}  \cap B(s^{1/2}) = \emptyset $,
   $\bar{\gamma}^r_n =\bar{\gamma}^r_n$ (as parameterized curves) whenever $S_n$ hits $\tr \mathscr{S}_n^{K,r} $ before reaching the boundary of $B_n(s^{1/2})$, and so
  \[
  \begin{aligned}
    \mathbf{P} \left( \bar{\gamma}^r_n \neq \bar{\gamma}^s_n \right)  \leq & \mathbf{P} \left( (\mathscr{S}^{K,r}_{n} \triangle  \mathscr{S}^{K,s}_{n})\cap B_n (s^{1/2}) \neq \emptyset  \right)  \\
    &
    + \mathbf{P} \left(  S_n[  0  , \xi_S (B_n(s^{1/2} )) ] \cap \tr \mathscr{S}_n^{K,r}  \neq \emptyset  \right).
  \end{aligned}
  \]
  Proposition~\ref{prop:ust-equal-larger} gives $\mathbf{P} \left( (\mathscr{S}^{K,r}_{n} \triangle  \mathscr{S}^{K,s}_{n})\cap B_n (s^{1/2}) \neq \emptyset  \right) \to 0$ as $s \to \infty$.
  Recall that $\mathscr{S}_n^K$ is a subtree of the uniform spanning tree, including a path to infinity. Then, Proposition \ref{prop:treehittable} implies that
  \[  \mathbf{P} \left(  S[  0  , \xi_S (B_n(s^{1/2} )) ] \cap \tr \mathscr{S}_n^{K,r}  \neq \emptyset  \right)  \to 0  \text{ as } r, s \to \infty .\]
  Therefore, \eqref{eq:gammardif2} converges to $0$ as $r ,  s  \to \infty$. It follows that $ \left( \mathcal{L} (\hat{\gamma}^R) \right)_{R \geq 1} $ is a Cauchy sequence in the Prohorov metric.
  Since $d_{\mathbf{P}}$ is a complete metric, we conclude that $ \left( \mathcal{L} (\hat{\gamma}^R) \right) $ converges weakly. Such limit is a random element $\hat{\gamma}$ taking values in $\mathcal{C}_f$, and in particular $\hat{\gamma}$ has finite duration.

  On the space of finite curves $ (\mathcal{C}_f, \psi)  $, the evaluation of the endpoint defines a continuous function $E : \mathcal{C}_f \to \mathbb{R}^3$.
  Therefore, as we take $n \to \infty$, the endpoint of $E (\bar{\gamma}_n^R) \in \tr \mathscr{S}_n^K$ converges to $ E (\hat{\gamma}^R) $ (see \cite[Theorem 5.1]{Billingsley}, for example).
  Proposition \ref{prop:hitas} implies that, with probability one,  $ E (\bar{\gamma}^{R}_n ) \in \tr \mathscr{S}^K_n $ for $R$ large enough.
  Additionally, note that $\mathscr{S}_n^K$ converges weakly to $\mathscr{S}^K$ as a parameterized tree, when $n \to \infty$.
  It follows that the law of $E (\hat{\gamma}^R)$ is supported on $\mathscr{S}^K$.
\end{proof}

\begin{lemma} \label{lemma:Wilson-curves}
 The collection of curves $\Gamma^e (  \hat{\mathscr{S}}^K  ) \cup \{ \hat{\gamma} \}$ define a parameterized tree $ \hat{\mathscr{S}}^{K+1}  $.
 This tree coincides with the description in Section~\ref{subsec:essential-branches-UST}.
\end{lemma}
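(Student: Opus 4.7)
The plan is to apply Proposition \ref{prop:essentialbranch-const} to the $(K+1)$-tuple $\hat{\gamma}^{x(1),y(1)}, \ldots, \hat{\gamma}^{x(K),y(K)}, \hat{\gamma}$, relabelling $\hat{\gamma}$ as the final curve $\hat{\gamma}^{x(K+1),\tilde{y}(K+1)}$. The first $K$ curves already satisfy the hypotheses of that proposition, since they are by construction the essential branches of the parameterized tree $\hat{\mathscr{S}}^K$. By Lemma \ref{lemma:finallimit}, $\hat{\gamma} \in \mathcal{C}_f$ starts at $x(K+1)$ and its endpoint lies on $\tr \hat{\mathscr{S}}^K = \bigcup_{j=1}^K \tr \hat{\gamma}^{x(j)}$. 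The only missing ingredient is that this endpoint is the \emph{first} hitting point of $\tr \hat{\mathscr{S}}^K$ by $\hat{\gamma}$, equivalently that $\tr \hat{\gamma}|_{[0,T(\hat{\gamma}))} \cap \tr \hat{\mathscr{S}}^K = \emptyset$; this is the main obstacle of the proof.

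I would establish the no-early-hitting property by contradiction, working in the Skorokhod coupling introduced in Step 1 of the proof of Proposition \ref{prop:extension}. Suppose there exists $t_0 \in [0, T(\hat{\gamma}))$ with $z_0 := \hat{\gamma}(t_0) \in \tr \hat{\mathscr{S}}^K$, and let $w := \hat{\gamma}(T(\hat{\gamma}))$. Choose $R$ large enough that $\tr \hat{\gamma} \cup \{z_0,w\} \subset B_E(R/2)$. By Lemma \ref{lemma:convLERW} together with Lemma \ref{lemma:finallimit}, we have $\bar{\gamma}_n^R \to \hat{\gamma}$ in $(\mathcal{C}_f,\psi)$; by Lemma \ref{lemma:HausdorffDistance}, $\tr \mathscr{S}_n^{K,R} \to \tr \hat{\mathscr{S}}^{K,R}$ in the Hausdorff distance. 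Hence, for large $n$, there is a time $\tau_n$ with $\tau_n$ close to $t_0$ (so macroscopically smaller than the duration of $\bar{\gamma}_n^R$) and $\bar{\gamma}_n^R(\tau_n)$ at Euclidean distance $o(1)$ from $\tr \mathscr{S}_n^{K,R}$. Because $\bar{\gamma}_n^R$ is the loop-erasure of a simple random walk stopped on first entry to $\mathscr{S}_n^{K,R}$, the portion of the underlying random walk after its visit to $\bar{\gamma}_n^R(\tau_n)$ must reach $\tr \mathscr{S}_n^{K,R}$ before leaving $B_E(R)$. By the hittability estimate of Proposition \ref{prop:treehittable}, with probability tending to $1$ this hit occurs inside a ball of vanishing radius around $\bar{\gamma}_n^R(\tau_n)$, so the endpoint of $\bar{\gamma}_n^R$ is within Euclidean distance $o(1)$ of $\bar{\gamma}_n^R(\tau_n)$. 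Passing to the limit, this forces $w = z_0$; but then $\hat{\gamma}(t_0) = \hat{\gamma}(T(\hat{\gamma}))$ with $t_0 < T(\hat{\gamma})$, which violates simplicity of $\hat{\gamma}$ (inherited from the simple curves $\bar{\gamma}_n^R$ modulo a separate short argument using Proposition \ref{result:quasiloopsPoly} to rule out macroscopic quasi-loops in the limit). This contradiction yields the claim.

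Once Proposition \ref{prop:essentialbranch-const} applies, we obtain a parameterized tree with spanning points $X = \{x(1),\ldots,x(K+1)\}$ whose essential branches are $\Gamma^e(\hat{\mathscr{S}}^K) \cup \{\hat{\gamma}\}$; this is what we take as $\hat{\mathscr{S}}^{K+1}$. To identify it with the object described in Subsection \ref{subsec:essential-branches-UST}, one simply observes that the prescription there defines $\hat{\gamma}^{x(K+1),y(K+1)}$ as the scaling limit of a LERW from $x(K+1)$ stopped on its first hit of $\bigcup_{j \leq K} \tr \hat{\gamma}^{x(j),y(j)}$. By construction, $\hat{\gamma}$ is exactly the weak limit of the $\beta$-parameterized LERWs $\bar{\gamma}_n$, which are loop-erasures of simple random walks started at $x_n(K+1)$ and stopped on first entry to $\mathscr{S}_n^K$, and $\mathscr{S}_n^K \Rightarrow \hat{\mathscr{S}}^K$ by the inductive hypothesis; combined with the no-early-hitting property proved above, this identifies $\hat{\gamma}$ with $\hat{\gamma}^{x(K+1),y(K+1)}$ and completes the verification.
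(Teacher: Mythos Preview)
Your approach is the same as the paper's: verify the hypotheses of Proposition~\ref{prop:essentialbranch-const} for the $(K+1)$-tuple of curves and then identify the resulting tree with the inductive description. The paper's proof is two lines because the ``first-hitting'' property you isolate as the main obstacle is already packaged into the \emph{statement} of Lemma~\ref{lemma:finallimit} (``the intersection of $\tr\hat\gamma$ and $\tr\hat{\mathscr S}^K$ is the endpoint of $\hat\gamma$''), which is simply cited. Your hittability-plus-simplicity argument is a correct way to justify that clause and is in the same spirit as the quasi-loop and hittability machinery used throughout Steps~2--4; note that the cleanest version of your contradiction is to observe that once $\bar\gamma_n^R(\tau_n)$ is within $\varepsilon'$ of $\tr\mathscr S_n^{K,R}$, the strong Markov property at the \emph{first} visit of $S_n$ to that lattice point plus Proposition~\ref{prop:treehittable} forces the \emph{endpoint} $S_n(\xi_n^{\mathscr S,R})$ (not merely some later LERW point) to lie in a small ball, so $w=z_0$ follows directly without needing to analyse the tail of the loop-erasure.
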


\begin{proof}
  Lemma~\ref{lemma:finallimit} shows that $\Gamma^c (\mathscr{S}^K) \cup \{  \hat{\gamma} \}$ satisfies the conditions of Proposition~\ref{prop:essentialbranch-const}.
  It follows that $\Gamma^c (\mathscr{S}^K) \cup \{  \hat{\gamma} \}$ is the set of essential branches for a parameterized tree $\hat{ \mathscr{S}}^{K+1} $.

  Finally, note that Lemma~\ref{lemma:convLERW} shows that $\hat{ \gamma}$ is the limit of scaled loop-erased random walks, stopped when they hit the previous limit element $ \tr \mathscr{S}^K $, and such hitting time is finite. Therefore $ \hat{\mathscr{S}}^{K+1} $ is the tree of Section \ref{subsec:essential-branches-UST}.
\end{proof}

\subsubsection{Step 6: the scaling limit of parameterized trees }
%\label{subsec:step6}

\begin{proof}[Proof of Proposition \ref{prop:extension}]
First let us describe the probability measure induced by $(\mathscr{S}^K_n, \bar{\gamma}_n)$. Let $\mu_n$ be the probability measure on $\mathscr{F}^K$ induced by $\mathscr{S}^K_n$. For each $\mathscr{S}^K_n \in \mathscr{F}^K$, let $\nu_n^{\gamma_n}$ be the probability measure on $(\mathcal{C}_f, \psi)$ induced by the loop-erased random walk $\bar{\gamma}_n$; recall that $\bar{\gamma}_n$ is stopped when it exits $(\mathbb{R}^3\setminus \tr \mathscr{S}^K_n) \cap \mathbb{Z}^3$. The measure $\nu_n^{\gamma_n}$ defines the stochastic kernel
\[K_n (\mathscr{S}^K_n, A) = \nu^{\gamma_n}_n (A), \qquad \forall\mathscr{S}^K_n \in \mathscr{F}^K, A \in \mathcal{B} (\mathcal{C}_f),\]
where $\mathcal{B} (\mathcal{C}_f)$ is the Borel $\sigma$-algebra corresponding to $(\mathcal{C}_f, \psi)$. That is, the probability measure induced by $(\mathscr{S}^K_n, \bar{\gamma}_n)$, $\mu_n \otimes K_n$ say, is the unique probability measure such that
\[\mu_n \otimes K_n (A_1 \times A_2)=\int_{A_1} K_n (\mathscr{S}^K_n, A_2) \mu_n(d \mathscr{S}^K_n),\]
for Borel sets $A_1  \in \mathcal{B} (\mathscr{F}^K) $ and $A_2 \in \mathcal{B} (\mathcal{C}_f)$.

Now, recall we are supposing that we have a coupling so that $\mathscr{S}^K_n\rightarrow \hat{\mathscr{S}}^K$, almost-surely. In what follows, we write $\mathbf{P}^*$ for the corresponding probability measure. From Lemma \ref{lemma:finallimit}, we obtain that, $\mathbf{P}^*$-a.s., $\nu^{\gamma_n}_n \to \nu^{\hat{\gamma}}$ as $n \to \infty$, where $\nu^{\hat{\gamma}}$ is the law of $\hat{\gamma}$. Hence $\nu^{\hat{\gamma}}$ is $\mathbf{P}^*$-measurable, and, in particular, so is $\nu^{\hat{\gamma}}(A)$ for all $A\in \mathcal{B}(C_f)$. As a consequence, the integral
\[\mu\otimes K \left(A_1\times A_2\right):=\int\mathbf{1}_{A_1}(\hat{\mathscr{S}^K }) \nu^{\hat{ \gamma }}(A_2) d\mathbf{P}^*\]
is well-defined for every $A_1\in\mathcal{B}(\mathscr{F}^K)$, $A_2\in\mathcal{B}(\mathcal{C}_f)$. Moreover, $\mu\otimes K$ is readily extended to give a measure on the product space $\mathscr{F}^K\times\mathcal{C}_f$. Finally, let $A_1\in\mathcal{B}(\mathscr{F}^K)$, $A_2\in\mathcal{B}(\mathcal{C}_f)$ be continuity sets for $\mu\otimes K$, in the sense that $\mu\otimes K(\partial A_1\times \mathcal{C}_f)=0=\mu\otimes K(\mathscr{F}^K\times\partial A_2)$. We then have that, $\mathbf{P}^*$-a.s., $\mathbf{1}_{A_1}(\mathscr{S}^K_n) \nu_n^{\hat{\gamma}}(A_2)\to\mathbf{1}_{A_1}(\hat{\mathscr{S}^K}) \nu^{\hat{\gamma}}(A_2)$. An application of the dominated convergence theorem thus yields
\[\mu_n \otimes K_n(A_1\times A_2)\to\mu\otimes K \left(A_1\times A_2\right),\]
which is enough to establish that $\mu \otimes K$ is a measure on $( \mathcal{F}^{K}, \mathcal{C}_f )$ (see \cite[Theorem 2.8]{Billingsley}).
Lemma \ref{lemma:Wilson-curves} shows that $ \mu \otimes K $ defines a measure on the spaces of parameterized trees $\mathcal{F}^{K+1}$.
\end{proof}

\section{Proof of tightness and subsequential scaling limit}\label{proofsec}

Given the preparations in the previous sections, we are now in a position to establish the first main result of this article, namely Theorem \ref{mainthm1}.

\begin{proof}[Proof of Theorem \ref{mainthm1}] We start by establishing the parts of the result concerning the Gromov-Hausdorff-type topology. Applying Lemma \ref{tightness}, the tightness claim follows from Proposition \ref{1st-assump}, Corollary \ref{ass-4} and Proposition \ref{5th}. It remains to check the distributional convergence of $\underline{\sU}_{n}$ as $n\rightarrow\infty$, where we write $\underline{\sU}_n$ for the random measured, rooted spatial tree at \eqref{uddef}, indexed by $\delta_n=2^{-n}$. By the first part of the theorem and Prohorov's theorem (see \cite[Theorem 16.3]{Kall}, for example), we know that every subsequence $(\underline{\sU}_{n_i})_{i\geq 1}$ admits a convergent subsubsequence $(\underline{\sU}_{n_{i_j}})_{j\geq 1}$. Thus we only need to establish the uniqueness of the limit.

Now, suppose $(\underline{\sU}_{n_i})_{i\geq 1}$ is a convergent subsequence, and write $\uT=(\mathcal{T},d_\mathcal{T},\mu_{\mathcal{T}},\phi_\mathcal{T},\rho_\mathcal{T})$ for the limiting random element in $\mathbb{T}$. To show that the convergence specifies the law of $\uT$ uniquely, we will start by considering finite restrictions of $\underline{\sU}_{n_i}$, $i\geq 1$. In particular, for $R\in(0,\infty)$, set
\[\underline{\sU}_{n_i}^{(R)}=\left(B(\delta_{n_i}^{-1}R),\delta_{n_i}^{\beta}d_\mathcal{U}|_{B(\delta_{n_i}^{-1}R)\times B(\delta_{n_i}^{-1}R)},\delta_{n_i}^3\mu_{\mathcal{U}}\left(\cdot\cap B(\delta_{n_i}^{-1}R)\right),\delta_{n_i}\phi_\mathcal{U}|_{B(\delta_{n_i}^{-1}R)},\rho_\mathcal{U}\right),\]
i.e.\ the part of $\underline{\sU}_{n_i}$ contained inside $B(\delta_{n_i}^{-1}R)$. (We acknowledge this notation clashes with that used in Section \ref{topsec} for restrictions to balls with respect to the tree metric.) Note that, by \eqref{deltadef}, we have that
\begin{align*}
\lefteqn{\lim_{R\rightarrow\infty}\limsup_{i\rightarrow\infty}\mathbf{P}\left(\Delta\left(\underline{\sU}_{n_i}^{(R)},\underline{\sU}_{n_i}\right)>\varepsilon\right)}\\
&\leq \lim_{R\rightarrow\infty}\limsup_{i\rightarrow\infty}\left(\mathbf{1}_{\{e^{-\lambda^{-1}R^\beta}>\varepsilon\}}+\mathbf{P}\left(B_\sU(0,\lambda^{-1}\delta_{n_i}^{-\beta}R^\beta)\not\subseteq B(\delta_{n_i}^{-1}R)\right)\right)\\
&\leq Ce^{-c\lambda^a}
\end{align*}
for any $\varepsilon>0$ and $\lambda\geq 1$, where we have applied Proposition \ref{2-4-1} to deduce the final bound. In particular, since $\lambda$ can be taken arbitrarily large in the above estimate, we obtain that
\begin{equation}\label{conccc}
\lim_{R\rightarrow\infty}\limsup_{i\rightarrow\infty}\mathbf{P}\left(\Delta\left(\underline{\sU}_{n_i}^{(R)},\underline{\sU}_{n_i}\right)>\varepsilon\right)=0.
\end{equation}
As a consequence, to prove the uniqueness of the law of $\uT$, it will be enough to show that, for each $R\in (0,\infty)$, $(\underline{\sU}_{n_{i}}^{(R)})_{i\geq 1}$ converges in distribution to a uniquely specified limit. Indeed, if $\uT^{(R)}$ is the limit of $\underline{\sU}_{n_i}^{(R)}$, then, since $\underline{\sU}_{n_i}\buildrel{d}\over{\rightarrow}\uT$ (as $i\rightarrow\infty$) and \eqref{conccc} both hold, we have that $\uT^{(R)}\buildrel{d}\over{\rightarrow}\uT$ as $R\rightarrow\infty$.

Next, for given $n_i$ and $R$, consider the measure $\pi_{n_i}^{(R)}$ on $B(\delta_{n_i}^{-1}R)\times \mathbb{R}^3$ given by
\[\pi_{n_i}^{(R)}(dxdy)=\frac{\mu_{\sU}(dx)\delta_{\delta_{n_i}\phi_\sU(x)}(dy)}{\mu_\sU(B(\delta_{n_i}^{-1}R))},\]
where $\delta_z(\cdot)$ is the probability measure on $\mathbb{R}^3$ placing all its mass at $z$. We will check that the triple
\begin{equation}\label{triple}
\left(B(\delta_{n_i}^{-1}R),\delta_{n_i}^{\beta}d_\mathcal{U}|_{B(\delta_{n_i}^{-1}R)\times B(\delta_{n_i}^{-1}R)},\pi_{n_i}^{(R)}\right)
\end{equation}
converges in the marked Gromov-weak topology of \cite[Definition 2.4]{DGP}; a characterisation of this convergence that will be relevant to us is given in the following paragraph. Towards establishing tightness, we first note that the projections of $\pi_{n_i}^{(R)}$ onto the sets $B(\delta_{n_i}^{-1}R)$ and $\mathbb{R}^3$ are simply the uniform probability measures on $B(\delta_{n_i}^{-1}R)$ and $\delta_{n_i}B(\delta_{n_i}^{-1}R)$, respectively. Since the latter measure clearly converges to the uniform probability measure on $B_E(R)$, by \cite[Theorem 4]{DGP} (see also \cite[Theorem 3]{GPW}), the desired tightness is implied by the following two conditions.
\begin{enumerate}
  \item[(a)] The distributions of
  \[\delta_{n_i}^\beta d_\sU\left(\xi^{n_i,R}_1,\xi^{n_i,R}_2\right),\qquad i\geq 1,\]
  are tight, where $\xi^{n_i,R}_1$ and $\xi^{n_i,R}_2$ are independent uniform random variables on $B(\delta_{n_i}^{-1}R)$, independent of $\sU$.
  \item[(b)] For every $\varepsilon>0$, there exists an $\eta>0$ such that
  \[\mathbf{E}\left(\delta_{n_i}^3\mu_\sU\left(\left\{x\in B(\delta_{n_i}^{-1}R):\:\mu_{\sU}\left( B_\sU(x,\delta_{n_i}^{-\beta}\varepsilon )\cap B(\delta_{n_i}^{-1}R)\right)\leq \eta\right\}\right)\right)\leq\varepsilon.\]
\end{enumerate}
The fact that (b) holds readily follows from the mass lower bound of Corollary \ref{ass-4}. As for (a), this is a simple consequence of Corollary \ref{fddcor}. Moreover, if we write $(\xi^{n_i,R}_j)_{j\geq 1}$ for a sequence of independent uniform random variables on $B(\delta_{n_i}^{-1}R)$, independent of $\sU$, then Corollary \ref{fddcor} further implies that
\begin{equation}\label{uspec}
\left(\left(\delta_{n_i}^\beta d_\sU\left(\xi^{n_i,R}_j,\xi^{n_i,R}_k\right)\right)_{j,k\geq 1},\left(\xi_j^{n_i,R}\right)_{j\geq 1}\right)
\end{equation}
converges in distribution. This enables us to deduce, by applying \cite[Theorem 5, see also Remark 2.7]{DGP}, that the triple at \eqref{triple} in fact converges in distribution in the marked Gromov-weak  topology. We denote the limit by $(\mathcal{T}^{(R)},d_{\mathcal{T}^{(R)}},\pi_{\mathcal{T}^{(R)}})$, where $(\mathcal{T}^{(R)},d_{\mathcal{T}^{(R)}})$ is a complete, separable metric space, and $\pi_{\mathcal{T}^{(R)}}$ is a probability measure on $\mathcal{T}^{(R)}\times \mathbb{R}^3$ such that $\pi_{\mathcal{T}^{(R)}}(\cdot\times \mathbb{R}^3)$ has full support on $\mathcal{T}^{(R)}$. In addition, by combining \eqref{use-1} with Proposition \ref{5th}, we have the following adaptation of Assumption \ref{a3}: there exists a continuous, increasing function $h(\eta)$ with $h(0)=0$ such that
\[\lim_{\eta\rightarrow 0}\liminf_{\delta\rightarrow 0}
\bP\left(\sup_{\substack{x,y\in B(\delta^{-1}R):\\\delta^{\beta}\dU(x,y)<\eta}}\delta \left|\phi_\sU(x)-\phi_\sU(y)\right| \leq h(\eta)\right)= 1.\]
This allows us to apply \cite[Theorem 3.7]{KL} to deduce that
\[\pi_{\mathcal{T}^{(R)}}(dxdy)=\mu_{\mathcal{T}^{(R)}}(dx)\delta_{\phi_{\mathcal{T}^{(R)}}(x)}(dy),\]
where $\mu_{\mathcal{T}^{(R)}}$ is a probability measure on ${\mathcal{T}^{(R)}}$ of full support, and $\phi_{\mathcal{T}^{(R)}}:\mathcal{T}^{(R)}\rightarrow\mathbb{R}^3$ is a continuous function.

As a consequence of the convergence described in the previous paragraph and the separability of the marked Gromov-weak topology (see \cite[Theorem 2]{DGP}), we can assume that all the random objects are built on the same probability space with probability space with probability measure $\mathbf{P}^*$ such that, $\mathbf{P}^*$-a.s.,
\[\left(B(\delta_{n_i}^{-1}R),\delta_{n_i}^{\beta}d_\mathcal{U}|_{B(\delta_{n_i}^{-1}R)\times B(\delta_{n_i}^{-1}R)},\pi_{n_i}^{(R)}\right)\rightarrow\left(\mathcal{T}^{(R)},d_{\mathcal{T}^{(R)}},\pi_{\mathcal{T}^{(R)}}\right).\]
By \cite[Lemma 3.4]{DGP}, this implies that, $\mathbf{P}^*$-a.s., there exists a complete and separable metric space $(Z,d_Z)$ and isometric embeddings $\psi_{n_i}:(B(\delta_{n_i}^{-1}R),\delta_{n_i}^{\beta}d_\mathcal{U})\rightarrow (Z,d_Z)$, $\psi:(\mathcal{T}^{(R)},d_{\mathcal{T}^{(R)}})\rightarrow(Z,d_Z)$, such that
\begin{equation}\label{piconv}
\pi_{n_i}^{(R)}\circ (\tilde{\psi}_{n_i})^{-1}\rightarrow\pi_{\mathcal{T}^{(R)}}\circ \tilde{\psi}^{-1}
\end{equation}
weakly as probability measures on $Z\times\mathbb{R}^3$, where $\tilde{\psi}_{n_i}(x,y)=({\psi}_{n_i}(x),y)$ and $\tilde{\psi}(x,y)=({\psi}(x),y)$. From our initial assumption that $(\underline{\sU}_{n_i})_{i\geq 1}$ is distributionally convergent in $\mathbb{T}$, Corollary \ref{ass-4} and \eqref{use-1}, we further have the existence of a deterministic subsequence $(n_{i_j})_{j\geq 1}$ such that, $\mathbf{P}^*$-a.s., $\underline{\sU}_{n_{i_j}}\rightarrow{\uT}$ in $\mathbb{T}$,
\begin{equation}\label{mlb}
\inf_{j\geq1}\delta_{n_{i_j}}^3\inf_{x\in B(\delta_{n_{i_j}}^{-1}R)}\mu_\sU\left(B_\sU(x,\delta_{n_{i_j}}^{-\beta}\delta)\right)>0,\qquad \forall\delta>0,
\end{equation}
and also
\begin{equation}\label{contain}
\sup_{x\in B(\delta_{n_{i_j}}^{-1}R)}\delta_{n_{i_j}}^{\beta}d_\sU(0,x)\rightarrow\Lambda\in(0,\infty).
\end{equation}
Now, taking projections onto $Z$ and rescaling, we readily obtain from \eqref{piconv} that
\begin{equation}\label{asd1}
\delta_{n_i}^3\mu_{\sU}\left(({\psi}_{n_i})^{-1}(\cdot)\cap B(\delta_{n_i}R)\right)\rightarrow c \mu_{\mathcal{T}^{(R)}}\circ {\psi}^{-1}
\end{equation}
weakly as probability measures on $Z$, where the constant $c$ is the Lebesgue measure of $B_E(R)$. Moreover, appealing again to the mass lower bound of \eqref{mlb}, we also obtain the subsequential convergence of measure supports, i.e.
\[{\psi}_{n_{i_j}}\left(B(\delta_{n_{i_j}}R)\right)\rightarrow\psi\left(\mathcal{T}^{(R)}\right)\]
with respect to the Hausdorff topology on compact subsets of $Z$ (cf.\ the argument of \cite[Theorem 6.1]{ALW}, for example). That $\mathcal{T}^{(R)}$ is indeed compact is established as in \cite{ALW}, and that it is a real tree follows from \cite[Lemma 2.1]{EPW}. In particular, if we define a sequence of correspondences by setting
\[\mathcal{C}_{n_{i_j}}:=\left\{(x,x')\in B(\delta_{n_{i_j}}R)\times \mathcal{T}^{(R)}:\:d_Z\left(\psi_{n_{i_j}}(x),\psi(x')\right)\leq 2d_H^Z\left({\psi}_{n_{i_j}}(B(\delta_{n_{i_j}}R)),\psi(\mathcal{T}^{(R)})\right)\right\},\]
where $d_H^Z$ is the Hausdorff distance on $Z$, then we have that
\begin{equation}\label{asd2}
\sup_{(x,x')\in\mathcal{C}_{n_{i_j}}}d_Z\left(\psi_{n_{i_j}}(x),\psi(x')\right)\rightarrow0.
\end{equation}
Given that $\underline{\sU}_{n_{i_j}}\rightarrow{\uT}$ in $\mathbb{T}$ and \eqref{contain} holds, it is a straightforward application of \cite[Lemmas 3.5 and 5.1]{BCroyK} to also check that, $\mathbf{P}^*$-a.s.,
\[\lim_{\eta\rightarrow0}\limsup_{j\rightarrow\infty}\sup_{\substack{x,y\in B(\delta_{n_{i_j}}^{-1}R):\\\delta_{n_{i_j}}^{\beta}\dU(x,y)<\eta}}\delta_{n_{i_j}} \left|\phi_\sU(x)-\phi_\sU(y)\right|=0,\]
and, applying this equicontinuity in conjunction with \eqref{piconv}, this yields in turn that
\begin{equation}\label{asd3}
\sup_{(x,x')\in\mathcal{C}_{n_{i_j}}}\left|\phi_{\sU}(x)-\phi_{\mathcal{T}^{(R)}}(x')\right|\rightarrow0.
\end{equation}
Finally, although not included in the framework of \cite{DGP, GPW, KL}, it is not difficult to include the convergence of roots in the above arguments, i.e.\ we may further suppose that
\begin{equation}\label{asd4}
d_Z\left(\psi_{n_{i_j}}(\rho_\sU),\psi(\rho_{\mathcal{T}^{(R)}})\right)\rightarrow0
\end{equation}
for some $\rho_{\mathcal{T}^{(R)}}\in\mathcal{T}^{(R)}$ with $\phi_{\mathcal{T}^{(R)}}(\rho_{\mathcal{T}^{(R)}})=0$. Recalling the definition of $\Delta_c$ from \eqref{deltacdef}, combining \eqref{asd1}, \eqref{asd2}, \eqref{asd3} and \eqref{asd4} yields that $\Delta_c(\underline{\sU}^{(R)}_{n_{i_j}},\underline{\mathcal{T}}^{(R)})\rightarrow0$, $\mathbf{P}^*$-a.s., where $\uT^{(R)}:=(\mathcal{T}^{(R)},d_{\mathcal{T}^{(R)}},\mu_{\mathcal{T}^{(R)}},\phi_{\mathcal{T}^{(R)}},\rho_{\mathcal{T}^{(R)}})$. Since the distribution of  $\underline{\mathcal{T}}^{(R)}$ is uniquely specified by \eqref{uspec}, and the same limit can be deduced for some subsubsequence of any subsequence of $(n_i)_{i\geq 1}$, we obtain that $\underline{\sU}_{n_{i}}^{(R)}\rightarrow\underline{\mathcal{T}}^{(R)}$ in distribution in $\mathbb{T}$, and thus the part of the proof concerning the Gromov-Hausdorff-type topology is complete.

As for the path ensemble topology, we know from \cite[Lemma 3.9]{BCroyK} that convergence of compact measured, rooted spatial trees with respect to our Gromov-Hausdorff-type implies the corresponding path ensemble statement. To extend from this to the desired conclusion, we can proceed exactly as in the proof of \cite[Lemma 5.5]{BCroyK}, with the additional inputs required being provided by \eqref{use-1} and the coupling lemma that is stated below at Lemma \ref{couplem}.
\end{proof}

\section{Properties of the limiting space}\label{limitsec}

The aim of this section is to prove Theorem \ref{mainthm2}. To this end, we present several preparatory lemmas. In the first of these, we check that for large enough annuli there is only one disjoint crossing by a path in $\sU$. Precisely, for $r<R$, we introduce the event ${\cal C}^E_{\cal U} (r, R)$ by setting
\begin{equation*}
{\cal C}^E_{\cal U} (r, R) = \left\{ \exists x, y \in B (R)^{c} \text{ such that } \gamma_{\cal U} (x, y) \cap B (r) \neq \emptyset \right\},
\end{equation*}
and show that the probability of this occurring decays as the ratio $R/r$ increases.

\begin{lem}\label{mendo-i}
There exist universal constants $a, b, C \in (0, \infty ) $ such that for all $\delta \in (0, 1) $ and $\lambda \ge 1$,
\[\mathbf{P} \left( {\cal C}^E_{\cal U} \left( \lambda^{-a} \delta^{-1},  \delta^{-1} \right) \right) \le C \lambda^{-b}.\]
\end{lem}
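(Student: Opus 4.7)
The plan is to follow the Wilson's-algorithm and nested-shell construction developed in the proof of Proposition \ref{1st-assump}. Set $r = \lambda^{-a}\delta^{-1}$, $R = \delta^{-1}$, and introduce an intermediate scale $r_1 = \lambda^{-a/2}\delta^{-1}$, where $a>0$ will be chosen as a large constant at the end. First observe that $\mathcal{C}^E_\mathcal{U}(r, R)$ holds if and only if the subtree $T^{\mathrm{out}}$ of $\mathcal{U}$ spanned by $B(R)^c$ together with its end at infinity meets $B(r)$; this is the object I will control.

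First I would sample $\gamma^0_\infty$ and note, by standard three-dimensional hitting estimates for the simple random walk, that with probability at least $1-C\lambda^{-a/4}$ it does not return to $B(r)$ after first exiting $B(r_1)$. Next I would construct a dense shell $S^*\subset \mathcal{U}$ around $B(r)$ by running Wilson's algorithm on a net in $B(r_1)\setminus B(2r)$ of mesh $\lambda^{-M}\delta^{-1}$ for a large $M=M(a)$. Exactly as in the proof of Proposition \ref{1st-assump}, iterating the three-dimensional hittability estimate of Proposition \ref{result:hit} together with random-walk exit-time bounds, one obtains that with probability at least $1-C\lambda^{-c_1}$ every net-LERW stays in a slightly enlarged annulus, attaches to the previously built tree outside $B(r)$, and moreover the resulting shell $S^*$ is itself $\eta$-hittable throughout $B(r_1)\setminus B(r)$.

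On the intersection of these good events, $S^*$ plays the role of a bottleneck: by $\eta$-hittability combined with three-dimensional hitting estimates, any simple random walk started outside $B(r_1)$ reaches $S^*$ before $B(r)$ with probability at least $1-\lambda^{-c_2}$. To complete the construction of $\mathcal{U}$, I would then run Wilson's algorithm on the remaining vertices, processing $B(R)^c$ in nested annular groups $B(2^{k+1}R)\setminus B(2^k R)$, $k=0,1,2,\dots$. At each stage, new LERWs hit the existing tree and the tree thickens, so the probability that a newly added LERW enters $B(r)$ before hitting the current tree is summable in $k$. On the intersection of all the good events, $T^{\mathrm{out}}\cap B(r) = \emptyset$, yielding the required bound $\mathbf{P}(\mathcal{C}^E_\mathcal{U}(r,R))\le C\lambda^{-b}$ for a suitable $b>0$.

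The main technical obstacle will be uniformity over the infinite set $B(R)^c$: a direct union bound over per-vertex failure estimates diverges in three dimensions, because $\sum_{x\in B(R)^c} r/|x|$ is infinite. As in Proposition \ref{1st-assump}, this is circumvented by the iterated annular Wilson's-algorithm construction above, so that each shell contributes a summable failure probability while the tree built at earlier stages acts as an enlarged hittable target for LERWs begun at later stages.
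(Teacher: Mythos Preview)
Your outline is workable in spirit but overcomplicates matters, and the ``main technical obstacle'' you identify --- uniformity over the infinite set $B(R)^c$ --- is in fact a non-issue once one uses the tree structure properly. The paper's proof is a two-line addendum to Proposition~\ref{1st-assump}, not a new construction.

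Here is the simplification you miss. Suppose $x,y\in B(\delta^{-1})^c$ and $\gamma_{\mathcal U}(x,y)$ enters $B(\lambda^{-a}\delta^{-1})$. Then the infinite ray $\gamma_\infty^x$ from $x$ must enter $B(\delta^{-1})$, and its first entry point $z$ lies in the \emph{finite} set $\partial_i B(\delta^{-1})$. From $z$ onward, $\gamma_\infty^x$ coincides with $\gamma_\infty^z$, so the portion of $\gamma_{\mathcal U}(x,\gamma_\infty)$ inside $B(\delta^{-1})$ is contained in $\gamma_{\mathcal U}(z,\gamma_\infty)$. But $\partial_i B(\delta^{-1})\subseteq \mathcal U_{k_0}$, and on the event $A_{k_0}'$ already built in the proof of Proposition~\ref{1st-assump}, every branch $\gamma_{\mathcal U}(z,\gamma_\infty)$ with $z\in \partial_i B(\delta^{-1})$ avoids $B(\lambda^{-4}\delta^{-1})$ and lands on $\gamma_\infty$ outside that ball. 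The paper then adds one further event $W=\{\gamma_\infty[u,\infty)\cap B(\lambda^{-5}\delta^{-1})=\emptyset\}$, where $u$ is the first exit of $\gamma_\infty$ from $B(\lambda^{-4}\delta^{-1})$; this has $\mathbf P(W)\ge 1-C\lambda^{-1}$ by a simple SRW return estimate. On $A_{k_0}'\cap W$, the path $\gamma_{\mathcal U}(x,y)=\gamma_{\mathcal U}(x,x')\cup\gamma_{\gamma_\infty}(x',y')\cup\gamma_{\mathcal U}(y',y)$ avoids $B(\lambda^{-5}\delta^{-1})$ entirely, giving the result with $a=5$, $b=1$. No Wilson's algorithm on $B(\delta^{-1})^c$ is ever run.

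By contrast, your plan to process $B(R)^c$ via outward annuli $B(2^{k+1}R)\setminus B(2^kR)$ is incomplete as written: each annulus contains order $(2^k\delta^{-1})^3$ vertices, so a per-vertex union bound diverges, and you would need a refining-net argument inside every annulus (not just the claim that ``the tree thickens'') to obtain summability in $k$. That can likely be made to work, but it is a substantial detour compared with the one-line tree-structure reduction above.
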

\begin{proof} This is essentially established in the proof of Proposition \ref{1st-assump}. We will use the same notation as in that proof here. First, suppose that the event $A_{k_{0}}'$, as defined in the proof of Proposition \ref{1st-assump}, occurs. It then holds that: for every point $x \in \partial B (\delta^{-1} )$,
\begin{equation*}
\gamma_{\cal U} \left( x, \gamma_{\infty} \right) \cap B \left( \lambda^{-4} \delta^{-1} \right) = \emptyset,
\end{equation*}
where $\gamma_{\infty}$ is the unique infinite simple path in ${\cal U}$ started at the origin, and $\gamma_{\cal U} \left( x, \gamma_{\infty} \right)$ is shortest path in ${\cal U }$ from $x$ to a point of $\gamma_{\infty}$. Note that we have already proved that $\mathbf{P} (A_{k_{0}}' ) \ge 1 - C \lambda^{-1}$. Second, let $u$ be the first time that $\gamma_{\infty}$ exits $B(\lambda^{-4}\delta^{-1})$, and define
\[W = \left\{ \gamma_{\infty} [u, \infty ) \cap B (\lambda^{-5} \delta^{-1} ) = \emptyset \right\}.\]
By Proposition 1.5.10 of \cite{Lawb}, it holds that $\mathbf{P} (W) \ge 1 - C \lambda^{-1}$. Finally, suppose that the event $A_{k_{0}}' \cap W$ occurs. For $x,y\in B(\delta^{-1})^{c}$, let $x',y'\in\gamma_{\infty}$ be such that $\gamma_{\cal U}(x,\gamma_{\infty})=\gamma_{\cal U}(x,x')$ and $\gamma_{\cal U}(y,\gamma_{\infty})=\gamma_{\cal U}(y,y')$. We then have that $\gamma_{\cal U}(x,x')\cap B(\lambda^{-4}\delta^{-1})=\emptyset$ and $\gamma_{\cal U}(y,y')\cap B(\lambda^{-4}\delta^{-1})=\emptyset$. Also, it holds that $x',y'\in\gamma_{\infty}[u,\infty)$. In particular, it follows that $\gamma_{\cal U}(x,y)\cap B(\lambda^{-5}\delta^{-1})=\emptyset$ for all $x,y\in B(\delta^{-1})^{c}$. This completes the proof of the result with $a=5$ and $b=1$.
\end{proof}

We next establish a result which essentially gives the converse of Assumption \ref{a3}. In particular, we define the event ${\cal D} (a, b, c )$ by
\begin{equation*}
{\cal D} (a, b, c ) = \left\{\exists x, y \in B ( a) \text{ such that } d_{\cal U}^{\text{S}} (x, y) < b \text{ and } d_{\cal U} (x, y) > c \right\},
\end{equation*}
where we define the Schramm metric $d_\mathcal{U}^S$ on $\sU$ analogously to \eqref{dtschramm}, and check the following.

\begin{lem}\label{darudaru}
There exist universal $a_{1}, \dots , a_{4}, C \in (0, \infty)$ such that for all $\delta \in (0, 1)$ and $\lambda \ge 1,$
\[\mathbf{P} \left( {\cal D} \left( \lambda^{a_{1}} \delta^{-1}, \lambda^{-a_{2}} \delta^{-1}, \lambda^{-a_{3}} \delta^{- \beta } \right) \right) \le C \lambda^{- a_{4} }.\]
\end{lem}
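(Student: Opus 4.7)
The plan is to follow the structure of the proof of Proposition~\ref{5th}, replacing its Euclidean-to-intrinsic inequality for the ILERW with the Schramm-to-intrinsic inequality provided by Proposition~\ref{result:boundS}. The key structural observation is that, for any two vertices $x, y$ of $\mathcal{U}$, the path $\gamma_\mathcal{U}(x, y)$ decomposes as $\gamma_\infty^x|_{[x, b]} \cup \gamma_\infty^y|_{[y, b]}$, where $b$ is the unique branch point at which the infinite paths $\gamma_\infty^x$ and $\gamma_\infty^y$ first coalesce. Since each of these sub-ILERWs is a subset of $\gamma_\mathcal{U}(x,y)$, the Schramm distances along them satisfy $d^S_{\gamma_\infty^x}(x, b), d^S_{\gamma_\infty^y}(y, b) \le d^S_\mathcal{U}(x, y)$, while $d_\mathcal{U}(x, y) = d_{\gamma_\infty^x}(x, b) + d_{\gamma_\infty^y}(y, b)$. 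Consequently, a sufficient Schramm-to-intrinsic bound on each of the two sub-ILERWs yields the target bound on $d_\mathcal{U}(x, y)$.

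To produce such a bound uniformly, I will introduce a net $N$ of $B(\lambda^{a_1}\delta^{-1})$ with mesh $\lambda^{-a_5}\delta^{-1}$, so that $|N| \le C\lambda^{3(a_1+a_5)}$, and apply Proposition~\ref{result:boundS} via translation invariance with parameters $(\lambda^\alpha, \lambda^{-a_2})$ in place of $(\lambda, r)$. This yields, for each fixed $v \in N$, that $\gamma_\infty^v$ satisfies the desired Schramm-to-intrinsic bound---with Schramm threshold $\lambda^{-a_2}\delta^{-1}$ and intrinsic output $\lambda^{\alpha - a_2\beta}\delta^{-\beta}$---on an event of probability at least $1 - C\lambda^{3\alpha}e^{-c\lambda^\alpha}$. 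Choosing $\alpha > 0$ small enough that $\alpha < a_2\beta$ (so that the intrinsic output is bounded by $\lambda^{-a_3}\delta^{-\beta}$ with $a_3 = a_2\beta - \alpha > 0$), but large enough that the exponential factor dominates $|N|$ polynomially, a union bound then yields that every $\gamma_\infty^v$, $v \in N$, satisfies the Schramm-to-intrinsic bound simultaneously, on an event of probability $\ge 1 - C\lambda^{-a_4}$.

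To extend from the net to arbitrary $x, y \in B(\lambda^{a_1}\delta^{-1})$, I will run Wilson's algorithm starting from $\gamma_\infty = \gamma_\infty^0$, adding the LERWs from each $v \in N$, and then filling in the remainder through a hierarchy of progressively finer nets, just as in the proofs of Propositions~\ref{1st-assump} and~\ref{5th}. The hittability estimate of Proposition~\ref{result:hit}, combined with the intrinsic-length tail bound of Proposition~\ref{result:upperLowTail}, will allow us to control both the Schramm and intrinsic distances between $x$ and a nearest net point $v_x \in N$; for $a_5$ chosen large, these corrections can be made much smaller than $\lambda^{-a_2}\delta^{-1}$ and $\lambda^{-a_3}\delta^{-\beta}$ respectively. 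The Schramm-distance triangle inequality $d^S_\mathcal{U}(v_x, v_y) \le d^S_\mathcal{U}(v_x, x) + d^S_\mathcal{U}(x, y) + d^S_\mathcal{U}(y, v_y)$, combined with the corresponding intrinsic triangle inequality, then reduces the general case to the net case handled in the previous step, yielding $d_\mathcal{U}(x,y) \le \lambda^{-a_3}\delta^{-\beta}$. The main technical obstacle, as in the analogous earlier proofs, lies in the careful balancing of the exponents $(a_1,a_2,a_3,a_4,a_5,\alpha)$ so that all the triangle-inequality corrections compose correctly and the various bad events each have probability $\le C\lambda^{-a_4}$; I do not anticipate a substantial new difficulty beyond this bookkeeping.
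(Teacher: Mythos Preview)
Your approach is essentially the same as the paper's. Both proofs proceed by (i) establishing the Schramm-to-intrinsic bound for a single ILERW via Proposition~\ref{result:boundS} (the paper phrases this as a box-counting reduction to \cite[(7.51)]{LS}, which is exactly the content of Proposition~\ref{result:boundS}); (ii) taking a net of $B(\lambda^{a_1}\delta^{-1})$ and using translation invariance plus a union bound to control the ILERWs $\gamma_\infty^v$ from every net point; (iii) using your decomposition $\gamma_\mathcal{U}(x,y)=\gamma_\infty^x|_{[x,b]}\cup\gamma_\infty^y|_{[y,b]}$ to pass from the ILERW bound to the tree bound on the subtree $\mathcal{U}_N$ spanned by the net; and (iv) extending to arbitrary $x,y$ via Wilson's algorithm, hittability, and the triangle inequality for both $d_\mathcal{U}$ and $d_\mathcal{U}^S$.

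One small correction: in step (iv) you should work not with a nearest \emph{net point} $v_x\in N$ but with the point $z_x\in\mathcal{U}_N$ where the branch $\gamma_\mathcal{U}(x,\mathcal{U}_N)$ first meets $\mathcal{U}_N$. The Wilson/hittability argument directly controls $d_\mathcal{U}(x,z_x)$ and $d^S_\mathcal{U}(x,z_x)$, but gives no direct control on the tree path from $z_x$ to your chosen $v_x$, which could in principle be long. This is harmless, because your decomposition argument in step (iii) actually yields the Schramm-to-intrinsic bound for \emph{all} pairs in $\mathcal{U}_N$ (every such point lies on some $\gamma_\infty^v$), not just for net points, so you can run the triangle inequality with $z_x,z_y$ in place of $v_x,v_y$. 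This is precisely what the paper does.
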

\begin{proof} Consider the event $\hat{{\cal D}} (a, b, c )$ given by
\begin{equation*}
\hat{{\cal D}} (a, b, c ) = \left\{ \exists x, y \in B ( a) \cap \gamma_{\infty} \text{ such that } d_{\cal U}^{\text{S}} (x, y) < b \text{ and } d_{\cal U} (x, y) > c \right\}.
\end{equation*}
We first prove that there exist universal $a_{1}, \dots , a_{4}, C \in (0, \infty)$ such that for all $\delta \in (0, 1)$ and $\lambda \ge 1$,
\begin{equation}\label{darudaru-2}
\mathbf{P} \left( \hat{{\cal D}}  \left( \lambda^{a_{1}} \delta^{-1}, \lambda^{-a_{2}} \delta^{-1}, \lambda^{-a_{3}} \delta^{- \beta } \right) \right) \le C \lambda^{- a_{4} }.
\end{equation}
To do this, let $a_{1} = 10^{-4}$, $a_{2} = 1$ and $a_{3} = 1/2$. Moreover, let $D = (w_{k})_{k=1}^{M}$ be a $\lambda^{-a_{2}} \delta^{-1}$-net of $B ( \lambda^{a_{1}} \delta^{-1} ) $ such that $  B ( \lambda^{a_{1}} \delta^{-1} )  \subseteq \bigcup_{k=1}^{M} B (w_{k}, \lambda^{-a_{2}} \delta^{-1} )$ and $M \asymp \lambda^{3 (a_{1} + a_{2} )}$. Suppose that the event $\hat{{\cal D}}  \left( \lambda^{a_{1}} \delta^{-1}, \lambda^{-a_{2}} \delta^{-1}, \lambda^{-a_{3}} \delta^{- \beta } \right) $ occurs. Then there exists $w_{k} \in D$ such that $| \gamma_{\infty} \cap B (w_{k}, \lambda^{-a_{2}} \delta^{-1})| \ge c \lambda^{-a_{3}} \delta^{-\beta} $ for some universal $c > 0$. Now, it follows from \cite[(7.51)]{LS} that
\begin{equation*}
\mathbf{P} \left( \exists  w_{k} \in D \text{ such that } \left|  \gamma_{\infty} \cap B (w_{k}, \lambda^{-a_{2}} \delta^{-1} )\right| \ge c \lambda^{-a_{3}} \delta^{-\beta}  \right) \le C  e^{- c' \lambda^{1/2} },
\end{equation*}
for some universal $c', C \in (0, \infty )$. Thus, the inequality \eqref{darudaru-2} holds when we let $a_{4} = 100$.

We next consider a $\lambda^{-4} \delta^{-1}$-net $D' =(x_{i})_{i=1}^{N}$ of the ball $B (\lambda^{a_{1}} \delta^{-1})$ for which $B ( \lambda^{a_{1}} \delta^{-1} ) \subseteq \bigcup_{i=1}^{N} B (x_{i}, \lambda^{-4} \delta^{-1} )$ and $N \asymp \lambda^{3 (a_{1} + 4) }$. We perform Wilson's algorithm as follows:
\begin{itemize}
\item Consider a subtree spanned by $D' = (x_{i})_{i=1}^{N}$. The output random tree is denoted by ${\cal U}_{1}$.
\item Perform Wilson's algorithm for all remaining points $\mathbb{Z}^{3} \setminus D'$ to generate ${\cal U}$.
\end{itemize}
We define the event $L$ by
\begin{equation*}
L = \bigcap_{i=1}^{N}  \hat{{\cal D}}  \left( \lambda^{a_{1}} \delta^{-1}, \lambda^{-a_{2}} \delta^{-1}, \lambda^{-a_{3}} \delta^{- \beta }; i \right)^{c},
\end{equation*}
where the event $\hat{{\cal D}}  \left( a,b,c; i \right)$ is defined by
\begin{equation*}
\hat{{\cal D}} (a, b, c; i ) = \left\{ \exists x, y \in B ( a) \cap \gamma_{\infty}^{x_{i}} \text{ such that } d_{\cal U}^{\text{S}} (x, y) < b \text{ and } d_{\cal U} (x, y) > c \right\},
\end{equation*}
with $\gamma_{\infty}^{x}$ standing for the unique infinite simple path in ${\cal U}$ started at $x$. By \eqref{darudaru-2}, we have $\mathbf{P} (L) \ge 1 - C \lambda^{-80}$. Furthermore, if we define
\[J = \left\{ \forall x \in B (\lambda^{a_{1} } \delta^{-1} ), \text{diam} \left( \gamma_{\cal U} (x, {\cal U}_{1} ) \right) < \lambda^{-2} \delta^{-1} \text{ and } d_{\cal U} (x, {\cal U}_{1} ) < \lambda^{-2} \delta^{-\beta} \right\},\]
then applying the hittability of each branch of ${\cal U}$ as in the proof of Proposition \ref{1st-assump} guarantees that $\mathbf{P} (J ) \ge 1 - C \lambda^{-10}$. Finally, suppose that the event $L \cap J$ occurs. The event $L$ ensures that for all $x, y \in {\cal U}_{1} $ with $d_{\cal U}^{\text{S}} (x, y) < \lambda^{-a_{2}} \delta^{-1} $, we have $d_{\cal U} (x, y) < 2 \lambda^{- a_{3}} \delta^{-\beta} $. Also, the event $J$ guarantees that for all $x, y \in B (\lambda^{a_{1}} \delta^{-1} )$ with $d_{\cal U}^{\text{S}} (x, y) < \frac{1}{2} \lambda^{-a_{2}} \delta^{-1} $, we have $d_{\cal U} (x, y) < 3 \lambda^{- a_{3}} \delta^{-\beta} $. Thus the proof is complete, establishing the result with $a_{1} = 10^{-4}$, $a_{2} = 1$, $a_{3} = 1/2$ and $a_{4} = 10$.
\end{proof}

For the remainder of the section, including in the proof of Theorem \ref{mainthm2}, we fix a sequence $\delta_n\rightarrow 0$ such that $(\mathbf{P}_{\delta_n})_{n\geq 1}$ converges weakly (as measures on $(\mathbb{T},\Delta)$), and write $\sU_{\delta_n}=(\mathcal{U},\delta_n^\kappa d_\mathcal{U},\delta_n^2\mu_\mathcal{U},\delta_n\phi_\mathcal{U},0)$. Letting  $\hat{\mathbf{P}}$ be the relevant limiting law, we denote by $\uT=(\mathcal{T},d_\mathcal{T},\mu_\mathcal{T},\phi_\mathcal{T},\rho_\mathcal{T})$ a random element of $\mathbb{T}$ with law $\hat{\mathbf{P}}$. A key ingredient to the proof of Theorem \ref{mainthm2} is the following coupling between the discrete and continuous models, which is a ready consequence of this convergence assumption. Since the proof of the corresponding result in \cite{BCroyK} was not specific to the two-dimensional case, we omit the proof here.

{\lem[{cf.\ \cite[Lemma 5.1]{BCroyK}}]\label{couplem} There exist realisations of $(\sU_{\delta_n})_{n\geq 1}$ and $\uT$ built on the same probability space, with probability measure $\mathbf{P}^*$ say, such that: for some subsequence $(n_i)_{i\geq 1}$ and divergent sequence $(r_j)_{j\geq 1}$ it holds that, $\mathbf{P}^*$-a.s.,
\[D_{i,j}:=\Delta_c\left(\sU_{\delta_{n_i}}^{(r_j)},\uT^{(r_j)}\right)\rightarrow 0\]
as $i\rightarrow \infty$, for every $j\geq 1$.}

\begin{proof}[Proof of Theorem \ref{mainthm2}] We start by checking the measure bounds of parts (c) and (d), and we also remark that part (b) is an elementary consequence of (c) (see \cite[Proposition 1.5.15]{Edgar}, for example). The uniform bound of (c) will follow from the estimates: for $R>0$, there exist constants $c_i\in(0,\infty)$ such that, for every $r\in(0,1)$,
\begin{eqnarray}
\hat{\mathbf{P}}\left(\inf_{x\in B_\mathcal{T}(\rho_\mathcal{T},R)}\mu_\mathcal{T}\left(B_\mathcal{T}(x,r)\right)
\le c_1r^{d_f}(\log r^{-1})^{-c_2}\right)&\leq& c_3r^{c_4},\label{u1}\\
\hat{\mathbf{P}}\left(\sup_{x\in B_\mathcal{T}(\rho_\mathcal{T},R)}\mu_\mathcal{T}\left(B_\mathcal{T}(x,r)\right)
\geq c_5r^{d_f}(\log r^{-1})^{c_6}\right)&\leq& c_7r^{c_8}.\label{u2}
\end{eqnarray}
Indeed, given these, applying Borel-Cantelli along the subsequence $r_n=2^{-n}$, $n\in\mathbb{N}$, yields the result. By appealing to the coupling of Lemma \ref{couplem}, the above inequalities readily follow from the following discrete analogues:
\begin{eqnarray}
\limsup_{\delta\rightarrow\infty}{\mathbf{P}}\left(\delta^3\min_{x\in B_{\sU}(\rho_\sU,\delta^{-\beta}R)}\mu_\mathcal{T}\left(B_\sU(x,\delta^{-\beta}r)\right)
\le c_1r^{d_f}(\log r^{-1})^{-c_2}\right)&\leq& c_3r^{c_4},\label{dcdcdc1}\\
\limsup_{\delta\rightarrow\infty}{\mathbf{P}}\left(\delta^3\max_{x\in B_{\sU}(\rho_\sU,\delta^{-\beta}R)}\mu_\mathcal{T}\left(B_\sU(x,\delta^{-\beta}r)\right)
\ge c_5r^{d_f}(\log r^{-1})^{c_6}\right)&\leq& c_7r^{c_8}.\label{dcdcdc2}
\end{eqnarray}
To establish these, we start by noting that Proposition \ref{2-4-1} implies that the probability in \eqref{dcdcdc1} is bounded above by
\[Ce^{-cz^a}+{\mathbf{P}}\left(\delta^3\min_{x\in B(\delta^{-1}R^{1/\beta}z)}\mu_\mathcal{T}\left(B_\sU(x,\delta^{-\beta}r)\right)\le c_1r^{d_f}(\log r^{-1})^{-c_2}\right)\]
for any $z\geq 1$. Moreover, applying a simple union bound and Theorem \ref{2nd-goal} (with $R=\delta^{-\beta}r$, $\lambda= c_1^{-1}\log( r^{-1})^{c_2}$), we can bound this in turn by
\[Ce^{-cz^a}+\frac{C'R^{d_f}z^3}{r^{d_f}}e^{-c'c_1^{-a'}\log( r^{-1})^{a'c_2}}.\]
Choosing $z=(c^{-1}\log(r^{-1}))^{1/a}$, $c_1$ small enough so that $c'c_1^{-a}>d_f$, and $c_2=1/a'$, the above is bounded above by $C''r^{c''}$, as desired. The proof of \eqref{dcdcdc2} is similar, with Theorem  \ref{darui-1} replacing Theorem \ref{2nd-goal}. As for (d), this follows from a Borel-Cantelli argument and the following estimates: there exist constants $c_i\in(0,\infty)$ such that
\begin{equation}\label{dcdcdc3}
\hat{\mathbf{P}}\left(\mu_{\mathcal{T}}\left(B_\mathcal{T}(\rho_\mathcal{T},r)\right)\geq \lambda r^{d_f}
\right)\leq c_1e^{-c_2\lambda^{c_3}},
\end{equation}
\begin{equation}\label{dcdcdc4}
\hat{\mathbf{P}}\left(\mu_{\mathcal{T}}\left(B_\mathcal{T}(\rho_\mathcal{T},r)\right)\leq \lambda^{-1} r^{d_f}
\right)\leq c_4e^{-c_5\lambda^{c_6}},
\end{equation}
for all $r>0$, $\lambda\geq 1$. Similarly to the proof of the uniform estimates \eqref{u1} and \eqref{u2}, applying the coupling of Lemma \ref{couplem}, these readily follow from Theorem \ref{1st-goal} and Proposition \ref{2-4-1}.

For part (a), since $(\sU, d_{\sU})$ has infinite diameter, we immediately find that $(\mathcal{T},d_\mathcal{T})$ has at least one end at infinity. Thus we need to show that there can be no more than one end at infinity. Given Lemma \ref{couplem} and the inclusion results of \eqref{use-1} and Proposition \ref{2-4-1}, this can be proved exactly as in the two-dimensional case. In particular, as in \cite{BCroyK}, it follows from the following crossing estimate: for $r>0$,
\[\lim_{R\rightarrow \infty}\limsup_{\delta\rightarrow0}\mathbf{P}\left(C_\sU^E(\delta^{-1}r,\delta^{-1} R)\right)=0,\]
which is given by Lemma \ref{mendo-i}.

For part (e), we can proceed exactly as in the proof of \cite[Lemma 5.4]{BCroyK}. Given Lemma \ref{couplem}, the one additional ingredient we need to do this is the estimate corresponding to \cite[(5.12)]{BCroyK}:  for every $r,\eta>0$,
\[\lim_{\varepsilon\rightarrow 0}\limsup_{\delta\rightarrow 0}{\mathbf{P}}
\left(\inf_{\substack{x,y\in B_\mathcal{U}(0,\delta^{-\beta}r):\\d_\mathcal{U}(x,y)\geq \delta^{-\beta}\eta}}d_\mathcal{U}^S(x,y)<\delta^{-1}\varepsilon\right)=0,\]
and this was established in Lemma \ref{darudaru} (when viewed in conjunction with Proposition \ref{2-4-1}).

Given Lemma \ref{couplem} and \eqref{2-4-1}, the proof of part (f) is identical to that of \cite[Lemma 5.2]{BCroyK}.
\end{proof}

\section{Simple random walk and its diffusion limit}\label{srwsec}

In this section, we complete the article with the proofs of Theorem \ref{mainthm3}, Corollary \ref{corsrxexp} and Theorem \ref{mainthm4}.

\begin{proof}[Proof of Theorem \ref{mainthm3}] On the event
\begin{equation}\label{evenggg}
\left\{\inf_{x\in \BU(0,R)} \mU\left(\BU(x,R/8)\right)\geq \lambda^{-1}R^{d_f},\:\mU\left(\BU(0,2R)\right)\leq \lambda R^{d_f}\right\},
\end{equation}
one can find a cover $(\BU(x_i,R/4))_{i=1}^N$ of $\BU(0,R)$ of size $N\leq \lambda^2$ (cf.\ \cite[Lemma 9]{Cest}, for example). Following the argument of \cite[Lemma 2.4]{BCKum} (see alternatively \cite[Lemma 4.1]{Kumres}), it holds that on the event at \eqref{evenggg},
\[R_\sU\left(0,\BU(0,R)^c\right)\geq \frac{R}{\lambda^2}.\]
Hence the result is a consequence of Theorem \ref{2nd-goal} and Proposition \ref{2-4-1}.
\end{proof}

\begin{proof}[Proof of Corollary \ref{corsrxexp}] By Theorem \ref{mainthm3}, parts (1) and (4) of \cite[Assumption 1.2]{KM} hold. Moreover, since $R_\sU(0,\BU(0,R)^c)\leq R+1$, we also have that part (2) of \cite[Assumption 1.2]{KM} holds. Hence \eqref{star1}, \eqref{specdim}, \eqref{star3}, \eqref{star5} and \eqref{star7} follow from \cite[Proposition 1.4 and Theorem 1.5]{KM}. It remains to prove the claims involving the Euclidean distance. To this end, note that by \eqref{use-1} and Proposition \ref{2-4-1},
\[\mathbf{P}\left(\BU(0,\lambda^{-1}R^\beta)\subseteq B(R)\subseteq\BU(0,\lambda R^\beta) \right)\geq 1-c_1\lambda^{-c_2}.\]
Hence, by Borel-Cantelli, if $R_n:=2^n$ and $\lambda_n:=n^{2/c_2}$, then
\[\BU(0,\lambda_n^{-1}R_n^\beta)\subseteq B(R_n)\subseteq\BU(0,\lambda_n R_n^\beta)\]
for all large $n$, $\mathbf{P}$-a.s. Combining this with the results at \eqref{star1} and \eqref{star3}, we obtain \eqref{star2} and \eqref{star4}. As for \eqref{star6}, the lower bound follows from Jensen's inequality, Fatou's lemma and \eqref{star4}. Indeed,
\[\liminf_{R\rightarrow \infty}\frac{\log \mathbb{E}^{\sU}\left( \tau^E_{0,R}\right)}{\log R}\geq \liminf_{R\rightarrow \infty}\mathbb{E}^{\sU}\left( \frac{\log \tau^E_{0,R}}{\log R}\right)\geq\mathbb{E}^{\sU}\left( \liminf_{R\rightarrow \infty}\frac{\log \tau^E_{0,R}}{\log R}\right)=\beta d_w.\]
As for the upper bound, a standard estimate for exit times (see \cite[Corollary 2.66]{Barlowbook}, for example) gives that
\[E^{\sU}_0\tau_{0,R}^E\leq R^3 R_\sU(0,B(R)^c)\leq R^3\xi_R,\]
where $\xi_R$ is defined above Proposition \ref{result:upperLowTail}. The latter result thus yields
\[\mathbb{E}^\sU\left(\tau_{0,R}^E\right)\leq R^3 \mathbf{E}\left(\xi_R\right)\leq cR^{3+\beta}=cR^{\beta d_w},\]
which gives (a stronger statement than) the desired conclusion.
\end{proof}

\begin{proof}[Proof of Theorem \ref{mainthm4}] The result can be proved by a line-by-line modification of \cite[Theorems 1.4 and 7.2]{BCroyK}, and so we omit the details. However, as an aid to the reader, we summarise the key steps. As per the construction of \cite{Kden}, $\hat{\mathbf{P}}$-a.s., there is a `resistance form' $(\mathcal{E}_\mathcal{T},\mathcal{F}_\mathcal{T})$ on $(\mathcal{T},d_{\mathcal{T}})$, characterised by
\[d_\mathcal{T}(x,y)^{-1}=\inf\left\{\mathcal{E}_\mathcal{T}(f,f):\:f\in\mathcal{F}_\mathcal{T},\:f(x)=0,\:f(y)=1\right\},\qquad\forall x,y\in\mathcal{T},\:x\neq y.\]
Moreover, by taking
\[\mathcal{D}_\mathcal{T}:=\overline{\mathcal{F}_\mathcal{T}\cap C_0(\mathcal{T})},\]
where $C_0(\mathcal{T})$ are the compactly supported continuous functions on $(\mathcal{T},d_{\mathcal{T}})$, and the closure is taken with respect to $\mathcal{E}_\mathcal{T}(f,f)+\int_\mathcal{T}f^2d\mu_\mathcal{T}$, we obtain a regular Dirichlet form $(\mathcal{E}_\mathcal{T},\mathcal{D}_\mathcal{T})$ on $L^2(\mathcal{T},\mu_{\mathcal{T}})$ (see \cite[Remark 1.6]{AEW} or \cite[Theorem 9.4]{Kres}). Moreover, since $(\mathcal{T},d_\mathcal{T})$ is complete and has one end at infinity (by Theorem \ref{mainthm2}(a)), the naturally associated stochastic process $((X^\mathcal{T}_t)_{t\geq 0},(P^\mathcal{T}_x)_{x\in\mathcal{T}})$ is recurrent (see \cite[Theorem 4]{AEW}). And, from \cite[Theorem 10.4]{Kres}, we have that the process admits a jointly continuous transition density $(p^\mathcal{T}_t(x, y))_{x,y\in\mathcal{T},t>0}$.

Next, by appealing to the Skorohod representation theorem, it is possible to construct realisations of $(\U,\delta_n^\beta\dU,\delta_n^3\mU,\delta_n\pU,\rU)$, $n\geq 1$, and  $(\T,\dT,\mT,\pT,\rT)$ on the same probability space with probability measure $\mathbf{P}^*$ such that $(\U,\delta_n^\beta\dU,\delta_n^3\mU,\delta_n\pU,\rU)\rightarrow(\T,\dT,\mT,\pT,\rT)$, $\mathbf{P}^*$-a.s. Moreover, applying Theorem \ref{mainthm3} in a simple Borel-Cantelli argument allows one to deduce that, $\mathbf{P}^*$-a.s.,
\[\lim_{R\rightarrow\infty}\liminf_{n\rightarrow\infty}\delta_n^{\beta}{R}_\U\left(0,\BU(0,R\delta_n^{-\beta})^c\right)=\infty.\]
Hence we can apply \cite[Theorem 7.1]{Cres} to deduce that, $\mathbf{P}^*$-a.s.,
\begin{equation}\label{repok}
P^\U_0\left(\left(\delta_n X^\U_{t\delta_n^{-(3+\beta)}}\right)_{t\geq 0}\in\cdot\right)\rightarrow P^\mathcal{T}_{\rho_\mathcal{T}}\circ\pT^{-1}
\end{equation}
weakly as probability measures on $C(\mathbb{R}_+,\mathbb{R}^3)$. Since the left-hand side above is $\mathbf{P}^*$-measurable, so is the right-hand side. Moreover, for any measurable set $B\subseteq C(\mathbb{R}_+,\mathbb{R}^3)$, we have that
\[P^\mathcal{T}_{\rho_\mathcal{T}}\circ\pT^{-1}(B)=\mathbf{E}^*\left(P^\mathcal{T}_{\rho_\mathcal{T}}\circ\pT^{-1}(B)\:\vline\:\mathcal{T}\right),\]
where $\mathbf{E}^*$ is the expectation under $\mathbf{P}^*$, and so $P^\mathcal{T}_{\rho_\mathcal{T}}\circ\pT^{-1}$ is in fact $\hat{\mathbf{P}}$-measurable, as is required to prove part (a). For part (b), we apply \eqref{repok} and integrate out with respect to $\mathbf{P}^*$.

As for the heat kernel estimates, we note that the measure bounds of Theorem \ref{mainthm2}(c) are enough to apply the arguments of \cite{Cest} to deduce part (c) (for further details, see the proof of \cite[Theorem 1.4(c)]{BCroyK}). As for the on-diagonal estimates of part (d), similarly to the proof of \cite[Theorem 7.2]{BCroyK} (cf.\ \cite[Theorems 1.6 and 1.7]{Cvol}), these follow from the distributional estimates on the measures of balls at \eqref{dcdcdc3} and \eqref{dcdcdc4}, together with the following resistance estimate
\begin{equation}\label{limres}
\mathbf{P}\left(R_\mathcal{T}(\rho_\mathcal{T},B_\mathcal{T}(\rho_\mathcal{T},R)^c)\leq \lambda^{-1}R\right)\leq Ce^{-c\lambda^{a}},
\end{equation}
where $R_\mathcal{T}$ is the resistance associated with $(\mathcal{E}_\mathcal{T},\mathcal{F}_\mathcal{T})$. As in the proof of Theorem \ref{mainthm3}, to check \eqref{limres}, it is enough to combine \eqref{dcdcdc3} with the bound
\[\hat{\mathbf{P}}\left(\inf_{x\in B_\mathcal{T}(\rho_\mathcal{T},R)} \mu_\mathcal{T}\left(B_\mathcal{T}(x,R/8)\right)\leq \lambda^{-1}R^{d_f}\right)\leq Ce^{-c\lambda^{a}},\]
which is again a ready consequence of the discrete analogue (see Theorem \ref{2nd-goal} and Proposition \ref{2-4-1}).
\end{proof}

\section*{Acknowledgements}

DC would like to acknowledge the support of a JSPS Grant-in-Aid for Research Activity Start-up, 18H05832, a JSPS Grant-in-Aid for Scientific Research (C), 19K03540, and the Research Institute for Mathematical Sciences, an International Joint Usage/Research Center located in Kyoto University. SHT would like to acknowledge the support of a fellowship from the Mexican National Council for Science and Technology (CONACYT). DS is supported by a JSPS Grant-in-Aid for Early-Career Scientists, 18K13425 and JSPS KAKENHI Grant Number 17H02849 and 18H01123.
Furthermore, all the authors would like to thank Russell Lyons for suggesting numerous corrections and an anonymous referee for their insightful referee report. In particular, for the referee's detailed description of how Kozma's result from \cite{Kozma} could be extended (see Remark \ref{extension} for further comments).

\bibliography{3dUST}
\bibliographystyle{amsplain}

\appendix

\printnoidxglossary[title=List of symbols, toctitle=List of symbols, sort=standard]

\end{document}